\documentclass{amsart}
\usepackage{amssymb,amsmath,mathrsfs,amsfonts}
\usepackage{amscd, amssymb, amsmath, amsthm, graphics,graphicx}
\usepackage{amsmath,amsfonts,amsthm,amssymb}
\usepackage{slashbox}
\usepackage{fancyhdr}
\usepackage[all]{xy}
\usepackage[colorlinks=true]{hyperref}
\usepackage[small,nohug,heads=vee]{diagrams}
\usepackage{multirow}
\usepackage{enumitem}
\usepackage{psfrag}
\diagramstyle[labelstyle=\scriptstyle]
\numberwithin{equation}{section}

\newtheorem{theorem}{Theorem}[section]
\newtheorem{lemma}[theorem]{Lemma}
\newtheorem{proposition}[theorem]{Proposition}

\newtheorem{construction}[theorem]{Construction}

\newtheorem{notation}[theorem]{Notation}
\newtheorem{parameter}[theorem]{Parameter}
\newtheorem{assumption}[theorem]{Assumption}

\theoremstyle{definition}
\newtheorem{definition}[theorem]{Definition}

\newtheorem{question}[theorem]{Question}
\newtheorem{remark}[theorem]{Remark}

\begin{document}
\sloppy

\title[Virtual domination of $3$-manifolds III]{Virtual domination of $3$-manifolds III}

\author{Hongbin Sun}
\address{Department of Mathematics, Rutgers University - New Brunswick, Hill Center, Busch Campus, Piscataway, NJ 08854, USA}
\email{hongbin.sun@rutgers.edu}


\subjclass[2010]{57M10, 57M50, 30F40}
\thanks{The author is partially supported by Simons Collaboration Grants 615229.}
\keywords{hyperbolic $3$-manifolds, non-zero degree maps, good pants construction, quasi-isometric embedding}

\date{\today}
\begin{abstract}
We prove that for any oriented cusped hyperbolic $3$-manifold $M$ and any compact oriented $3$-manifold $N$ with tori boundary, there exists a finite cover $M'$ of $M$ that admits a degree-$8$ map $f:M'\to N$, i.e. $M$ virtually $8$-dominates $N$. 
\end{abstract}

\maketitle
\vspace{-.5cm}
\section{Introduction}

In this paper, we assume all manifolds are compact, connected and oriented, unless otherwise indicated. By a cusped hyperbolic $3$-manifold, we mean a compact $3$-manifold with nonempty tori boundary, such that its interior admits a complete hyperbolic structure with finite volume, unless otherwise indicated.

For two closed oriented $n$-manifolds $M,N$ and a map $f:M\to N$, a natural quantity associated to $f$ is its mapping degree. The mapping degree of $f$ is $d\in \mathbb{Z}$ if $f_*([M])=d[N]$ for oriented fundamental classes $[M]\in H_n(M;\mathbb{Z})$ and $[N]\in H_n(N;\mathbb{Z})$.  The notion of mapping degree can be generalized to proper maps between manifolds with boundary. For two compact oriented $n$-manifolds $M$ and $N$ with boundary, a map $f:M\to N$ is {\it proper} if $f^{-1}(\partial N)=\partial M$ holds. The mapping degree of a proper map $f:M\to N$ is $d\in \mathbb{Z}$ if $f_*([M,\partial M])=d[N,\partial N]$ for oriented relative fundamental classes  $[M,\partial M]\in H_n(M,\partial M;\mathbb{Z})$ and $[N,\partial N]\in H_n(N,\partial N;\mathbb{Z})$. In either of the above cases, $f$ is a {\it non-zero degree map} if the degree of $f$ is not zero. If the mapping degree $d\ne 0$, we say that $M$ {\it $d$-dominates} $N$, and we say that $M$ {\it dominates} $N$ if $M$ $d$-dominates $N$ for some non-zero integer $d$. In this paper, we will work on $3$-manifolds with nonempty boundary, and all maps $f:M\to N$ between such $3$-manifolds are proper, unless otherwise indicated.

Roughly speaking, if $M$ dominates (or $1$-dominates) $N$, then $M$ is topologically more complicated than $N$. For certain invariants of manifolds, e.g. ranks of fundamental groups, betti numbers, simplicial volumes, representation volumes, etc, this impression on behavior of topological invariants under non-zero degree maps (or degree-$1$ maps) are classical results. However, for some other invariants, e.g. Heegaard genera and Heegaard Floer homology of $3$-manifolds, it is unknown whether the above impression is correct.

In \cite{CT}, the authors asked the following question: Whether there is an easily described class $\mathscr{C}$ of closed oriented $n$-manifolds, such that any closed oriented $n$-manifold is dominated by some $M\in \mathscr{C}$. In \cite{Gai}, Gaifullin proved that, for any positive integer $n$, there exists a closed oriented $n$-manifold $M_0$, such that any closed oriented $n$-manifold is dominated by a finite cover of $M_0$ (virtually dominated by $M_0$), i.e. we can take $\mathscr{C}$ to be the set of all finite covers of $M_0$. In \cite{Sun2,LS,Sun5}, the author and Liu proved the following result. 

\begin{theorem}\label{closedmanifold}\cite{Sun2, LS, Sun5}
For any closed oriented $3$-manifold $M$ with positive simplicial volume and any closed oriented $3$-manifold $N$, there exists a finite cover $M'$ of $M$ that admits a degree-$1$ map $f:M'\to N$. 
\end{theorem}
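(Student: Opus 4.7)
The plan is to reduce to the case where $M$ is closed hyperbolic and then construct a nearly geodesic combinatorial model of $N$ inside a finite cover of $M$ using the Kahn--Markovic good pants machinery. Since $M$ has positive simplicial volume, its geometric decomposition contains at least one hyperbolic piece $M_0$. A preliminary reduction, using LERF for $3$-manifold groups together with Thurston's Dehn filling to absorb Seifert-fibered neighbors, should show that it is enough to build a degree-$1$ map from a finite cover of a closed hyperbolic filling of $M_0$ to $N$. So assume from now on that $M$ is a closed hyperbolic $3$-manifold.

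\textbf{Building an immersed model of $N$.} First geometrize $N$ into hyperbolic pieces $\{N_i\}$ and Seifert pieces $\{N_j\}$ glued along a graph of JSJ tori. For each hyperbolic piece $N_i$, I would use the Kahn--Markovic good pants construction inside $M$ to produce an immersed panted $2$-complex that approximates a hyperbolic structure on $N_i$ with nearly equilateral pants of large, fixed cuff length. For each Seifert piece $N_j$, I would assemble families of good curves and good tori in $M$ into an immersed model carrying the fibration data of $N_j$. Finally, the subcomplexes would be glued along common good tori via the Liu--Markovic connection principle, which supplies enough flexibility in the shearing parameters to match boundary data across JSJ tori. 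The output is an immersed $2$-complex $Y \looparrowright M$ whose thickening reproduces the graph-of-pieces combinatorics of $N$.

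\textbf{The main obstacle: forcing the degree to be $1$.} Getting a nonzero-degree map from a $\pi_1$-injective panted model is fairly direct once $Y$ exists, but forcing the degree to equal $1$ is the hardest part. It requires a careful homological accounting: within each hyperbolic piece, Mirzakhani-style weighted counts of good pants with prescribed feet and shearing must be balanced so that $Y$ represents exactly the fundamental class of $N_i$ relative to the boundary tori; across the JSJ graph, the gluing parameters must be chosen so that orientations and framings line up globally without spurious cancellation or duplication. The Seifert pieces are especially delicate because they require producing virtually Seifert immersed pieces in $M$ that cover their target Seifert piece with degree exactly one, and this has to be done compatibly with the hyperbolic side on each torus. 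I expect the bulk of the argument to consist of this combinatorial bookkeeping, relying crucially on equidistribution of good pants and the full strength of the connection principle.

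\textbf{From immersed model to degree-$1$ map.} Once $Y \looparrowright M$ has been built with the right homology class, I would invoke Agol's theorem and the associated subgroup separability results of Wise to pass to a finite cover $M' \to M$ in which the image of $Y$ lifts to an embedded $2$-complex $\widetilde{Y} \subset M'$. A regular neighborhood $W$ of $\widetilde{Y}$ in $M'$, cut along its JSJ-like tori and refilled using the model, should be homeomorphic to $N$; collapsing the complementary pieces of $M' \setminus \mathrm{int}(W)$ in a JSJ-compatible way then produces the desired map $M' \to N$, whose degree is $1$ by construction. Throughout, the limiting step is not the existence of the cover or the lifting, but the earlier production of an immersed model whose fundamental class in $M$ is precisely that of $N$ rather than merely a nonzero multiple.
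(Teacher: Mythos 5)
There are several genuine gaps, the most serious being your proposed reduction of the domain $M$.

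\textbf{Dehn filling the domain fails.} You suggest Dehn filling to replace $M$ by a closed hyperbolic filling of a hyperbolic JSJ piece $M_0$. But finite covers of the filled manifold are not finite covers of $M$, so virtual domination of $N$ by the filling does not yield virtual domination by $M$; LERF does not rescue this because the cover you would eventually want lives over the filling, not over $M$. The actual argument keeps $M_0$ cusped, maps the $2$-complex $Z$ into $M_0$, and then appeals to the separability of $j_*(\pi_1(Z))$ as a subgroup of $\pi_1(M)$ (Sun's generalization of Agol, handling geometrically finite subgroups of a hyperbolic piece inside a mixed/graph manifold group). That separability statement is what lets one lift $Z$ to an embedding in a finite cover of $M$ itself.

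\textbf{You reduce the wrong manifold.} The crucial preliminary simplification is applied to the \emph{target}, not the domain: by Boileau--Wang every closed oriented $3$-manifold $N$ is $1$-dominated by a closed hyperbolic $3$-manifold, so one may assume $N$ is hyperbolic. This instantly eliminates your entire program of building separate immersed models of the Seifert pieces of $N$ and gluing them along JSJ tori. Producing a ``virtually Seifert immersed piece'' in a hyperbolic manifold via good curves and good tori that covers a target Seifert piece with degree exactly one is not something the good pants machinery provides and is very unlikely to be tractable; the Boileau--Wang reduction is precisely what makes the problem accessible. Moreover, the construction is not of a model of $N$ as a $3$-manifold: one triangulates $N$ geometrically, builds a $2$-complex $Z$ from the $1$-skeleton of $N$ plus one good-pants surface $S_\Delta$ per triangle $\Delta$, and thickens.

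\textbf{You misidentify the hard part.} You expect the difficulty to lie in ``forcing the degree to be $1$'' via equidistribution-based weighted counts of good pants and careful homological balancing of fundamental classes. In fact the degree-$1$ conclusion is nearly automatic from the construction: the thickened $Z$ has a proper map to the thickened $2$-skeleton of $N$ that sends each $S_\Delta\times I$ to $\Delta\times I$ by a degree-$1$ pinch and is the identity on the $1$-handles; extending over the remaining $3$-balls of $N$ and the complementary pieces of $M'$ yields a degree-$1$ map because on the $1$-handles the map is a homeomorphism. The genuinely hard step is establishing $\pi_1$-injectivity of $j:Z\looparrowright M_0$, which in \cite{Sun2,Sun5} is done by showing the lifted map $\tilde{j}:\tilde{Z}\to\mathbb{H}^3$ is a quasi-isometric embedding after equipping $Z$ with a suitable metric. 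Your sketch does not engage with this step at all.
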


So for any closed oriented $3$-manifold $M$ with positive simplicial volume, we can take $\mathscr{C}$ to be the set of all finite covers of $M$.

Note the condition that $M$ has positive simplicial volume is necessary for Theorem \ref{closedmanifold}, since a manifold with zero simplicial volume does not dominate any manifold with positive simplicial volume, and the simplicial volume has the covering property.

In this paper, we generalize the above virtual domination result from closed $3$-manifolds to $3$-manifolds with tori boundary. The following theorem is the main result of this paper.

\begin{theorem}\label{main2}
For any oriented cusped hyperbolic $3$-manifold $M$ and any compact oriented $3$-manifold $N$ with tori boundary, there exists a finite cover $M'$ of $M$ that admits a proper map $f:M'\to N$ with $\text{deg}(f)=8$.
\end{theorem}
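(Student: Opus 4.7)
The plan is to adapt the virtual domination machinery developed in \cite{Sun2, LS, Sun5} to the setting where both the source and target $3$-manifolds carry (tori) boundary. The starting point is the torus/JSJ decomposition of $N$ into geometric pieces: each piece is either hyperbolic with tori boundary or Seifert fibered, and adjacent pieces are glued along tori. The goal is to realize this decomposition, piece-by-piece, by a $3$-dimensional submanifold of a finite cover $M'$ of $M$, and then to check that the resulting proper map to $N$ has degree precisely $8$.

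The construction will proceed piece-by-piece. Since $M$ is cusped hyperbolic, the Kahn--Markovic good pants construction, in its cusped version, supplies an abundance of immersed $(R,\epsilon)$-good pants in $M$ whose boundary geodesics have length close to $2R$. Following the closed-case strategy, the first step is to construct an immersed ``panted cobordism'' inside $M$ that serves as a $2$-dimensional skeleton for each geometric piece of $N$. For a hyperbolic piece, the panted cobordism is built directly from good pants together with the connection principle; for a Seifert piece, one exploits that the piece is an $S^1$-bundle over a $2$-orbifold and builds a corresponding bundle-like structure over a panted surface in $M$. These pieces are then glued along their boundary tori, mirroring the JSJ decomposition of $N$. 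Using virtual specialness of cusped hyperbolic $3$-manifolds, one can pass to a finite cover $M'$ of $M$ in which the immersed pieces become embedded and can be simultaneously glued along matching tori.

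Having constructed the $2$-skeleton inside $M'$, one thickens it to a regular neighborhood whose complement (in an appropriate submanifold of $M'$) consists of $3$-balls and solid tori, so that a collapse map yields a proper map to $N$. The degree $8=2^3$ is expected to arise from three independent $\mathbb{Z}/2$ indeterminacies forced by the good pants machinery in the cusped setting: orientation matching along the connection principle, passage from almost-fibered to fibered structures near the boundaries of the pieces, and a doubling needed to realize Seifert fibrations within a hyperbolic ambient manifold. Unlike the closed setting of \cite{LS, Sun5} where these parities can be fully resolved to yield degree $1$, the presence of cusps in $M$ and boundary tori in $N$ obstructs at least some of them, leaving a factor of $8$.

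The main obstacle will be the simultaneous gluing across all JSJ tori of $N$. The presence of boundary tori in both $M$ and $N$ forces one to control the behavior of the good pants near the cusps of $M$ and near the boundary tori of $N$. The quasi-isometric embedding property of the good pants assemblies (which gives the requisite separability of their fundamental groups in $\pi_1(M)$), together with careful bookkeeping of the slopes induced on the boundary tori of $M'$ by the Kahn--Wright cusped good pants construction, will be essential to ensure that the gluings can be carried out inside a single finite cover $M'$ while yielding a \emph{proper} map of the prescribed degree.
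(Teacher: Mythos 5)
Your proposal diverges substantially from the paper's route, and the divergences are not cosmetic: several of them are exactly the points where the naive plan fails, and you have not supplied arguments to bridge them.

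First, the paper does \emph{not} realize the JSJ decomposition of $N$ piece-by-piece inside $M$. It instead reduces the target to a one-cusped hyperbolic $3$-manifold before any good-pants construction happens (Lemma~\ref{solidtorus}, Lemma~\ref{reduction1cusplemma}, Proposition~\ref{reduction1cusp}), using a proper map to $D^2\times S^1$, the Berstein--Edmonds simple-branched-covering theorem, and Myers' theorem on null-homotopic hyperbolic knots. Your plan of building ``bundle-like structures over panted surfaces'' for Seifert pieces has no backing technology: good pants assemblies in a hyperbolic $M$ produce near-Fuchsian surfaces and nearly-geodesic immersions, not circle bundles, and simultaneously gluing such pieces across all JSJ tori inside a single finite cover is an open-ended matching problem you have not constrained. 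The paper's reduction makes this whole issue disappear.

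Second, and more seriously, you do not address the central obstruction that motivates the paper. When the mapped-in $2$-complex $Z$ contains tori mapping into peripheral tori of $M$ (as it must, to produce a \emph{proper} map), the induced map $\tilde{j}:\tilde{Z}\to\mathbb{H}^3$ on universal covers is \emph{not} a quasi-isometric embedding, since peripheral tori are exponentially distorted. Your appeal to ``the quasi-isometric embedding property of the good pants assemblies'' silently assumes the closed-case argument goes through. The paper spends Section~\ref{pi1inj2} fixing this by passing to an ideal $3$-complex $Z^3$ (coning off the peripheral tori, with the cone point deleted), building a one-parameter family $\tilde{j}_t$, and proving quasi-isometric embedding for $\tilde{j}_0$ via a delicate modified-sequence argument that handles short segments and large bending angles near ideal vertices (Proposition~\ref{lengthshort}, Lemmas~\ref{modifiedsequencelength}, \ref{modifiedsequenceangle}). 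Without an argument of this kind, you cannot conclude $\pi_1$-injectivity of $j$, and therefore cannot invoke separability to lift to a finite cover.

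Third, there is a gap in the extension to a proper map. Even after $Z$ embeds in some $M'$, it is not automatic that the degree map extends properly: as the introduction of the paper explains, a component of $M'\setminus\mathcal{Z}$ may be disjoint from $\partial M'$ while its image region of $N$ meets $\partial N$. The paper's construction is engineered so that, under $g:\mathcal{Z}\to\mathcal{N}^{(2)}$, the preimage of the torus $\partial_p\mathcal{N}^{(2)}$ is exactly $\partial_p\mathcal{Z}\subset\partial M'$ (Lemma~\ref{elementary} and Theorem~\ref{pi1injectivity}), which is what allows the extension. You mention ``bookkeeping of slopes'' but give no mechanism that guarantees this preimage condition.

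Finally, your heuristic for the degree $8=2^3$ does not match any argument in the paper and is not independently justified. In the paper the $8$ arises as: a factor of $2$ from the branched-covering reduction to a target with connected torus boundary (Lemma~\ref{reduction1cusplemma}); a factor in $\{1,2,4\}$ depending on the number of connected components of the $2$-complex $Z$ built from good pants, hamster wheels, and umbrellas (Construction~\ref{constructZ2} and the surrounding discussion); and finally a cyclic cover used to normalize the product to exactly $8$ (proof of Theorem~\ref{main2} from Theorem~\ref{main3}). None of these are the ``indeterminacies'' you list, and in particular there is no factor associated to Seifert pieces since the target is made hyperbolic before the construction begins.

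In short, the proposal identifies the right toolkit (good pants, connection principle, separability) but misses all of the structural decisions that make the proof work: reducing the target to a one-cusped hyperbolic manifold; doubling the triangulation and steering it toward two peripheral tori of $M$ using a nontrivial kernel class in $H_1(T_1\cup T_2)$ (Proposition~\ref{homology2boundary}); the ideal $3$-complex trick for $\pi_1$-injectivity; and the boundary-compatibility condition for extending to a proper map. These are not optional refinements; without them the argument does not close.
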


The proof of Theorem \ref{main2} can also be applied to prove a similar result on certain mixed $3$-manifolds. Here a mixed $3$-manifold is a compact oriented irreducible $3$-manifold with empty or tori boundary, such that it has nontrivial JSJ decomposition and at least one hyperbolic JSJ piece.

\begin{theorem}\label{mixed}
For any compact oriented mixed $3$-manifold $M$ with tori boundary such that a hyperbolic piece of $M$ intersects with $\partial M$, and  any compact oriented $3$-manifold $N$ with tori boundary, there exists a finite cover $M'$ of $M$ that admits a  proper map $f:M'\to N$ with $\text{deg}(f)= 8$.
\end{theorem}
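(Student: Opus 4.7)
The plan is to localize Theorem \ref{main2} to a single hyperbolic JSJ piece of $M$ and collapse the other pieces to $2$-dimensional subsets of $N$. Let $M_0$ be a hyperbolic JSJ piece of $M$ with $M_0\cap\partial M\ne\emptyset$; then $M_0$ is itself a cusped hyperbolic $3$-manifold, so Theorem \ref{main2} produces a finite cover $\tilde M_0\to M_0$ and a proper degree-$8$ map $f_0:\tilde M_0\to N$.

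First I upgrade the cover $\tilde M_0\to M_0$ to a finite cover $p:M'\to M$ in which $\tilde M_0$ appears as a component of $p^{-1}(M_0)$. Viewing $H=\pi_1(\tilde M_0)$ inside $\pi_1(M)$ via $\pi_1(M_0)\hookrightarrow\pi_1(M)$, I need a finite-index subgroup $K\le\pi_1(M)$ with $K\cap \pi_1(M_0)=H$; such a $K$ exists by subgroup separability of $\pi_1(M)$, which holds for mixed $3$-manifolds by the work of Przytycki--Wise combined with Agol's virtual specialness, once applied to the finite coset transversal of $H$ in $\pi_1(M_0)$. On $\tilde M_0\subset M'$ I then homotope $f_0$ in a collar of each interior JSJ torus $T\subset \partial \tilde M_0$ to push $T$ off of $\partial N$ into a parallel copy $T_N^*$ just inside a collar of its image; the remaining components of $\partial \tilde M_0$, which lie in $\partial M'$, still map to $\partial N$. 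On each remaining piece $P\subset M'$ I build a map using a $2$-dimensional spine of $P$: the prescribed boundary data (given by the modified $f_0$ on adjacent JSJ tori, by recursion on JSJ tori shared with previously treated pieces, and by a chosen map into $\partial N$ on true boundary components) extends across $P$ into a $2$-complex in the interior of $N$ that bridges the prescribed boundary images.

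The resulting map $f:M'\to N$ is proper because $\partial M'$ is entirely sent to $\partial N$ while the interior of $M'$ maps into the interior of $N$; its degree equals $\deg f_0=8$, since pieces other than $\tilde M_0$ factor through $2$-complexes and so contribute trivially to the class $f_\ast[M',\partial M']\in H_3(N,\partial N)$. The main obstacle I expect is the combinatorial extension over non-hyperbolic pieces while maintaining $\pi_1$-compatibility on shared JSJ tori and properness at true boundary components; handling this will likely require passing to further covers of $M'$ (again invoking LERF of $\pi_1(M)$) and a case analysis depending on whether each piece is Seifert fibered or hyperbolic, using projections to base orbifolds or explicit $2$-dimensional spines to absorb the prescribed boundary data into the target $2$-complexes in $N$.
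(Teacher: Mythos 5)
Your plan diverges from the paper's argument in a way that creates a real obstruction, rather than merely deferring technical details. The paper does \emph{not} first apply Theorem~\ref{main2} to the hyperbolic piece and then patch; it re-runs Constructions~\ref{constructZ1}--\ref{constructZ2} \emph{inside} $M_0$ (see Section~\ref{proof1.3}), producing a $2$-complex $j:Z\looparrowright M_0$ that meets only the two chosen tori $T_1,T_2\subset M_0\cap\partial M$. After separability lifts $\mathcal Z$ into a finite cover $M'$ of $M$, one has $\mathcal Z\cap\partial M'=\partial_p\mathcal Z$, the complement $M'\setminus\mathcal Z$ (which swallows all the non-hyperbolic pieces) is sent to graphs and cell centers in $N$, and every component of $K\cap\partial M'$ is sent to a point of $\partial N$. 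The JSJ tori of $M'$ never meet $\mathcal Z$ and so place no constraint on $\pi_1$.

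Your approach, by contrast, starts from a proper degree-$8$ map $f_0:\tilde M_0\to N$ which necessarily sends \emph{every} component of $\partial\tilde M_0$, including the JSJ tori, to $\partial N$. After the pushing-off step, the restriction $f_0'|_T:T\to T_N^*$ on a JSJ torus $T$ is still homotopic to the boundary restriction of a nonzero-degree proper map and hence $\pi_1$-injective onto a peripheral $\mathbb Z^2<\pi_1(N)$, and this boundary data is then \emph{forced} on the adjacent pieces. Here is where the argument breaks. Let $P$ be a Seifert-fibered JSJ piece of $M'$ glued to $\tilde M_0$ along $T$, and let $\phi\in\pi_1(T)\subset\pi_1(P)$ be the regular fiber, which is central in $\pi_1(P)$. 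Its image $\bar\phi$ is a nontrivial parabolic element of $\pi_1(N)$, whose centralizer in the one-cusped hyperbolic manifold $N$ is exactly the peripheral $\mathbb Z^2$. Therefore any extension $\pi_1(P)\to\pi_1(N)$ of your boundary data must have image inside that $\mathbb Z^2$, i.e.\ must factor through an abelian quotient $H_1(P)\to\mathbb Z^2$ that agrees on $\pi_1(T)$ with the injective map $f_0'|_{T*}$. This fails whenever $\pi_1(T)\to H_1(P)$ is not injective, and even when it is injective the extension from the image of $\pi_1(T)$ to all of $H_1(P)$ over $\mathbb Z^2$ is obstructed by torsion in the cokernel, and it must also be consistent with the prescribed maps on the \emph{other} boundary tori of $P$. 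Passing to further covers of $M'$ does not obviously repair this, because the original application of Theorem~\ref{main2} gave you no control over the boundary behavior of $f_0$; you cannot arrange $\bar\phi$ to be trivial without contradicting $\deg f_0\ne 0$. You also invoke ``LERF of $\pi_1(M)$'', but mixed $3$-manifold groups are not LERF; what is actually needed, and what the paper invokes via~\cite{Sun3}, is separability of geometrically finite subgroups of the hyperbolic piece, a strictly weaker statement. So the key extension step, which you flag as the ``main obstacle'', is not a deferrable detail — it is the heart of the matter, and the paper's construction is designed precisely to avoid ever facing it.
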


At first, we can not prove virtual $1$-domination for Theorems \ref{main2} and \ref{mixed}. Although we can prove virtual $1$-, $2$, or $4$-domination in certain special cases, we do need to state our result as virtual $8$-domination.

For technical reason, we can not prove Theorem \ref{mixed} for other mixed $3$-manifolds with tori boundary, although we do expect the virtual domination result still holds in that case. To fully resolve this problem, it remains to study mixed $3$-manifolds such that all of their boundary components are contained in Seifert pieces.
	
\begin{question}\label{question1}
Let $M$ be a compact oriented $3$-manifold with nonempty tori boundary and positive simplicial volume. Does $M$ virtually ($1$-)dominate all compact oriented $3$-manifolds with tori boundary?
\end{question}

For a statement as Theorem \ref{main2}, we do not have to restrict to compact oriented $3$-manifolds with tori boundary, and we ask what happens for all compact oriented $3$-manifolds with nonempty (possibly higher genus) boundary. 

\begin{question}\label{question2}
Which compact oriented $3$-manifold $M$ with boundary virtually dominates all compact oriented $3$-manifolds with boundary?
\end{question}

Two necessary conditions for Questions \ref{question2} are: $M$ has a boundary component of genus at least $2$, and the double of $M$ has positive simplicial volume. If the boundary of $M$ only consists of $2$-spheres and tori, so does any finite cover $M'$ of $M$. Then $M'$ does not dominate any $3$-manifold with higher genus boundary, by considering the restriction map on the boundary. Moreover, if $M$ virtually dominates $N$ and both manifolds have boundary, then $D(M)$ virtually dominates $D(N)$. Since we can choose $N$ so that $D(N)$ has positive simplicial volume, then so does $D(M)$.

Before we sketch the proof of Theorem \ref{main2}, let's first recall the proof of virtual domination results (Theorem \ref{closedmanifold}) of closed $3$-manifolds in \cite{Sun2}, \cite{LS} and \cite{Sun5}. All these three proofs roughly follow the same circle of ideas, and we sketch the proof of the most general result in \cite{Sun5} here. At first, by \cite{BW}, we can assume the target manifold $N$ is a closed hyperbolic $3$-manifold, and we take a geometric triangulation of $N$. Since $M$ has positive simplicial volume, let $M_0$ be a hyperbolic JSJ piece of a prime summand of $M$. Then we construct a map $j^{1}:N^{(1)}\to M_0$ from the $1$-skeleton $N^{(1)}$ of $N$ to $M_0$, such that $j^{1}$ maps the boundary of each triangle $\Delta$ in $N$ to a null-homologous closed curve in $M_0$. For each triangle $\Delta$ in $N$, we construct a compact orientable surface $S_{\Delta}$ with connected boundary and a map $S_{\Delta}\looparrowright M_0$ that maps $\partial S_{\Delta}$ to $j^{1}(\partial \Delta)$, so that $S_{\Delta}$ is mapped to a nearly geodesic subsurface in $M_0$. Then the maps $j^{1}:N^{(1)}\to M_0$ and $\{S_{\Delta}\looparrowright M_0\}$ together give a map $j:Z\looparrowright M_0$ from a $2$-complex $Z$ to $M_0$. If we construct the maps $\{S_{\Delta}\looparrowright M_0\}$ carefully enough, $j:Z\looparrowright M_0$ induces an injective homomorphism on $\pi_1$. Since $j_*(\pi_1(Z))<\pi_1(M_0)<\pi_1(M)$ is a separable subgroup in $\pi_1(M)$ (by \cite{Sun3}, which generalizes Agol's celebrated result on LERFness of hyperbolic $3$-manifold groups in \cite{Agol2}), the map $j:Z\looparrowright M$ lifts to an embedding $j':Z\hookrightarrow M'$ into a finite cover $M'$ of $M$. A neighborhood of $Z$ in $M'$ is a compact oriented $3$-manifold $\mathcal{Z}$ with boundary, and is homeomorphic to the manifold obtained from a neighborhood $\mathcal{N}(N^{(2)})$ of $N^{(2)}$ in $N$, by replacing each $\Delta \times I$ by $S_{\Delta}\times I$. Then there is a proper degree-$1$ map $g:\mathcal{Z}\to \mathcal{N}(N^{(2)})$ that maps each $S_{\Delta} \times I\subset \mathcal{Z}$ to $\Delta\times I \subset \mathcal{N}(N^{(2)})$. This proper degree-$1$ map $g:\mathcal{Z}\to \mathcal{N}(N^{(2)})$ extends to a degree-$1$ map $f:M'\to N$, by mapping each component of $M'\setminus \mathcal{Z}$ to the union of some components of $N\setminus \mathcal{N}(N^{(2)})$ (each component is a $3$-ball) and a finite graph in $N$.

In the context of manifolds with boundary, the above proof fails in the last step, but we need to fix it from the very first step. For example, if we apply the above approach to manifolds with boundary, it is possible that some component $C$ of $M'\setminus \mathcal{Z}$ does not intersect with $\partial M'$, but a component of $N\setminus \mathcal{N}(N^{(2)})$ intersecting $g(\partial C)$ may contain some component of $\partial N$. In this case $g:\mathcal{Z}\to \mathcal{N}(N^{(2)})$ does not extend to a proper map $f:M'\to N$. Moreover, even if each component $C$ of $M'\setminus \mathcal{Z}$ intersects with $\partial M'$, it is also difficult to construct the desired extension $f:M'\to N$. So we need to take a more careful construction for proving Theorem \ref{main2}, which is sketched in the following.

In Section \ref{reduction}, we reduce the proof of Theorem \ref{main2} to $3$-manifolds $M$ and $N$ satisfying the following extra assumptions.
\begin{itemize} 
\item $M$ has two components $T_1,T_2$, such that the kernel of $H_1(T_1\cup T_2;\mathbb{Z})\to H_1(M;\mathbb{Z})$ contains an element with nontrivial components in both $H_1(T_1;\mathbb{Z})$ and $H_1(T_2;\mathbb{Z})$.
\item $N$ is a finite volume hyperbolic $3$-manifold with a single cusp.
\end{itemize}
In Section \ref{initialdata}, we take a geometric cellulation of a compact core $N_0$ of $N$ which has extra edges than a geometric triangulation, such that each triangle contained in $\partial N_0$ is almost an equilateral triangle. In Section \ref{constructZ2section}, we construct two maps $j^{(1)}_s:N^{(1)}\to M$ for $s=1,2$ such that the following hold:
\begin{enumerate}
\item For each triangle $\Delta$ of $N_0$ contained in $\partial N_0$, $j^{(1)}_s(\partial \Delta)$ bounds a geodesic triangle in $M$. 
\item For each $s=1,2$, the union of geodesic triangles in $M$ bounded by $j_s^{(1)}(\partial \Delta)$ in item (1) gives a mapped-in torus $T\to M$ homotopic into $T_s$.
\item For each triangle or bigon $\Delta$ of $N_0$ not contained in $\partial N_0$, $j_1^{(1)}(\partial \Delta)\cup j_2^{(1)}(\partial \Delta)$ is null-homologous in $M$.
\end{enumerate}
For each triangle $\Delta$ of $N_0$ as in item (3), we construct a compact orientable surface $S_{\Delta}$ and a nearly geodesic immersion $S_{\Delta}\looparrowright M$ bounded by two copies of $j_1^{(1)}(\partial \Delta)\cup j_2^{(1)}(\partial \Delta)$. Then two copies of  $j_s^{(1)}:N^{(1)}\to M$ with $s=1,2$, two copies of the tori in item (2) and the maps $\{S_{\Delta}\looparrowright M\}$ together give a $2$-complex $Z$ and a map $j:Z\looparrowright M$. In Section \ref{pi1inj2}, we prove that if the construction is done carefully, $j:Z\looparrowright M$ is $\pi_1$-injective. After this step, the construction of the virtual domination (proper) map is similar to the closed manifold case. We first use Agol's result (\cite{Agol2}) that $j_*(\pi_1(Z))<\pi_1(M)$ is a separable subgroup to lift $Z$ to an embedded $2$-complex in a finite cover $M'$ of $M$, and take a neighborhood of $Z$ in $M'$ denoted by $\mathcal{Z}$. Then we have a proper degree-$4$ map $g:\mathcal{Z}\to \mathcal{N}(N^{(2)})$, such that the following hold.
\begin{itemize}
\item For the component $T'=\partial N_0$ of $\partial \mathcal{N}(N^{(2)})$, each component of $g^{-1}(T')$ is a torus in $M'$ parallel to a component of $\partial M'$.
\end{itemize}
This key property implies that $g$ can be extended to a proper degree-$4$ map $f:M'\to N$, as desired (see Section \ref{constructdomination2}).

Note that the $\pi_1$-injectivity of $j:Z\looparrowright M$ can not be proved by exactly the same way as in \cite{Sun2,LS,Sun5}. In \cite{Sun2,LS,Sun5}, we equipped $Z$ with a natural metric and proved that the map $\tilde{j}:\tilde{Z}\to \tilde{M}=\mathbb{H}^3$ on universal covers is a quasi-isometric embedding. However, in the current case, $\tilde{j}:\tilde{Z}\to \tilde{M}$ is not a quasi-isometric embedding anymore, since $j(Z)$ contains some tori homotopic into $\partial M$. To prove the $\pi_1$-injectivity of $j$, we modify $Z$ as following. For each torus $T$ in $Z$ as in item (2) above (that is homotopic into a horotorus in $M$), we add the cone of $T$ to $Z$ with the cone point deleted, and get an ideal $3$-complex $Z^{3}$ (a $3$-complex with certain vertices deleted). The map $j:Z\looparrowright M$ extends to a map $j_1:Z^{3}\looparrowright M$ that maps ideal vertices of $Z^{3}$ to corresponding ends of $M$. In Section \ref{pi1inj2}, we prove the $\pi_1$-injectivity of $j:Z\looparrowright M$ by proving that $\tilde{j_1}:\tilde{Z}^{3}\to \tilde{M}=\mathbb{H}^3$ is a quasi-isometric embedding.

\bigskip

Although the above description of $j:Z\looparrowright M$ is mostly topological, we actually need geometric methods to construct it. Our main geometric tool for constructing various geometric objects is the good pants construction. Roughly speaking, the good pants construction is a tool box that uses so called good curves, good pants and other good objects to construct geometrically nice objects in hyperbolic $3$-manifolds. The good pants construction was initiated by Kahn and Markovic in \cite{KM}, for constructing nearly geodesic $\pi_1$-injective immersed closed subsurfaces in closed hyperbolic $3$-manifolds, with good pants as building blocks. Then in \cite{KW}, Kahn and Wright generalized Kahn-Markovic's work to construct nearly geodesic $\pi_1$-injective immersed closed subsurfaces in cusped hyperbolic $3$-manifolds. These geometrically nice subsurfaces of Kahn-Wright are basic pieces for constructing our $2$-complex $j:Z\looparrowright M$ in cusped hyperbolic $3$-manifolds. More details on the good pants construction can be found in Section \ref{pregoodpants}.

Now we summarize the organization of this paper. In Section \ref{pregoodpants}, we review the good pants construction in closed and cusped hyperbolic $3$-manifolds, including works in \cite{KM, LM, KW, Sun4}. In Section \ref{prehyperbolic}, we review and prove some elementary geometric estimates in hyperbolic geometry. In Section \ref{reduction}, we prove preparational results that reduce the domain and target manifolds in Theorems \ref{main2} and \ref{mixed}. The technical heart of this paper is in Sections \ref{topo2} and \ref{pi1inj2}. In Section \ref{topo2}, we construct the mapped-in $2$-complex $j:Z\looparrowright M$ and the virtual domination map from $M$ to $N$, modulo the $\pi_1$-injectivity of $j:Z\looparrowright M$ (Theorem \ref{pi1injectivity}). The $\pi_1$-injectivity of $j$ will be proved in Section \ref{pi1inj2}.

\bigskip
\bigskip

\section{Preliminary on the good pants construction}\label{pregoodpants}

In this section, we review the good pants construction on finite volume hyperbolic $3$-manifolds, including constructions of nearly geodesic subsurfaces (\cite{KM} and \cite{KW}), works on panted cobordism groups (\cite{LM} and \cite{Sun4}) and the connection principle of cusped hyperbolic $3$-manifolds (\cite{Sun4}). 

\subsection{Constructing nearly geodesic subsurfaces in finite volume hyperbolic $3$-manifolds}\label{constructsurface}
In \cite{KM}, Kahn and Markovic proved the following surface subgroup theorem. This work initiates the development of the good pants construction, and it was the first step of Agol's proof of Thurston's virtual Haken and virtual fibering conjectures (\cite{Agol2}).
	\begin{theorem}[Surface subgroup theorem \cite{KM}]\label{surface}
		For any closed hyperbolic $3$-manifold $M$,
		there exists an immersed closed hyperbolic subsurface $f\colon S\looparrowright M$,
		such that $f_*\colon \pi_1(S)\rightarrow 	\pi_1(M)$ is injective.
	\end{theorem}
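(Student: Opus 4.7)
The plan is to follow the Kahn--Markovic strategy and build the closed nearly totally geodesic surface out of ``good pants'' glued together along ``good curves''. Fix a large parameter $R > 0$ and a small tolerance $\epsilon > 0$. A \emph{good curve} is a closed geodesic in $M$ whose complex length lies within $\epsilon$ of $2R$, and a \emph{good pair of pants} is an immersed totally geodesic pair of pants whose three cuffs are good curves. Each good pants, once cut along its three orthogeodesic seams, has a well-defined \emph{foot} at each cuff: a point of the cuff together with an orthonormal frame. The surface will be produced by assembling a finite collection of good pants along their cuffs, with the gluings controlled so that the resulting surface is nearly totally geodesic.

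The first main step is an equidistribution / counting theorem. For each good curve $\gamma$, let $\mathrm{Feet}(\gamma)$ denote the multiset of feet at $\gamma$ coming from all good pants whose cuffs include $\gamma$. Using exponential mixing of the frame flow on $M$ (a theorem going back to Margulis), one shows that as $R\to\infty$, $\mathrm{Feet}(\gamma)$ becomes equidistributed on the normal torus bundle $N^1(\gamma)$ of $\gamma$, with explicit error. The proof uses that each good pants is obtained by closing up an almost-closed long orthogeodesic segment between two good curves into a genuine pants, so counting good pants reduces to counting orbits of the frame flow that return close to themselves.

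The second step is to use the equidistribution to perform a matching. For each good curve $\gamma$ we wish to pair up feet on the two sides of $\gamma$ so that glued-up pants meet with a shear close to $+1$ along $\gamma$ (the ``good shear'' condition). Equidistribution together with a Hall-type marriage argument produces a perfect matching of the feet in which the induced shears all lie within $\epsilon$ of the preferred value. The union of these matched pants is a closed $2$-complex $S$ mapping to $M$, and the small-shear property implies $S$ is, up to a tiny bending along the cuffs, a piecewise totally geodesic surface.

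The final step is $\pi_1$-injectivity, which is also where the main technical obstacle lies. One lifts $f\colon S\looparrowright M$ to the universal covers $\tilde f\colon\tilde S\to\mathbb{H}^3$ and proves that $\tilde f$ is a quasi-isometric embedding. The idea is that each lifted good pants is close to a totally geodesic hyperbolic pair of pants in $\mathbb{H}^3$, and the small-shear condition ensures that adjacent geodesic pants on the lift are bent by a tiny angle along their shared cuff. One then shows, by an Anosov-type argument along any path in $\tilde S$ crossing many cuffs, that the accumulated geometric distortion stays bounded from below by a linear function of combinatorial distance. This quasi-isometric embedding then forces $\tilde f$ to be injective at the level of $\pi_1$. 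The hardest part is making these two quantitative ingredients compatible: the $\epsilon$ used to produce the matching must be small enough that the bending stays in the Anosov regime, while $R$ must be large enough that the counting/equidistribution error is absorbed. Balancing these constants (and showing that a nonempty matching exists for all sufficiently small $\epsilon$ and large $R$) is the crux of the argument.
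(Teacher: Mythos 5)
Your proposal correctly reproduces the Kahn--Markovic strategy that the paper cites for this theorem (and summarizes in Section~\ref{constructsurface}): good curves of complex length near $2R$, good pants, equidistribution of feet via exponential mixing of the frame flow, a Hall-marriage matching enforcing the nearly $1$-shift (feet differing by $\approx 1+\pi i$) along each cuff, and $\pi_1$-injectivity by showing the lift $\tilde f\colon\tilde S\to\mathbb{H}^3$ is a quasi-isometric embedding. One small but worth-flagging imprecision: a good pants is \emph{not} an immersed totally geodesic pair of pants (generic closed hyperbolic $3$-manifolds contain no such thing); it is a homotopy class of immersion whose cuffs are good curves and whose complex half-lengths are $\epsilon$-close to $R$, hence only \emph{approximately} totally geodesic --- this is precisely why the controlled shear and long cuffs are needed to keep the accumulated bending in the regime where the quasi-isometric embedding argument works, as you describe in your final step.
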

The immersed subsurface of Kahn and Markovic is geometrically nice, and it is built by pasting a large collection of {\it $(R,\epsilon)$-good pants} along {\it $(R,\epsilon)$-good curves} in a nearly geodesic way. These terminologies are summarized in the following. 

We fix a closed oriented hyperbolic $3$-manifold $M$, a small number $\epsilon>0$ and a large number $R>0$. 
\begin{definition}\label{goodcurves}
An {\it $(R,\epsilon)$-good curve} is an oriented closed geodesic in $M$ with complex length satisfying $|{\bf l}(\gamma)-2R|<2\epsilon$. The (finite) set consisting of all such $(R,\epsilon)$-good curves is denoted by ${\bold \Gamma}_{R,\epsilon}$.
\end{definition}

Here the complex length of $\gamma$ is defined by ${\bf l}(\gamma)=l+i\theta \in \mathbb{C}/2\pi i\mathbb{Z}$, where $l\in \mathbb{R}_{>0}$ is the length of $\gamma$, and $\theta\in \mathbb{R}/2\pi \mathbb{Z}$ is the rotation angle of the loxodromic isometry of $\mathbb{H}^3$ corresponding to $\gamma$. In this paper, we adopt the convention in \cite{KW} that good curves have length close to $2R$, instead of the convention in \cite{KM} that good curves have length close to $R$.

\begin{definition}\label{goodpants}
We use $\Sigma_{0,3}$ to denote the oriented topological pair of pants.
A pair of {\it $(R,\epsilon)$-good pants} is a homotopy class of immersion $\Sigma_{0,3} \looparrowright M$, denoted by $\Pi$, such that all three cuffs of $\Sigma_{0,3}$ are mapped to $(R,\epsilon)$-good curves $\gamma_1,\gamma_2,\gamma_3\in {\bold \Gamma}_{R,\epsilon}$, and the complex half length $\bold{hl}_{\Pi}(\gamma_i)$ of each $\gamma_i$ with respect to $\Pi$ satisfies
		$$\left|\bold{hl}_{\Pi}(\gamma_i)-R\right|<\epsilon.$$
We use ${\bold \Pi}_{R,\epsilon}$ to denote the finite set of all $(R,\epsilon)$-good pants. 
\end{definition}

Here the complex half length ${\bf hl}_{\Pi}(\gamma_i)$ measures the complex distance between two vectors $\vec{v}_{i-1},\vec{v}_{i+1}$ along $\gamma_i$, where $\vec{v}_{i-1}$ and $\vec{v}_{i+1}$ are tangent vectors of oriented common perpendicular segments (seams) from $\gamma_i$ to $\gamma_{i-1}$ and $\gamma_{i+1}$ respectively. See Section 2.1 of \cite{KM} for the precise definition of complex half length.
If $\gamma\in {\bf \Gamma}_{R,\epsilon}$ is a cuff of $\Pi\in {\bf \Pi}_{R,\epsilon}$, then $\bold{hl}_{\Pi}(\gamma)$ is uniquely determined by ${\bf l}(\gamma)$, and we denote this value by $\bold{hl}(\gamma)$ if no confusion is caused.


For $\gamma\in {\bf \Gamma}_{R,\epsilon}$, we can identify its unit normal bundle as $N^1(\gamma)=\mathbb{C}/({\bf l}(\gamma)\mathbb{Z}+2\pi i\mathbb{Z})$, then its half-unit normal bundle is defined to be $$N^1(\sqrt{\gamma})=\mathbb{C}/({\bf hl}(\gamma)\mathbb{Z}+2\pi i\mathbb{Z}).$$ Given $\Pi\in {\bf \Pi}_{R,\epsilon}$ with one cuff $\gamma=\gamma_i$, the pair of normal vectors $\vec{v}_{i-1},\vec{v}_{i+1}$ used to define ${\bf hl}_{\Pi}(\gamma)$ gives a unique vector ${\bf foot}_{\gamma}(\Pi)\in N^1(\sqrt{\gamma})$, called the {\it formal foot} of $\Pi$ on $\gamma$.

In \cite{KM}, to obtain the nearly geodesic subsurface, $(R,\epsilon)$-good pants are pasted along $(R,\epsilon)$-good curves with nearly $1$-shifts, rather than exactly matching seams along common cuffs. More precisely, in the nearly geodesic subsurface $S\looparrowright M$, for any two $(R,\epsilon)$-good pants $\Pi_1\in {\bf \Pi}_{R,\epsilon}, \Pi_2\in {\bf \Pi}_{R,\epsilon}$ in $S$ pasted along $\gamma\in {\bf \Gamma}_{R,\epsilon}$, such that $\gamma$ is an oriented boundary of $\Pi_1$, after identifying $N^1(\sqrt{\gamma})$ with $N^1(\sqrt{\bar{\gamma}})$ naturally, it is required that 
$$|{\bf foot}_{\gamma}(\Pi_1)-{\bf foot}_{\bar{\gamma}}(\Pi_2)-(1+\pi i)|<\frac{\epsilon}{R}\ \ \text{in}\ N^1(\sqrt{\gamma}).$$

This nearly $1$-shift is a crucial condition to guarantee the injectivity of $f_*: \pi_1(S)\rightarrow \pi_1(M)$. Kahn and Markovic showed that, for any $(R,\epsilon)$-good curve $\gamma$, the formal feet of $(R,\epsilon)$-good pants with cuff $\gamma$ are nearly evenly distributed along $\gamma$. So $M$ contains a large collection of $(R,\epsilon)$-good pants, and they can be pasted together by nearly $1$-shifts. Therefore, the asserted $\pi_1$-injective immersed closed subsurface can be constructed. 

\bigskip

In \cite{KW}, Kahn and Wright generalized Kahn and Markovic's surface subgroup theorem in closed hyperbolic $3$-manifolds (Theorem \ref{surface}) to cusped hyperbolic $3$-manifolds.

\begin{theorem}\label{surfaceincusp}(Theorem 1.1 of \cite{KW})
		Let $\Gamma<PSL_2(\mathbb{C})$ be a Kleinian group and assume that $\mathbb{H}^3/\Gamma$ has finite volume and is not compact. Then for all $K>1$, there exists $K$-quasi-Fuchsian (closed) surface subgroups in $\Gamma$.
	\end{theorem}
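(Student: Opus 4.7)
The plan is to adapt Kahn--Markovic's surface subgroup construction, following the architecture recalled above: first build a large bank of $(R,\epsilon)$-good curves and $(R,\epsilon)$-good pants in $M=\mathbb{H}^3/\Gamma$; then assemble them along common cuffs using the near-$(1+\pi i)$-shift condition on formal feet into a closed oriented surface $f\colon S\looparrowright M$; and finally show that for $\epsilon\to 0$ and $R\to\infty$ the resulting immersion is $K$-quasi-Fuchsian with $K\to 1$. The essential new difficulty in passing from the closed to the cusped setting is that the frame flow now acts on a non-compact space, and a typical $(R,\epsilon)$-good curve will make cusp excursions whose depth grows with $R$.

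For the equidistribution step I would rely on mixing of the frame flow on the unit frame bundle $\mathrm{Fr}(M)$, which holds on any finite-volume hyperbolic $3$-manifold by classical ergodic theory, combined with an orbit-counting estimate for closed geodesics. On any fixed compact subset of $\mathrm{Fr}(M)$ this yields equidistribution of frames along closed geodesics of length near $2R$, and hence near-equidistribution of the formal feet $\mathbf{foot}_\gamma(\Pi)\in N^1(\sqrt{\gamma})$ as $\Pi$ varies over good pants with cuff $\gamma$. As long as every good object involved stays in a fixed compact set, the Kahn--Markovic near-shift assembly argument goes through essentially unchanged and produces a $\pi_1$-injective immersion whose quasi-Fuchsian constant tends to $1$, via the usual pleated-surface argument: the shift condition forces exponential decay of the pleating data on $\tilde S$, so the developing map $\tilde S\to\mathbb{H}^3$ is a quasi-isometric embedding with quasi-convex image.

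The main obstacle, and the heart of the argument, is that simply restricting to good objects confined to the thick part loses equidistribution of feet and leaves too few good pants to carry out the assembly, because almost every long closed geodesic excurses into the cusps. The remedy I would pursue, in the spirit of Kahn--Wright, is to permit controlled cusp excursions and to replace the naive pasting near each excursion by a geometric model piece---an ``umbrella'' or ``cusp hat'' built from good pants whose cuffs wrap around horocycles and whose seams match the adjacent thick-part good pants under the near-$(1+\pi i)$-shift condition. Proving that the formal feet on a good curve with such controlled excursions are still near-equidistributed requires running the mixing estimate in horoball coordinates and controlling the boundary contributions from deep excursions; this is the technical crux. With this cusp equidistribution in hand, the assembly and the quasi-Fuchsian verification proceed as in the compact case, and $K\to 1$ is achieved by letting $\epsilon\to 0$ and $R\to\infty$.
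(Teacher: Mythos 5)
Your high-level plan — use mixing for equidistribution, identify cusp excursions as the obstruction, and insert a dedicated geometric model piece near each deep excursion — correctly captures the shape of the Kahn--Wright argument, and naming umbrellas is the right instinct. But the description of the model piece and of the surgery scheme has concrete gaps the actual proof must fill. You are missing the notion of an \emph{$(R,\epsilon)$-good hamster wheel}: the cyclic $R$-fold cover of the pair of pants with cuff lengths $2,2,2R$, a planar surface with $R+2$ boundary components all of length near $2R$, which is precisely the shape that runs efficiently and nearly geodesically into a cusp. Umbrellas are assemblies of finitely many such hamster wheels pasted under the $(R,\epsilon)$-well-matched condition; they are not built from good pants, and their boundary curves are good curves (closed geodesics of complex length near $2R$), not horocycles. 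The surgery is also quantitative in a way your sketch omits: one fixes thresholds $h_T\geq 6\log R$ and $h_c\geq h_T+44\log R$, and for each good pants $\Pi$ of height $\geq h_c$ with a cuff $\gamma$ of height $< h_c$, one replaces $\Pi$ by a $\mathbb{Q}_+$-linear combination of umbrellas $\hat U(\Pi,\gamma)$ whose boundary contains one copy of $\gamma$, all of whose other boundary curves have height $< h_T$, and which is well-matched with anything that was well-matched with $\Pi$ along $\gamma$; one then proves that the feet of $A_0+A_1$ (all good pants of height $<h_c$ plus the umbrellas) are evenly distributed on $N^1(\sqrt{\gamma})$ for every low-height $\gamma$, and clears denominators before pasting $A\cup\bar A$.

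Finally, once hamster wheels are present the closing step cannot be ``the usual pleated-surface argument as in the compact case.'' The closed surface is a \emph{good assembly}, a union of good pants and good hamster wheels pasted by the well-matched condition, and proving that such an assembly is $\pi_1$-injective and $K$-quasi-Fuchsian (Theorem 2.2 of \cite{KW}) is a genuinely new estimate, not a verbatim transcription of the Kahn--Markovic exponential decay of pleating data. So while you have the right strategic picture, the proposal as written would not go through: it needs hamster wheels, the correct definition of umbrellas as hamster-wheel assemblies, the height thresholds and $\mathbb{Q}_+$-replacement scheme, and the injectivity theorem for good assemblies.
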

	
The main difficulty for proving Theorem \ref{surfaceincusp} is that, for cusped hyperbolic $3$-manifolds, good pants are not evenly distributed along good curves, especially for those good curves that run into cusps very deeply (with high heights). 

We first define the height function on a cusped hyperbolic $3$-manifold $M$. 
By the Margulis lemma, there exists $\epsilon_0>0$, such that the subset of $M$ consisting of points of injectivity radii at most $\epsilon_0$ is a disjoint union of solid tori and cusp neighborhoods of ends (simply called cusps). For any point in $M$ not belonging to any cusps, we define its height to be $0$. For any point $p$ in a cusp $C\subset M$, we define the height of $p$ to be the distance between $p$ and the boundary of $C$. For a compact geodesic segment or a closed geodesic in $M$, we define its {\it height} to be the maximal height of points on it. For a pair of $(R,\epsilon)$-good pants in $M$, we define its height to be the maximal height of its three cuffs. 

For any $h>0$, we use ${\bf \Gamma}_{R,\epsilon}^{<h}$ (${\bf \Pi}_{R,\epsilon}^{<h}$) to denote the set of all $(R,\epsilon)$-good curves (the set of all $(R,\epsilon)$-good pants) in $M$ with height less than $h$. We can define ${\bf \Gamma}_{R,\epsilon}^{\geq h}$ and ${\bf \Pi}_{R,\epsilon}^{\geq h}$ similarly.

To construct nearly geodesic subsurfaces in cusped hyperbolic $3$-manifolds, Kahn and Wright introduced a new geometric object called {\it $(R,\epsilon)$-good hamster wheel}.  For a positive integer $R$, let $Q_R$ be the oriented hyperbolic pants with cuff lengths $2,2$ and $2R$. The {\it $R$-perfect hamster wheel} $H_R$ is the cyclic $R$-sheet regular cover of $Q_R$ with $R+2$ boundary components, such that all cuffs of $H_R$ have length $2R$. 
An {\it $(R,\epsilon)$-good hamster wheel} (or simply an $(R,\epsilon)$-hamster wheel) ${\bf H}$ is a map $f:H_R\to M$ up to homotopy, such that the image of each cuff of $H_R$ lies in ${\bf \Gamma}_{R,\epsilon}$, and $f$ is approximately a totally geodesic immersion. For each $(R,\epsilon)$-good hamster wheel ${\bf H}$ and each cuff $\gamma\in {\bf \Gamma}_{R,\epsilon}$ of ${\bf H}$, a foot ${\bf foot}_{\gamma}(\Pi)\in N^1(\sqrt{\gamma})$ can be defined that approximates the tangent direction of ${\bf H}$.
See Section 2.9 of \cite{KW} for the precise definition of $(R,\epsilon)$-hamster wheels and their feet.



In \cite{KW}, Kahn and Wright defined the {\it $(R,\epsilon)$-well-matched condition} for pasting finitely many $(R,\epsilon)$-good pants and $(R,\epsilon)$-good hamster wheels together in a nearly geodesic manner. 
A {\it good assembly} in a cusped hyperbolic $3$-manifold is a compact oriented subsurface (possibly with boundary) obtained by pasting finitely many $(R,\epsilon)$-good pants and $(R,\epsilon)$-good hamster wheels according to the $(R,\epsilon)$-well-matched condition. Then Kahn and Wright proved that an immersed subsurface in a cusped hyperbolic $3$-manifold arised from a good assembly is $\pi_1$-injective.

To construct a closed subsurface in a cusped hyperbolic $3$-manifold arised from a good assembly, Kahn and Wright defined a more complicated geometric object called an {\it umbrella}. An umbrella ${\bf U}$ consists of a compact planar surface $U$ decomposed as a finite union of subsurfaces homeomorphic to $H_R$ and a map $f:U\to M$, such that the restriction of $f$ on each $H_R$ subsurface (under the decomposition) gives an $(R,\epsilon)$-good hamster wheel, and these $(R,\epsilon)$-good hamster wheels are $(R,\epsilon)$-well-matched with each other. For each umbrella ${\bf U}$ and each cuff $\gamma\in {\bf \Gamma}_{R,\epsilon}$ of ${\bf U}$, we define ${\bf foot}_{\gamma}({\bf U})\in N^1(\sqrt{\gamma})$ to be the foot of the $(R,\epsilon)$-hamster wheel in ${\bf U}$ containing $\gamma$. Umbrellas are used to take care of the non-desired property that feet of good pants are not evenly distributed on $N^1(\sqrt{\gamma})$ for some $\gamma\in {\bf \Gamma}_{R,\epsilon}$, especially when $\gamma$ has high height. 



In \cite{KW}, Kahn and Wright took constants $h_T\geq 6\log{R}$ and $h_c\geq h_T+44\log{R}$. 
Then they considered the collection of all $(R,\epsilon)$-good pants $\Pi$ with at least one cuff of height less than $h_c$.  For any $(\Pi,\gamma)$ such that $\Pi\in {\bf \Pi}_{R,\epsilon}^{\geq h_c}$ and $\gamma\in {\bf \Gamma}_{R,\epsilon}^{<h_c}$ is a cuff of $\Pi$, in Theorem 4.15 of \cite{KW}, Kahn and Wright constructed a $\mathbb{Q}_+$-combination of umbrellas $\hat{U}(\Pi,\gamma)$ with coefficients sum to $1$ such that the following hold:
\begin{enumerate}
\item As a $\mathbb{Q}_+$-linear combination of umbrellas, the boundary of $\hat{U}(\Pi,\gamma)$ contains one copy of $\gamma$, and all of its other boundary components have height less than $h_T$.
\item $\hat{U}(\Pi,\gamma)$ is $(R,\epsilon)$-well-matched with any $(R,\epsilon)$-good pants that is $(R,\epsilon)$-well-matched with $\Pi$ along $\gamma$.
\end{enumerate} 
Then they used $\hat{U}(\Pi,\gamma)$ to replace $\Pi$ in the above collection of good pants. 
 
 After the above replacement process, we obtain two finite linear combinations of $(R,\epsilon)$-good objects. The first one is the sum of all $(R,\epsilon)$-good pants in ${\bf \Pi}_{R,\epsilon}^{<h_c}$, and  the second one is the sum of $\mathbb{Q}_+$-linear combinations of umbrellas constructed above:
 $$A_0=\sum_{\Pi\in {\bf \Pi}_{R,\epsilon}^{<h_c}}\Pi,\ \ \ A_1=\sum_{\gamma_\in {\bf \Gamma}_{R,\epsilon}^{<h_c}}\sum_{\Pi \in {\bf \Pi}_{R,\epsilon}^{\geq h_c}, \gamma\subset \partial \Pi}\hat{U}(\Pi,\gamma).$$ 
Then Kahn and Wright proved that, for any $\gamma\in {\bf \Gamma}_{R,\epsilon}^{<h_c}$, the feet of $(R,\epsilon)$-pants and umbrellas in $A=A_0+A_1$ are evenly distributed on $N^1(\sqrt{\gamma})$. After eliminating denominators in $A$ by multiplying a large integer, they could paste good pants and umbrellas in $A\cup \bar{A}$ ($\bar{A}$ denotes the orientation reversal of $A$) to get the desired nearly geodesic closed subsurface.

\bigskip

\subsection{Panted cobordism groups of finite volume hyperbolic $3$-manifolds}\label{prepantedcobordism}

In \cite{LM}, Liu and Markovic introduced panted cobordism groups of closed oriented hyperbolic $3$-manifolds and computed these groups. In \cite{Sun4}, the author generalized some results in \cite{LM} to oriented cusped hyperbolic $3$-manifolds. In this section, we review these results and their consequence Proposition \ref{boundingsurface}, which is the main input from the good pants construction to this work.

We first fix a closed oriented hyperbolic $3$-manifold $M$, a small number $\epsilon>0$ and a large number $R>0$. Let $\mathbb{Z}{\bf \Gamma}_{R,\epsilon}$ be the free abelian group generated by ${\bf \Gamma}_{R,\epsilon}$, modulo the relation $\gamma+\bar{\gamma}=0$ for all $\gamma\in {\bf \Gamma}_{R,\epsilon}$. Here $\bar{\gamma}$ denotes the orientation reversal of $\gamma$. Let $\mathbb{Z}{\bf \Pi}_{R,\epsilon}$ be the free abelian group generated by ${\bf \Pi}_{R,\epsilon}$, modulo the relation $\Pi+\bar{\Pi}=0$ for all $\Pi\in {\bf \Pi}_{R,\epsilon}$. By taking the oriented boundary of $(R,\epsilon)$-good pants, we get a homomorphism $\partial:\mathbb{Z}{\bf \Pi}_{R,\epsilon}\to \mathbb{Z}{\bf \Gamma}_{R,\epsilon}$. The panted cobordism group $\Omega_{R,\epsilon}(M)$ is defined as the following in \cite{LM}.

\begin{definition}\label{closedcobordismgroup} 
The {\it panted cobordism group} $\Omega_{R,\epsilon}(M)$ is defined to be the cokernel of the homomorphism $\partial$, i.e. $\Omega_{R,\epsilon}(M)$ fits into the following exact sequence 
$$\mathbb{Z}{\bf \Pi}_{R,\epsilon}\xrightarrow{\partial} \mathbb{Z}{\bf \Gamma}_{R,\epsilon}\to \Omega_{R,\epsilon}(M)\to 0.$$
\end{definition}

To state the result in \cite{LM}, we need the following definition.

\begin{definition}\label{framebundle}
For an oriented hyperbolic $3$-manifold $M$ and a point $p\in M$, a \emph{special orthonormal frame} (or simply a frame) of $M$ at $p$ is a triple of unit tangent vectors $(\vec{t}_p,\vec{n}_p,\vec{t}_p\times \vec{n}_p)$ such that $\vec{t}_p,\vec{n}_p\in T_p^1M$ with $\vec{t}_p\perp \vec{n}_p$, and $\vec{t}_p\times \vec{n}_p\in T_p^1M$ is the cross product  with respect to the orientation of $M$. We use $\text{SO}(M)$ to denote the {\it frame bundle} of $M$ consisting of all special orthonormal frames of $M$.
\end{definition}

For simplicity, we denote each element in $\text{SO}(M)$ by its basepoint and the first two vectors of the frame, as $(p,\vec{t}_p,\vec{n}_p)$, since the third vector is determined by the first two. We call $\vec{t}_p$ and $\vec{n}_p$ the tangent vector and the normal vector of this frame, respectively.

In \cite{LM}, Liu and Markovic proved the following result on $\Omega_{R,\epsilon}(M)$.
	
\begin{theorem}\label{homology}(Theorem 5.2 of \cite{LM})
For any closed oriented hyperbolic $3$-manifold $M$, small enough $\epsilon>0$ depending on $M$ and large enough $R>0$ depending on $\epsilon$ and $M$, there is a natural isomorphism 
		$$\Phi:\Omega_{R,\epsilon}(M)\rightarrow H_1(\text{SO}(M);\mathbb{Z}).$$
	\end{theorem}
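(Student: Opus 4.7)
The plan is to define a natural map $\Phi$ on generators, verify it descends to $\Omega_{R,\epsilon}(M)$, and then establish surjectivity and injectivity using equidistribution of good curves and the connection principle.

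For each $\gamma\in {\bf \Gamma}_{R,\epsilon}$, I would assign a canonical class in $H_1(\text{SO}(M);\mathbb{Z})$ as follows. Pick a point $p\in \gamma$ and a frame $(p,\vec{t}_p,\vec{n}_p)\in \text{SO}(M)$ with $\vec{t}_p$ tangent to $\gamma$. Propagate $\vec{t}$ by the geodesic flow and $\vec{n}$ by parallel transport once around $\gamma$. The tangent returns to its starting position, while $\vec{n}$ comes back rotated by $\theta=\operatorname{Im}({\bf l}(\gamma))$; close up the cycle by appending the short arc in the $SO(2)$-fiber that unwinds this rotation. The resulting $1$-cycle $\tilde{\gamma}\subset \text{SO}(M)$ is well-defined in homology independently of the choice of $p$ and initial normal, and the relation $\gamma+\bar{\gamma}=0$ is manifestly respected. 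This yields a homomorphism $\Phi_0\colon \mathbb{Z}{\bf \Gamma}_{R,\epsilon}\to H_1(\text{SO}(M);\mathbb{Z})$. Next, each $\Pi\in {\bf \Pi}_{R,\epsilon}$ is $C^1$-close to a totally geodesic pair of pants, which carries a canonical frame-bundle section with $\vec{t}$ tangent to the pants surface and $\vec{n}$ the unit normal. The image of this section is a singular $2$-chain in $\text{SO}(M)$ whose boundary is (homologous to) the sum of the canonical frame lifts of the three cuffs, the extra rotation contributions matching precisely the closing-up arcs used to define $\tilde{\gamma}$. Hence $\Phi_0(\partial\Pi)=0$, and $\Phi_0$ descends to the asserted $\Phi\colon \Omega_{R,\epsilon}(M)\to H_1(\text{SO}(M);\mathbb{Z})$.

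For surjectivity, I would use that $\text{SO}(M)$ is a closed $6$-manifold whose first homology is generated by closed loops. By exponential mixing of the frame flow on $\text{SO}(M)$, frame lifts of $(R,\epsilon)$-good curves equidistribute in $\text{SO}(M)$ for large $R$, and in particular every class in $H_1(\text{SO}(M);\mathbb{Z})$ is realized by a finite sum of $\tilde{\gamma}$'s. So $\Phi$ is surjective.

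The heart of the argument, and the main obstacle, is injectivity: given $\sum n_i\gamma_i\in \mathbb{Z}{\bf \Gamma}_{R,\epsilon}$ whose image in $H_1(\text{SO}(M);\mathbb{Z})$ vanishes, I must produce $(R,\epsilon)$-good pants realizing this sum as a boundary in $\mathbb{Z}{\bf \Gamma}_{R,\epsilon}$. The strategy, following Liu--Markovic, is to show that the sole obstruction to combining good curves as boundaries of good pants is precisely the $H_1(\text{SO}(M);\mathbb{Z})$ class: this class records both the length modulo $\mathbb{Z}$ and the holonomy rotation modulo $2\pi$, i.e.\ the $SO(2)$-fiber direction of $\text{SO}(M)\to T^1M$. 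Using the connection principle together with near-uniform equidistribution of formal feet ${\bf foot}_{\gamma}(\Pi)\in N^1(\sqrt{\gamma})$, I would construct, for any two cuffs $\gamma_i,\gamma_j$ appearing in the sum, good pants $\Pi_1,\Pi_2$ whose combined boundary equals $\gamma_i+\gamma_j+\delta+\bar{\delta}$ for some auxiliary good curve $\delta$, and then chain such elementary moves together so that all auxiliary curves telescope away, provided the global homological obstruction vanishes. The hardest point is executing this with integer rather than merely rational coefficients, which forces one to track torsion in $H_1(\text{SO}(M);\mathbb{Z})$ and to match rotation numbers exactly modulo $2\pi$ rather than only in $\mathbb{R}/2\pi\mathbb{Q}$; this is where the fine equidistribution statements for feet on $N^1(\sqrt{\gamma})$, together with careful finite-holonomy bookkeeping along the chains of good pants, are indispensable.
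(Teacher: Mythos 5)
The paper does not prove Theorem \ref{homology}: it is imported verbatim as Theorem 5.2 of \cite{LM}, and only cited here as an input to Theorems \ref{cuspedhomology} and \ref{boundingsurface}. So there is no in-paper argument to compare against; what can be evaluated is whether your outline faithfully reproduces the Liu--Markovic strategy and whether its steps would survive being made precise.

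Your outline names the right objects (frame lift of a good curve with a fiber arc closing the rotation, a section over a good pants, equidistribution for surjectivity, a telescoping argument for injectivity), and these are indeed the ingredients of \cite{LM}. The genuine gap is in the well-definedness step. The assertion that the boundary of the pants section is ``homologous to the sum of the canonical frame lifts of the three cuffs, the extra rotation contributions matching precisely the closing-up arcs'' is not automatic; it is the crux of the theorem. Since a closed oriented $3$-manifold is parallelizable, $\text{SO}(M)\cong M\times SO(3)$, and $H_1(\text{SO}(M);\mathbb{Z})\cong H_1(M;\mathbb{Z})\oplus \mathbb{Z}/2$ non-canonically, with the $\mathbb{Z}/2$ coming from $\pi_1(SO(3))$. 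Both the choice of fiber arc in your definition of $\Phi_0(\gamma)$ (the two homotopy classes of path in $SO(3)$ unwinding the holonomy differ by this $\mathbb{Z}/2$) and the boundary computation for a pants (how the pants-normal twist relative to parallel transport along each cuff, the three seam-turnings, and the half-length condition $|\bold{hl}_\Pi(\gamma_i)-R|<\epsilon$ combine) live exactly in this $\mathbb{Z}/2$. Getting it to close up consistently is what forces the canonical lift in \cite{LM} to be routed through the half-turn normal bundle $N^1(\sqrt\gamma)$ rather than $N^1(\gamma)$; treating it as manifest is where your argument would break if pushed to detail. Secondary issues: equidistribution of frame lifts is a measure-theoretic statement and does not by itself produce a good curve representing a prescribed class in $H_1(\text{SO}(M);\mathbb{Z})$; that requires the connection principle. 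And the injectivity paragraph is essentially a pointer to \cite{LM}, which is fine as a plan but contains none of the actual swap/splitting moves and feet-equidistribution estimates that do the work there.
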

	
In \cite{Sun4}, the author generalized Theorem \ref{homology} to oriented cusped hyperbolic $3$-manifolds. The corresponding result in \cite{Sun4} has some height conditions on involved curves and pants, and we need the following definition.

For any $h'>h>0$, $\mathbb{Z}{\bf \Gamma}_{R,\epsilon}^{<h}$ is naturally a subgroup of $\mathbb{Z}{\bf \Gamma}_{R,\epsilon}^{<h'}$. For the boundary homomorphism $\partial: \mathbb{Z}{\bf \Pi}_{R,\epsilon}^{<h'}\to \mathbb{Z}{\bf \Gamma}_{R,\epsilon}^{<h'}$, we use $\mathbb{Z}{\bf \Pi}_{R,\epsilon}^{h,h'}$ to denote the $\partial$-preimage of $\mathbb{Z}{\bf \Gamma}_{R,\epsilon}^{<h}<\mathbb{Z}{\bf \Gamma}_{R,\epsilon}^{<h'}$ in $\mathbb{Z}{\bf \Pi}_{R,\epsilon}^{<h'}$. We first recall the following definition in \cite{Sun4}.

\begin{definition}\label{cuspedcobordismgroup}
For an oriented cusped hyperbolic $3$-manifold $M$ and any $h'>h>0$, we define the {\it $(R,\epsilon)$-panted cobordism group of height $(h,h')$}, denoted by $\Omega_{R,\epsilon}^{h,h'}(M)$, to be the cokernel of the homomorphism $\partial|: \mathbb{Z}{\bf \Pi}_{R,\epsilon}^{h,h'}\to  \mathbb{Z}{\bf \Gamma}_{R,\epsilon}^{<h}$. Thus $\Omega_{R,\epsilon}^{h,h'}(M)$ fits into the following exact sequence $$\mathbb{Z}{\bf \Pi}_{R,\epsilon}^{h,h'}\xrightarrow{\partial|}  \mathbb{Z}{\bf \Gamma}_{R,\epsilon}^{<h}\to \Omega_{R,\epsilon}^{h,h'}(M)\to 0. $$
\end{definition}


In \cite{Sun4}, the author proved the following analogy of Theorem \ref{homology} for oriented cusped hyperbolic $3$-manifolds.

\begin{theorem}\label{cuspedhomology} (Theorem 1.1 of \cite{Sun4})
For any oriented cusped hyperbolic 3-manifold M, any numbers
$\beta>\alpha\geq 4$ with $\beta-\alpha\geq 3$ and any $\epsilon\in (0,10^{-2})$, there exists $R_0 = R_0(M, \epsilon) > 0$,
such that for any $R > R_0$, we have a natural isomorphism
$$\Phi:\Omega_{R,\epsilon}^{\alpha\log{R},\beta\log{R}}(M)\to H_1(\text{SO}(M); \mathbb{Z}).$$
\end{theorem}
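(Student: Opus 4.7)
The plan is to adapt Liu--Markovic's proof of Theorem \ref{homology} to the cusped setting, tracking heights at every step and exploiting the buffer $\beta - \alpha \geq 3$ to absorb the controlled height growth inherent to the good pants construction.

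First I would define the map $\Phi$ on generators. Each oriented $(R,\epsilon)$-good curve $\gamma$ admits a canonical lift $\tilde\gamma$ to the frame bundle: at every point of $\gamma$, take $\vec{t}$ to be the unit tangent and $\vec{n}$ a parallel-transported normal vector. Since the rotation angle of $\gamma$ is close to $0 \pmod{2\pi}$, the lift closes up to a well-defined homology class in $H_1(\text{SO}(M);\mathbb{Z})$ independent of the choice of initial $\vec{n}$. Set $\Phi(\gamma) = [\tilde\gamma]$ and extend $\mathbb{Z}$-linearly. To see that $\Phi$ descends to the cokernel, observe that each $(R,\epsilon)$-good pants $\Pi$ in the relevant height range provides, via a framing compatible with its three seams, an explicit singular $2$-chain in $\text{SO}(M)$ whose oriented boundary is the frame lift of $\partial \Pi$.

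For surjectivity, I would use the connection principle for cusped hyperbolic $3$-manifolds from \cite{Sun4}. Represent any $x \in H_1(\text{SO}(M);\mathbb{Z})$ by a smooth loop $\ell$ contained in a fixed compact subset of $\text{SO}(M)$, approximate $\ell$ by a concatenation of long geodesic segments meeting at near-identity frame rotations, and close each successive piece into an $(R,\epsilon)$-good curve via the connection principle. For $R$ large relative to the diameter of this compact set, every resulting $\gamma_i$ has height far below $\alpha \log R$, and $\Phi\bigl(\sum \gamma_i\bigr) = x$.

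Injectivity is the main obstacle, and is where the buffer $\beta - \alpha \geq 3$ becomes essential. Given $\sum a_i \gamma_i \in \mathbb{Z}{\bf\Gamma}_{R,\epsilon}^{<\alpha\log R}$ with $\sum a_i [\tilde\gamma_i] = 0$ in $H_1(\text{SO}(M);\mathbb{Z})$, I would follow the Liu--Markovic strategy of producing a singular $2$-chain in $\text{SO}(M)$ bounding $\sum a_i \tilde\gamma_i$, triangulating it into small model pieces, and realizing each piece by an $(R,\epsilon)$-good pants via the cusped connection principle. The difficulty is that when some $\gamma_i$ has height close to $\alpha \log R$, the intermediate cuffs produced by the filling process may climb further into the cusp before being cancelled. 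Quantitative estimates should show that this climb is bounded by $O(\log R)$, since the connection principle applied to a curve of height $h$ yields auxiliary curves of height at most $h + O(\log R)$; the $3\log R$ of slack built into $\beta \log R$ then guarantees that the resulting $\partial$-preimage lies in $\mathbb{Z}{\bf\Pi}_{R,\epsilon}^{\alpha\log R, \beta\log R}$ rather than merely in $\mathbb{Z}{\bf\Pi}_{R,\epsilon}$. The bulk of the work is thus making each height estimate in the Liu--Markovic filling argument explicit and uniform in $R$.
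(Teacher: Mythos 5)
This theorem is cited from an external reference (Theorem~1.1 of \cite{Sun4}); the present paper gives no proof of it, so there is nothing internal to compare your proposal against line by line. Assessed on its own merits, your high-level strategy --- adapt the Liu--Markovic proof of Theorem~\ref{homology} with systematic height control, using a cusped connection principle --- does match what \cite{Sun4} actually does, and your treatment of well-definedness and surjectivity is plausible. But the proposal is too thin exactly where the new content of \cite{Sun4} lives, and one of its quantitative claims is inconsistent with the tools as stated in this paper.

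Concretely: the Liu--Markovic injectivity argument is not a ``produce a singular $2$-chain in $\text{SO}(M)$ bounding $\sum a_i \tilde\gamma_i$, triangulate it, and fill each piece with a pair of pants'' procedure. It is a long algebraic and combinatorial reduction through linearly enhanced curves and pants, splitting operations, and canonical framings, with a separate bookkeeping step for the $\mathbb{Z}/2$ summand of $H_1(\text{SO}(M);\mathbb{Z}) \cong H_1(M;\mathbb{Z}) \oplus \mathbb{Z}/2$. Each of those intermediate operations, not just the final invocation of the connection principle, creates auxiliary good curves whose heights must be tracked, and a correct proof has to verify the bound $\beta\log R$ at every stage. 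Your key quantitative assertion --- that ``the connection principle applied to a curve of height $h$ yields auxiliary curves of height at most $h + O(\log R)$'' --- is not supported by the connection principle stated in this paper (Theorem~\ref{connectionprinciple}), which requires the endpoints to sit at height at most $\log t \approx \log R$ and produces segments of height at most $2\log R$; it says nothing about inputs at height near $\alpha\log R$ with $\alpha \geq 4$. To fill the gap you would need either a version of the connection principle valid up to height $\alpha\log R$, or a mechanism for first descending high-height curves to low height before filling (as Kahn--Wright do with umbrellas and hamster wheels), together with an explicit verification that $\beta - \alpha \geq 3$ suffices to absorb the height growth of each individual step in the Liu--Markovic chain. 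As written, the proposal correctly identifies the framework but leaves these --- the genuinely new ingredients of \cite{Sun4} --- entirely unaddressed.
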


Moreover, for Theorem \ref{cuspedhomology} (and Theorem \ref{homology}), if we compose the isomorphism $\Phi:\Omega_{R,\epsilon}^{\alpha\log{R},\beta\log{R}}(M)\to H_1(\text{SO}(M); \mathbb{Z})$ with the homomorphism $\pi_*:H_1(\text{SO}(M);\mathbb{Z})\to H_1(M;\mathbb{Z})$ induced by the bundle projection, $\pi_*\circ \Phi$ maps the equivalent class of each $(R,\epsilon)$-multicurve to its homology class in $H_1(M;\mathbb{Z})$. 

To give the geometric meaning of $\Omega_{R,\epsilon}^{h,h'}(M)$, we define the following two types of subsurfaces in an oriented cusped hyperbolic $3$-manifold $M$. 
\begin{definition}\label{subsurfaces}
For any small $\epsilon>0$ and large number $R>0$, we define the following terms.
\begin{enumerate}
\item An {\it $(R,\epsilon)$-panted subsurface} in a hyperbolic $3$-manifold $M$ consists of a (possibly disconnected) compact oriented surface $F$ with a pants decomposition and an immersion $i: F\looparrowright M$, such that the restriction of $j$ to each pair of pants in the pants decomposition of $F$ gives a pair of $(R,\epsilon)$-good pants.
\item If $R$ is also an integer, an {\it $(R,\epsilon)$-nearly geodesic subsurface} in a cusped hyperbolic $3$-manifold $M$ consists of a compact oriented surface $F$ decomposed as pants and $R$-hamster wheels (by a family of disjoint essential curves $\mathcal{C}$), and an immersion $i:F\looparrowright M$, such that the following hold. The restriction of $i$ on each pants or $R$-hamster wheel subsurface of $F$ is an $(R,\epsilon)$-good pants or an $(R,\epsilon)$-good hamster wheel respectively, and these $(R,\epsilon)$-good components are pasted together by the $(R,\epsilon)$-well-matched condition.
\end{enumerate}
\end{definition}

The $(R,\epsilon)$-panted subsurface was originally defined by Liu and Markovic in \cite{LM}, and it does not require any feet-matching condition when two $(R,\epsilon)$-good pants are pasted along an $(R,\epsilon)$-good curve. Geometrically, an $(R,\epsilon)$-multicurve $L\in \mathbb{Z}{\bf \Gamma}_{R,\epsilon}^{<h}$ represents the trivial element in $\Omega_{R,\epsilon}^{h,h'}(M)$ if and only if it bounds an $(R,\epsilon)$-panted subsurface of height at most $h'$. The definition of an $(R,\epsilon)$-nearly geodesic subsurface is same as an $(R,\epsilon)$-good assembly in \cite{KW}, but we stick to this terminology since we have been using it throughout \cite{Sun2, LS, Sun5}.
 Theorem 2.2 of \cite{KW} implies that, if $\epsilon>0$ is small enough and $R>0$ is large enough, an $(R,\epsilon)$-nearly geodesic subsurface is $\pi_1$-injective.

In Proposition 3.11 of \cite{Sun5}, the author proved the following result, which generalizes Corollary 2.11 of \cite{Sun2}. 

\begin{proposition}\label{boundingsurface} (Proposition 3.11 of \cite{Sun5})
Let $M$ be an oriented cusped hyperbolic $3$-manifold. Then for any constant $\alpha\geq 4$, any small $\epsilon> 0$ depending on $M$ and any large real number $R > 0$ depending on $M$ and $\epsilon$, the following statement holds. For any null-homologous oriented $(R,\epsilon)$-multicurve $L \in \mathbb{Z}{\bf \Gamma}_{R,\epsilon}^{<\alpha\log{R}}$, there is a nontrivial invariant $\sigma(L)\in \mathbb{Z}_2$ such that $\sigma(L_1\cup L_2)=\sigma(L_1)+\sigma(L_2)$ and the following hold.

If $\sigma(L)=0$, for any integer $R'\geq R$, $L$ is the oriented boundary of an immersed subsurface $f:S\looparrowright M$ satisfying the following conditions.
\begin{enumerate}
\item If we write $L$ as a union of its components $L=L_1\cup \cdots\cup L_k$, then $S$ is decomposed as oriented subsurfaces $S=(\cup_{i=1}^k \Pi_i)\cup S'$ with disjoint interior, such that $\Pi_i\cap \partial S$ is a single curve $c_i$ that is mapped to $L_i$.
\item The restriction $f|_{\Pi_i}:\Pi_i\looparrowright M$ is a pair of pants such that $|{\bf hl}_{\Pi_i}(L_i)-R|<\epsilon$, and $|{\bf hl}_{\Pi_i}(s)-R'|<\epsilon$ holds for any other component $s\subset \partial \Pi_i$.
\item If we fix a normal vector $\vec{v}_i\in N^1(\sqrt{L_i})$ for each component $L_i$ of $L$, then we can make sure $|{\bf foot}_{L_i}(\Pi_i)-\vec{v}_i|<\epsilon$ holds for all $i$.
\item The restriction $f|_{S'}:S'\looparrowright M$ is an oriented $(R',\epsilon)$-nearly geodesic subsurface.
\item For any component $s\subset S'\cap \Pi_i$ that is mapped to $\gamma\in {\bf \Gamma}_{R',\epsilon}$, we take its orientation induced from $\Pi_i$, then we have $$|{\bf foot}_{\gamma}(\Pi_i)-{\bf foot}_{\bar{\gamma}}(S')-(1+\pi i)|<\frac{\epsilon}{R}.$$
\end{enumerate}
\end{proposition}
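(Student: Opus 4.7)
The plan is to first define $\sigma$ from the $\mathbb{Z}/2$-extension structure of the frame bundle, then to construct $S$ in two geometric phases: a collar of bridging pants $\{\Pi_i\}$ at the boundary realizing the prescribed feet at scale $R$, followed by an $(R',\epsilon)$-nearly geodesic subsurface $S'$ filling the resulting multicurve at scale $R'$.

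\textbf{The invariant $\sigma$.} The $\text{SO}(3)$-bundle structure of $\text{SO}(M)\to M$ yields the exact sequence
\[
H_1(\text{SO}(3);\mathbb{Z}) \to H_1(\text{SO}(M);\mathbb{Z}) \xrightarrow{\pi_*} H_1(M;\mathbb{Z}) \to 0,
\]
so $\ker\pi_*$ is a quotient of $\mathbb{Z}/2$. For $L\in\mathbb{Z}\mathbf{\Gamma}_{R,\epsilon}^{<\alpha\log R}$ null-homologous in $M$, I fix $\beta$ with $\beta-\alpha\geq 3$ and apply Theorem \ref{cuspedhomology} at heights $(\alpha\log R,\beta\log R)$ to obtain $\Phi([L])\in H_1(\text{SO}(M);\mathbb{Z})$. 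Since $\pi_*\Phi([L])=[L]=0$ in $H_1(M)$, this class lies in $\ker\pi_*$; set $\sigma(L)\in\mathbb{Z}/2$ to be its class. Additivity $\sigma(L_1\cup L_2)=\sigma(L_1)+\sigma(L_2)$ follows from $\mathbb{Z}$-linearity of $\Phi$.

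\textbf{Construction of $S$ when $\sigma(L)=0$.} For each component $L_i$ of $L$ and the prescribed $\vec{v}_i\in N^1(\sqrt{L_i})$, I use the connection principle of \cite{Sun4} together with the equidistribution of feet (inherited from exponential mixing of the frame flow) to produce a pair of pants $\Pi_i$ with cuffs of complex half-length approximately $R,R',R'$, whose $R$-cuff is $L_i$ and with $|\mathbf{foot}_{L_i}(\Pi_i)-\vec{v}_i|<\epsilon$; this yields (2) and (3). Let $L'$ denote the signed sum of the remaining cuffs of $\{\Pi_i\}$. Since each $\Pi_i$ is a topological pair of pants, $\Phi([L]+[L'])=0$ in $H_1(\text{SO}(M);\mathbb{Z})$, so $\Phi([L'])=-\Phi([L])$, which vanishes because $\sigma(L)=0$; hence $\sigma(L')=0$ at scale $R'$. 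Theorem \ref{cuspedhomology} at scale $R'$ then realizes $L'$ as the boundary of an $(R',\epsilon)$-panted subsurface, which I upgrade to an $(R',\epsilon)$-nearly geodesic subsurface $S'$ via the Kahn-Wright umbrella/hamster-wheel replacement in the formulation of \cite{KW, Sun4}. The well-matched condition (5) between $\Pi_i$ and $S'$ along each $\gamma\subset\partial\Pi_i\cap S'$ is arranged by choosing the $S'$-piece adjacent to $\gamma$ to have foot equal to the $(1+\pi i)$-shift of $\mathbf{foot}_\gamma(\Pi_i)$ within error $\epsilon/R$, which is possible by equidistribution of feet at scale $R'$.

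\textbf{Main obstacle.} The hard part is simultaneously controlling the two scales $R$ and $R'$: once $\{\Pi_i\}$ is fixed, $L'$ is rigid, so both $\sigma(L')=0$ and the compatibility of feet between $\Pi_i$ and $S'$ along the $\gamma_i^\pm$ must hold within the tolerances of (2)-(5). Both rely on equidistribution of feet at both scales, on the flexible-height version of Theorem \ref{cuspedhomology}, and on Kahn-Wright's umbrella machinery to handle high-height cuffs; the careful accounting of cumulative errors across the two scales, which is absent in the closed-manifold analogue of \cite{Sun2}, is the technical core of the proposition.
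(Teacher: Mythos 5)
Your overall skeleton is the right one and matches the approach behind both the closed-manifold version (Corollary 2.11 of \cite{Sun2}) and its cusped analogue: define $\sigma$ as the image of $\Phi([L])$ in $\ker\big(\pi_*\colon H_1(\text{SO}(M);\mathbb{Z})\to H_1(M;\mathbb{Z})\big)\cong\mathbb{Z}/2$; when $\sigma(L)=0$, attach $(R,R',\epsilon)$-good pants $\Pi_i$ to transition from scale $R$ to scale $R'$; then cap the residual multicurve $L'$ with an $(R',\epsilon)$-nearly geodesic subsurface. There are, however, two places where you invoke the machinery at a level of generality that it does not directly support. First, ``$\Phi([L]+[L'])=0$'' does not parse: $\Phi$ is the isomorphism $\Omega_{R,\epsilon}^{\alpha\log R,\beta\log R}(M)\to H_1(\text{SO}(M);\mathbb{Z})$ at the \emph{single} scale $R$, and neither $L'$ (at scale $R'$) nor the mixed collection $L\cup L'$ lies in any one such cobordism group. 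The identity you need, $\Phi'([L'])=-\Phi([L])$ with $\Phi'$ the isomorphism at scale $R'$, is correct but must be derived from the underlying Liu--Markovic construction: $\Phi$ sends each good curve $\gamma$ to a canonical lift $\hat\gamma\in H_1(\text{SO}(M);\mathbb{Z})$, and one has to verify that the three canonical lifts of the cuffs of a \emph{mixed-scale} good pants sum to zero in $H_1(\text{SO}(M);\mathbb{Z})$. Theorem \ref{cuspedhomology} as a black box does not give you this relation.

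Second, the step from ``$L'$ bounds an $(R',\epsilon)$-panted subsurface'' to ``$L'$ bounds an $(R',\epsilon)$-nearly geodesic subsurface with boundary feet prescribed to accuracy $\epsilon/R$'' is precisely the technical content of the proposition and is not a direct application of Kahn--Wright, whose theorem produces \emph{closed} surfaces. Once the $\Pi_i$ are fixed, the target feet $\mathbf{foot}_{\bar\gamma}(S')$ on every component of $L'$ are rigid, so $S'$ must be assembled relative to that boundary data: one needs the $\epsilon/R'$-equidistribution of feet of good pants on good curves of controlled height, combined with the hamster-wheel/umbrella replacement for any high-height cuffs that appear during the assembly. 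Writing ``which is possible by equidistribution of feet'' at this point asserts the conclusion rather than establishing it. Smaller omissions: the nontriviality of $\sigma$, which requires $\ker\pi_*\cong\mathbb{Z}/2$ (this uses that orientable $3$-manifolds are parallelizable, so $\text{SO}(M)\cong M\times \text{SO}(3)$); the height bound on $L'$ needed to apply Theorem \ref{cuspedhomology} at scale $R'$; and the claim that the two-scale bookkeeping is ``absent'' in \cite{Sun2} is off --- it is already present there; what is genuinely new in the cusped case is the height control and the hamster-wheel/umbrella pieces, not the two-scale structure.
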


We call the immersed pants in condition (2) {\it $(R,R',\epsilon)$-good pants}, and we call the immersed subsurface $f:S\looparrowright M$ constructed in Proposition \ref{boundingsurface} {\it an $(R',\epsilon)$-nearly geodesic subsurface with $(R,\epsilon)$-good boundary}. We can also assume that $S$ has no closed component.

 \begin{remark}\label{combinatorialdistance}
For the immersed subsurface $S\looparrowright M$ constructed in Corollary \ref{boundingsurface}, the collection of curves $\mathcal{C}\subset S$ (giving the decomposition of $S$) gives a graph-of-space structure on $S$ with dual graph $\Gamma$: each component of $S\setminus \mathcal{C}$ gives a vertex of $\Gamma$, and each component of $\mathcal{C}$ gives an edge of $\Gamma$. Let $v_i$ be the vertex of $\Gamma$ corresponding to $\Pi_i\subset S$. We can further modify the nearly geodesic subsurface $S\looparrowright M$ as in Section 3.1 Step IV of \cite{Sun1}, such that the combinatorial length of any topological essential path in $\Gamma$ from $v_i$ to $v_j$ (possibly $i=j$) is at least $R'e^{\frac{R'}{2}}$.

Moreover, if we endow $S$ a hyperbolic metric such that all $\partial$-curves of $S$ have length $2R$ and all curves in $\mathcal{C}$ have length $2R'$. Since all seams (shortest geodesic segments between boundary components) have lengths at least $e^{-\frac{R'}{2}}$ and each geodesic segment in a pair of pants or a hamster wheel from a cuff to itself has length at least $R$, any proper essential path in $S$ from $L_i$ to $L_j$ (possibly $i=j$) has length at least $R$.
\end{remark}

\subsection{The connection principle of finite volume hyperbolic $3$-manifolds}\label{connprinciple}

The connection principle is a fundamental tool that constructs geometric segments and $\partial$-framed segments in finite volume hyperbolic $3$-manifolds. The idea of connection principle was initiated in \cite{KM}, and the first officially stated connection principle is given in Lemma 4.15 of \cite{LM}. In \cite{Sun4}, the author proved a version of connection principle for oriented cusped hyperbolic $3$-manifolds, which is the connection principle will be used in this paper. In \cite{Liu} and \cite{Sun5}, connection principles with homological control in frame bundles are obtained for oriented closed and cusped hyperbolic $3$-manifolds respectively.

At first, we recall the definition of oriented {\it $\partial$-framed segments} and associated objects (Definition 4.1 of \cite{LM}). They are the geometric objects constructed by our connection principle.
\begin{definition}\label{segment} 
An {\it oriented $\partial$-framed segment} in $M$ is a triple 
$$\mathfrak{s}=(s,\vec{n}_{\text{ini}},\vec{n}_{\text{ter}}),$$ 
such that $s$ is an immersed oriented compact geodesic segment (simply called a geodesic segment), $\vec{n}_{\text{ini}}$ and $\vec{n}_{\text{ter}}$ are unit normal vectors of $s$ at its initial and terminal points respectively. 
\end{definition}

We have the following objects associated to an oriented $\partial$-framed segment $\mathfrak{s}$:
\begin{itemize}
\item The {\it carrier segment} of $\mathfrak{s}$ is the (oriented) geodesic segment $s$, and the {\it height} of $\mathfrak{s}$ is the height of $s$.	
\item The {\it initial endpoint} $p_{\text{ini}}(\mathfrak{s})$ and the {\it terminal endpoint} $p_{\text{ter}}(\mathfrak{s})$ are the initial and terminal points of $s$ respectively.
\item The {\it initial framing} $\vec{n}_{\text{ini}}(\mathfrak{s})$ and the {\it terminal framing} $\vec{n}_{\text{ter}}(\mathfrak{s})$ are the unit normal vectors $\vec{n}_{\text{ini}}$ and $\vec{n}_{\text{ter}}$ respectively.
\item The {\it initial direction} $\vec{t}_{\text{ini}}(\mathfrak{s})$ and the {\it terminal direction} $\vec{t}_{\text{ter}}(\mathfrak{s})$ are the unit tangent vectors in the direction of $s$ at $p_{\text{ini}}(\mathfrak{s})$ and $p_{\text{ter}}(\mathfrak{s})$ respectively.
\item The {\it initial frame} and the {\it terminal frame} of $\mathfrak{s}$ are $(p_{\text{ini}}(\mathfrak{s}),\vec{t}_{\text{ini}}(\mathfrak{s}),\vec{n}_{\text{ini}}(\mathfrak{s}))$ and $(p_{\text{ter}}(\mathfrak{s}),\vec{t}_{\text{ter}}(\mathfrak{s}),\vec{n}_{\text{ter}}(\mathfrak{s}))$ respectively.
\item The {\it length} $l(\mathfrak{s})\in(0,\infty)$ of $\mathfrak{s}$ is the length of its carrier $s$, the {\it phase} $\varphi(\mathfrak{s})\in \mathbb{R}/2\pi \mathbb{Z}$ of $\mathfrak{s}$ is the angle from the parallel transport of $\vec{n}_{\text{ini}}$ along $s$ to $\vec{n}_{\text{ter}}$.
\item The {\it orientation reversal} of $\mathfrak{s}=(s,\vec{n}_{\text{ini}},\vec{n}_{\text{ter}})$ is defined to be 
$$\bar{\mathfrak{s}}=(\bar{s},\vec{n}_{\text{ter}},\vec{n}_{\text{ini}}).$$ 
\item For any angle $\phi\in \mathbb{R}/2\pi \mathbb{Z}$, the frame rotation of $\mathfrak{s}$ by $\phi$ is defined to be 
$$\mathfrak{s}(\phi)=\big(s,\cos{\phi}\cdot\vec{n}_{\text{ini}}+\sin{\phi}\cdot(\vec{t}_{\text{ini}}\times \vec{n}_{\text{ini}}),\cos{\phi}\cdot\vec{n}_{\text{ter}}+\sin{\phi}\cdot (\vec{t}_{\text{ter}}\times \vec{n}_{\text{ter}})\big).$$
\end{itemize}

Now we state the connection principle in Theorem 3.7 of \cite{Sun5}. Since we do not need a homological statement in frame bundles, we only state a weaker version of condition (3) here.

\begin{theorem}\label{connectionprinciple} (Theorem 3.7 of \cite{Sun5})
Let $M$ be an oriented cusped hyperbolic $3$-manifold, and let ${\bf p}=(p,\vec{t}_p,\vec{n}_p), {\bf q}=(q,\vec{t}_p, \vec{n}_p)\in \text{SO}(M)$ be two frames based at $p,q\in M$ respectively. Let $\xi \in H_1(M,\{p,q\};\mathbb{Z})$ be a relative homology class with boundary $\partial \xi =[q]-[p]$. 

Then for any $\delta\in(0,10^{-2})$, there exists $T=T(M,\xi,\delta)$ depending on $M, \xi$ and $\delta$, such that for any $t>T$, there is a $\partial$-framed segment $\mathfrak{s}$ from $p$ to $q$ such that the following hold.
\begin{enumerate}
\item The heights of $p,q$ are at most $\log{t}$, and the height of $\mathfrak{s}$ is at most $2\log{t}$.
\item The length and phase of $\mathfrak{s}$ are $\delta$-close to $t$ and $0$ respectively. The initial and terminal frames of $\mathfrak{s}$ are $\delta$-close to ${\bf p}$ and ${\bf q}$ respectively.
\item The relative homology class of the carrier of $\mathfrak{s}$ equals $\xi\in H_1(M,\{p,q\};\mathbb{Z})$.
\end{enumerate}
\end{theorem}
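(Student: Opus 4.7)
The plan is to combine exponential mixing of the frame flow on $\text{SO}(M)$ with a homological correction argument, in the spirit of Lemma~4.15 of \cite{LM} and its cusped extension in \cite{Sun4}. First choose $T$ large enough that $\log T$ exceeds the heights of $p$ and $q$, so that condition (1) at the endpoints is automatic; also arrange $\log T$ to dominate the lengths of a fixed finite family of ``correction arcs'' representing a generating system for the relevant subgroup of $H_1(M;\mathbb{Z})$. Then for each $t>T$ one must produce a single geodesic segment of length $\delta$-close to $t$ whose initial and terminal frames are $\delta$-close to ${\bf p}$ and ${\bf q}$, whose phase is $\delta$-close to $0$, whose carrier represents $\xi$ in $H_1(M,\{p,q\};\mathbb{Z})$, and whose carrier stays in the $(2\log t)$-thick part of $M$.

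For existence and frame control, let $U_{\bf p}, U_{\bf q}\subset \text{SO}(M)$ be neighborhoods of diameter $\sim\delta$ around ${\bf p},{\bf q}$, designed so that membership in $U_{\bf p}$ (resp.\ $U_{\bf q}$) encodes the approximate basepoint together with the approximate tangent and normal vectors (and hence the phase). By exponential mixing of the frame flow $\Phi_s$ against Haar measure---valid on finite-volume hyperbolic $3$-manifolds by Howe--Moore together with quantitative estimates in the Kleinian setting---for $t$ sufficiently large and $s$ in a $\delta$-window around $t$,
\[
\mu\bigl(U_{\bf p}\cap \Phi_{-s}(U_{\bf q})\bigr) \;\geq\; c(\delta) \;>\; 0
\]
with $c(\delta)$ independent of $t$. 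Any frame in this intersection yields a $\partial$-framed segment satisfying condition (2).

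To enforce condition (3), I would pass to the abelian covering $\widehat M\to M$ corresponding to a sufficiently large finite-index quotient of $H_1(M;\mathbb{Z})$ containing the class of $\xi$, and rerun the mixing argument on $\text{SO}(\widehat M)$; by equidistribution in the cover, each relative homology coset is realized by a positive-measure set of $\Phi$-orbits, in particular the coset of $\xi$. Equivalently, one may first produce an unrestricted segment $\mathfrak{s}_0$ by mixing, compute the discrepancy $\eta=\xi-[\mathfrak{s}_0]\in H_1(M;\mathbb{Z})$, then pre-concatenate a bounded representative of $\eta$ and reapply mixing (this is the source of the $\xi$-dependence in $T$, since the length of such a representative grows with a norm of $\xi$). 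The main obstacle is the height cap $2\log t$ on the carrier: Sullivan's logarithm law implies that generic length-$t$ geodesics make cusp excursions of depth $\asymp \log t$, so one must quantitatively bound the measure of $\Phi$-orbits that excurse deeper than $2\log t$ within time $t$ by a quantity that is $o(1)$ as $t\to\infty$. Combined with the lower bound $c(\delta)$ from mixing, this still leaves a positive-measure set of admissible frames, each of which furnishes the desired $\partial$-framed segment.
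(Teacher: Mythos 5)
This statement is imported verbatim from \cite{Sun5} (Theorem 3.7 there, which in turn builds on \cite{Sun4}); the present paper offers no proof beyond the citation, so there is no internal argument to compare against line by line. Your overall strategy --- exponential mixing of the frame flow to hit prescribed frame neighborhoods, followed by a homological correction and a measure estimate to cap cusp excursions --- is indeed the route taken in \cite{LM} (Lemma~4.15) for closed manifolds and adapted in \cite{Sun4,Sun5} to the cusped case. Two points in your sketch, however, are genuine gaps rather than details to be filled in.

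First, the homological correction as written does not close. You propose to run mixing to obtain $\mathfrak{s}_0$, set $\eta=\xi-[\mathfrak{s}_0]\in H_1(M;\mathbb{Z})$, and then pre-concatenate a ``bounded representative of $\eta$.'' But $[\mathfrak{s}_0]$ is the homology class of a roughly random geodesic arc of length $\sim t$, and there is no a priori bound on this class that is uniform in $t$ (indeed its size typically grows with $t$). So ``bounded representative'' is not available, and the stated dependence of $T$ on a norm of $\xi$ does not absorb this. Passing to a finite abelian cover corresponding to $H_1(M;\mathbb{Z}_n)$ only controls the class modulo $n$, not the exact integral class $\xi$. The correct mechanism (in \cite{LM} and its descendants) is to work directly in the universal cover: lift $U_{\bf p}$ to a fixed fundamental-domain copy, choose the \emph{specific} lift of $U_{\bf q}$ determined by $\xi$, and apply a quantitative equidistribution/mixing estimate to the correlation of these two lifted sets in $\mathbb{H}^3$; positivity of that correlation realizes $\xi$ on the nose. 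Your sketch lacks this step.

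Second, the height bound requires a more careful quantitative comparison than ``$o(1)$ beats a fixed $c(\delta)$.'' The neighborhoods $U_{\bf p},U_{\bf q}$ are allowed to sit at height up to $\log t$; at such heights the injectivity radius is of order $1/t$, so for fixed $\delta$ the ``$\delta$-ball'' around ${\bf p}$ wraps around the cusp torus and has measure comparable to the measure of the cusp at that height, i.e.\ decaying polynomially in $t$. Thus $c(\delta)$ is \emph{not} uniform in $t$ in the regime the theorem permits, and the comparison against the measure of orbits excursing beyond height $2\log t$ has to be done with matching exponents. This is exactly why the conclusion allows the carrier to go twice as deep as the endpoints; the proof in \cite{Sun4} organizes the segment so that the excursion near each endpoint is controlled by the endpoint's height, with the long middle portion confined to the thick part. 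Your measure-theoretic framing can probably be made to work, but as stated the inequality you need is not established.
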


\bigskip
\bigskip

\section{Preliminary on hyperbolic geometry}\label{prehyperbolic}

In this section, we give some geometric estimates on $\partial$-framed segments and geodesic segments, by using elementary hyperbolic geometry. Most of these results can be found in Section 3 of \cite{Sun4}, while some of them were originally proved in \cite{LM}. We have a new result (Proposition \ref{lengthshort}) that estimates the length of a consecutive chain of geodesic segments (see definition below), where some involved geodesic segments can be short.

We first need a few geometric definitions on $\partial$-framed segments from Section 4 of \cite{LM}. 
\begin{definition}\label{chainsandcycles}
Let $0<\delta<\frac{\pi}{3}$, $L>0$ and $0<\theta<\pi$ be three constants.
\begin{enumerate}
\item Two oriented $\partial$-framed segments $\mathfrak{s}$ and $\mathfrak{s}'$ are {\it $\delta$-consecutive} if the terminal point of $\mathfrak{s}$ is the initial point of $\mathfrak{s}'$, and the terminal framing of $\mathfrak{s}$ is $\delta$-close to the initial framing of $\mathfrak{s}'$. The {\it bending angle} between $\mathfrak{s}$ and $\mathfrak{s}'$ is the angle between the terminal direction of $\mathfrak{s}$ and the initial direction of $\mathfrak{s}'$.

\item A {\it $\delta$-consecutive chain} of oriented $\partial$-framed segments is a finite sequence $\mathfrak{s}_1,\cdots,\mathfrak{s}_m$ such that each $\mathfrak{s}_i$ is $\delta$-consecutive to $\mathfrak{s}_{i+1}$ for $i=1,\cdots,m-1$. It is a {\it $\delta$-consecutive cycle} if furthermore $\mathfrak{s}_m$ is $\delta$-consecutive to $\mathfrak{s}_1$. A $\delta$-consecutive chain or cycle is {\it $(L,\theta)$-tame} if each $\mathfrak{s}_i$ has length at least $2L$ and each bending angle is at most $\theta$.

\item For an $(L,\theta)$-tame $\delta$-consecutive chain $\mathfrak{s}_1,\cdots,\mathfrak{s}_m$, the {\it reduced concatenation}, denoted by $\mathfrak{s}_1\cdots \mathfrak{s}_m$, is the oriented $\partial$-framed segment defined as the following. The carrier segment of $\mathfrak{s}_1\cdots \mathfrak{s}_m$ is homotopic to the concatenation of carrier segments of $\mathfrak{s}_1,\cdots,\mathfrak{s}_m$, with respect to endpoints. The initial and terminal framings of $\mathfrak{s}_1\cdots\mathfrak{s}_m$ are the closest unit normal vectors to the initial framing of $\mathfrak{s}_1$ and the terminal framing of $\mathfrak{s}_m$ respectively.

\item For an $(L,\theta)$-tame $\delta$-consecutive cycle $\mathfrak{s}_1,\cdots,\mathfrak{s}_m$, the {\it reduced cyclic concatenation}, denoted by $[\mathfrak{s}_1\cdots \mathfrak{s}_m]$, is the oriented closed geodesic freely homotopic to the cyclic concatenation of carrier segments of $\mathfrak{s}_1,\cdots,\mathfrak{s}_m$, assuming it is not null-homotopic.
\end{enumerate}
\end{definition}

Without considering initial and terminal framings, we can also talk about the following terms on geodesic segments: consecutive geodesic segments and their bending angles, a consecutive chain and a consecutive cycle of geodesic segments and their $(L,\theta)$-tameness, the reduced concatenation of a consecutive chain of geodesic segments, and the reduced cyclic concatenation of a consecutive cycle of geodesic segments.

The following lemma from \cite{LM} is very useful for estimating length and phase of a concatenation of oriented $\partial$-framed segments. The function $I(\cdot)$ is defined by $I(\theta)=2\log{(\sec{\frac{\theta}{2}})}$.

\begin{lemma}\label{lengthphase} (Lemma 4.8 of \cite{LM})
Given any positive constants $\delta,\theta,L$ with $0<\theta<\pi$ and $L\geq I(\theta)+10\log{2}$, the following statements hold in any oriented hyperbolic $3$-manifold.
\begin{enumerate}
\item If $\mathfrak{s}_1,\cdots,\mathfrak{s}_m$ is an $(L,\theta)$-tame $\delta$-consecutive chain of oriented $\partial$-framed segments, denoting the bending angle between $\mathfrak{s}_i$ and $\mathfrak{s}_{i+1}$ by $\theta_i\in[0,\theta)$, then 
$$\Big|l(\mathfrak{s}_1\cdots \mathfrak{s}_m)-\sum_{i=1}^ml(\mathfrak{s}_i)+\sum_{i=1}^{m-1}I(\theta_i)\Big|<\frac{(m-1)e^{(-L+10\log{2})/2}\sin{(\theta/2)}}{L-\log{2}}$$
and 
$$\Big|\varphi(\mathfrak{s}_1\cdots\mathfrak{s}_m)-\sum_{i=1}^m\varphi(\mathfrak{s}_i)\Big|<(m-1)(\delta+e^{(-L+10\log{2})/2}\sin{(\theta/2)}),$$
where $|\cdot|$ on $\mathbb{R}/2\pi\mathbb{Z}$ is understood as the distance from zero valued in $[0,\pi]$.

\item If $\mathfrak{s}_1,\cdots,\mathfrak{s}_m$ is an $(L,\theta)$-tame $\delta$-consecutive cycle of oriented $\partial$-framed segments, denoting the bending angle between $\mathfrak{s}_i$ and $\mathfrak{s}_{i+1}$ by $\theta_i\in[0,\theta)$ with $\mathfrak{s}_{m+1}$ equal to $\mathfrak{s}_1$ by convention, then 
$$\Big|l([\mathfrak{s}_1\cdots \mathfrak{s}_m])-\sum_{i=1}^ml(\mathfrak{s}_i)+\sum_{i=1}^{m}I(\theta_i)\Big|<\frac{me^{(-L+10\log{2})/2}\sin{(\theta/2)}}{L-\log{2}}$$
and 
$$\Big|\varphi([\mathfrak{s}_1\cdots\mathfrak{s}_m])-\sum_{i=1}^m\varphi(\mathfrak{s}_i)\Big|<m(\delta+e^{(-L+10\log{2})/2}\sin{(\theta/2)}),$$
where $|\cdot|$ on $\mathbb{R}/2\pi\mathbb{Z}$ is understood as the distance from zero valued in $[0,\pi]$.
\end{enumerate}
\end{lemma}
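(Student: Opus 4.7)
The plan is to reduce both statements to iterated applications of a single-corner straightening estimate, and to control the accumulated error using the tameness hypotheses together with the slack $L \geq I(\theta)+10\log{2}$. I would first treat the chain case in part (1), and then derive the cyclic case in part (2) by applying the chain estimate and one final closing step.

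\emph{Single-corner estimate.} Consider two consecutive oriented geodesic segments $s_1, s_2$ in $\mathbb{H}^3$ of lengths $a,b \geq 2L$ meeting at bending angle $\theta_1 \in [0,\theta)$. The two outer endpoints together with the meeting point span a geodesic triangle lying in a totally geodesic copy of $\mathbb{H}^2$, and the hyperbolic law of cosines gives
\[
\cosh{c} = \cosh{a}\cosh{b} + \sinh{a}\sinh{b}\cos{\theta_1},
\]
where $c$ is the length of the straightened geodesic. Expanding in exponentials and using $1+\cos{\theta_1} = 2\cos^{2}(\theta_1/2)$ yields $c = a+b - I(\theta_1) + \eta$, with $|\eta|$ exponentially small in $\min(a,b)$ and with an explicit constant of the form $\sin(\theta/2)/(L-\log{2})$ coming from the tail of the $\cosh$ expansion. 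For the phase one compares the parallel transport of a normal framing along $s_1 \cup s_2$ with its transport along the straightened segment; the discrepancy equals the holonomy around the thin geodesic triangle, which by Gauss--Bonnet is bounded by its (exponentially small) area. The $\delta$-consecutive condition adds a further $\delta$-error to the phase at the junction but does not affect the length.

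\emph{Iterating along the chain.} Apply the single-corner estimate successively at corners $1,2,\ldots,m-1$. After straightening the first $i$ corners, the resulting segment has length at least $2(i+1)L - i\,I(\theta) \geq (i+1)L$ by the hypothesis $L \geq I(\theta) + 10\log{2}$, so each subsequent straightening is again controlled by the single-corner estimate with both adjacent lengths at least $L$. Summing the $m-1$ geometric errors of size $e^{(-L+10\log{2})/2}\sin(\theta/2)/(L-\log{2})$ and the $m-1$ framing mismatches of size $\delta$ yields the stated bounds in part (1). For part (2), I would first apply part (1) to straighten $\mathfrak{s}_1,\ldots,\mathfrak{s}_m$ into a single long $\partial$-framed segment $\mathfrak{t}$; the cyclic $\delta$-consecutive condition then makes the terminal frame of $\mathfrak{t}$ close to its initial frame, with closing bending angle still at most $\theta$. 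The lower bound on $L$ ensures the closed loop is not null-homotopic and admits a unique closed geodesic representative; one further application of the single-corner estimate at the closing junction, together with the observation that this closed geodesic differs from the closed polygon by an exponentially small perturbation, gives the length and phase bounds in part (2).

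The main obstacle I expect is the bookkeeping in the inductive step: one must verify that the bending angles of the intermediate straightened polygons never escape $[0,\theta)$, and that the framing discrepancies accumulated so far do not spoil the hypothesis of the single-corner estimate. Both issues reduce to showing that the exponentially small perturbations introduced at each corner stay well below the slack $10\log{2}$ built into the lower bound on $L$, which is precisely what makes the bound $L \geq I(\theta) + 10\log{2}$ the right threshold and allows the induction to close uniformly in $m$.
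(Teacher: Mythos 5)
This lemma is quoted verbatim as Lemma 4.8 of Liu--Markovic \cite{LM}; the paper under review does not prove it, so there is no in-paper argument to compare against. Your proposal is therefore a blind reconstruction, and I will evaluate it on its own terms.

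For the chain case your outline is sound: the hyperbolic law of cosines applied to a single corner, expansion in exponentials using $1+\cos\theta_1 = 2\cos^2(\theta_1/2)$, the identification $-2\log\cos(\theta_1/2) = I(\theta_1)$, and a Gauss--Bonnet holonomy estimate for the phase discrepancy are all standard and correct ideas, and iterating along the chain with the slack from $L \geq I(\theta)+10\log 2$ does keep the lengths growing linearly. Two remarks of caution: (i) the raw Taylor expansion of the law of cosines produces an error scaling like $\tan^2(\theta_1/2)\,e^{-2L}$, whereas the target bound has the rather different form $\sin(\theta/2)\,e^{(-L+10\log 2)/2}/(L-\log 2)$, so matching the precise constants requires a more delicate decomposition (tracking the angular drift of the straightened direction, whose effect on subsequent corners enters through $I'(\theta)=\tan(\theta/2)$); and (ii) after straightening a corner the tangent direction at the endpoint shifts by an exponentially small amount, so the bending angle at the next corner is no longer exactly $\theta_{i+1}$ --- you note this, but it is worth emphasizing that because of this shift the angle can actually slightly exceed $\theta$, and the argument must carry through a weakened hypothesis $\theta_i \in [0,\theta + O(e^{-L/2}))$ rather than literally $[0,\theta)$.

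The genuine gap is in the cyclic case. After straightening $\mathfrak{s}_1\cdots\mathfrak{s}_m$ you obtain a geodesic segment from $p=p_{\mathrm{ini}}(\mathfrak{s}_1)$ back to $p$ --- a based loop, not a closed geodesic. The quantity $l([\mathfrak{s}_1\cdots\mathfrak{s}_m])$ in part (2) is the translation length of the loxodromic isometry $g\in\pi_1$ determined by this loop, i.e.\ the infimum of $d(q,g(q))$ over $q\in\mathbb{H}^3$, whereas the straightened chain only gives you $d(p,g(p))$. ``One further application of the single-corner estimate at the closing junction'' does not bridge this gap: the single-corner estimate computes the length of a geodesic segment between two fixed endpoints, which is a different object. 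To pass from $d(p,g(p))$ (and the bending angle $\theta_m$ at $p$) to $l(g)$ one needs a separate estimate --- for instance the hyperbolic trigonometry of the right-angled quadrilateral linking $p$, $g(p)$, and the axis of $g$, or a direct computation of $\mathrm{tr}(g)$ in $\mathrm{PSL}(2,\mathbb{C})$, or a rescaling argument comparing $d(p,g^n(p))/n$ to $l(g)$. The clause ``this closed geodesic differs from the closed polygon by an exponentially small perturbation'' gestures at the needed input but does not supply it, and as written the step would not close. This is where a reader would get stuck if they tried to fill in your sketch.
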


For an $(L,\theta)$-tame $\delta$-consecutive chain of $\partial$-framed segments $\mathfrak{s}_1,\cdots,\mathfrak{s}_m$, we need the following lemma in \cite{Sun5} to bound the difference between initial frames of $\mathfrak{s}_1$ and $\mathfrak{s}_1\cdots \mathfrak{s}_m$.

\begin{lemma}\label{directiondifference} (Lemma 3.4 of \cite{Sun5})
Let $\delta,\theta,L$ be positive constants with $0<\theta<\pi$ and $L\geq I(\theta)+10\log{2}$. If $\mathfrak{s}_1,\cdots,\mathfrak{s}_m$ is an $(L,\theta)$-tame $\delta$-consecutive chain of oriented $\partial$-framed segments, then the distance between the initial frames of $\mathfrak{s}_1$ and $\mathfrak{s}_1\cdots \mathfrak{s}_m$ in $\text{SO}(M)_{p_{\text{ini}}(\mathfrak{s}_1)}$ is at most $8e^{-L}$.
\end{lemma}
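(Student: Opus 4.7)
The plan is to lift the chain to $\mathbb{H}^3$ and obtain the estimate via a telescoping hyperbolic-trigonometry argument. Pick lifts $\tilde{\mathfrak{s}}_1,\ldots,\tilde{\mathfrak{s}}_m$ assembling into a piecewise geodesic path $\tilde{p}_0 \to \tilde{p}_1 \to \cdots \to \tilde{p}_m$ in $\mathbb{H}^3$ with segment lengths $\ell_i \geq 2L$ and bending angles $\theta_i < \theta$ at each $\tilde{p}_i$. By the definition of reduced concatenation, the carrier of $\mathfrak{s}_1\cdots\mathfrak{s}_m$ lifts to the straight segment $\overline{\tilde{p}_0\tilde{p}_m}$, so the two initial frames in question share the basepoint $\tilde{p}_0 = p_{\mathrm{ini}}(\tilde{\mathfrak{s}}_1)$ and differ only in (a) their initial tangent directions and (b) their initial framings. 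Since the framing of $\mathfrak{s}_1\cdots\mathfrak{s}_m$ is by construction the closest unit normal in the new normal plane to $\vec{n}_{\mathrm{ini}}(\mathfrak{s}_1)$, a standard projection argument bounds (b) linearly in terms of (a), so it suffices to estimate (a).

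To handle (a), I would telescope through the partial straightenings. For $1 \leq k \leq m$, let $\vec{t}_k \in T^1_{\tilde{p}_0}\mathbb{H}^3$ denote the initial unit tangent of the geodesic $\overline{\tilde{p}_0\tilde{p}_k}$, so $\vec{t}_1 = \vec{t}_{\mathrm{ini}}(\tilde{\mathfrak{s}}_1)$ and $\vec{t}_m = \vec{t}_{\mathrm{ini}}(\mathfrak{s}_1\cdots\mathfrak{s}_m)$; the goal is then to bound $\sum_{k=2}^{m}|\vec{t}_k - \vec{t}_{k-1}|$. The jump $|\vec{t}_k - \vec{t}_{k-1}|$ is controlled by the angle at $\tilde{p}_0$ in the triangle $\tilde{p}_0\tilde{p}_{k-1}\tilde{p}_k$, which the hyperbolic law of sines gives as
\[
\sin\bigl(\angle_{\tilde{p}_0}\bigr) \;=\; \sin(\beta_{k-1})\,\frac{\sinh(\ell_k)}{\sinh\bigl(d(\tilde{p}_0,\tilde{p}_k)\bigr)},
\]
where $\beta_{k-1}$ is the angle at $\tilde{p}_{k-1}$ between the forward direction of $\overline{\tilde{p}_0\tilde{p}_{k-1}}$ and $\tilde{\mathfrak{s}}_k$. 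Applying Lemma \ref{lengthphase}(1) to the two-piece chain $\overline{\tilde{p}_0\tilde{p}_{k-1}}\cup\tilde{\mathfrak{s}}_k$ yields $d(\tilde{p}_0,\tilde{p}_k) \geq d(\tilde{p}_0,\tilde{p}_{k-1}) + \ell_k - I(\theta) - 1$, and the same lemma applied to $\tilde{\mathfrak{s}}_1,\ldots,\tilde{\mathfrak{s}}_{k-1}$ gives $d(\tilde{p}_0,\tilde{p}_{k-1}) \geq (k-1)L$ under the hypothesis $L \geq I(\theta) + 10\log 2$. Substituting and bounding $\sinh(\ell_k)/\sinh(d(\tilde{p}_0,\tilde{p}_k))$ by a constant multiple of $e^{-d(\tilde{p}_0,\tilde{p}_{k-1})}$ produces $|\vec{t}_k - \vec{t}_{k-1}| \leq C(\theta)\, e^{-(k-1)L}$.

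Summing this geometric series over $k = 2, \ldots, m$ gives $|\vec{t}_m - \vec{t}_1| \leq C(\theta)\, e^{-L}/(1 - e^{-L})$, which simplifies to less than $4e^{-L}$ once the hypothesis $L \geq I(\theta) + 10\log 2$ is unpacked. Combining with the framing contribution from (b) yields the claimed bound $8e^{-L}$. The main obstacle is a subtle circular dependency: $\beta_{k-1}$ is not literally the original bending angle $\theta_{k-1}$, but the angle measured from the already-straightened direction at $\tilde{p}_{k-1}$, so bounding it requires knowing the terminal-direction discrepancy for the inductive straightening of $\tilde{\mathfrak{s}}_1,\ldots,\tilde{\mathfrak{s}}_{k-1}$. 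I would resolve this by running a simultaneous induction that tracks both the initial and terminal tangent discrepancies (the latter via the symmetric argument for the reversed chain), ensuring $\beta_{k-1} \leq \theta + O(e^{-L})$ at every step, which does not disturb the geometric decay.
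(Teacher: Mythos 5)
Your high-level plan — lift to $\mathbb{H}^3$, telescope through the partial straightenings $\vec{t}_k$, control each jump by the hyperbolic law of sines in the triangle $\tilde{p}_0\tilde{p}_{k-1}\tilde{p}_k$, and sum a geometric series — is sound, and the framing contribution is indeed controlled linearly by the direction contribution, as you say. But the ``circular dependency'' you flag is a self-inflicted complication, and your proposed repair of it does not quite go through.

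First, the repair. To apply Lemma \ref{lengthphase}(1) to the two-piece chain $\overline{\tilde{p}_0\tilde{p}_{k-1}}\cup\tilde{\mathfrak{s}}_k$ with bending angle $\beta_{k-1}$, you need that chain to be $(L,\theta')$-tame for some $\theta'$ with $L\geq I(\theta')+10\log 2$. Your simultaneous induction only gives $\beta_{k-1}\leq\theta+O(e^{-L})$, so the best available $\theta'$ satisfies $\theta'>\theta$, and since $I$ is strictly increasing, the required inequality $L\geq I(\theta')+10\log 2$ is \emph{not} implied by the hypothesis $L\geq I(\theta)+10\log 2$. The deficit is only of order $e^{-L/2}\tan(\theta/2)$, which is small, but as stated the cited lemma simply does not apply; one would have to go back inside the proof of Lemma \ref{lengthphase}(1) and verify there is enough slack. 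This is a genuine gap in the argument as written.

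Second, the circularity is avoidable entirely, which is the cleaner route. Apply Lemma \ref{lengthphase}(1) not to the synthetic two-piece chain but directly to the original subchain $\mathfrak{s}_1,\ldots,\mathfrak{s}_k$, which is $(L,\theta)$-tame by hypothesis; no information about $\beta_{k-1}$ is needed. This gives
\[
d(\tilde{p}_0,\tilde{p}_k)\;\geq\;\sum_{i=1}^k\ell_i-(k-1)\bigl(I(\theta)+1\bigr)\;\geq\;\ell_k+(k-1)\bigl(2L-I(\theta)-1\bigr)\;\geq\;\ell_k+(k-1)(L+1),
\]
using $L\geq I(\theta)+10\log 2$. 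Now in the law of sines simply bound $\sin\beta_{k-1}\leq 1$; the geometric decay comes entirely from
\[
\sin\alpha_k\;\leq\;\frac{\sinh\ell_k}{\sinh d(\tilde{p}_0,\tilde{p}_k)}\;\leq\;2\,e^{\ell_k-d(\tilde{p}_0,\tilde{p}_k)}\;\leq\;2\,e^{-(k-1)(L+1)},
\]
and there is no feedback of angle estimates into distance estimates. (One small point you did not address: $\sin\alpha_k$ being small allows $\alpha_k$ near $\pi$ as well as near $0$; the former is excluded because $d(\tilde{p}_0,\tilde{p}_k)>\ell_k$, so $\tilde{p}_0$ cannot lie between $\tilde{p}_{k-1}$ and $\tilde{p}_k$.) With this cleaner estimate the constants come out comfortably: $\sum_{k\geq 2}\alpha_k\leq \pi\sum_{k\geq 2}e^{-(k-1)L}<4e^{-L}$, and doubling for the framing gives $8e^{-L}$. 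I would rewrite the proof along these lines rather than pursue the simultaneous induction.
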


The following lemma in \cite{Sun4} bounds the distance between a $\delta$-consecutive cycle of geodesic segments and the corresponding closed geodesic, which is useful for bounding heights of closed geodesics arised from geometric constructions. 

\begin{lemma}\label{distance} (Lemma 3.7 of \cite{Sun4})
Given any positive constants $\theta,L$ where $0<\theta<\pi$ and $L\geq 4(I(\theta)+10\log{2})$, the following statement holds in any oriented hyperbolic $3$-manifold. If $s_1,\cdots,s_m$ is an $(L,\theta)$-tame cycle of geodesic segments with $m\leq L$, and the bending angle between $s_i$ and $s_{i+1}$ lies in $[0,\theta)$ for each $i$, with $s_{m+1}$ equal to $s_1$ by convention, then the closed geodesic $[s_1\cdots s_m]$ lies in the $1$-neighborhood of the union $\cup_{i=1}^m s_i$. 
\end{lemma}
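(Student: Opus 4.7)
The plan is to work in the universal cover $\mathbb{H}^3$ of $M$ and exploit the stability of quasi-geodesics. Lift the cyclic concatenation $s_1\cdots s_m$ to a bi-infinite piecewise-geodesic line $\tilde\Gamma = \bigcup_{i \in \mathbb{Z}} \tilde s_i \subset \mathbb{H}^3$, invariant under the loxodromic isometry $\phi$ of $\mathbb{H}^3$ in the free homotopy class of $[s_1\cdots s_m]$, with $\phi(\tilde s_i) = \tilde s_{i+m}$. Let $\tilde\ell$ denote the axis of $\phi$; it projects to $[s_1\cdots s_m]$, while $\tilde\Gamma$ projects to $\cup_i s_i$. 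It suffices to show that $\tilde\ell$ lies in the open $1$-neighborhood of $\tilde\Gamma$.

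First I would verify that $\tilde\Gamma$ is a quasi-geodesic with extremely good constants. Applying Lemma \ref{lengthphase}(1) to an arbitrary substring of $\tilde\Gamma$ consisting of $k$ consecutive segments, the hyperbolic distance between its endpoints agrees with the arc length up to a correction of at most $k I(\theta)$ plus an error that is exponentially small in $L$. The lower bound $2L$ on segment lengths and the hypothesis $L \geq 4(I(\theta)+10\log 2)$ make the multiplicative distortion close to $1$ and the additive distortion mild. The $\phi$-periodic structure then forces $\tilde\Gamma$ to have the same pair of endpoints on $\partial_\infty \mathbb{H}^3$ as $\tilde\ell$, and a Morse-type stability argument in $\mathbb{H}^3$ places $\tilde\ell$ within a bounded neighborhood of $\tilde\Gamma$.

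To refine the bounded-distance conclusion to the explicit $1$-neighborhood, I would estimate the distance from an arbitrary point $x \in \tilde\ell$ to $\tilde\Gamma$ via a two-scale analysis. Convexity of the distance function $d(\cdot,\tilde\ell)$ along geodesic segments in $\mathbb{H}^3$, together with the exponential-in-$L$ closeness of the corners $p_i = \tilde s_{i-1}\cap \tilde s_i$ to $\tilde\ell$ (a consequence of the quasi-geodesic estimate), shows that the interior of each $\tilde s_i$ sits exponentially close to $\tilde\ell$. Near each corner $p_i$, elementary hyperbolic trigonometry on the triangle formed by $p_i$ and the nearest points of $\tilde\ell$ bounds the local deviation by a quantity controlled by $\theta$ together with the exponential closeness of the endpoints. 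The hypothesis $L \geq 4(I(\theta)+10\log 2)$ is precisely what is needed to make these combined estimates sum to less than $1$, while $m \leq L$ prevents the accumulation of errors across the periodic cycle from spoiling the estimate.

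The main technical obstacle is in carrying out these quantitative estimates carefully enough to achieve the explicit bound $1$ rather than a generic $O(1)$ bound from an abstract Morse lemma. Particular care is needed at the corners, where naive trigonometric bounds scale with $I(\theta)$ and could blow up as $\theta \to \pi$; the hypothesis on $L$ is chosen so that the long straight portions of $\tilde\Gamma$ compensate for any corner deviation and force $\tilde\ell$ to track $\tilde\Gamma$ tightly enough to remain in the $1$-neighborhood after projection to $M$.
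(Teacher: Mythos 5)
Your high-level plan --- lift to $\mathbb{H}^3$, identify the axis $\tilde\ell$ of the loxodromic, observe via Lemma \ref{lengthphase} that $\tilde\Gamma$ is a quasi-geodesic with tight constants (the hypothesis $m\le L$ controlling the accumulated error), note the shared ideal endpoints, and reduce to showing $\tilde\ell$ lies in the $1$-neighborhood of $\tilde\Gamma$ --- is the right one. The gap is in the refinement step. The corners $p_i$ are \emph{not} exponentially close to $\tilde\ell$: in the limiting configuration with long segments and a symmetric bend, the distance from $p_i$ to $\tilde\ell$ is a quantity $h$ with $\sinh h=\tan(\theta_i/2)$, which tends to $\infty$ as $\theta_i\to\pi$ and, under the given hypothesis, can be comparable to $I(\theta)/2\le L/8$. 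Consequently the claim that each whole segment $\tilde s_i$ sits exponentially close to $\tilde\ell$, and with it the implied containment of $\tilde\Gamma$ in a small neighborhood of $\tilde\ell$, are false (only a middle portion of each $\tilde s_i$ is close to the axis), and the argument as written breaks: you are in effect trying to prove the reverse, and false, inclusion.

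What actually produces the constant $1$ is a different right-triangle estimate that your sketch gestures at but does not pin down. Let $\pi(p_i)\in\tilde\ell$ be the foot of the perpendicular from $p_i$. In the limiting configuration $\tilde s_i$ meets the perpendicular $p_i\pi(p_i)$ at angle $\tfrac{\pi-\theta_i}{2}$, so the right triangle with vertices $p_i$, $\pi(p_i)$ and the foot of the perpendicular from $\pi(p_i)$ to the geodesic through $\tilde s_i$ has hypotenuse of length $h$ and angle $\tfrac{\pi-\theta_i}{2}$ at $p_i$; the hyperbolic sine rule gives
\begin{equation*}
\sinh\bigl(d(\pi(p_i),\tilde s_i)\bigr)=\sin\bigl(\tfrac{\pi-\theta_i}{2}\bigr)\sinh h=\cos(\theta_i/2)\tan(\theta_i/2)=\sin(\theta_i/2)<1,
\end{equation*}
hence $d(\pi(p_i),\tilde s_i)<\log(1+\sqrt2)\approx 0.881$. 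The corner recedes from $\tilde\ell$ as $\theta_i\to\pi$, but the angle between $\tilde s_i$ and the perpendicular shrinks at exactly the compensating rate; this cancellation, not any exponential smallness of $h$, is the source of the uniform bound. The hypotheses $L\ge 4(I(\theta)+10\log2)$ and $m\le L$ then keep the genuine configuration close enough to this ideal one that the slack $1-\log(1+\sqrt2)\approx 0.12$ absorbs the quasi-geodesic error terms, and convexity of $d(\cdot,\tilde s_i)$ along $\tilde\ell$ handles points $x$ between consecutive corner projections. Without identifying this cancellation one only gets that the distance from $\tilde\ell$ to $\tilde\Gamma$ near a corner is $O(I(\theta))$, which is not enough.
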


The following result generalizes Lemma \ref{lengthphase} (1), which estimates the length of a consecutive chain of geodesic segments where some involved geodesic segments are short.

\begin{proposition}\label{lengthshort}
Given any positive constants $\theta,L$ with $0<\theta<\frac{\pi}{2}$ and 
$$L\geq \max\{12 I(\pi-\theta)+80\log{2}, 24\log{2}-16\log{(\frac{\pi}{2}-\theta)}\},$$ 
the following statement holds in any hyperbolic $3$-manifold. Let $s_1,\cdots,s_m$ be a consecutive chain of geodesic segments such that one of the following hold for each $i=1,\cdots,m-1$.
\begin{enumerate}
\item Either both $s_i$ and $s_{i+1}$ have length at least $L$, and the bending angle between $s_i$ and $s_{i+1}$ lies in $[0,\pi-\theta]$.
\item Or exactly one of $s_i$ and $s_{i+1}$ has length at least $L$, and the bending angle between $s_i$ and $s_{i+1}$ lies in $[0,\frac{\pi}{2}-\theta]$.
\end{enumerate}
Then we have
$$l(s_1\cdots s_m)\geq \frac{1}{2}\sum_{i=1}^ml(s_i).$$
\end{proposition}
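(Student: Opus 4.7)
The plan is to iteratively combine consecutive pairs of geodesic segments from left to right and control the accumulated length loss using the hyperbolic law of cosines. For any two consecutive geodesic segments $s,s'$ of lengths $a,b$ meeting at bending angle $\alpha\in[0,\pi)$, the triangle with vertices at the two far endpoints and the common endpoint has interior angle $\pi-\alpha$ at the middle vertex, so
\[
\cosh l(ss')=\cos^{2}(\alpha/2)\cosh(a+b)+\sin^{2}(\alpha/2)\cosh(a-b)\ge\tfrac{1}{2}\cos^{2}(\alpha/2)\,e^{a+b},
\]
which yields the per-pair estimate $l(ss')\ge a+b-I(\alpha)-\log 2$ whenever $a+b\ge I(\alpha)+\log 2$. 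Under either alternative in the hypothesis one has $\alpha\le\pi-\theta$, so $I(\alpha)\le I(\pi-\theta)$; the hypothesis $L\ge 12I(\pi-\theta)+80\log 2$ will make this per-pair loss negligible compared to $L$.

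Setting $\tilde s_{k}=s_{1}\cdots s_{k}$ for $1\le k\le m$, I plan to prove by induction that the pairwise estimate can be applied at every step with $I(\alpha)\le I(\pi-\theta)$, yielding
\[
l(\tilde s_{k+1})\ge l(\tilde s_{k})+l(s_{k+1})-I(\pi-\theta)-\log 2-\eta_{k}
\]
for small errors $\eta_{k}=O(e^{-L/2})$. Summing then gives $l(\tilde s_{m})\ge\sum_{i}l(s_{i})-(m-1)(I(\pi-\theta)+\log 2+O(e^{-L/2}))$. Because the hypothesis forbids two consecutive short segments, at least $\lceil m/2\rceil$ of the $s_{i}$ satisfy $l(s_{i})\ge L$, so $\sum_{i}l(s_{i})\ge mL/2$, and the inequality $L\ge 12I(\pi-\theta)+80\log 2$ then yields $l(\tilde s_{m})\ge\tfrac12\sum_{i}l(s_{i})$ by straightforward arithmetic.

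The hard part will be justifying the inductive step, which requires that the bending angle $\tilde\alpha_{k}$ at $p_{k}$ between $\tilde s_{k}$ and $s_{k+1}$ stays in $[0,\pi-\theta]$. This differs from the given bending angle $\alpha_{k}$ between $s_{k}$ and $s_{k+1}$ by the angular drift $\delta_{k}$ between the terminal directions of $\tilde s_{k}$ and $s_{k}$, with $\tilde\alpha_{k}\le\alpha_{k}+\delta_{k}$ in the spherical sense. Unlike the $(L,\theta)$-tame setting of Lemma \ref{directiondifference}, my chain contains short segments, so $\delta_{k}$ cannot be made uniformly exponentially small. Applying the hyperbolic sine rule to the triangle with vertices $p_{0},p_{k-1},p_{k}$ gives $\sin\delta_{k}=\sin\tilde\alpha_{k-1}\cdot\sinh l(\tilde s_{k-1})/\sinh l(\tilde s_{k})$, and I would split into two cases. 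When $s_{k}$ is long, the ratio is $O(e^{-L})$ so $\delta_{k}$ is negligible and $\tilde\alpha_{k}\le\alpha_{k}+O(e^{-L})\le\pi-\theta+O(e^{-L})$. When $s_{k}$ is short, the hypothesis forces $s_{k-1}$ to be long with $\alpha_{k-1}\le\pi/2-\theta$, so by the previous inductive step $\tilde\alpha_{k-1}\le\pi/2-\theta+O(e^{-L/2})$; a degeneration argument as $l(s_k)\to 0$ shows $\delta_{k}\le\tilde\alpha_{k-1}$, and combined with $\alpha_{k}\le\pi/2-\theta$ (forced by $s_{k+1}$ being long) this yields $\tilde\alpha_{k}\le\pi-2\theta+O(e^{-L/2})\le\pi-\theta$. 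The auxiliary constraint $L\ge 24\log 2-16\log(\pi/2-\theta)$ handles the remaining technical estimates on error terms, ensuring the pairwise estimate's preconditions $l(\tilde s_{k})+l(s_{k+1})\ge I(\tilde\alpha_k)+\log 2$ are met throughout the iteration even when $\pi/2-\theta$ is small.
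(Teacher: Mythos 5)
Your plan takes a genuinely different route from the paper. The paper's proof avoids the cumulative drift problem entirely by constructing an auxiliary chain $t_1,\dots,t_{m-1}$ whose vertices are the midpoints of the $s_i$ (with $s_1$ and $s_m$ kept at their ends); it then shows directly that each $t_i$ has length at least $\tfrac{1}{3}L$ and that each new bending angle is at most $\pi-2\theta$, so Lemma \ref{lengthphase}~(1) applies in one shot with no induction and no angle bookkeeping. Your approach instead concatenates left to right and controls the angle $\tilde\alpha_k$ between $\tilde s_k$ and $s_{k+1}$ inductively, using the spherical triangle inequality $\tilde\alpha_k\le\alpha_k+\delta_k$ together with two observations: (i) when $s_k$ is long, the hyperbolic sine rule plus the per-pair length estimate gives $\delta_k=O(e^{-L/2})$; (ii) when $s_k$ is short, the crude bound $\delta_k\le\tilde\alpha_{k-1}$ (which is really just the angle-sum inequality for the hyperbolic triangle $p_0p_{k-1}p_k$, no degeneration needed) combined with the fact that $\alpha_{k-1},\alpha_k\le\tfrac{\pi}{2}-\theta$ keeps $\tilde\alpha_k\le\pi-2\theta+O(e^{-L/2})$. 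Because every short segment is flanked by long ones, each long step "resets" the drift, so the $O(e^{-L/2})$ errors do not accumulate. This is sound and yields the same conclusion with essentially the same arithmetic. The midpoint trick buys a shorter argument without induction; your route is a plausible first approach and is more self-contained in that it uses only the law of cosines rather than citing Lemma \ref{lengthphase}.

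A few small corrections to keep in mind when writing this up. First, the per-pair bound $l(ss')\ge a+b-I(\alpha)-\log 2$ is unconditional (it follows from $\cosh c\ge\tfrac12\cos^2(\alpha/2)e^{a+b}$ and $e^c\ge\cosh c$), so there is no genuine precondition $a+b\ge I(\alpha)+\log 2$ to verify; it only governs when the bound is non-vacuous. Correspondingly there is no extra $\eta_k$ error in the telescoping sum — the only errors are the angular drifts $\delta_k$. Second, the count of long segments should be $\lfloor m/2\rfloor$, not $\lceil m/2\rceil$: the pattern short-long-short-$\cdots$-short has only $\lfloor m/2\rfloor$ long entries, giving $\sum_i l(s_i)\ge\tfrac{m-1}{2}L$ rather than $\tfrac{m}{2}L$. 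The final inequality then requires $L\ge 4(I(\pi-\theta)+\log 2)$, which the hypothesis $L\ge 12I(\pi-\theta)+80\log 2$ comfortably supplies, so nothing breaks, but the stated count is wrong. Third, your explanation of why the constraint $L\ge 24\log 2-16\log(\tfrac{\pi}{2}-\theta)$ is needed is vague and probably not where the real dependence lies in your scheme; in your version the place where $L$ must dominate a quantity depending on $\theta$ is in forcing $O(e^{-L/2})<\theta$ so that $\pi-2\theta+O(e^{-L/2})\le\pi-\theta$, and this is already guaranteed by $L\ge 12I(\pi-\theta)$ since $I(\pi-\theta)=-2\log\sin(\theta/2)$. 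You should either locate where $\log(\tfrac{\pi}{2}-\theta)$ actually enters your bounds or explicitly note that the second term in the hypothesis on $L$ is not needed for your argument.
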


\begin{proof}
At first, we lift the consecutive chain of geodesic segments $s_1,\cdots,s_m$ to the universal cover, and work on a consecutive chain of geodesic segments in $\mathbb{H}^3$. If $m=1$ or $2$, the result follows directly from the cosine law of hyperbolic geometry (see also estimates below), so we assume that $m\geq 3$.

Let $x_1$ be the initial point of $s_1$ and let $x_{m+1}$ be the terminal point of $s_m$. For any $i=2,\cdots,m$, let $x_i$ be the terminal point of $s_{i-1}$, which is also the initial point of $s_i$.
Let $y_1=x_1, y_{m}=x_{m+1}$, and let $y_i$ be the middle point of $s_i$ for each $i=2,\cdots,m-1$. For any $i=1,\cdots,m-1$, let $t_i$ be the geodesic segment from $y_i$ to $y_{i+1}$. We will use Lemma \ref{lengthphase} (1) to estimate $l(t_1\cdots t_{m-1})=l(s_1\cdots s_m)$.

We need to estimate lengths of $t_i$ and bending angles between $t_i$ and $t_{i+1}$. At first, we claim that 
\begin{align}\label{3.1}
l(t_i)\geq \frac{2}{3}(d(y_i,x_{i+1})+d(x_{i+1},y_{i+1}))\geq \frac{1}{3}L.
\end{align}
{\bf Case I.} Both $l(s_i)$ and $l(s_{i+1})$ are at least $\frac{L}{4}$. Then $d(y_i,x_{i+1}),d(x_{i+1},y_{i+1})\geq \frac{L}{8}$ and by assumption at least one of them is greater than $\frac{L}{2}$. By Lemma 4.10 (2) of \cite{LM}, we have 
\begin{align*}
l(t_i) &=d(y_i,y_{i+1})\geq d(y_i,x_{i+1})+d(x_{i+1},y_{i+1})-I(\pi-\angle y_ix_{i+1}y_{i+1})\\
&\geq d(y_i,x_{i+1})+d(x_{i+1},y_{i+1})-I(\pi-\theta)\\
& \geq \frac{2}{3}(d(y_i,x_{i+1})+d(x_{i+1},y_{i+1}))\geq \frac{1}{3}L.
\end{align*}
{\bf Case II.} Otherwise, one of $l(s_i),l(s_{i+1})$ is at most $\frac{L}{4}$, and we assume that $l(s_i)\leq \frac{L}{4}$. By assumption of this lemma, we have $l(s_{i+1})>L$ and $\angle y_ix_{i+1}y_{i+1}>\frac{\pi}{2}+\theta$. So we have $d(y_i,x_{i+1})\leq l(s_i)\leq \frac{L}{4}$ and $d(x_{i+1},y_{i+1})\geq \frac{1}{2}l(s_{i+1})\geq \frac{L}{2}$. Since $\angle y_ix_{i+1}y_{i+1}>\frac{\pi}{2}$, we have
\begin{align*}
l(t_i)=d(y_i,y_{i+1})\geq d(x_{i+1},y_{i+1})\geq \frac{2}{3}(d(y_i,x_{i+1})+d(x_{i+1},y_{i+1}))\geq \frac{1}{3}L.
\end{align*}
So equation (\ref{3.1}) holds in both cases.

Then we claim that 
$\angle y_iy_{i+1}x_{i+1}<\frac{\pi}{2}-\theta.$
In Case I above, we apply Lemma 4.10 (1) of \cite{LM} to get $$\angle y_iy_{i+1}x_{i+1}<e^{(-\frac{L}{8}+3\log{2})/2}<\frac{\pi}{2}-\theta.$$
In Case II above, $\angle y_ix_{i+1}y_{i+1}>\frac{\pi}{2}+\theta$ implies $\angle y_iy_{i+1}x_{i+1}<\frac{\pi}{2}-\theta$ directly. 

The same argument implies $\angle y_{i+2}y_{i+1}x_{i+2}<\frac{\pi}{2}-\theta$. So we get
\begin{align}\label{3.2}
\angle y_iy_{i+1}y_{i+2}\geq \pi -\angle y_iy_{i+1}x_{i+1}-\angle y_{i+2}y_{i+1}x_{i+2}\geq \pi-2(\frac{\pi}{2}-\theta)=2\theta.
\end{align}

For the consecutive chain of geodesic segments $t_1,\cdots,t_{m-1}$, by equations (\ref{3.1}) and (\ref{3.2}), each segment has length at least $\frac{1}{3}L$ and each bending angle is at most $\pi-2\theta$. By Lemma \ref{lengthphase} (1), we have 
\begin{align*}
& l(s_1\cdots s_m) = l(t_1\cdots t_{m-1}) \geq \sum_{i=1}^{m-1}l(t_i)-(m-2)I(\pi-2\theta)-(m-2)\frac{e^{(-\frac{L}{6}+10\log{2})/2}}{\frac{L}{6}-\log 2}\\
\geq\ & \sum_{i=1}^{m-1}l(t_i)-(m-2)(I(\pi-2\theta)+1)\geq \frac{3}{4}\sum_{i=1}^{m-1}l(t_i).
\end{align*}
Here the last inequality holds since $\frac{1}{4}l(t_i)\geq \frac{1}{12}L\geq I(\pi-2\theta)+1$ for each $t_i$, by equation (\ref{3.1}). By equation (\ref{3.1}) again, we have 
\begin{align*}
& l(s_1\cdots s_m) \geq \frac{3}{4}\sum_{i=1}^{m-1}l(t_i) \geq \frac{1}{2}\sum_{i=1}^{m-1}(d(y_i,x_{i+1})+d(x_{i+1},y_{i+1}))\\
=\ &\frac{1}{2}\Big(d(y_1,x_2)+\sum_{i=2}^{m-1}(d(x_i,y_i)+d(y_i,x_{i+1}))+d(x_m,y_m)\Big)
= \frac{1}{2}\sum_{i=1}^m l(s_i).
\end{align*}
\end{proof}

\bigskip
\bigskip

\section{Reduction of the domain and target in Theorem \ref{main2}}\label{reduction}

In this section, we prove a few preparational results that reduce the domain and target manifolds $M$ and $N$ in Theorem \ref{main2} to some convenient form. Recall that when talking about a cusped hyperbolic $3$-manifold, we mean a compact $3$-manifold with tori boundary whose interior admits a complete hyperbolic metric with finite volume, unless otherwise indicated.

\subsection{Reducing the domain manifold $M$}
At first, we prove that any cusped hyperbolic $3$-manifold $M$ has a finite cover that satisfies a convenient homological condition. 

\begin{proposition}\label{homology2boundary}
For any oriented cusped hyperbolic $3$-manifold $M$, it has a finite cover $M'$ with two distinct boundary components $T_1,T_2\subset \partial M'$, such that the kernel of 
$$H_1(T_1\cup T_2;\mathbb{Z})\to H_1(M';\mathbb{Z})$$ 
contains an element $\alpha_1+\alpha_2\in H_1(T_1\cup T_2;\mathbb{Z})$ such that $0\ne \alpha_1\in H_1(T_1;\mathbb{Z})$ and $0\ne \alpha_2\in H_1(T_2;\mathbb{Z})$.
\end{proposition}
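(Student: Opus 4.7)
The plan is to first pass to a finite cover ensuring $M$ has at least two cusps, then exploit the half-lives--half-dies structure of the peripheral homology map to find the desired kernel element, falling back on a further finite cover only in a residual degenerate case.

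If $M$ has only one cusp $T$, then by Agol's LERF theorem~\cite{Agol2} the peripheral subgroup $\pi_1(T)<\pi_1(M)$ is separable: for any $g\in\pi_1(M)\setminus\pi_1(T)$ there exists a finite-index normal subgroup $N\triangleleft\pi_1(M)$ with $g\notin N\cdot\pi_1(T)$. The image of $\pi_1(T)$ in $\pi_1(M)/N$ is then a proper subgroup, so $T$ lifts to at least two cusps in the corresponding cover. Replacing $M$ by this cover, I may assume $M$ has cusps $T_1,\ldots,T_k$ with $k\ge 2$.

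Next I analyze the peripheral homology. For any pair of cusps $T_i,T_j$, let $V_l=\mathrm{im}(H_1(T_l;\mathbb{Q})\to H_1(M;\mathbb{Q}))$ and $K_l=\ker(H_1(T_l;\mathbb{Q})\to H_1(M;\mathbb{Q}))$ for $l\in\{i,j\}$. The kernel of $H_1(T_i\cup T_j;\mathbb{Q})\to H_1(M;\mathbb{Q})$ contains $K_i\oplus K_j$, and it also contains every element of the form $(\alpha_i,-\alpha_j)$ obtained by lifting a common non-zero class $v\in V_i\cap V_j$ to $\alpha_i\in H_1(T_i;\mathbb{Q})$ and $\alpha_j\in H_1(T_j;\mathbb{Q})$. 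Hence whenever (a) both $K_i$ and $K_j$ are non-zero or (b) $V_i\cap V_j\ne 0$, I obtain a kernel element with both components non-zero; clearing denominators then produces an integer class $\alpha_1+\alpha_2$ as required, using that each $H_1(T_l;\mathbb{Z})$ is torsion-free. A short dimensional argument using half-lives--half-dies, namely that the total peripheral image in $H_1(M;\mathbb{Q})$ has rank equal to $k$, shows that some pair satisfies (a) or (b) in all but a few degenerate configurations; the principal such case is $k=2$ with $V_1=0$ (so $T_1$ is rationally null-homologous) and $V_2$ of full rank $2$.

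In each degenerate configuration I pass to a further finite cover. In the sample case above, since $V_1=0$ both generators of $H_1(T_1;\mathbb{Z})$ are torsion in $H_1(M;\mathbb{Z})$, so for some $n\ge 1$ an essential simple closed curve $\mu\subset T_1$ bounds a properly embedded oriented surface $S\subset M$ with $\partial S=n\mu$. Applying LERF to the pair consisting of $\pi_1(T_1)$ and a suitable element of $\pi_1(S)\setminus\pi_1(T_1)$, I construct a finite cover $M'\to M$ in which $T_1$ splits into at least two cusps $\tilde T_1^{(1)},\tilde T_1^{(2)}$ and some connected component of the preimage of $S$ has non-empty boundary on both lifted tori. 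The boundary of that component directly yields $\alpha_1+\alpha_2=0$ in $H_1(M';\mathbb{Z})$ with each $\alpha_i\in H_1(\tilde T_1^{(i)};\mathbb{Z})$ non-trivial, establishing the proposition for the cover $M'$.

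The main obstacle is the final construction: simultaneously splitting the null-homologous cusp and forcing a lift of the auxiliary surface to straddle two lifts of that cusp. Both conditions are met by carefully choosing the finite quotient of $\pi_1(M)$, using LERF for the proper subgroup $\pi_1(T_1)$ together with a chosen non-peripheral loop in $\pi_1(S)$. The remaining degenerate configurations (e.g.\ $k\ge 4$ with all peripheral images of full rank and pairwise transverse in $H_1(M;\mathbb{Q})$) are handled analogously: further cover passes enlarge $b_1$ and change the peripheral intersection pattern until criterion (a) or (b) applies to some pair of cusps.
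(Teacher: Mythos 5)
Your approach diverges from the paper's, and it has a genuine gap at the crucial step. The paper proves this uniformly (for any number of cusps, with no homological case analysis) by invoking Proposition~4.6 of Przytycki--Wise to produce a \emph{connected} geometrically finite $\pi_1$-injective immersed surface $S\looparrowright M$ whose two boundary circles both lie on a single cusp $T$ with opposite orientations. Gluing $S$ to two copies of a finite cover of $T$ gives a $2$-complex $S_D$ that is $\pi_1$-injective by Martinez-Pedroza's combination theorem, and LERF then embeds $S_D$ into a finite cover $M'$. Because the two torus pieces $T_{D,1}\neq T_{D,2}$ of $S_D$ are distinct and the lift is an embedding, the two boundary circles of $S$ automatically land on two distinct boundary tori of $M'$, producing the kernel element. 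Your non-degenerate observation (that if some pair $(T_i,T_j)$ has $V_i\cap V_j\neq 0$ or both $K_i,K_j\neq 0$, then $M$ itself already works) is correct and is a genuine shortcut when it applies, but the degenerate configurations are where the content lies, and your treatment of them does not close.

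Concretely: in your sample degenerate case ($k=2$, $V_1=0$), you assert that applying LERF to $\pi_1(T_1)$ together with some $g\in\pi_1(S)\setminus\pi_1(T_1)$ produces a cover in which $T_1$ splits \emph{and} some lift of $S$ straddles two of the lifted tori. LERF gives the splitting, but it gives no control over which lifted torus each boundary circle of a lifted component of $S$ lands on; a priori every component of $p^{-1}(S)$ could have all its boundary on one lifted torus. Moreover, if $\partial S$ is connected (e.g.\ a genus-one Seifert surface for a primitive slope $\mu$), a lift of $S$ that covers $S$ with degree one near the basepoint has a single boundary circle on a single lifted torus, so there is nothing to straddle. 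You would need exactly the extra structure the paper supplies (the PW surface with two oppositely oriented boundary circles on the same cusp, combined with Martinez-Pedroza to ensure $\pi_1$-injectivity of the glued $2$-complex) to force the straddling. The remaining degenerate configurations for $k\geq 4$ are dismissed as "handled analogously," but you give no surface to lift and no termination argument; note also that pairwise transversality of the $V_i$'s does not force $\operatorname{rank}(\sum V_i)=\sum\operatorname{rank}(V_i)$, so the dimensional argument must be done more carefully (for $k=4$ four rank-$2$ subspaces of a rank-$4$ space can be pairwise transverse).
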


Note that we do need at least two boundary components of $M'$ in Proposition \ref{homology2boundary}. If all boundary components of $M$ are $H_1$-injective (e.g. $M$ is the complement of a two-component hyperbolic link with non-zero linking number), then any boundary component of any finite cover of $M$ is $H_1$-injective.

\begin{proof}
We take a boundary component $T$ of $M$, a slope $c$ on $T$, and a slope $l$ on $T$ that intersects $c$ once. 

By Proposition 4.6 of \cite{PW} and its proof, there is a geometrically finite $\pi_1$-injective connected oriented immersed subsurface $i:S\looparrowright M$, such that the following hold:
\begin{itemize}
\item $S$ has exactly two oriented boundary components $C_1,C_2$.
\item $i$ maps both $C_1$ and $C_2$ to $T$, and $i_*[C_1]=-i_*[C_2]=d[c]$ for some positive integer $d$.
\end{itemize}

For any positive integer $D$, let $T_{D}$ be the covering space of $T$ corresponding to $\langle dc, Dl\rangle<\langle c,l\rangle\cong \pi_1(T)$. Let $S_{D}$ be the $2$-complex obtained by pasting $S$ and two copies of $T_D$ (denoted by $T_{D,1}$ and $T_{D,2}$), such that $C_1,C_2\subset \partial S$ are pasted with curves in $T_{D,1},T_{D,2}$ corresponding to $\pm dc$, respectively. Then the map $i:S\looparrowright M$ and covering maps $T_{D,1},T_{D,2}\to T\subset M$ together induce a map $i_D:S_D\looparrowright M$. By Theorem 1.1 of \cite{MP}, when $D$ is large enough, $i_D$ is a $\pi_1$-injective map. 

Since hyperbolic $3$-manifold groups are LERF (\cite{Agol2}) and $S_D$ embeds into the covering space of $M$ corresponding to $(i_D)_*(\pi_1(S_D))<\pi_1(M)$, there is a finite cover $M'$ of $M$, such that $i_D:S_D\looparrowright M$ lifts to an embedding $\tilde{i}_D:S_D\hookrightarrow M'$. Since $\tilde{i}_D$ is an embedding, $T_1=\tilde{i}_D(T_{D,1})$ and $T_2=\tilde{i}_D(T_{D,2})$ are two distinct boundary components of $M'$. Since $\tilde{i}_D|_{S}:S\hookrightarrow M'$ gives an embedded oriented subsurface of $M'$ whose boundary consists of a pair of essential curves on $T_1$ and $T_2$ respectively, 
$$H_1(T_1\cup T_2;\mathbb{Z})\to H_1(M';\mathbb{Z})$$ 
is not injective. Let $\alpha_1$ be the intersection of $\partial S \cap T_1$, and let $\alpha_2$ be the intersection of $\partial S \cap T_2$, then $\alpha_1+\alpha_2\in H_1(T_1\cup T_2;\mathbb{Z})$ is an element in the kernel with the desired form.

\end{proof}

A similar argument as in Proposition \ref{homology2boundary} proves a similar result on certain mixed $3$-manifolds.

\begin{proposition}\label{homology2boundarymixed}
Let $M$ be an oriented mixed $3$-manifold with tori boundary such that $\partial M$ intersects with a hyperbolic piece $M_0$ of $M$. Then $M$ has a finite cover $M'$ with a hyperbolic piece $M_0'$ of $M'$, such that $M_0'\cap \partial M'$ contains two components $T_1,T_2$, and the kernel of 
$$H_1(T_1\cup T_2;\mathbb{Z})\to H_1(M_0';\mathbb{Z})$$ 
contains an element $\alpha_1+\alpha_2\in H_1(T_1\cup T_2;\mathbb{Z})$ such that $0\ne \alpha_1\in H_1(T_1;\mathbb{Z})$ and $0\ne \alpha_2\in H_1(T_2;\mathbb{Z})$.
\end{proposition}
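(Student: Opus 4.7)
The strategy is to reduce to Proposition \ref{homology2boundary} applied to the hyperbolic piece $M_0$, and then lift the resulting $\pi_1$-injective immersion to an embedding in a finite cover of $M$ using a separability result for mixed $3$-manifold groups. By hypothesis, some boundary torus $T \subset \partial M$ lies in $M_0$, so $T$ is a cusp of the cusped hyperbolic $3$-manifold $M_0$.

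Repeat the first half of the proof of Proposition \ref{homology2boundary} verbatim, working entirely inside $M_0$: pick a slope $c$ and a dual slope $l$ on $T$; invoke Proposition 4.6 of \cite{PW} to obtain a geometrically finite, $\pi_1$-injective immersion $i\colon S \looparrowright M_0$ with two oriented boundary components $C_1, C_2$ both mapping into $T$ and satisfying $i_*[C_1]=-i_*[C_2]=d[c]$ for some $d>0$; then paste two copies $T_{D,1}, T_{D,2}$ of the $(dc, Dl)$-cover $T_D$ of $T$ to $S$ along $C_1, C_2$ to form a $2$-complex $S_D$ with induced map $i_D\colon S_D \looparrowright M_0$. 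For $D$ large enough, Theorem 1.1 of \cite{MP} guarantees that $i_D$ is $\pi_1$-injective in $M_0$; since JSJ pieces are $\pi_1$-injective in the ambient manifold, the composition $S_D \looparrowright M_0 \hookrightarrow M$ is also $\pi_1$-injective.

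Now apply the separability theorem of \cite{Sun3}, which extends Agol's LERF result \cite{Agol2} to mixed $3$-manifold groups: the subgroup $(i_D)_*(\pi_1(S_D)) < \pi_1(M)$ is separable, so there is a finite cover $M'$ of $M$ into which $i_D$ lifts to an embedding $\tilde{\imath}_D\colon S_D \hookrightarrow M'$. Since finite covers respect the JSJ decomposition, the component $M_0'$ of the preimage of $M_0$ that contains $\tilde{\imath}_D(S_D)$ is itself a hyperbolic piece of $M'$. The images $T_1 = \tilde{\imath}_D(T_{D,1})$ and $T_2 = \tilde{\imath}_D(T_{D,2})$ are closed tori embedded in the preimage of $T$; since each component of that preimage is itself a torus (a full boundary component of $M'$), each $T_i$ must be all of the ambient component, hence is an entire boundary torus of $M'$, and both lie in $M_0' \cap \partial M'$. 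They are distinct, since otherwise $\tilde{\imath}_D$ could not be injective on $T_{D,1} \sqcup T_{D,2}$. Finally, $\tilde{\imath}_D(S) \subset M_0'$ is an embedded oriented subsurface whose boundary realizes an element $\alpha_1 + \alpha_2 \in H_1(T_1 \cup T_2;\mathbb{Z})$ that dies in $H_1(M_0';\mathbb{Z})$, with each $\alpha_i \neq 0$ because $d \neq 0$.

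The main obstacle is not any individual step but verifying that the three tools driving Proposition \ref{homology2boundary} --- the Przytycki-Wise almost-fiber construction, the \cite{MP} $\pi_1$-injectivity criterion for pasting surfaces with peripheral covers, and LERF-type separability --- all remain available for $M_0$ sitting inside the mixed $3$-manifold $M$. The first two are intrinsic to $M_0$, and the third is precisely the content of \cite{Sun3}, so the argument amounts to a direct upgrade of the previous proposition.
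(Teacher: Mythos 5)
Your proof is correct and follows essentially the same route as the paper: reuse the Przytycki--Wise surface and the Mart\'inez-Pedroza pasting argument inside $M_0$, cite \cite{Sun3} for separability of $(i_D)_*(\pi_1(S_D))$ in $\pi_1(M)$, lift to an embedding in a finite cover, and read off the homological kernel element from the embedded subsurface. The only additions are brief justifications (the $\pi_1$-injectivity of the JSJ inclusion, and the argument that $\tilde{\imath}_D(T_{D,s})$ fills an entire boundary torus) that the paper leaves implicit, so this is the same proof with a bit more detail.
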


\begin{proof}
Let $T$ be a component of $M_0\cap \partial M$.
By the proof of Proposition \ref{homology2boundary}, there is a $\pi_1$-injective $2$-complex $i_D:S_D\looparrowright M_0\hookrightarrow M$, such that $S_D$ is a union of three surfaces $S,T_{D,1},T_{D,2}$, and both $T_{D,1},T_{D,2}$ are mapped to $T$ via covering maps.

Since $i_D:S_D\looparrowright M$ is mapped into a hyperbolic piece $M_0$ of $M$, by \cite{Sun3}, $(i_D)_*(\pi_1(S_D))$ is a separable subgroup of $\pi_1(M)$. Since $S_D$ embeds into the covering space of $M$ corresponding to $(i_D)_*(\pi_1(S_D))<\pi_1(M)$, there is a finite cover $M'$ of $M$, such that $i_D:S_D\looparrowright M$ lifts to an embedding $\tilde{i}_D:S_D\hookrightarrow M'$. Since $S_D$ is connected, the image of $\tilde{i}_D$ is contained in a hyperbolic piece $M_0'\subset M'$. Since $\tilde{i}_D$ is an embedding, $T_1=\tilde{i}_D(T_{D,1})$ and $T_2=\tilde{i}_D(T_{D,2})$ are two distinct boundary components of $M_0'$ and they are both contained in $M_0'\cap \partial M'$. Similar to the proof of Proposition \ref{homology2boundary}, the existence of the subsurface $\tilde{i}_D|_S:S\to M'$ implies that 
$$H_1(T_1\cup T_2;\mathbb{Z})\to H_1(M';\mathbb{Z})$$ 
is not injective, and the kernel contains an element in the desired form.
\end{proof}

\bigskip

\subsection{Reducing the target manifold $N$}

To reduce the target manifold $N$, we first need to prove two topological lemmas. The first one is quite elementary, while the second one uses results on branched coverings between $3$-manifolds.

\begin{lemma}\label{solidtorus}
For any compact oriented $3$-manifold $N$ with nonempty tori boundary, there exists a proper map $g:N\to D^2\times S^1$ such that the following hold.
\begin{enumerate}
\item The degree of $g$ is at least $3$.
\item $g$ induces a surjective homomorphism on $\pi_1$.
\item The restriction of $g$ to each boundary component of $N$ is a covering map to $S^1\times S^1\subset D^2\times S^1$ of positive degree.
 \end{enumerate}
\end{lemma}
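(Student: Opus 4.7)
The plan is to construct $g$ in two stages: first produce a proper, $\pi_1$-surjective map $g_0:N\to D^2\times S^1$ of positive (possibly small) degree whose restriction to each boundary torus is a covering, and then post-compose with a power self-map of $D^2\times S^1$ to boost the degree to at least three.

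For the first stage the central issue is existence of a primitive cohomology class $\phi\in H^1(N;\mathbb{Z})$ whose restriction to $\pi_1(T_i)$ is non-zero for every boundary torus $T_i$; this is the main obstacle. I would derive it from the classical ``half lives, half dies'' theorem: the kernel $K$ of $H_1(\partial N;\mathbb{Q})\to H_1(N;\mathbb{Q})$ is a Lagrangian subspace of $H_1(\partial N;\mathbb{Q})$ with respect to the intersection pairing. Since this pairing is block-diagonal across the boundary tori and non-degenerate on each rank-two block $H_1(T_i;\mathbb{Q})$, the isotropic $K$ cannot contain any entire $H_1(T_i;\mathbb{Q})$, so the image $V_i:=\text{image}(H_1(T_i;\mathbb{Q})\to H_1(N;\mathbb{Q}))$ has positive rank for every $i$. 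Because $\partial N\ne\emptyset$ also forces $H^1(N;\mathbb{Q})\ne 0$, the complement in $H^1(N;\mathbb{Q})$ of the finite union of proper annihilators $\bigcup_i\text{Ann}(V_i)$ is non-empty, and an integral primitive representative of any such rational class gives the desired $\phi$.

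Once $\phi$ is in hand I realize it by a smooth $f:N\to S^1$, arranged to be independent of the collar coordinate on a fixed bicollar $\partial N\times[0,1]\subset N$. On each $T_i$ the homomorphism $\phi|_{\pi_1(T_i)}:\mathbb{Z}^2\to\mathbb{Z}$ is non-zero, so I can complete it to an injective homomorphism $\mathbb{Z}^2\to\mathbb{Z}^2$, realizing a covering map $g_i:T_i\to T^2=\partial D^2\times S^1$ whose second coordinate matches $f|_{T_i}$. Writing $g_i(x)=(z_i(x),w_i(x))$, I set
\[
g_0(x,t)=\bigl((1-t)\,z_i(x),\,w_i(x)\bigr)\quad\text{on } T_i\times[0,1],
\]
and $g_0(y)=(0,f(y))$ on the complement of the collar in $N$. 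The two pieces agree on $\partial N\times\{1\}$, the identity $g_0^{-1}(\partial D^2\times S^1)=\partial N$ is immediate from the formula, the boundary restrictions are the coverings $g_i$, and $(g_0)_*=\phi$ is surjective; a preimage count at a regular point of $D^2\times S^1$ gives $\text{deg}(g_0)=\sum_i\text{deg}(g_i)>0$. Finally, post-composing with the self-map $\psi_k:D^2\times S^1\to D^2\times S^1$, $(z,w)\mapsto(z^k,w)$, for an integer $k$ with $k\cdot\text{deg}(g_0)\ge 3$, preserves properness, $\pi_1$-surjectivity, and the boundary-covering property while multiplying the total degree by $k$, yielding the required map $g$.
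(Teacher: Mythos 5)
Your proof is correct and takes a genuinely different route from the paper's. The two arguments share the same homological starting point, namely a class $\phi\in H^1(N;\mathbb{Z})$ restricting non-trivially to every boundary torus, and in fact you supply a more explicit justification of its existence via the ``half lives, half dies'' Lagrangian argument where the paper simply asserts that each $(j_i)_*$ has free rank at least one. After that the two proofs diverge. The paper represents the Poincar\'e dual of $\phi$ by a proper surface $\Sigma\subset N$, builds a degree-$(\geq 3)$ proper map $\Sigma\to D^2$ by pinching the interior and branched-covering the resulting wedge of discs, extends this to covering maps $T_i\to T^2$ over annular complements, and then extends to all of $N$ using that the complement $(D^2\times S^1)\setminus\bigl((D^2\times\{pt\})\cup T^2\bigr)$ is a $3$-ball; the bound $\deg(g)\geq 3$ is arranged at the outset by choosing the branched covers of high enough degree. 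You avoid both the surface representative and the branched covers entirely: you realize $\phi$ by a circle-valued map, write down an explicit collar interpolation $(x,t)\mapsto\bigl((1-t)z_i(x),w_i(x)\bigr)$ that is visibly proper and restricts to the $g_i$ on $\partial N$, and then reach degree $\geq 3$ by post-composing with the self-map $(z,w)\mapsto(z^k,w)$ of the solid torus, which multiplies degree by $k$ while preserving properness, $\pi_1$-surjectivity and the boundary-covering property. This post-composition trick is cleaner than the paper's direct degree arrangement. Two details you elide but which are easy to fill: choose the integral completions so that each $g_i$ is orientation-preserving with respect to the boundary orientations, so the terms in $\deg(g_0)=\sum_i\deg(g_i)$ have a common sign; and realize $f$ so that each restriction $f|_{T_i}$ is a linear map $T_i\to S^1$, so that $g_i=(z_i,w_i)$ is a genuine covering and not merely homotopic to one, as the statement of the lemma requires.
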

Conditions (2) and (3) in this lemma imply that $g$ is an ``allowable primitive map'', according to the terminology in \cite{Edm}.

\begin{proof}
Let the boundary components of $N$ be $T_1,\cdots,T_k$. 
For each $i=1,\cdots, k$, let $j_i:T_i\to N$ be the inclusion map, then the free rank of the image of $(j_i)_*:H_1(T_i;\mathbb{Z})\to H_1(N;\mathbb{Z})$ is at least $1$. So there exists a surjective homomorphism $\alpha:H_1(N;\mathbb{Z})\to \mathbb{Z}$ such that $\alpha\circ (j_i)_*:H_1(T_i;\mathbb{Z})\to \mathbb{Z}$ is nontrivial for all $i$. We consider $\alpha$ as an element in $\text{Hom}(H_1(N;\mathbb{Z});\mathbb{Z})\cong H^1(N;\mathbb{Z})$, then the dual of $\alpha$ in $H_2(N,\partial N;\mathbb{Z})$ is represented by a compact oriented (possibly disconnected) proper subsurface $\Sigma\subset N$. By doing surgery, we can assume that for each $i$, $\Sigma\cap T_i$ consists of parallel essential circles with consistent orientation. By our choice of $\alpha$, $\Sigma$ intersects with each $T_i$ nontrivially.

We take a proper map $h_0:\Sigma\to D^2=D^2\times \{pt\}$ of degree at least $3$, such that for any $T_i$, the restriction of $h_0|:\Sigma\cap T_i\to S^1=\partial D^2$ on each component of $\Sigma\cap T_i$ has the same positive degree $d_i$. This map $h_0$ can be constructed by first pinching $\Sigma\setminus \mathcal{N}(\partial \Sigma)$ to a point, with the resulting space being a one-point union of discs, then each disc is mapped to $D^2$ by a branched cover of positive degree. The restriction $h_0|:\Sigma\cap T_i\to S^1=S^1\times \{pt\}$ can be extended to a covering map $h_i:T_i\to S^1\times S^1$ of positive degree as following. Since each component $A$ of $T_i\setminus (\Sigma\cap T_i)$ is an annulus, and its two boundary components are mapped to $S^1\times \{pt\}$ with the same degree $d_i$, we define $h_i|_A$ to be a (orientation preserving) covering map to $(S^1\times S^1)\setminus (S^1\times \{pt\})$ of degree $d_i$. Then we have $h_i^{-1}(S^1\times \{pt\})=\Sigma\cap T_i$ and $\text{deg}(h_i)=\text{deg}(h_0|:\Sigma\cap T_i\to S^1)$.

The maps $h_0$ and $h_i,\ i=1,\cdots,k$ together give a map 
$$h:\Sigma\cup(\cup_{i=1}^k T_i)\to (D^2\times \{pt\})\cup (S^1\times S^1)\subset D^2\times S^1.$$ 
The map $h$ extends to a proper map $g:N\to D^2\times S^1$, since it extends to a neighborhood of $\Sigma\cup (\cup T_i)\subset N$ and $(D^2\times S^1)\setminus \big((D^2\times \{pt\})\cup (S^1\times S^1)\big)$ is a $3$-ball.

By construction, we have $\text{deg}(g)=\sum_{i=1}^k\text{deg}(h_i)=\text{deg}(h_0)\geq 3$, thus condition (1) holds. 
For the composition $N\xrightarrow{g}D^2\times S^1\to S^1$, the preimage of $pt\in S^1$ is exactly $\Sigma$, so
the induced homomorphism $H_1(N;\mathbb{Z})\to H_1(S^1;\mathbb{Z})$ is same with $\alpha:H_1(N;\mathbb{Z})\to \mathbb{Z}$. Since $\alpha$ is surjective, $g$ induces a surjective homomorphism on both $H_1$ and $\pi_1$, thus condition (2) holds. Since $g:N\to D^2\times S^1$ is an extension of $h_i:T_i\to S^1\times S^1$, condition (3) holds.

\end{proof}

\begin{lemma}\label{reduction1cusplemma}
For any compact oriented $3$-manifold $N$ with nonempty tori boundary, there exists a compact oriented $3$-manifold $M$ with connected torus boundary, such that $M$ virtually properly $2$-dominates $N$.
\end{lemma}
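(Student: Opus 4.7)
The plan is to apply Lemma \ref{solidtorus} to reduce the construction to a pullback problem over the solid torus, then build $M$ as a $2$-fold branched cover of $V:=D^2\times S^1$ with connected torus boundary, and produce both the finite cover $M'$ of $M$ and the proper degree-$2$ map $M'\to N$ simultaneously via a fiber-product construction.

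First, apply Lemma \ref{solidtorus} to obtain a proper map $g\colon N\to V$ of degree $d\geq 3$ which is surjective on $\pi_1$ and restricts on each boundary torus of $N$ to a covering of $\partial V=T^2$. Since $g$ is allowable in the sense of Edmonds (proper, $\pi_1$-surjective, with covering restriction on the boundary), we may assume after a homotopy that $g$ is a branched covering with branch locus a link $\Delta_g\subset V$. Next, choose a knot $B\subset \operatorname{int}(V)$ that (a) is disjoint from $\Delta_g$, and (b) meets a meridian disk $D^2\times\{*\}$ transversely in a single point; for instance, a generic pushoff of the core circle works. Let $\phi\colon \pi_1(V\smallsetminus B)\to \mathbb{Z}/2$ be the mod-$2$ linking number with $B$, and let $p\colon M\to V$ be the corresponding $2$-fold branched cover. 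Condition (b) forces the meridian of $V$ to equal $\mu_B$ up to conjugation in $\pi_1(V\smallsetminus B)$, so $\phi|_{\pi_1(\partial V)}$ is nontrivial; hence $\partial M\to \partial V$ is a connected $2$-fold cover of $T^2$, so $\partial M$ is a single torus as required.

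Now pull back along $g$. After a small homotopy making $g$ transverse to $B$, the preimage $L:=g^{-1}(B)$ is a link in $\operatorname{int}(N)$, disjoint from $\partial N$ because $g$ is proper. Let $\psi:=\phi\circ g_\ast\colon \pi_1(N\smallsetminus L)\to \mathbb{Z}/2$; transversality forces $\psi$ to send each meridian of $L$ to $1$, so $\psi$ is surjective. Let $\pi\colon M'\to N$ be the associated $2$-fold branched cover. Then $M'$ is a $3$-manifold (since $2$-fold branched covers of $3$-manifolds along links are $3$-manifolds), and $\pi$ is a proper map of degree $2$, giving the required map to $N$. By the universal property of fiber products, $M'\cong N\times_V M$, and the second projection yields a map $q\colon M'\to M$ fitting into a commuting square with $p\circ q = g\circ \pi$.

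The last step is to verify that $q\colon M'\to M$ is a finite unramified cover of degree $d$. Along $p^{-1}(B)$ the $2$-fold ramification of $\pi$ over $L$ is exactly cancelled by the $2$-fold ramification of $p$ over $B$, so $q$ is a local homeomorphism there; away from $p^{-1}(B)$, $p$ is itself a local homeomorphism, and $g$ supplies $d$ local sheets making $q$ a local homeomorphism of sheet count $d$; the disjointness $B\cap\Delta_g=\emptyset$ prevents any new ramification from arising. A fiber count shows $q$ has constant fiber cardinality $d$ and so is a covering map of degree $d$. Hence $M'$ is a finite cover of the single-cusped $3$-manifold $M$ and $\pi\colon M'\to N$ is a proper map of degree $2$, i.e.\ $M$ virtually properly $2$-dominates $N$. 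The main obstacles are the careful coordination of the branch loci (the choice of $B$ disjoint from $\Delta_g$ and meeting a meridian disk in one point) to simultaneously arrange connectedness of $\partial M$ and unramifiedness of $q$; the remaining verifications are standard facts about branched coverings of $3$-manifolds along links together with transversality.
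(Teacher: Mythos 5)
The first half of your construction is sound and closely parallels the paper's strategy: you invoke Lemma~\ref{solidtorus} to get an allowable map $g:N\to V=D^2\times S^1$, homotope it to a branched covering via Edmonds, and aim to build $M$ over $V$. The boundary analysis of $M$ (connectedness of $\partial M$ via $\phi(\mu_V)=1$) is correct, as is the identification of $L=g^{-1}(B)$ with its surjective $\psi=\phi\circ g_*$, so $M'$ is indeed a well-defined connected degree-$2$ branched cover of $N$.

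The gap is in the final step: $q:M'\to M$ is \emph{not} an unramified covering map. Over a point $m\in M$ with $p(m)\in\Delta_g$, the fiber of $q$ is $g^{-1}(p(m))\times\{m\}$, which has strictly fewer than $d$ points since $p(m)$ is a branch point of $g$; so the ``constant fiber cardinality $d$'' claim fails. More structurally: the disjointness $B\cap\Delta_g=\emptyset$ makes $p$ a local homeomorphism near $p^{-1}(\Delta_g)$, so near a preimage $m'$ of such a point, $q$ is locally $p^{-1}\circ g\circ\pi$, where $\pi$ is unramified and $g$ is ramified at $\pi(m')\in g^{-1}(\Delta_g)$; hence $q$ inherits the ramification of $g$. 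The disjointness prevents \emph{new} ramification from arising at the interaction of $B$ and $\Delta_g$, but it does nothing to cancel the pre-existing ramification of $g$. You can see the failure starkly in your own suggested choice of $B$: a pushoff of the core is isotopic to the core, so $p:M\to V$ is just $(z,t)\mapsto(z^2,t)$ on $D^2\times S^1$ and $M$ is again a solid torus, whose finite covers are solid tori with cyclic $\pi_1$; yet $M'$, the double branched cover of a typical $N$, has complicated fundamental group. The two cannot be homeomorphic, so $q$ cannot be a covering.

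What the paper does instead is precisely engineered to kill this branching: rather than taking a branched cover of $V$, it drills out tubular neighborhoods of the branch locus $L=\Delta_g$ from $V$ and glues in copies of $\Sigma_{1,1}\times S^1$, producing an $M$ whose topology encodes the branch behavior of $g$. The simple-branched-covering normalization (Berstein--Edmonds) is then used to classify the local branch types (a unique $2$-to-$1$ component $\mathcal{L}_i^0$ and the rest homeomorphic) so that the pieces of $M'$ can be matched to covers of the glued-in $\Sigma_{1,1}\times S^1$ blocks, yielding a genuine unramified $M'\to M$ along with a proper degree-$2$ pinching map $M'\to N$. To salvage your approach you would similarly need to excise $\Delta_g$ from $V$ (and $g^{-1}(\Delta_g)$ from $N$) and cap off carefully, at which point you essentially arrive back at the paper's construction.
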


\begin{proof}

We first take a proper map $g:N\to D^2\times S^1$ satisfying the conclusion of Lemma \ref{solidtorus}.
These conditions make Theorem 4.1 of \cite{Edm} applicable, so $g: N\to D^2\times S^1$ is homotopic to a branched covering map relative to boundary. By Theorem 6.5 of \cite{BE}, $g: N\to D^2\times S^1$ is further homotopic to a simple branched covering, and we still denote this map by $g$. Here by simple branched covering, we mean that $g$ is a branched covering of degree $d\in \mathbb{Z}_{>0}$, such that for any $p\in D^2\times S^1$, $g^{-1}(p)$ consists of at least $d-1$ points. 

Note that if $g:N\to D^2\times S^1$ is a non-branched cover, then $N=D^2\times S^1$ and we can simply take $M=D^2\times S^1$. So we can assume that $g$ is a genuine branched cover.

Let $L\subset D^2\times S^1$ be the branched locus of the simple branched covering $g:N\to D^2\times S^1$, and we write $L=L_1\cup\cdots\cup L_k$ as a union of its components. We take a tubular neighborhood $\mathcal{N}(L)=\mathcal{N}(L_1)\cup \cdots\cup \mathcal{N}(L_k)$ of $L=L_1\cup\cdots\cup L_k$. On the torus $\partial \mathcal{N}(L_i)$, we take an oriented meridian $m_i$ that bounds the meridian disc of $\mathcal{N}(L_i)$, and take an oriented longitude $l_i$ that intersects with $m_i$ exactly once. For each $L_i$, we take $M_i$ to be a copy of $\Sigma_{1,1}\times S^1$, and take a linear homeomorphism 
$$\phi_i:\partial M_i=\partial \Sigma_{1,1}\times S^1\to \partial \mathcal{N}(L_i)$$
that maps oriented curves $\partial \Sigma_{1,1}\times \{*\}$ and $\{*\}\times S^1$ to $m_i$ and $l_i$ respectively. Then we take $M$ to be 
$$M=(D^2\times S^1\setminus \mathcal{N}(L)) \cup_{\{\phi_i\}_{i=1}^k}(\cup _{i=1}^k M_i).$$
Now we need to construct a finite cover $M'\to M$ and a degree-$2$ map $M'\to N$.

\bigskip

Let $\mathcal{L}=g^{-1}(L)\subset N$. Since $g:N\to D^2\times S^1$ is a branched covering, there is a tubular neighborhood $\mathcal{N}(\mathcal{L})$ of $\mathcal{L}$ in $N$, such that the restriction map $g|:N\setminus \mathcal{N}(\mathcal{L})\to D^2\times S^1\setminus \mathcal{N}(L)$ is a covering map of degree $d=\text{deg}(g)$.

Let $\mathcal{L}_i=g^{-1}(L_i)$. Since $g:N\to D^2\times S^1$ is a simple branched covering, there is a unique component $\mathcal{L}_i^0$ of $\mathcal{L}_i$ such that $g$ is locally a $2$-to-$1$ map near $\mathcal{L}_i^0$, and $g$ is a local homeomorphism near any point in $\mathcal{L}_i\setminus \mathcal{L}_i^0=\cup_{j=1}^{n_i}\mathcal{L}_i^j.$ The restriction of $g$ to $\partial \mathcal{N}(\mathcal{L}_i^0)\to \partial N(L_i)$ is a finite cover corresponding to a subgroup of $\pi_1(\partial N(L_i))$ in one of the following two types:
\begin{enumerate}
\item $\langle 2m_i, k_il_i\rangle <\pi_1(\partial \mathcal{N}(L_i))$ or
\item $ \langle2m_i,k_il_i+m_i\rangle<\pi_1(\partial \mathcal{N}(L_i))$
\end{enumerate}
for some positive integer $k_i$. For any $j\in \{1,\cdots,n_i\}$, the restriction of $g$ to $\partial \mathcal{N}(\mathcal{L}_i^j)\to \partial \mathcal{N}(L_i)$ is a finite cover corresponding to subgroup 
\begin{enumerate}
\item[(3)] $\langle m_i, k_i^jl_i\rangle<\pi_1(\partial \mathcal{N}(L_i))$
\end{enumerate} for some positive integer $k_i^j$.

In case (3), let $\tilde{m}_i^j,\tilde{l}_i^j\subset \partial \mathcal{N}(\mathcal{L}_i^j)$ be one (oriented) component of the preimage of $m_i,l_i\subset \partial \mathcal{N}(L_i)$, respectively. Then $\tilde{m}_i^j\to m_i$ and $\tilde{l}_i^j\to l_i$ are covering maps of degree $1$ and $k_i^j$ respectively, and these two curves intersect once on $\partial \mathcal{N}(\mathcal{L}_i^j)$. 
We take $M_i^j=\Sigma_{1,1}\times S^1$ and a degree-$k_i^j$ covering map 
$$p_i^j:M_i^j=\Sigma_{1,1}\times S^1\to M_i=\Sigma_{1,1}\times S^1,$$ 
that is the product of $id:\Sigma_{1,1}\to \Sigma_{1,1}$ and the degree-$k_i^j$ covering map $S^1\to S^1$. Let 
$$\psi_i^j:\partial M_i^j=\partial \Sigma_{1,1}\times S^1\to \partial N(\mathcal{L}_i^j)$$ 
be the linear homeomorphism that maps oriented curves $\partial \Sigma_{1,1}\times \{*\}$ and $\{*\}\times S^1$ to $\tilde{m}_i^j$ and $\tilde{l}_i^j$ respectively.
Then we have the following commutative diagram
\begin{align}\label{4.1}
\begin{diagram}
\partial M_i^j=\partial \Sigma_{1,1}\times S^1 & \rTo^{\psi_i^j} & \partial \mathcal{N}(\mathcal{L}_i^j) & \subset &  N\setminus N(\mathcal{L})\\
\dTo^{p_i^j|} & & \dTo^{g|} & & \\
\partial M_i=\partial \Sigma_{1,1}\times S^1 & \rTo^{\phi_i} & \partial \mathcal{N}(L_i) & \subset & D^2\times S^1\setminus N(L).
\end{diagram}
\end{align}

In case (1) above, let $\tilde{m}_i^0,\tilde{l}_i^0\subset \partial \mathcal{N}(\mathcal{L}_i^0)$ be one (oriented) component of the preimage of $m_i,l_i\subset \partial \mathcal{N}(L_i)$, respectively. Then $\tilde{m}_i^0\to m_i$ and $\tilde{l}_i^0\to l_i$ are covering maps of degree $2$ and $k_i$ respectively, and these two curves intersect once on $\partial N(\mathcal{L}_i^0)$. We take $M_i^0=\Sigma_{2,2}\times S^1$, and take a degree-$4k_i$ covering map 
$$p_i^0:M_i^0=\Sigma_{2,2}\times S^1\to M_i=\Sigma_{1,1}\times S^1.$$ 
It is the product of the degree-$4$ covering map $\Sigma_{2,2}\to \Sigma_{1,1}$
that factors through $\Sigma_{1,2}$ (which restricts to a degree-$2$ cover on each boundary component), and the degree-$k_i$ covering map $S^1\to S^1$. Let 
$$\psi_i^0:\partial M_i^0=\partial \Sigma_{2,2}\times S^1\to \partial \mathcal{N}(\mathcal{L}_i^0)$$ 
be a linear map that restricts to a homeomorphism on each component of $\partial M_i^0$, such that it maps each oriented boundary component of $\partial\Sigma_{2,2}\times \{*\}$ to $\tilde{m}_i^0$, and maps oriented curves $\{*\}\times S^1$ on both components of $\partial M_i^0$ to $\tilde{l}_i^0$. Then we have the following commutative diagram
\begin{align}\label{4.2}
\begin{diagram}
\partial M_i^0=\partial \Sigma_{2,2}\times S^1 & \rTo^{\psi_i^0} & \partial N(\mathcal{L}_i^0) & \subset &  N\setminus N(\mathcal{L})\\
\dTo^{p_i^0|} & & \dTo^{g|} & & \\
\partial M_i=\partial \Sigma_{1,1}\times S^1 & \rTo^{\phi_i} & \partial N(L_i) & \subset & D^2\times S^1\setminus N(L).
\end{diagram}
\end{align}

In case (2) above, let $\tilde{m}_i^0,\tilde{l}_i^0\subset \partial N(\mathcal{L}_i^0)$ be one (oriented) component of the preimage of $m_i,l_i\subset \partial N(L_i)$, respectively. There is actually a unique $\tilde{l}_i^0$. Then $\tilde{m}_i^0\to m_i$ and $\tilde{l}_i^0\to l_i$ are covering maps of degree $2$ and $2k_i$ respectively, and these two curves intersect (algebraically) twice on $\partial N(\mathcal{L}_i^0)$. Here $\tilde{l}_i^0$ corresponds to $2(k_il_i+m_i)-(2m_i)=2k_il_i\in \pi_1(\partial \mathcal{N}(L_i))$. We take $M_i^0=\Sigma_{2,2}\times I/(x,0)\sim(\phi(x),1)$, where $\phi:\Sigma_{2,2}\to \Sigma_{2,2}$ is the nontrivial deck transformation of the double cover $q:\Sigma_{2,2}\to \Sigma_{1,2}$. Note that $q$ restricts to a degree-$2$ cover on each boundary component of $\Sigma_{2,2}$. Then we take the degree-$4k_i$ covering map 
$$p_i^0:M_i^0=\Sigma_{2,2}\times I/\sim\ \to M_i=\Sigma_{1,1}\times S^1$$ 
that is a composition $\Sigma_{2,2}\times I/\sim \ \to \Sigma_{1,2}\times S^1 \to \Sigma_{1,1}\times S^1$. Here the first map takes the  double covering map $q:\Sigma_{2,2}\to \Sigma_{1,2}$ on each fiber and takes the identity map on the base $S^1$, the second map is the product of the double cover $\Sigma_{1,2}\to \Sigma_{1,1}$ and the degree-$k_i$ covering map $S^1\to S^1$. 
Then each oriented component of the $p_i^0$-preimage of $\partial \Sigma_{1,1}\times \{*\}$ is a component of $\partial \Sigma_{2,2}\times \{*\}$. Each oriented component of the $p_i^0$-preimage of $\{*\}\times S^1$ is a flow line of $M_i^0$ along the $I$-direction, and it intersects the corresponding component of $\partial \Sigma_{2,2}\times \{*\}$ algebraically twice.

There exists a linear map $$\psi_i^0:\partial M_i^0=\partial \Sigma_{2,2}\times I/\sim \to \partial \mathcal{N}(\mathcal{L}_i^0)$$ 
that restricts to a homeomorphism on each component of $\partial M_i^0$, such that it maps each oriented boundary component of $\partial \Sigma_{2,2}\times \{*\}$ to $\tilde{m}_i^0$, and maps an $I$-flow line on each component of $\partial M_i^0$ to $\tilde{l}_i^0$. Then we have the following commutative diagram
\begin{align}\label{4.3}
\begin{diagram}
\partial M_i^0 & \rTo^{\psi_i^0} & \partial N(\mathcal{L}_i^0) & \subset &  N\setminus \mathcal{N}(\mathcal{L})\\
\dTo^{p_i^0|} & & \dTo^{g|} & & \\
\partial M_i & \rTo^{\phi_i} & \partial N(L_i) & \subset & D^2\times S^1\setminus \mathcal{N}(L).
\end{diagram}
\end{align}

\bigskip

Now we take two copies of $N\setminus N(\mathcal{L})$ and denote them by $(N\setminus N(\mathcal{L}))_1$ and $(N\setminus N(\mathcal{L}))_2$ respectively. For any $i=1,\cdots,k$ and $j=1,\cdots,n_i$, we take two copies of $M_i^j$ and denote them by $(M_i^j)_1$ and $(M_i^j)_2$. For any $i=1,\cdots,k$, $M_i^0$ has two boundary components, and we denote them by $(\partial M_i^0)_1$ and $(\partial M_i^0)_2$ respectively.  Then we take $M'$ to be the union of following manifolds
$$(N\setminus \mathcal{N}(\mathcal{L}))_1, (N\setminus \mathcal{N}(\mathcal{L}))_2, (M_i^j)_1, (M_i^j)_2, M_i^0\ \text{for}\ i=1,\cdots,k,\ j=1,\cdots,n_i,$$
by pasting maps 
$$(\psi_i^j)_1:\partial (M_i^j)_1\to (\partial \mathcal{N}(\mathcal{L}_i^j))_1 \subset (N\setminus \mathcal{N}(\mathcal{L}))_1,\ (\psi_i^j)_2:\partial (M_i^j)_2\to (\partial \mathcal{N}(\mathcal{L}_i^j))_2\subset (N\setminus \mathcal{N}(\mathcal{L}))_2,$$
$$(\psi_i^0|)_1:(\partial M_i^0)_1\to (\partial \mathcal{N}(\mathcal{L}_i^0))_1 \subset (N\setminus \mathcal{N}(\mathcal{L}))_1,\ (\psi_i^0|)_2:(\partial M_i^0)_2\to (\partial \mathcal{N}(\mathcal{L}_i^0))_2\subset (N\setminus \mathcal{N}(\mathcal{L}))_2.$$ 
The homeomorphisms $(\psi_i^j)_1,(\psi_i^j)_2$ denote copies of the map $\psi_i^j$ on the corresponding copy of $\partial M_i^j$, while $(\psi_i^0|)_1$ and $(\psi_i^0|)_2$ denote the restriction of $\psi_i^0$ on the corresponding component of $\partial M_i^0$.

The covering map $\pi:M'\to M$ is defined by the following covering maps on pieces of $M'$:
\begin{itemize}
\item $g|$ maps $(N\setminus \mathcal{N}(\mathcal{L}))_1,(N\setminus \mathcal{N}(\mathcal{L}))_2 \subset M'$ to $D^2\times S^2\setminus \mathcal{N}(L)\subset M$,
\item $p_i^j$ maps $(M_i^j)_1,(M_i^j)_2\subset M'$ to $M_i\subset M$ for $i=1,\cdots,k$ and $j=1,\cdots,n_i$,
\item $p_i^0$ maps $M_i^0\subset M'$ to $M_i\subset M$ for $i=1,\cdots,k$.
\end{itemize}
Here $\pi$ is a well-defined map because of three commutative diagrams (\ref{4.1}), (\ref{4.2}), (\ref{4.3}).

The degree-$2$ map $f:M'\to N$ is defined by the following maps on pieces of $M'$:
\begin{itemize}
\item The identity map that maps $(N\setminus \mathcal{N}(\mathcal{L}))_1,(N\setminus \mathcal{N}(\mathcal{L}))_2 \subset M$ to $N\setminus \mathcal{N}(\mathcal{L})\subset N$.
\item A pinching map that maps $(M_i^j)_1,(M_i^j)_2=\Sigma_{1,1}\times S^1 \subset M'$ to $\mathcal{N}(\mathcal{L}_i^j)=D^2\times S^1\subset N$. This map is the product of a degree-$1$ piniching map $\Sigma_{1,1}\to D^2$ and the identity map $S^1\to S^1$.
\item A pinching map that maps $M_i^0=\Sigma_{2,2}\times I/\sim \ \subset M'$ to $\mathcal{N}(\mathcal{L}_i^0)=D^2\times S^1\subset N$, here $\sim$ is induced by the identity map of $\Sigma_{2,2}$ or the nontrivial deck transformation of $\Sigma_{2,2}\to \Sigma_{1,2}$. Here we take a fixed degree-$2$ pinching map $\Sigma_{2,2}\to D^2$ on each fiber, such that it commutes with monodromy homeomorphisms of $M_i^0$ and $\mathcal{N}(\mathcal{L}_i^0)$, and restricts to a homeomorphism on each boundary component. 
\end{itemize}

Then $f$ is a degree-$2$ proper map from $M'$ to $N$.

\end{proof}

Now we are ready to prove the following result.

\begin{proposition}\label{reduction1cusp}
For any compact oriented $3$-manifold $N$ with nonempty tori boundary, there exists a  one-cusped oriented hyperbolic $3$-manifold $M$, such that $M$ virtually properly $2$-dominates $N$.
\end{proposition}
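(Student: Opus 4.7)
The plan is to combine Lemma \ref{reduction1cusplemma} with a hyperbolization step carried out by drilling and hyperbolic Dehn filling. First I apply Lemma \ref{reduction1cusplemma} to obtain a compact oriented $3$-manifold $M_0$ with a single torus boundary component that virtually properly $2$-dominates $N$; say $\pi : M_0^{\mathrm{cov}} \to M_0$ is a finite cover carrying a proper degree-$2$ map $f : M_0^{\mathrm{cov}} \to N$. It therefore suffices to construct a one-cusped oriented hyperbolic $3$-manifold $M$ admitting a finite cover $\widetilde{M}$ and a proper degree-$1$ map $\widetilde{M} \to M_0$, since then by passing to a common finite cover of $\widetilde{M}$ and $M_0^{\mathrm{cov}}$ (using Agol's subgroup separability \cite{Agol2}) and composing with $f$, we obtain a proper degree-$2$ map from a finite cover of $M$ to $N$.

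To build $M$, I proceed in two stages. First, by Myers' theorem, I choose a link $L' \subset \mathrm{int}(M_0)$ such that $M_1 = M_0 \setminus \mathcal{N}(L')$ is cusped hyperbolic, with boundary $\partial M_0 \sqcup \partial \mathcal{N}(L')$. Second, I aim to kill the drilled cusps by Dehn filling while keeping the manifold hyperbolic and maintaining the degree-$1$ relationship with $M_0$. The idea is to pass to a finite cover $\widetilde{M_1} \to M_1$ in which each preimage boundary torus over a component of $\partial \mathcal{N}(L')$ is a cover of high degree $q$, so that the lift of the meridian of $L'$ to $\widetilde{M_1}$ becomes a $q$-fold multiple of a primitive class, and in which the preimage of $\partial M_0$ is a single torus (which will become the unique cusp of $M$); the existence of such a cover follows from the LERFness of cusped hyperbolic $3$-manifold groups \cite{Agol2}. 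I then Dehn-fill each lifted drilled cusp of $\widetilde{M_1}$ along its lifted meridional slope, recovering a finite cover $\widetilde{M_0}$ of $M_0$, but now the filling slopes have length proportional to $q$ in the Euclidean cusp metric. By Thurston's hyperbolic Dehn surgery theorem, for sufficiently large $q$ this filling lies in the hyperbolic regime, so $M := \widetilde{M_0}$ (or a commensurable reduction of it) is a one-cusped hyperbolic $3$-manifold, and the covering map $\widetilde{M_0} \to M_0$ itself provides a proper degree-$[\widetilde{M_0} : M_0]$ map that I can reduce to degree-$1$ after passing to another finite cover by again invoking LERFness.

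The main obstacle I anticipate is reconciling two competing requirements on the Dehn filling: hyperbolicity of $M$ demands that the filling slopes be long in the cusp metric, while a degree-$1$ relationship to $M_0$ naturally wants meridional fillings, which would simply reproduce $M_0$ (non-hyperbolic). The resolution I propose is that working in a sufficiently deep finite cover makes the meridional slope itself long — its length scales linearly with the cover degree $q$ — so that filling along the lift of the meridian simultaneously recovers a cover of $M_0$ and satisfies Thurston's surgery hypothesis. Verifying the existence of such a finite cover with the required primitivity of boundary classes and connectedness properties over $\partial M_0$ is the key technical step, and I plan to accomplish it by applying Agol's theorem to the relevant peripheral subgroups of $\pi_1(M_1)$ together with a standard argument producing covers with prescribed cusp-splitting behavior.
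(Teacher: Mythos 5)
Your approach has a genuine gap at the Dehn-filling step, and your proposed resolution of the tension you correctly identify is in fact incorrect. For the filling of $\widetilde{M_1}$ to recover a \emph{cover} of $M_0 = M_1(\mu)$, the covering map $\widetilde{M_1}\to M_1$ must extend over the filling solid tori; this forces each filling slope $\tilde\sigma$ on a preimage torus $\tilde T$ to project to $\mu$ homeomorphically (degree~$1$). Since $\tilde T\to T$ is a local isometry, $\tilde\sigma$ then has exactly the same length as $\mu$ on corresponding horotori, and its normalized length in fact \emph{decreases} by a factor of $\sqrt{[\tilde T:T]}$ because the cusp torus gets larger; deep covers make Thurston's hypothesis harder, not easier, to satisfy. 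Conversely, if you fill along a slope $\tilde\sigma$ projecting to $\mu$ with degree $q>1$, the slope does become longer, but the filled manifold is then only a degree-$q$ \emph{branched} cover of $M_0$ (branched along the core circles), not a cover, and in particular is not a Dehn filling that simply ``recovers a finite cover of $M_0$.'' The two requirements you need are mutually exclusive, and no passage to deeper covers can reconcile them. Moreover, even granting a proper map $\widetilde{M_0}\to M_0$ of some degree $d>1$, passing to further finite covers can only \emph{increase} the degree of the composition to $N$; LERFness never lowers mapping degree, so the final reduction to degree~$2$ cannot work either.

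The paper avoids this conflict entirely. After the reduction of Lemma~\ref{reduction1cusplemma} one first applies Lemma~4.1 of~\cite{Sun5} to replace the target by an \emph{irreducible} compact $3$-manifold $N''$ with a single torus boundary and a degree-$1$ map to the previous stage. One then invokes Myers' theorem (Theorem~7.2 of~\cite{Mye}) to find a hyperbolic knot $K\subset N''$ that is \emph{null-homotopic} in $N''$, and takes $M$ to be \emph{any} hyperbolic Dehn filling of $N''\setminus\mathcal N(K)$. The degree-$1$ map $M\to N''$ is then constructed directly as in Boileau--Wang~\cite{BW}: it is the identity off the filled solid torus; it extends over the meridian disc of the filled torus because every peripheral slope of $K$ is null-homotopic in $N''$ (the peripheral subgroup is generated by a meridian, which bounds a disc, and a longitude, which is null-homotopic since $K$ is); and it extends over the remaining $3$-ball because $N''$ is irreducible. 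This gives the needed degree-$1$ map to $N''$ without any covering/filling conflict and with no uncontrolled degree, and is the step your argument is missing.
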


\begin{proof}

At first, by Lemma \ref{reduction1cusplemma}, $N$ is virtually $2$-dominated by a compact oriented $3$-manifold $N'$ with connected torus boundary. By Lemma 4.1 of \cite{Sun5}, $N'$ is $1$-dominated by a compact oriented irreducible $3$-manifold $N''$ with connected torus boundary.

This result follows from the proof of Proposition 3.2 of \cite{BW}, although the result in \cite{BW} is only stated for closed $3$-manifolds. By Theorem 7.2 of \cite{Mye}, there exists a hyperbolic knot $K\subset N''$ that is null-homotopic in $N''$. We take $M$ to be a hyperbolic Dehn-filling of $N''\setminus \mathcal{N}(K)$, then $M$ is a one-cusped hyperbolic $3$-manifold. The proof of Proposition 3.2 of \cite{BW} constructs a degree-$1$ map $f:M\to N''$. More precisely, the map $f$ is identity on $N''\setminus \mathcal{N}(K)$, it extends to the meridian disc of the filled-in solid torus since $K$ is null-homotopic in $N''$, and it extends to the whole solid torus since $N''$ is irreducible.
\end{proof}

\bigskip
\bigskip

\section{Topological construction of virtual domination}\label{topo2}

In this section, we give the topological part of the proof of Theorem \ref{main2}, and we also point out how to modify the works to prove Theorem \ref{mixed}.

To prove Theorem \ref{main2}, it suffices to prove the following result. 

\begin{theorem}\label{main3}
Let $M$ be a compact oriented hyperbolic $3$-manifold, such that $\partial M$ has two components $T_1,T_2$ and the kernel of $H_1(T_1\cup T_2;\mathbb{Z})\to H_1(M;\mathbb{Z})$ contains an element $\alpha_1+\alpha_2\in H_1(T_1\cup T_2;\mathbb{Z})$ with $0\ne \alpha_1\in H_1(T_1;\mathbb{Z})$ and $0\ne \alpha_2\in H_1(T_2;\mathbb{Z})$. Let $N$ be a compact oriented hyperbolic $3$-manifold with connected torus boundary. Then $M$ has a finite cover $M'$, such that there is a proper map $f:M'\to N$ with $\text{deg}(f)\in \{1,2,4\}$.
\end{theorem}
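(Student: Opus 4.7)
The plan is to follow the roadmap sketched in the introduction. First, take a geometric cellulation of a compact core $N_0$ of $N$ (with extra edges beyond a geometric triangulation) so that each boundary triangle in $\partial N_0$ is almost equilateral; this makes it possible to realize each boundary triangle by a nearly geodesic triangle in $M$ whose three cuffs lie in $\mathbf{\Gamma}_{R,\epsilon}$. Then construct two edge-maps $j_s^{(1)}:N^{(1)}\to M$ for $s=1,2$ satisfying properties (1)--(3) listed in the sketch: boundary triangles are realized as closed geodesic triangles in $M$, the union for each $s$ assembles into a mapped-in torus homotopic into $T_s$, and for each interior $2$-cell $\Delta$ the combined boundary $j_1^{(1)}(\partial\Delta)\cup j_2^{(1)}(\partial\Delta)$ is null-homologous in $H_1(\mathrm{SO}(M);\mathbb{Z})$.

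To construct the edge-maps, I would assign a basepoint frame in $M$ to each $0$-cell of $N_0$ and apply the connection principle (Theorem \ref{connectionprinciple}) to realize each edge as an $(R,\epsilon)$-good $\partial$-framed geodesic segment with prescribed relative homology class, length, phase, and framing data. The hypothesis that $\alpha_1+\alpha_2$ lies in the kernel of $H_1(T_1\cup T_2;\mathbb{Z})\to H_1(M;\mathbb{Z})$ is exactly what allows a consistent choice of these homology classes so that each boundary torus of $N_0$ is realized as a mapped-in torus homotopic into $T_s$, while simultaneously ensuring that interior combined boundaries are null-homologous in $\mathrm{SO}(M)$ (via Theorem \ref{cuspedhomology}). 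Once this null-homology holds, for each interior $2$-cell $\Delta$ Proposition \ref{boundingsurface} supplies an $(R',\epsilon)$-nearly geodesic subsurface $S_\Delta \looparrowright M$ with $(R,\epsilon)$-good boundary bounded by two copies of $j_1^{(1)}(\partial\Delta)\cup j_2^{(1)}(\partial\Delta)$, with feet of the boundary pants matching prescribed normal vectors along the edges.

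Assembling two copies of each $j_s^{(1)}$ together with two copies of the two boundary tori and the surfaces $\{S_\Delta\}$ yields a $2$-complex $Z$ and a map $j:Z\looparrowright M$. Granting the $\pi_1$-injectivity of $j$ (Theorem \ref{pi1injectivity}, deferred to Section \ref{pi1inj2}), I would invoke Agol's LERF theorem for cusped hyperbolic $3$-manifold groups (\cite{Agol2}) together with the separability input of \cite{Sun3} to lift $j$ to an embedding $Z\hookrightarrow M'$ in a finite cover $M'$ of $M$. Taking a regular neighborhood $\mathcal{Z}\subset M'$, by the construction $\mathcal{Z}$ is homeomorphic to the space obtained from $\mathcal{N}(N^{(2)})$ by replacing each $\Delta\times I$ with $S_\Delta\times I$ (and each boundary-triangle slab with a corresponding torus collar), giving a natural proper degree-$4$ map $g:\mathcal{Z}\to\mathcal{N}(N^{(2)})$. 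The crucial torus-parallel property, that each component of $g^{-1}(\partial N_0)$ is parallel to a component of $\partial M'$, ensures $g$ extends over the complementary pieces of $M'\setminus\mathcal{Z}$ by sending each such piece to the union of a $3$-ball neighborhood in $N\setminus \mathcal{N}(N^{(2)})$ and a finite graph, producing the desired proper map $f:M'\to N$. In favorable cases the four copies collapse to two or one, yielding degrees $2$ or $1$ respectively.

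The main obstacle I expect is the simultaneous construction of $j_1^{(1)}$ and $j_2^{(1)}$ meeting all three compatibility conditions at once. Making each boundary torus not merely null-homologous but genuinely homotopic into its designated cusp torus $T_s$ requires delicate geometric control near the cusps of $M$: one must arrange feet and phases of the boundary good curves to match a horotorus structure, almost certainly by replacing some boundary good pants by Kahn--Wright hamster wheels or umbrellas. At the same time, the interior null-homology condition must be enforced in the frame bundle $\mathrm{SO}(M)$, which demands tracking not just $1$-cycles but their framings through the isomorphism $\Phi$ of Theorem \ref{cuspedhomology}. Balancing these cusp-level feet-matching requirements against the interior framing-homology conditions, while keeping all edges sufficiently long for the $\pi_1$-injectivity machinery of Section \ref{pi1inj2} to apply, is the technical heart of the argument.
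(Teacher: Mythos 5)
Your overall structural plan---cellulation of a compact core $N_0\subset N$, edge maps into $M$ via the connection principle, Kahn--Wright nearly geodesic bounding surfaces for interior cells, assembly into $Z\looparrowright M$, the deferred $\pi_1$-injectivity result, Agol's LERF theorem, and extension of a proper degree-$\{1,2,4\}$ map on $\mathcal{Z}$ to $M'\to N$---does match the paper. However, there is a fundamental misunderstanding of how the boundary tori are realized. You propose to make each boundary triangle a nearly geodesic piece whose edges lie in ${\bf \Gamma}_{R,\epsilon}$, and later suggest arranging feet and phases of ``boundary good pants'' or replacing them by hamster wheels or umbrellas to get tori homotopic into $T_s$. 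This cannot succeed: any surface assembled from $(R,\epsilon)$-good pants (or hamster wheels, or umbrellas) yields a quasi-Fuchsian surface subgroup, which can never be homotopic into a cusp torus, whose holonomy is parabolic. In Construction~\ref{constructj1}(1) and Construction~\ref{constructZ2}(2), the boundary edges are instead geodesic segments of hyperbolic length roughly $2\log R$ (not $2R$, so not in ${\bf \Gamma}_{R,\epsilon}$), homotopic rel endpoints to Euclidean geodesics in the horotorus $T_s$, and each boundary triangle is mapped to the \emph{totally geodesic} hyperbolic triangle it bounds; the union $T_s^t$ is then genuinely homotopic to a covering of the horotorus. No good pants appear on the boundary tori; the Kahn--Wright machinery is used only for the interior surfaces $S_\Delta$. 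Relatedly, the extra edges $e_{ijk}$ and bigons $B_{ijk}$ of Construction~\ref{cellulation} are there not to make boundary triangles almost equilateral (Lemma~\ref{existencealmostequilateral} already does that), but to mediate geometrically between the short boundary edges and the long interior edges, smoothing the dihedral-angle transition that would otherwise obstruct the $\pi_1$-injectivity estimates of Section~\ref{pi1inj2}.

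A second gap is the absence of the auxiliary curve $\gamma$ and the decomposition $N_0 = N_\gamma\cup\mathcal{N}(\gamma)$. Since $[l]$ dies under $H_1(\partial N_0)\to H_1(N_0)$, one cannot, for a single $s$, extend the boundary map $i_{\partial,s}$ to a homological instruction on $H_1(N_0^{(2)})$ making every single-$s$ interior triangle boundary null-homologous. The paper drills out $\mathcal{N}(\gamma)$ precisely so that $\partial N_0\hookrightarrow N_\gamma^{(2)}$ becomes $H_1$-injective (Lemma~\ref{simplehomology}(3)); it then uses single-$s$ surfaces $S_{ijk,s}$ on $N_\gamma^{(2)}$ and invokes $\alpha_1+\alpha_2=0$ only for the handful of triangles meeting $\mathcal{N}(\gamma)$ (Lemma~\ref{preparation}(3) and Lemma~\ref{checkgoodcurve}(2)). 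Your combined-boundary approach for every interior cell could in principle sidestep $\gamma$, but you would then need to verify that the homomorphism ${\bf i}_1+{\bf i}_2$ actually extends over all of $H_1(N_0^{(2)})$ when $H_1(\partial N_0)\to H_1(N_0)$ has torsion in its cokernel, and that the resulting $\mathcal{Z}$ still carries the pinching map of Lemma~\ref{elementary}; as written you do not engage with either point.
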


We first prove that Theorem \ref{main3} implies Theorem \ref{main2}.

\begin{proof}[Proof of Theroem \ref{main2} (by assuming Theorem \ref{main3}).]

Let $M$ be a compact oriented cusped hyperbolic $3$-manifold, and let $N$ be a compact oriented $3$-manifold with tori boundary, as in Theorem \ref{main2}. By Lemmas \ref{homology2boundary} and \ref{reduction1cusp}, there are compact oriented $3$-manifold $M_1$ and $N_1$ with tori boundary, such that the following hold:
\begin{enumerate}
\item $M_1$ is a finite cover of $M$ and $\partial M_1$ contains two components $T_1,T_2$, such that the kernel of $H_1(T_1\cup T_2;\mathbb{Z})\to H_1(M_1;\mathbb{Z})$ contains an element $\alpha_1+\alpha_2\in H_1(T_1\cup T_2;\mathbb{Z})$ with $0\ne \alpha_1\in H_1(T_1;\mathbb{Z})$ and $0\ne \alpha_2\in H_1(T_2;\mathbb{Z})$.
\item $N_1$ is an oriented one-cusped hyperbolic $3$-manifold that admits a finite cover $p:N_2\to N_1$ and a degree-$2$ map $g:N_2\to N$. 
\end{enumerate}
Theorem \ref{main3} implies that $M_1$ has a finite cover $M_2$ and there is a proper map $h:M_2\to N_1$ such that $\text{deg}(h)\in \{1,2,4\}$.

Let $q:M_3\to M_2$ be the covering space of $M_2$ corresponding to $(h_*)^{-1}(p_*(\pi_1(N_2)))$, then we have the following commutative diagram:
\begin{diagram}
M_3 & \rTo^{h'} & N_2 \\
\dTo^{q} &      & \dTo_{p} \\
M_2 & \rTo^{h} & N_1.
\end{diagram}
Here 
$$\text{deg}(q)=[\pi_1(M_2):q_*(\pi_1(M_3))]=[\pi_1(M_2):(h_*)^{-1}(p_*(\pi_1(N_2)))]$$ is a factor of
$\text{deg}(p)=[\pi_1(N_1):p_*(\pi_1(N_2))].$
Since $\text{deg}(h)\cdot \text{deg}(q)=\text{deg}(p)\cdot \text{deg}(h')$, $\text{deg}(h')$ is a factor of $\text{deg}(h)\in \{1,2,4\}$. So $f'=g\circ h':M_3\xrightarrow{h'}N_2\xrightarrow{g} N$ is a map such that $\text{deg}(f')\in \{2,4,8\}$.



Since $M_3$ has tori boundary, $b_1(M_3)\geq 1$ holds. So $M_3$ has a cyclic cover $r:M'\to M_3$ of degree $\frac{8}{\text{deg}(f')}$. Then $f=f'\circ r:M'\to N$ is a proper map of degree $8$, as desired.
\end{proof}

The following three subsections are devoted to prove Theorem \ref{main3}, modulo a $\pi_1$-injectivity result (Theorem \ref{pi1injectivity}). We always assume that $M$ and $N$ satisfy the assumption of Theorem \ref{main3}.

\subsection{Initial data of the construction}\label{initialdata}

In this section, we first give some geometric data deduced from $N$, then we give some related geometric notions on $M$.

We first prove the following lemma on triangulation of flat tori. The resulting triangulation will be the restriction of our desired triangulation of a cusped hyperbolic $3$-manifold to its horotorus.
\begin{lemma}\label{existencealmostequilateral}
For any flat torus $T$, any primitive closed geodesic $l$ on $T$, and any $\epsilon\in (0,0.1)$, $T$ has a geometric triangulation such that the following hold.
\begin{enumerate} 
\item $l$ is contained in the $1$-skeleton of this triangulation.
\item There exists $r\in (0,\epsilon)$, such that all edges have length in $[r,(1+2\epsilon)r)$.
\item Any inner angle of any triangle is $\epsilon$-close to $\frac{\pi}{3}$. 
\end{enumerate}
\end{lemma}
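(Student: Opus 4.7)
The plan is to construct an explicit refinement of the given flat torus by a nearly equilateral sublattice. Write $T = \mathbb{R}^2/\Lambda$ and choose a basis $v_1, v_2$ of $\Lambda$ with $v_1 = (L,0)$ the vector generating $l$ (so $L$ is the length of $l$) and $v_2 = (a,b)$ with $b > 0$. Let $n_1$ be a large integer (to be specified) and set $e_1 = v_1/n_1 = (L/n_1, 0)$. The goal is to find $e_2 \in \mathbb{R}^2$ close to the ``ideal equilateral'' vector $e_2^{\mathrm{id}} = (L/(2n_1), L\sqrt{3}/(2n_1))$, such that $v_2 \in \mathbb{Z}e_1 + \mathbb{Z}e_2$. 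If $v_2 = pe_1 + qe_2$, then necessarily $q = b/h$ where $h$ is the $y$-coordinate of $e_2$, and $p$ determines the $x$-coordinate. So I define $q = \mathrm{round}(2bn_1/(L\sqrt{3}))$ and $p = \mathrm{round}(an_1/L - q/2)$, then set $e_2 = (v_2 - pe_1)/q$.

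An elementary computation then shows that $e_2 = e_2^{\mathrm{id}} + O(L/n_1^2)$ (both coordinates), since the roundings of $q$ and $p$ introduce errors of at most $1/2$, which are damped by the factor $1/q = O(1/n_1)$. Setting $\Lambda' = \mathbb{Z}e_1 + \mathbb{Z}e_2$, the identities $v_1 = n_1 e_1$ and $v_2 = pe_1 + qe_2$ give $\Lambda \subset \Lambda'$, so the standard $\Lambda'$-invariant triangulation of $\mathbb{R}^2$ --- whose vertex set is $\Lambda'$ and whose triangles are translates of $\{0, e_1, e_2\}$ and $\{e_1, e_2, e_1+e_2\}$ --- descends to a geometric triangulation of $T$.

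Finally I check the three required properties. (1) The geodesic $l$ is the image of the segment from $0$ to $v_1 = n_1 e_1$, and by construction this segment is a concatenation of $n_1$ consecutive edges of $\Lambda'$, hence lies in the $1$-skeleton. (2) The three edge vectors $e_1, e_2, e_1 + e_2$ (or $e_2 - e_1$, depending on which diagonal is chosen) have lengths $L/n_1$, $|e_2|$, $|e_1+e_2|$, all of the form $(L/n_1)(1 + O(1/n_1))$; taking $r$ to be their minimum and choosing $n_1$ large enough gives both $r < \epsilon$ and $\max/\min < 1 + 2\epsilon$. (3) The angle between any two of these edge vectors tends to $\pi/3$ as $n_1 \to \infty$, so for $n_1$ sufficiently large every inner angle is within $\epsilon$ of $\pi/3$. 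The only mild obstacle is the bookkeeping in step (2), comparing the deviations from equilateral in both edge length and angle against the single parameter $n_1$; but since both deviations are $O(1/n_1)$ while the target tolerance $\epsilon$ is fixed, this is handled by simply choosing $n_1$ sufficiently large depending on $L, a, b, \epsilon$.
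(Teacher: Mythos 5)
Your proof is correct and follows essentially the same strategy as the paper's: construct a nearly hexagonal refinement lattice $\Lambda' \supset \Lambda$ with a short basis vector along $l$ and take the induced triangulation. The paper packages the same idea slightly differently (normalize so $l$ has length $1$, take the point $\omega$ of the fixed equilateral lattice $\Lambda_0$ closest to $Nz_0$, shear $\Lambda_0$ so as to carry $\omega$ to $Nz_0$, rescale by $1/N$, then barycentrically subdivide to shrink edges), but your explicit rounding to obtain $q$ and $p$ plays exactly the role of the ``closest lattice point'' step, and the conclusion is reached in the same way by letting the refinement parameter grow.
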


\begin{proof}
Up to scaling, we can assume that $T$ is isometric to $\mathbb{C}/\mathbb{Z}\oplus \mathbb{Z}z_0$ for some complex number $z_0\in \mathbb{C}$ with $\text{Im}(z_0)>0$, and the closed geodesic $l$ corresponds to the edge from $0$ to $1$.

We consider the lattice $\Lambda_0=\mathbb{Z}\oplus \mathbb{Z}\omega_0$ of $\mathbb{C}$ for $\omega_0=\frac{1+\sqrt{3}i}{2}$. For large $N\in \mathbb{N}$, we take $\omega$ to be the point in $\Lambda_0$ closest to $Nz_0$. Note that $\text{Im}(\omega)>0$ holds if $N>\frac{1}{\text{Im}(z_0)}$.

Let $T:\mathbb{C}\to \mathbb{C}$ be the linear transformation that maps $N,\omega\in \Lambda_0$ to $N,Nz_0\in \mathbb{Z}\oplus \mathbb{Z}z_0$ respectively. Then $\frac{1}{N}T$ maps $\Lambda_0$ to a lattice of $\mathbb{C}$ that contains $\mathbb{Z}\oplus \mathbb{Z}z_0$. The equilateral triangulation of $\mathbb{C}$ corresponding to $\Lambda_0$ induces a triangulation of $T$, and $l$ is contained in the $1$-skeleton. 

It is straight forward to check that, if $N$ is large enough (say $N>\frac{6}{\epsilon \cdot \text{Im}(z_0)}$), then all inner angles of the above triangulation of $T$ are $\epsilon$-close to $\frac{\pi}{3}$. So we get an $\epsilon$-almost-equilateral geodesic triangulation of $T$, such that all triangles are isometric to each other. 

For each triangle in this triangulation of $T$, we take middle points of its edges and divide it into four smaller triangles, to get a finer $\epsilon$-almost-equilateral geodesic triangulation, such that all smaller triangles are similar to the original ones. We do this process repeatedly so that all edges have length at most $\epsilon$. Let $r$ be the length of the shortest edge, then the Euclidean sine law implies that all edges have length in $[r,(1+2\epsilon)r)$.
\end{proof}

Let $\epsilon_0>0$ be a constant smaller than the $3$-dimensional Margulis constant. For the one-cusped hyperbolic $3$-manifold $N$ as in Theorem \ref{main3} (considered as an open complete Riemannian manifold), let $N_c$ be the complement of the cusp end with injectivity radius at most $\frac{\epsilon_0}{10}$, and let $T_c$ be the boundary of $N_c$. By a classical application of the Lefschetz duality, there is a primitive closed geodesic $l$ on $T_c$ that spans $\text{ker}(H_1(T_c;\mathbb{R})\to H_1(N_c;\mathbb{R}))$.

\begin{construction}\label{triangulation}
We construct a geometric triangulation of a compact sub-manifold of $N$ containing $N_c$, whose geometry near $\partial N_c$ is quite special. Let $\epsilon\in (0,\frac{\epsilon_0}{100})$ be a constant smaller than the injectivity radius of $N_c$. 
\begin{enumerate} 

\item By Lemma \ref{existencealmostequilateral} (applied to $\frac{\epsilon}{100}$), the horotorus $T_c$ has a geometric triangulation, such that $l$ is contained in the $1$-skeleton, all edges have length in $[r,(1+2\frac{\epsilon}{100})r)$ for some $r\in(0,\frac{\epsilon}{100})$, and all inner angles of triangles are $\frac{\epsilon}{100}$-close to $\frac{\pi}{3}$.

\item Let $T_c'$ be the horotorus in $N_c$ that has distance $\frac{\sqrt{6}}{3}r$ from $T_c$. For any triangle $\Delta$ in $T_c$, we take its circumcenter, and let $v_{\Delta}$ be its closest point on $T_c'$. For any vertex $n$ of the triangulation of $T_c$, let its closest point on $T_c'$ be $v_n$.

\item We connect $v_{\Delta}$ to the three vertices of $\Delta$ and obtain a (hyperbolic) tetrahedron in $N$. All inner angles of all triangles on the boundary of this tetrahedron are $\epsilon$-close to $\frac{\pi}{3}$. Note that the triangle $\Delta$ contained in $T_c$ is not a face of this tetrahedron, since $\Delta$ is only a Euclidean triangle but not a hyperbolic one.

\item For any two triangles $\Delta_1, \Delta_2$ contained in $T_c$ that share an edge, we add an edge connecting $v_{\Delta_1}$ and $v_{\Delta_2}$. This edge and the edge $\Delta_1\cap \Delta_2$ together give a hyperbolic tetrahedron in $N$. 

\item For any vertex $n$ and two triangles $\Delta_1, \Delta_2$ contained in $T_c$, such that $\Delta_1\cap \Delta_2$ is an edge containing $n$, we get a hyperbolic tetrahedron with vertices $n,v_n,v_{\Delta_1},v_{\Delta_2}$. A picture of the tetrahedra we have constructed can be found in Figure \ref{figure1}, which gives a triangulation of a compact submanifold containing $T_c$.

\item Let $N_{collar}$ be the union of tetrahedra (with disjoint interior) constructed in previous steps, and let $N_0$ be the union of $N_c$ and $N_{collar}$, which is compact and is a deformation retract of $N$. Then we extend the above triangulation of $N_{collar}$ to a geometric triangulation of $N_0$. 
\end{enumerate}
\end{construction}

\begin{figure}
\centering
\label{new picture}
\def\svgwidth{.7\textwidth}
\begingroup%
  \makeatletter%
  \providecommand\color[2][]{%
    \errmessage{(Inkscape) Color is used for the text in Inkscape, but the package 'color.sty' is not loaded}%
    \renewcommand\color[2][]{}%
  }%
  \providecommand\transparent[1]{%
    \errmessage{(Inkscape) Transparency is used (non-zero) for the text in Inkscape, but the package 'transparent.sty' is not loaded}%
    \renewcommand\transparent[1]{}%
  }%
  \providecommand\rotatebox[2]{#2}%
  \newcommand*\fsize{\dimexpr\f@size pt\relax}%
  \newcommand*\lineheight[1]{\fontsize{\fsize}{#1\fsize}\selectfont}%
  \ifx\svgwidth\undefined%
    \setlength{\unitlength}{448.60599236bp}%
    \ifx\svgscale\undefined%
      \relax%
    \else%
      \setlength{\unitlength}{\unitlength * \real{\svgscale}}%
    \fi%
  \else%
    \setlength{\unitlength}{\svgwidth}%
  \fi%
  \global\let\svgwidth\undefined%
  \global\let\svgscale\undefined%
  \makeatother%
  \begin{picture}(1,0.86902489)%
    \lineheight{1}%
    \setlength\tabcolsep{0pt}%
    \put(0,0){\includegraphics[width=\unitlength,page=1]{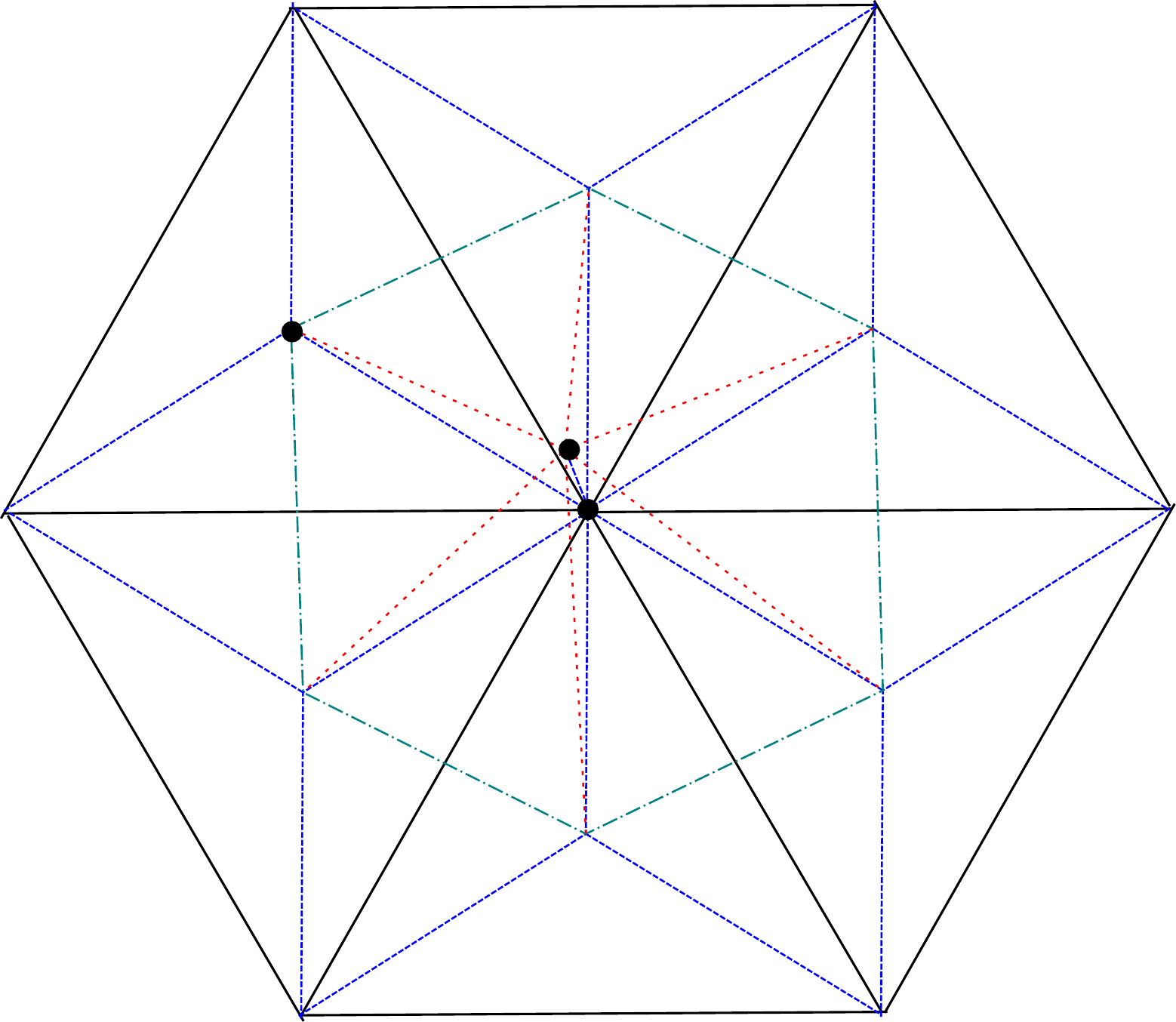}}%
    \put(0.48417944,0.38128297){\color[rgb]{0,0,0}\makebox(0,0)[lt]{\lineheight{1.25}\smash{\begin{tabular}[t]{l}$n$\\\end{tabular}}}}%
    \put(0.45241436,0.51001485){\color[rgb]{0,0,0}\makebox(0,0)[lt]{\lineheight{1.25}\smash{\begin{tabular}[t]{l}$v_n$\end{tabular}}}}%
    \put(0,0){\includegraphics[width=\unitlength,page=2]{figure1.pdf}}%
    \put(0.18826275,0.61701303){\color[rgb]{0,0,0}\makebox(0,0)[lt]{\lineheight{1.25}\smash{\begin{tabular}[t]{l}$v_{\Delta_1}$\end{tabular}}}}%
    \put(0.46244545,0.74407337){\color[rgb]{0,0,0}\makebox(0,0)[lt]{\lineheight{1.25}\smash{\begin{tabular}[t]{l}$v_{\Delta_2}$\end{tabular}}}}%
  \end{picture}%
\endgroup%

\caption{A picture of the triangulation near $\partial N_0$. Up to homotopy, black edges lie in $T_c$, blue edges connect $T_c$ and $T_c'$, red and green edges lie in $T_c'$.}
\label{figure1}
\end{figure}

For the above geometric triangulation of $N_0$, we use $V_N=\{n_1,n_2,\cdots,n_l\}$ to denote the set of vertices, and let $V_{N,\partial}=V_N\cap \partial N_0$. If there is an oriented edge from $n_i$ to $n_j$, we denote it by $e_{ij}$ and denote its orientation reversal by $e_{ji}$. For each triangle with vertices $n_i,n_j,n_k$, we denote the corresponding marked triangle by $\Delta_{ijk}$ (with an order on its vertices). We can naturally identify $\partial N_0$ with $\partial N_c=T_c$, and identify their triangulations.

Instead of directly working with the above geometric triangulation of $N_0$, we add more edges to get a cellulation of $N_0$. 

\begin{construction}\label{cellulation}
For any triangle $\Delta_{ijk}$ that only intersects with $\partial N_0$ along an edge $e_{ij}$, we add an oriented path $e_{ijk}$ in $\Delta_{ijk}$ from $n_i$ to $n_j$ of constant geodesic curvature, such that the tangent vector of $e_{ijk}$ at $n_i$ is $\frac{\epsilon}{200}$-close to the average of tangent vectors of $e_{ij}$ and $e_{ik}$, and the same for the tangent vector of $e_{ijk}$ at $n_j$.  See Figure \ref{figure2} for a picture of $e_{ijk}$. After this construction, there are two edges from $n_i$ to $n_j$.

The new edge $e_{ijk}$ divides $\Delta_{ijk}$ to a bigon and a triangle. We denote the bigon by $B_{ijk}$, and abuse notation to denote the new triangle by $\Delta_{ijk}$. For a triangle $\Delta_{ijk}$ obtained by this modification process, it is called  a {\it modified triangle}. The original triangle $\Delta_{ijk}$ (defined in Construction \ref{triangulation}) will not be used anymore.

After adding these edges to the triangulation of $N_0$ in Construction \ref{triangulation}, we get a cellulation of $N_0$, which is called a geometric cellulation. 
\end{construction}

\begin{figure}
\centering
\label{new picture}
\def\svgwidth{.5\textwidth}
\begingroup%
  \makeatletter%
  \providecommand\color[2][]{%
    \errmessage{(Inkscape) Color is used for the text in Inkscape, but the package 'color.sty' is not loaded}%
    \renewcommand\color[2][]{}%
  }%
  \providecommand\transparent[1]{%
    \errmessage{(Inkscape) Transparency is used (non-zero) for the text in Inkscape, but the package 'transparent.sty' is not loaded}%
    \renewcommand\transparent[1]{}%
  }%
  \providecommand\rotatebox[2]{#2}%
  \newcommand*\fsize{\dimexpr\f@size pt\relax}%
  \newcommand*\lineheight[1]{\fontsize{\fsize}{#1\fsize}\selectfont}%
  \ifx\svgwidth\undefined%
    \setlength{\unitlength}{540.46628815bp}%
    \ifx\svgscale\undefined%
      \relax%
    \else%
      \setlength{\unitlength}{\unitlength * \real{\svgscale}}%
    \fi%
  \else%
    \setlength{\unitlength}{\svgwidth}%
  \fi%
  \global\let\svgwidth\undefined%
  \global\let\svgscale\undefined%
  \makeatother%
  \begin{picture}(1,0.79151483)%
    \lineheight{1}%
    \setlength\tabcolsep{0pt}%
    \put(0,0){\includegraphics[width=\unitlength,page=1]{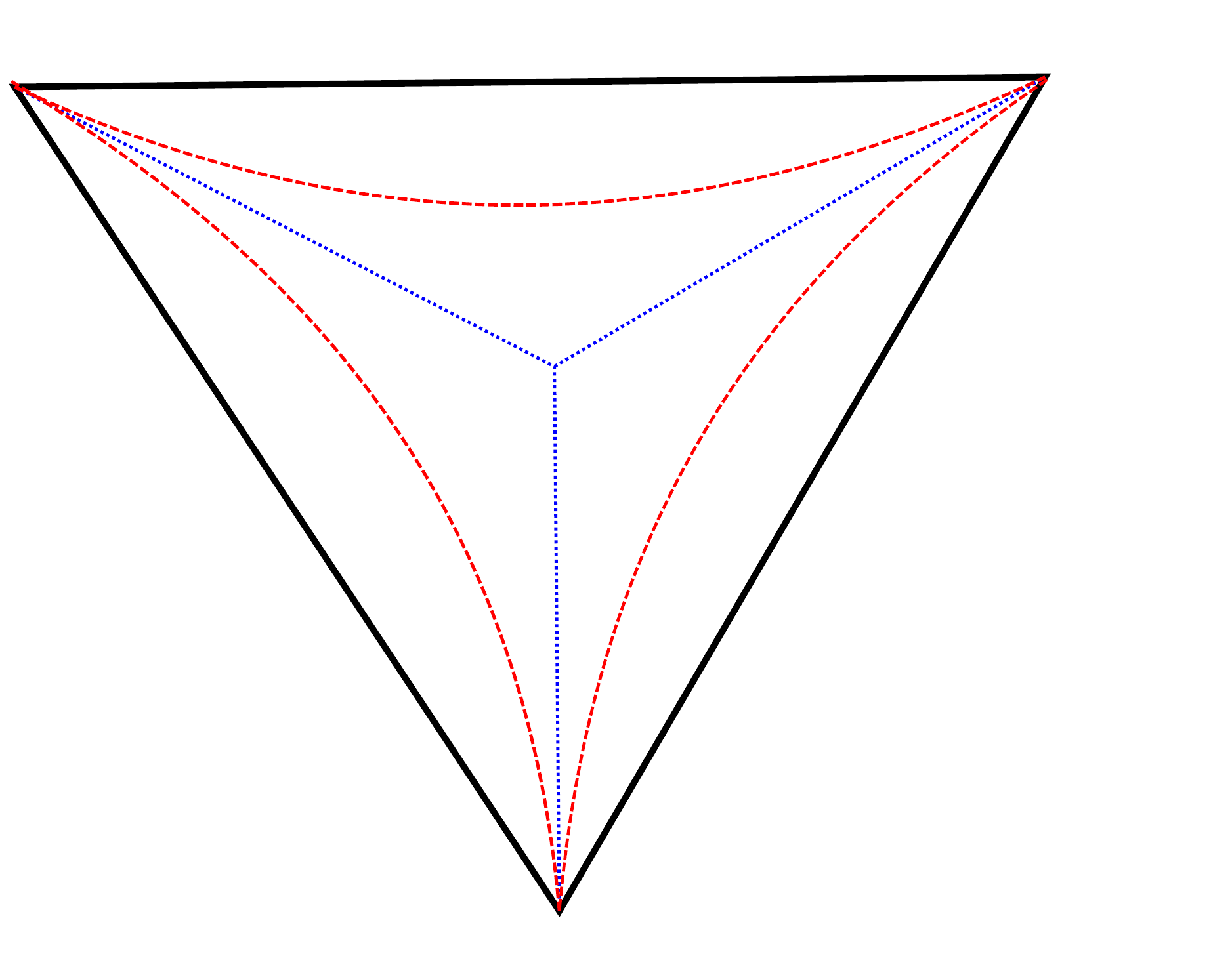}}%
    \put(-0.004201,0.74680818){\color[rgb]{0,0,0}\makebox(0,0)[lt]{\lineheight{1.25}\smash{\begin{tabular}[t]{l}$n_l$\end{tabular}}}}%
    \put(0.82644492,0.74880011){\color[rgb]{0,0,0}\makebox(0,0)[lt]{\lineheight{1.25}\smash{\begin{tabular}[t]{l}$n_j$\end{tabular}}}}%
    \put(0.43814687,0.00978427){\color[rgb]{0,0,0}\makebox(0,0)[lt]{\lineheight{1.25}\smash{\begin{tabular}[t]{l}$n_i$\end{tabular}}}}%
    \put(0.50361422,0.41003401){\color[rgb]{0,0,0}\makebox(0,0)[lt]{\lineheight{1.25}\smash{\begin{tabular}[t]{l}$e_{ijk}$\end{tabular}}}}%
    \put(0.43837167,0.52690953){\color[rgb]{0,0,0}\makebox(0,0)[lt]{\lineheight{1.25}\smash{\begin{tabular}[t]{l}$n_k$\end{tabular}}}}%
    \put(0.76010917,0.52051509){\color[rgb]{0,0,0}\makebox(0,0)[lt]{\lineheight{1.25}\smash{\begin{tabular}[t]{l}$e_{ij}$\end{tabular}}}}%
    \put(0,0){\includegraphics[width=\unitlength,page=2]{figure2.pdf}}%
  \end{picture}%
\endgroup%

\caption{Construction of the new edge $e_{ijk}$ near $\partial N_0$, and the vertex $n_k$ is to the left of $e_{ij}$.}
\label{figure2}
\end{figure}

We use $N^{(1)}$ and $N^{(2)}$ to denote the $1$- and $2$-skeletons of the above geometric cellulation of $N$ respectively.
This cellulation also gives a handle structure of a neighborhood of $N_0$ in $N$, and we use $\mathcal{N}^{(1)}$ and $\mathcal{N}^{(2)}$ to denote the union of $0$-, $1$-handles, and $0$-, $1$-, $2$-handles.

Let $m$ be a simple closed curve on $T_c$ that intersects with $l$ exactly once. We isotopy $m$ to a curve $\gamma$ in $N_0\setminus N_{collar}$, so that it is disjoint from $N^{(1)}$, and intersects with all triangles in $N^{(2)}$ transversely. Let $\mathcal{N}(\gamma)$ be the union of all tetrahedra that intersect with $\gamma$, then we can assume that $\mathcal{N}(\gamma)$ is a neighborhood of $\gamma$ homeomorphic to the solid torus. Let $N_{\gamma}$ be $N_0\setminus \text{int}(\mathcal{N}(\gamma))$. The above cellulation of $N_0$ induces a cellulation of $N_{\gamma}$, and we denote the $2$-skeleton of $N_{\gamma}$ by $N_{\gamma}^{(2)}$.

Since the $2$-skeleton carries the first homology group, we have the following commutative diagram:
\begin{align}\label{5.1}
\begin{diagram}
H_1(N_{\gamma}^{(2)};\mathbb{Z}) & \rTo & H_1(N_{\gamma};\mathbb{Z}) & &\\
\dTo                                                 &          &  \dTo                                      & &\\
H_1(N^{(2)};\mathbb{Z})                 & \rTo   & H_1(N_0;\mathbb{Z})             & \rTo & H_1(N;\mathbb{Z}).  
\end{diagram}
\end{align}
Here all homomorphisms are induced by inclusions, and all horizontal homomorphisms are isomorphisms.

The following lemma provides some elementary properties of vertical homomorphisms in diagram (\ref{5.1}).
\begin{lemma}\label{simplehomology}
Let $i:N_{\gamma}^{(2)}\to N$ be the inclusion map, and let $c$ be the meridian of $\mathcal{N}(\gamma)$, then the following hold.
\begin{enumerate}
\item $i_*:H_1(N_{\gamma}^{(2)};\mathbb{Z})\to H_1(N;\mathbb{Z})$ is surjective.
\item The kernel of $i_*$ is spanned by a non-torsion element $[c]\in H_1(N_{\gamma}^{(2)};\mathbb{Z})$, and $[c]-[l]$ is a torsion element in $H_1(N_{\gamma}^{(2)}; \mathbb{Z})$.
\item The inclusion $\partial N_0\to N_{\gamma}^{(2)}$ induces an injective homomorphism on $H_1(\cdot;\mathbb{Z})$.
\end{enumerate}
\end{lemma}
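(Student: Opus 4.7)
The plan is to identify $H_1(N_\gamma^{(2)}) \cong H_1(N_\gamma)$ (first homology is computed by the $2$-skeleton), reduce everything to the topology of $N_\gamma$ and $N_0$, and then extract (1), (2), (3) from the long exact sequence of the pair $(N_0, N_\gamma)$ combined with a geometric linking computation.

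First I would apply excision to get $H_\ast(N_0, N_\gamma) \cong H_\ast(\mathcal{N}(\gamma), \partial \mathcal{N}(\gamma))$, yielding $H_2(N_0, N_\gamma) \cong \mathbb{Z}$ (generated by the meridian disk of the solid torus) and $H_1(N_0, N_\gamma) = 0$. The resulting long exact sequence reads
\begin{equation*}
H_2(N_0) \xrightarrow{\psi} \mathbb{Z} \xrightarrow{\phi} H_1(N_\gamma) \to H_1(N_0) \to 0,
\end{equation*}
with $\phi(1) = [c]$. Combined with $H_1(N_0) \cong H_1(N)$ (deformation retract) and $H_1(N_\gamma^{(2)}) \cong H_1(N_\gamma)$, the surjection $H_1(N_\gamma) \to H_1(N_0)$ immediately gives (1).

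For (2), I would show $\psi = 0$, so that $\phi$ is injective and $[c]$ is non-torsion. The map $\psi$ sends a closed $2$-cycle $\Sigma \subset N_0$ to its intersection number $\Sigma \cdot \gamma$; since $\gamma$ is freely homotopic in $N_0$ to the curve $m \subset \partial N_0$ and $\Sigma$ can be isotoped into the interior of $N_0$, this intersection vanishes. Next, $l$ is null-homologous in $N_c$ (hence in $N_0$), so $[l]$ lies in the kernel of $H_1(N_\gamma) \to H_1(N_0)$ and equals $k[c]$ for some $k \in \mathbb{Z}$. To determine $k$, choose a properly embedded surface $\Sigma \subset N_c$ with $\partial \Sigma = l$, transverse to $\gamma$, and cut it along $\mathcal{N}(\gamma)$; the resulting $\Sigma' = \Sigma \cap N_\gamma$ has $\partial \Sigma' = l - (\Sigma \cdot \gamma)\, c$ on $\partial N_\gamma$, yielding $[l] = (\Sigma \cdot \gamma)[c]$ in $H_1(N_\gamma)$. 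Since $\gamma$ is isotopic to $m$, which meets $l$ once transversely on $T_c$, we get $\Sigma \cdot \gamma = \pm 1$; with the orientation of $l$ chosen appropriately, $[l] = [c]$ and therefore $[c] - [l] = 0$ is trivially torsion.

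For (3), an element $a[l] + b[m]$ of $H_1(\partial N_0) = \mathbb{Z}\langle l\rangle \oplus \mathbb{Z}\langle m\rangle$ vanishing in $H_1(N_\gamma)$ projects to $b[m]$ in $H_1(N_0)$, since $[l] = 0$ there. By half-lives-half-dies, the image of $H_1(T_c) \to H_1(N_0)$ is of rank $1$ and spanned by $[m]$, so $[m]$ is non-torsion and $b = 0$. Then $a[l] = a[c]$ vanishes in $H_1(N_\gamma)$, and since $[c]$ is non-torsion by (2), $a = 0$. The main delicacy is the orientation/sign bookkeeping in the linking argument that distinguishes $[l] = [c]$ from $[l] = -[c]$; once that convention is pinned down, the remaining steps are routine homological chases.
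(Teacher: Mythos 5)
Your overall strategy is sound and parallel to the paper's (the paper uses the Mayer--Vietoris sequence for $N_0 = N_\gamma \cup \mathcal{N}(\gamma)$ rather than the long exact sequence of the pair, but these are interchangeable here), and your observation that $\psi = 0$ directly gives $[c]$ non-torsion is a nice shortcut that the paper obtains instead by first showing $[l]$ is non-torsion. However, there is a genuine gap: you assert that ``$l$ is null-homologous in $N_c$ (hence in $N_0$)'' and proceed to choose a surface $\Sigma \subset N_c$ with $\partial\Sigma = l$. But $l$ was only chosen to span $\ker\bigl(H_1(T_c;\mathbb{R}) \to H_1(N_c;\mathbb{R})\bigr)$, i.e.\ $[l]$ is a \emph{torsion} element of $H_1(N_c;\mathbb{Z})$, not necessarily zero. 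Since $H_1(N_c;\mathbb{Z})$ can have torsion, there may be no surface in $N_c$ bounded by $l$ itself, only by $dl$ for the order $d$ of $[l]$. This is exactly why the lemma asserts that $[c]-[l]$ is \emph{torsion} rather than zero, and your stronger conclusion $[l]=[c]$ is false in general.

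The same unjustified assumption reappears in item (3), where you write that $a[l] + b[m]$ ``projects to $b[m]$ in $H_1(N_0)$, since $[l]=0$ there.'' Again $[l]$ is only torsion, so you should pass to real coefficients before making that reduction, or argue that $b=0$ using that $b[m]$ would have infinite order. The fix throughout is mechanical: replace $l$ by $dl$ where $d$ is minimal with $d[l]=0$ in $H_1(N_0;\mathbb{Z})$, run the surface/intersection argument with a surface $S$ satisfying $\partial S = dl$ to obtain $d[l]=d[c]$ (hence $[c]-[l]$ is $d$-torsion), and deduce $[l]$ non-torsion from $[c]$ non-torsion. This is precisely what the paper does. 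Once you make that repair, the rest of your homological chase goes through, and your use of $\psi=0$ to get non-torsion of $[c]$ is a legitimate and slightly more direct argument than the paper's.
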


\begin{proof}
Since horizontal homomorphisms in diagram  (\ref{5.1}) are isomorphisms, it suffices to study the inclusion $N_{\gamma}\to N_0$. We consider the M-V sequence given by $N_0=N_{\gamma}\cup \mathcal{N}(\gamma)$:
$$H_1(\partial \mathcal{N}(\gamma);\mathbb{Z})\to H_1(N_{\gamma};\mathbb{Z})\oplus H_1(\mathcal{N}(\gamma);\mathbb{Z})\to H_1(N_0;\mathbb{Z})\to 0.$$

Item (1) follows from the surjectivity of $H_1(N_{\gamma};\mathbb{Z})\oplus H_1(\mathcal{N}(\gamma);\mathbb{Z})\to H_1(N_0;\mathbb{Z})$ and the surjectivity of $H_1(\partial \mathcal{N}(\gamma);\mathbb{Z})\to H_1(\mathcal{N}(\gamma);\mathbb{Z})$. 

Now we prove item (2). By the M-V sequence, the kernel of $ H_1(N_{\gamma};\mathbb{Z})\to H_1(N_0;\mathbb{Z})$ is spanned by the meridian $c$ of $\mathcal{N}(\gamma)$. Since $[l]$ spans $\text{ker}(H_1(\partial N_0;\mathbb{R})\to H_1(N_0;\mathbb{R}))$, we take the minimal $d\in \mathbb{Z}_{>0}$ such that $d[l]=0\in H_1(N_0;\mathbb{Z})$. So $dl$ bounds a compact oriented surface $S$ in $N_0$, and the algebraic intersection number between $m\subset \partial N_0$ and $S$ is $d$. Since $\gamma$ is isotopic to $m$ in $N_0$, the algebraic intersection number between $\gamma$ and $S$ is $d$. So $S\cap N_{\gamma}$ is a compact oriented surface in $N_{\gamma}$ and it implies that $d[l]-d[c]=0\in H_1(N_{\gamma}; \mathbb{Z})$, so $[c]-[l]$ is a torsion element in $H_1(N_{\gamma}^{(2)}; \mathbb{Z})$. Moreover, the above argument also shows that $kl$ does not bound a compact oriented surface in $N_{\gamma}$ for any $k\in \mathbb{Z}_{>0}$. So $[l]$ is not a torsion element in $H_1(N_{\gamma};\mathbb{Z})$, and $[c]$ is not a torsion element either.

For the composition $H_1(\partial N_0;\mathbb{Z})\to H_1(N_{\gamma};\mathbb{Z})\to H_1(N_0;\mathbb{Z})$, its kernel is spanned by a multiple of $[l]$. Since item (2) implies that $[l]$ is not an torsion element in $H_1(N_{\gamma};\mathbb{Z})$, the homomorphism  $H_1(\partial N_0;\mathbb{Z})\to H_1(N_{\gamma};\mathbb{Z})$ is injective.

\end{proof}

Recall that $M$ has two boundary components $T_1,T_2$, such that the kernel of $H_1(T_1\cup T_2;\mathbb{Z})\to H_1(M;\mathbb{Z})$ contains $\alpha_1+\alpha_2 \in H_1(T_1\cup T_2;\mathbb{Z})$ with $0\ne \alpha_1\in H_1(T_1;\mathbb{Z})$ and $0\ne \alpha_2 \in H_1(T_2;\mathbb{Z})$. Now we treat $M$ as a non-compact open hyperbolic $3$-manifold, and consider $T_1$ and $T_2$ as two horotori of $M$.
The following lemma gives some data that will instruct us to construct the mapped-in $2$-complex $j:Z\looparrowright M$.

\begin{lemma}\label{preparation}
For any $\epsilon\in (0,10^{-2})$, there exist $R_0>0$, such that for any $R>R_0$, there exist the following maps and homomorphisms:
\begin{itemize}
\item Up to rechoosing the horotori $T_1$ and $T_2$ in $M$ (by changing their heights), we have maps $i_{\partial,1}:\partial N_0\to T_1$ and $i_{\partial,2}:\partial N_0 \to T_2$,
\item ${\bf i}_1:H_1(N_{\gamma}^{(2)};\mathbb{Z})\to H_1(M;\mathbb{Z})$ and ${\bf i}_2:H_1(N_{\gamma}^{(2)};\mathbb{Z})\to H_1(M;\mathbb{Z})$,
\item ${\bf i}:H_1(N;\mathbb{Z})\to H_1(M;\mathbb{Z})$,
\item $i_1,i_2:N_{\gamma}^{(1)}\to M$ (note that $N^{(1)}=N_{\gamma}^{(1)}$ holds),
\end{itemize}
such that the following properties hold.
\begin{enumerate}

\item For any $s=1,2$, $i_{\partial,s}$ maps each triangle in $\partial N_0$ to a geometric triangle in $T_s$ such that each inner angle is $\epsilon$-close to $\frac{\pi}{3}$, and the length of each edge lies in $[R,(1+\epsilon)R]$.

\item For any $s=1,2$, the following diagram commutes
\begin{diagram}
H_1(\partial N_0;\mathbb{Z}) & \rTo & H_1(N_{\gamma}^{(2)};\mathbb{Z})\\
\dTo^{(i_{\partial,s})_*}                                  &         &                \dTo^{{\bf i}_s}\\
H_1(T_s;\mathbb{Z})             & \rTo  & H_1(M;\mathbb{Z}).
\end{diagram} 

\item The following diagram commutes
\begin{diagram}
H_1(N_{\gamma}^{(2)};\mathbb{Z}) & \rTo^{{\bf i}_1+{\bf i}_2} & H_1(M;\mathbb{Z})\\
\dTo                                                 & \ruTo              _{{\bf i}}                &       \\
H_1(N;\mathbb{Z}).
\end{diagram}

\item For any $s=1,2$, $i_s|_{N^{(0)}}$ is an embedding, $i_s|_{\partial N_0^{(1)}}=i_{\partial,s}|_{\partial N_0^{(1)}}$ and the following diagram commute
\begin{diagram}
H_1(N_{\gamma}^{(1)};\mathbb{Z}) & \rTo^{(i_s)_*} & H_1(M;\mathbb{Z})\\
\dTo                                             & \ruTo_{{\bf i}_s}              &           \\
H_1(N_{\gamma}^{(2)};\mathbb{Z}).   &   &
\end{diagram}
\end{enumerate}
Here all undefined homormophisms are induced by inclusions.
\end{lemma}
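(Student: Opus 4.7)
The proof decomposes into constructing the boundary maps $i_{\partial,s}$, defining the homomorphisms ${\bf i}, {\bf i}_1, {\bf i}_2$, and extending $i_{\partial,s}$ across the interior $1$-skeleton of $N_\gamma$. The hypothesis $\alpha_1 + \alpha_2 = 0$ in $H_1(M;\mathbb{Z})$ is used crucially so that, after normalization, the class $[l] \in H_1(\partial N_0;\mathbb{Z})$ maps to zero in $H_1(M;\mathbb{Z})$ under the combined boundary map, which yields the factorization in condition (3).

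For the geometric construction, write $T_c = \partial N_0 \cong \mathbb{C}/\Lambda_c$ and $T_s \cong \mathbb{C}/\Lambda_s$, and let $\nu_c \in \Lambda_c$ correspond to the primitive geodesic $l$. Deepening the cusp of $M$ scales $\Lambda_s$ by $e^{-h_s}$ without altering its shape, so by independently adjusting $h_1, h_2$ one can arrange $|\alpha_1| = |\alpha_2|$ and choose a common positive integer $k \approx (R/r) |\nu_c| / |\alpha_s|$ for both $s = 1,2$. Fix the similarity $S_s : \mathbb{C} \to \mathbb{C}$ of scale $R/r$ whose rotation aligns $\nu_c$ with $\alpha_s$, and seek an $\mathbb{R}$-linear map $A_s : \mathbb{C} \to \mathbb{C}$ with $A_s(\Lambda_c) \subset \Lambda_s$ (so that $A_s$ descends to $i_{\partial,s} : T_c \to T_s$), $A_s(\nu_c) = k\alpha_s$, and $A_s$ close to $S_s$. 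Writing $A_s = S \cdot M_s \cdot T^{-1}$ with $M_s \in \mathrm{Mat}_{2 \times 2}(\mathbb{Z})$ (where $S, T$ are basis matrices of $\Lambda_s, \Lambda_c$), the descent condition forces $M_s$ to be integral while the near-similarity condition requires $M_s$ to approximate the real matrix $S^{-1} S_s T$; since $\|S\|_{\mathrm{op}} \to 0$ as $h_s \to \infty$, this approximation becomes arbitrarily good for sufficiently large $h_s$. The homological constraint $A_s(\nu_c) = k\alpha_s$ imposes two linear equations on the four entries of $M_s$, still leaving room for the approximation argument. Hence for $h_s$ large enough, $A_s$ exists, and its near-isometric character guarantees that every triangle of $\partial N_0$ maps to a nearly equilateral geodesic triangle on $T_s$ with edge lengths in $[R, (1+\epsilon)R]$, verifying condition (1); condition (2) then follows by construction.

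For the homomorphisms, Lemma \ref{simplehomology} identifies $H_1(N_\gamma^{(2)};\mathbb{Z})$ with $H_1(N_\gamma;\mathbb{Z})$, which surjects onto $H_1(N;\mathbb{Z})$ with kernel spanned by the non-torsion class $[c]$ (where $[c] - [l]$ is torsion), and the inclusion $H_1(\partial N_0;\mathbb{Z}) \hookrightarrow H_1(N_\gamma^{(2)};\mathbb{Z})$ is injective. Define ${\bf i}_s$ on the image of $H_1(\partial N_0;\mathbb{Z})$ by $(T_s \hookrightarrow M)_* \circ (i_{\partial,s})_*$, and extend it across a complementary direct summand by setting ${\bf i}_s([c]) := {\bf i}_s([l])$ (thereby forcing ${\bf i}_s$ to annihilate the torsion discrepancy $[c] - [l]$) and picking arbitrary values on the remaining generators. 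The identity $\alpha_1 + \alpha_2 = 0$ yields $({\bf i}_1 + {\bf i}_2)([l]) = k\alpha_1 + k\alpha_2 = 0$, so $({\bf i}_1 + {\bf i}_2)([c]) = 0$; any residual torsion in the kernel can be killed by further adjusting the extensions. Thus ${\bf i}_1 + {\bf i}_2$ vanishes on the kernel of $H_1(N_\gamma^{(2)};\mathbb{Z}) \to H_1(N;\mathbb{Z})$ and factors through $H_1(N;\mathbb{Z})$ to define ${\bf i}$.

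Finally, extend $i_{\partial,s}$ across the interior $1$-skeleton by first mapping the interior vertices of $N^{(0)}$ to distinct points of $M$ disjoint from $i_{\partial,s}(V_{N,\partial})$ (securing the embedding of $i_s|_{N^{(0)}}$), and for each interior edge of $N_\gamma^{(1)}$ choosing any path in $M$ between its endpoints that realizes the homology class prescribed by ${\bf i}_s$ on the corresponding $1$-cycle. The main obstacle is the geometric approximation producing $A_s$: the simultaneous fulfillment of lattice descent, near-similarity, and homological normalization forces a careful coordination of horotorus heights, and the symmetric handling of both boundary components $T_1, T_2$ rests essentially on the relation $\alpha_1 + \alpha_2 = 0$ in $H_1(M;\mathbb{Z})$.
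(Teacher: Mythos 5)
Your overall strategy mirrors the paper's — construct $i_{\partial,s}$ geometrically via a linear map between flat tori, define $\mathbf{i}_s$ by pushing forward from the boundary, then extend across the interior — but there is a genuine gap in the algebraic step of defining $\mathbf{i}_s$ on all of $H_1(N_\gamma^{(2)};\mathbb{Z})$.

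By Lemma \ref{simplehomology}~(3), $H_1(\partial N_0;\mathbb{Z})$ injects into $H_1(N_\gamma^{(2)};\mathbb{Z})$, but it is not a direct summand: it has finite index $k$ in a rank-$2$ direct summand $H \cong \mathbb{Z}^2$. You write ``extend it across a complementary direct summand by setting $\mathbf{i}_s([c]) := \mathbf{i}_s([l])$\ldots and picking arbitrary values on the remaining generators,'' but no such complement exists when $k>1$. (Also $[c]=[l]+(\text{torsion})$, so $[c]$ does not live in any hypothetical complement of $H_1(\partial N_0)$; the paper gets $\mathbf{i}_s([c])=\mathbf{i}_s([l])$ for free by defining $\mathbf{i}_s$ as projection-to-$H$ followed by a map on $H$, which automatically kills the torsion.) The real issue is whether the homomorphism $H_1(\partial N_0;\mathbb{Z})\to H_1(M;\mathbb{Z})$ extends to $H$ at all: the obstruction lives in $\mathrm{Ext}^1(H/H_1(\partial N_0;\mathbb{Z}),\, H_1(M;\mathbb{Z}))$, which need not vanish since $H_1(M;\mathbb{Z})$ may have torsion. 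The paper handles this by choosing the scaling factor to be $akD$ with the index $k$ explicitly built in (see equations (5.2)--(5.3)), which forces the image of $(i_{\partial,s})_*$ to lie in $k\cdot H_1(T_s;\mathbb{Z})$ and makes the extension to $H$ exist (and be unique). Your approximation argument produces a generic integer matrix $M_s$ with no such divisibility, so you cannot conclude that $\mathbf{i}_s$ is well defined.

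As for the geometric construction itself: your lattice-approximation argument (round $S^{-1}S_s T$ to an integer matrix as $h_s\to\infty$) is in principle viable, and differs from the paper's exact construction (which uses Lemma \ref{existencealmostequilateral} to put both lattices in hexagonal form $A\mathbb{Z}+(B+C\omega_0)\mathbb{Z}$ so that a plain integer scaling works). But because the paper's exact approach is precisely what produces the $k$-divisibility needed above, replacing it by an approximation is not merely a stylistic choice — it loses the arithmetic control your later algebraic step requires. If you want to keep the approximation route, you would need to additionally constrain $M_s$ to be congruent to $0$ modulo the relevant elementary divisors of $H/H_1(\partial N_0;\mathbb{Z})$, and verify that the rounding still lands close enough to a similarity.
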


\begin{proof}
At first, by Lemma \ref{simplehomology} (3), the inclusion induced homomorphism $H_1(\partial N_0;\mathbb{Z})\to H_1(N_{\gamma};\mathbb{Z})$ is injective. So $H_1(N_{\gamma};\mathbb{Z})$ has a direct summand $H\cong \mathbb{Z}^2$ that contains $H_1(\partial N_0;\mathbb{Z})$ as a finite index subgroup, with $H_1(N_{\gamma};\mathbb{Z})\cong H\oplus H'$ and let $[H:H_1(\partial N_0;\mathbb{Z})]=k$.

Recall that by Lemma \ref{existencealmostequilateral}, $\partial N_0$ is equipped with a triangulation induced by a geometric triangulation of the Euclidean torus $\partial N_c$, such that each triangle of $\partial N_c$ is almost an equilateral triangle with length at most $2\epsilon$. We identify $\partial N_0$ and $\partial N_c$ with the quotient of $\mathbb{R}^2$ by a lattice 
$$\Lambda=A\mathbb{Z}+(B+C\omega_0) \mathbb{Z},$$ 
such that each triangle in $\partial N_0$ and $\partial N_c$ corresponds to an equilateral triangle in $\mathbb{R}^2$ of edge length $1$ (this identification is not an isometry). Here $A,B,C\in \mathbb{Z}$ with $A,C\ne 0$ and $\omega_0=\frac{1+\sqrt{3}i}{2}$.
Moreover, we can assume that the $\mathbb{R}$-coefficient null-homologous curve $l\subset \partial N_0$ corresponds to $A\in \Lambda$.

Similarly, by Lemma \ref{existencealmostequilateral}, for any $s=1,2$, $T_s$ has a geodesic triangulation such that the following hold.
\begin{itemize}
\item Any inner angle of a triangle is $\frac{\epsilon}{10}$-close to $\frac{\pi}{3}$.
\item There exists $r_s\in (0,\frac{\epsilon}{10})$, such that all edges of triangles have length in $[r_s,(1+\frac{\epsilon}{5}) r_s)$.
\item The homology class $\alpha_s\in H_1(T_s;\mathbb{Z})$ is represented by the $d_s$-th power of a simple closed geodesic $l_s$ for some $d_s\in \mathbb{Z}_{>0}$, and $l_s$ is contained in the $1$-skeleton of this triangulation.
\end{itemize}
By the same process as above, we identify $T_s$ with the quotient of $\mathbb{R}^2$ by a lattice 
$$\Lambda_s=A_s\mathbb{Z}+(B_s+C_s\omega_0)\mathbb{Z},$$ 
with $A_s,C_s\ne 0$ and $l_s$ corresponds to $A_s\in \Lambda_s$. By sub-dividing triangles, we can assume that $A_1d_1=A_2d_2$ holds. Moreover, by rechoosing horotori parallel to $T_1$ and $T_2$ respectively, we can assume that $r_1=r_2$ holds, and let $r=r_1=r_2$.

Let $D$ be the least common multiple of $A_1C_1d_1$ and $A_2C_2d_2$. For any $a\in \mathbb{Z}_{>0}$, we construct a map $i_{\partial,s}:\partial N_0\to T_s$ as following. At first, we have
$$\Lambda_s=A_s \mathbb{Z}+(B_s+C_s\omega)\mathbb{Z}>akD\cdot \Lambda=(akDA)\mathbb{Z}+(akD(B+C\omega_0))\mathbb{Z},$$ since 
\begin{align}\label{5.2}
akDA=(akA\frac{D}{A_s})A_s
\end{align} and 
\begin{align}\label{5.3}
akD(B+C\omega_0)=(akB\frac{D}{A_s}-akCB_s\frac{D}{A_sC_s})A_s+(akC\frac{D}{C_s})(B_s+C_s\omega_0).
\end{align}
So the scaling by $akD$ gives a map from $\mathbb{R}^2/\Lambda$  to $\mathbb{R}^2/\Lambda_s$, and it maps each equilateral triangle of length $1$ to an equilateral triangle of length $akD$. 
Since we identified $\partial N_0$ and $T_s$ with $\mathbb{R}^2/\Lambda$ and $\mathbb{R}^2/\Lambda_s$ respectively, the $akD$-scaling map induces $i_{\partial,s}:\partial N_0\to T_s$, such that it maps each triangle in $\partial N_0$ to a triangle in $T_s$ of inner angle $\epsilon$-close to $\frac{\pi}{3}$, and with edge length contained in $[akDr,(1+\frac{\epsilon}{5})akDr)$. If $R>R_0=\frac{2kDr}{\epsilon}$, there exists a positive integer $a$, such that $[akDr,(1+\frac{\epsilon}{5})akDr)\subset [R,(1+\epsilon)R]$. So we can choose $a$ such that item (1) holds for both $i_{\partial,1}$ and $i_{\partial,2}$.

Note that the simple closed curve $l$ in $\partial N_0$ corresponds to $A\in \Lambda$, which is mapped to $akDA\in \Lambda_s$ via the $akD$-scaling map. Since $\alpha_s=d_sl_s$ corresponds to $d_sA_s\in \Lambda_s$, $(i_{\partial,s})_*$ maps $l$ to $(akA\frac{D}{d_sA_s})\alpha_s\in H_1(T_s;\mathbb{Z})$. Since we assumed $A_1d_1=A_2d_2$ and $\alpha_1+\alpha_2=0\in H_1(M;\mathbb{Z})$, we have 
\begin{align}\label{5.4}
(i_{\partial,1})_*(l)+(i_{\partial,2})_*(l)=akA\frac{D}{d_1A_1}(\alpha_1+\alpha_2)=0\in H_1(M;\mathbb{Z}).
\end{align}

Now we define ${\bf i}_s:H_1(N_{\gamma}^{(2)};\mathbb{Z})\to H_1(M;\mathbb{Z})$ for $s=1,2$. At first, by equations (\ref{5.2}) and (\ref{5.3}), $(i_{\partial,s})_*:H_1(\partial N_0;\mathbb{Z})\to H_1(T_s;\mathbb{Z})$ maps each element to a $k$-multiple of an element in $H_1(T_s;\mathbb{Z})$. Since $H_1(N_{\gamma}^{(2)};\mathbb{Z})\cong H_1(N_{\gamma};\mathbb{Z})=H\oplus H'$ for some $H$ containing $H_1(\partial N_0;\mathbb{Z})$ with $[H:H_1(\partial N_0;\mathbb{Z})]=k$, the homomorphism $(i_{\partial,s})_*:H_1(\partial N_0;\mathbb{Z})\to H_1(T_s;\mathbb{Z})$ uniquely extends to a homomorphism ${\bf h}_s:H\to H_1(T_s;\mathbb{Z})$, and we define
${\bf i}_s:H_1(N_{\gamma}^{(2)};\mathbb{Z})\to H_1(M;\mathbb{Z})$ to be 
$$H_1(N_{\gamma}^{(2)};\mathbb{Z})\to H\xrightarrow{{\bf h}_s}H_1(T_s;\mathbb{Z})\to H_1(M;\mathbb{Z}).$$
Here the first homomorphism is the projection to the direct summand $H$, and the third homomorphism is induced by inclusion. It is straight forward to check that the commutative diagram in item (2) holds.

Note that $N$ deformation retracts to $N_0=N_{\gamma}\cup \mathcal{N}(\gamma)$ and $\mathcal{N}(\gamma)$ is a solid torus. Once we prove that the meridian $c$ of $\mathcal{N}(\gamma)$ lies in the kernel of ${\bf i}_1+{\bf i}_2:H_1(N_{\gamma}^{(2)};\mathbb{Z})\to H_1(M;\mathbb{Z})$, then ${\bf i}_1+{\bf i}_2$ induces a homomorphism ${\bf i}:H_1(N;\mathbb{Z})\to H_1(M;\mathbb{Z})$ and the commutative diagram in item (3) holds. Recall that Lemma \ref{simplehomology} (2) implies that $[l]-[c]$ is a torsion element in $H_1(N_{\gamma}^{(2)};\mathbb{Z})=H\oplus H'$. Since $H\cong \mathbb{Z}^2$, we have $[c]-[l]\in H'$. By the definition of ${\bf i}_1$ and ${\bf i}_2$ above, ${\bf i}_1([c]-[l])={\bf i}_2([c]-[l])=0$ holds. So we have 
$$({\bf i}_1+{\bf i}_2)([c])=({\bf i}_1+{\bf i}_2)([l])+({\bf i}_1+{\bf i}_2)([c]-[l])=({\bf i}_1+{\bf i}_2)([l])=(i_{\partial,1})_*([l])+(i_{\partial,2})_*([l])=0.$$
Here the third equation follows from item (2) and the fourth equation follows from equation (\ref{5.4}).

To define $i_s:N_{\gamma}^{(1)}\to M$, we first define $i_s|_{\partial N_0^{(1)}}=i_{\partial,s}|_{\partial N_0^{(1)}}$, and arbitrarily extend $i_s$ to a maximal subcomplex $K$ of $N_{\gamma}^{(1)}$ that deformation retracts to $\partial N_0$. Then since edges in $N_{\gamma}^{(1)}\setminus K$ form a basis of $H_1(N_{\gamma}^{(1)};\mathbb{Z})/H_1(\partial N_0^{(1)};\mathbb{Z})$, we can extend $i_s$ to $N_{\gamma}^{(1)}$ so that the commutative diagram in item (4) holds. Finally, we slightly perturb $i_s$ if necessary, so that $i_s|_{N^{(0)}}$ is an embedding.
\end{proof}

Now we give some notations on the geometry of $N$. Most of the items (except item (3)) are similar to the notations in Notation 4.4 of \cite{Sun5}.
\begin{notation}\label{notation}
\begin{enumerate}

\item For any oriented edge $e_{ij}$ (or $e_{ijk}$) in $N^{(1)}$, let $\vec{v}_{ij}$ ($\vec{v}_{ijk}$) be the unit tangent vector of $e_{ij}$ ($e_{ijk}$) based at $n_i$. By Construction \ref{cellulation}, $\vec{v}_{ijk}$ lies in the plane in $T_{v_i}M$ containing $\vec{v}_{ij}$ and $\vec{v}_{ik}$, and is $\frac{\epsilon}{200}$-close to
$\frac{\vec{v}_{ij}+\vec{v}_{ik}}{|\vec{v}_{ij}+\vec{v}_{ik}|}$. 
For any marked geodesic triangle $\Delta_{ijk}$ in $N^{(2)}$, let 
$$\vec{n}_{ijk}=\frac{\vec{v}_{ij}\times \vec{v}_{ik}}{|\vec{v}_{ij}\times \vec{v}_{ik}|},$$ 
then it is a normal vector of $\Delta_{ijk}$ at $n_i$, and we have a frame 
$${\bf F}_{ijk}=(n_i,\vec{v}_{ij},\vec{n}_{ijk})\in \text{SO}(N)_{n_i}.$$ 
For any marked modified triangle $\Delta_{ijk}$ defined in Construction \ref{cellulation}, with $e_{ij}$ contained in $\partial N_0$ (which is not an edge of $\Delta_{ijk}$), the frames ${\bf F}_{kij}$ and ${\bf F}_{kji}$ are defined as in the previous case. For ${\bf F}_{ijk}$, let 
$$\vec{n}_{ijk}=\frac{\vec{v}_{ijk}\times \vec{v}_{ik}}{|\vec{v}_{ijk}\times \vec{v}_{ik}|}=\frac{\vec{v}_{ij}\times \vec{v}_{ik}}{|\vec{v}_{ij}\times \vec{v}_{ik}|},$$ 
and we get a frame $${\bf F}_{ijk}=(n_i,\vec{v}_{ijk},\vec{n}_{ijk})\in \text{SO}(N)_{n_i}.$$ 
For each frame ${\bf F}_{ijk}$, we denote $-{\bf F}_{ijk}=(n_i,-\vec{v}_{ij},-\vec{n}_{ijk}),$ then we have a finite collection of frames in $N$: 
$$\mathcal{F}_N=\{\pm {\bf F}_{ijk}\ |\ \Delta_{ijk} \text{\ is\ a\ marked\ triangle\ in\ }N^{(2)}\}.$$

\item For any $s=1,2$, let $m_{k,s}=i_s(n_k)$ and let $V_{M,s}=i_s(V_N)$. We take an isometry $t_s:TN|_{V_N}\to TM|_{V_{M,s}}$ that descends to $i_s:V_N\to V_{M,s}$, such that the following holds for any $n_k\in V_{N,\partial}$. At $n_k\in V_{N,\partial}$, there is a frame $(\vec{v}_k,\vec{n}_k)$ such that $\vec{v}_k$ is tangent to the direction of $l\subset \partial N_c$, and $\vec{v}_k\times \vec{n}_k$ points up straightly into the cusp. We require that $t_s$ maps $\vec{v}_k$ and $\vec{n}_k$ to $\vec{v}_k'$ and $\vec{n}_k'$ based at $m_{k,s}$ respectively, such that $\vec{v}_k'$ is tangent to the direction of $l_s\subset T_s$ and $\vec{v}_k'\times \vec{n}_k'$ points up straightly into the cusp. Then $t_s$ induces an $\text{SO}(3)$-equivariant isomorphism $t_s:\text{SO}(N)|_{V_N}\to \text{SO}(M)|_{V_{M,s}}$, denoted by the same notation. We denote ${\bf F}_{ijk,s}^{M}=(m_{i,s},\vec{v}_{ij,s}^{M}, \vec{n}_{ijk,s}^{M})=t_s({\bf F}_{ijk})\in \text{SO}(M)|_{m_{i,s}}$, and let 
$$\mathcal{F}_{M,s}=t_s(\mathcal{F}_N)\subset \text{SO}(M).$$

\item Since $N^{(2)}$ contains finitely many $2$-cells, there exists $\phi_0\in (0,\pi)$, such that all inner angles of bigons and triangles in $N^{(2)}$ and all dihedral angles between adjacent $2$-cells of $N^{(2)}$ lie in $[\phi_0,\pi]$. 
\end{enumerate}
\end{notation}

\begin{remark}\label{coordinate}
Let $n_i, n_j$ be two vertices of $\partial N_0$ such that $e_{ij}$ is contained in $\partial N_0$, and let $n_k$ be the vertex not contained in $\partial N_0$ such that $n_i,n_j,n_k$ span an triangle in the original triangulation of $N_0$ and it lies to the left of $e_{ij}$, as in Figure \ref{figure2}. We give coordinates of $T^1_{n_i}N$ such that $\vec{v}_{ij}\approx (1,0,0)$ with vanishing second coordinate and the vector pointing to the cusp is $(0,0,1)$. 

Let $\vec{v}_0=(\frac{1}{2},\frac{1}{2\sqrt{3}},\frac{r}{6}-\frac{1-e^{-\frac{2\sqrt{6}}{3}r}}{2r})$. 
Assuming all triangles in $T_c\subset N$ are equilateral triangles of length $r$, an elementary computation gives the following:
$$\vec{v}_{ik}=\vec{v}_1=\frac{\vec{v}_0}{\|\vec{v}_0\|}\approx (\frac{1}{2},\frac{1}{2\sqrt{3}},-\frac{\sqrt{6}}{3}), \vec{v}_{ijk}=\vec{v}_2\approx \frac{\vec{v}_1+(1,0,0)}{\|\vec{v}_1+(1,0,0)\|}\approx (\frac{\sqrt{3}}{2},\frac{1}{6},-\frac{\sqrt{2}}{3}),$$
$$\vec{n}_{ijk}=\vec{v}_3=\frac{\vec{v}_2\times \vec{v}_1}{\|\vec{v}_2\times \vec{v}_1\|}\approx (0,\frac{2\sqrt{2}}{3},\frac{1}{3}).$$

In this remark, the actual vectors are all $\frac{\epsilon}{20}$-close to their numerical approximations above.
\end{remark}

\begin{remark}
Although the frame bundle of a compact orientable $3$-manifold $N$ with connected torus boundary is trivial, there may not be a trivialization of $\text{SO}(N)$ such that its restriction to $\partial N$ has third vector pointing outward. So we do not have a homological instruction as good as Proposition 4.5 of \cite{Sun5}, which reduces the degree of virtual domination by a half.
\end{remark}

\bigskip

\subsection{Construction of the immersion $j:Z\looparrowright M$}\label{constructZ2section}

In this section, we construct the $\pi_1$-injective immersion $j:Z\looparrowright M$. Since $Z$ is a $2$-complex, we will inductively construct the $0$-, $1$-, $2$-skeletons of $Z$ and the restrictions of $j$ on these skeletons. 
Throughout this section, we fix a small number $\epsilon\in (0,10^{-2})$ and a sufficiently large number $R\in( \frac{1}{\epsilon},+\infty)$ such that all (finitely many) constructions below (invoking Theorems \ref{boundingsurface} and \ref{connectionprinciple}) are applicable.

\begin{construction}\label{constructZ0}
We define $Z^{0}$ to be a finite set $\{v_{1,1}, v_{1,2},v_{2,1},v_{2,2},\cdots, v_{l,1},v_{l,2}\}$, whose cardinality doubles the cardinality of $V_N=N^{(0)}$. Then we define $j^{0}:Z^{0}\to M$ by $j^{0}(v_{k,s})=m_{k,s}=i_s(n_k)\in M$ for any $k\in \{1,2,\cdots,l\}$ and $s\in \{1,2\}$.
\end{construction}

Here we take two copies of $N^{(0)}$ since we work with two boundary components $T_1$ and $T_2$ of $M$. Now we construct the $1$-complex $Z^1$ of $Z$. 

\begin{construction}\label{constructZ1}
For any unoriented edge $e_{ij}$ (or $e_{ijk}$) in $N^{(1)}$, it gives two edges $e_{ij,1}^Z,e_{ij,2}^Z$ (or $e_{ijk,1}^Z,e_{ijk,2}^Z$) in $Z^1$, such that $e_{ij,s}^Z$ (or $e_{ijk,s}^Z$) connects $v_{i,s}$ and $v_{j,s}$ for $s=1,2$. So $Z^{1}$ consists of two isomorphic components $Z^{1}_1$ and $Z^{1}_2$, and each of them is isomorphic to $N^{(1)}$. For the vertices and edges of $Z^1$ corresponding to vertices and edges in $N^{(1)}\cap \partial N_0$, they form a subcomplex of $Z^1$ and we denote it by $\partial_p Z^1$.
\end{construction}
A picture of $Z^{1}$ near a vertex of $\partial_p Z^1$ is shown in Figure \ref{figure1}.

The indices of vertices induce a total order on the set of vertices in $N^{(0)}$, and also induce total orders on the vertex set of $Z^{1}_1$ and the vertex set of $Z^{1}_2$. Any edge $e_{ij}$ (or $e_{ijk}$) in $N^{(1)}$ between $n_i$ and $n_j$ with $i<j$ has an orientation that goes from $n_i$ to $n_j$, and we always fix such a preferred orientation. Edges of $Z_1^1$ and $Z_2^1$ have identical orientations.

 The map $j^{1}:Z^{1}\to M$ on $1$-skeleton is given in the following construction, which consists of two maps $j^{1}_1:Z^{1}_1\to M$ and $j^{1}_2:Z^{1}_2\to M$ on the two (identical) components of $Z^{1}$.

\begin{construction}\label{constructj1}
For any $s=1,2$, we do the following construction.
\begin{enumerate}
\item For each oriented edge $e_{ij,s}^{Z}\subset Z^{1}_s$ with $i<j$ contained in $\partial_p Z_s^1$, we map it to the oriented geodesic segment homotopic to $i_s(e_{ij})$ relative to endpoints, via a homeomorphism. Note that these geodesic segments have length contained in $[2\log{R},2\log{R}+4\epsilon]$ (by Lemma \ref{preparation} (1)).

\item For each oriented edge $e_{ij,s}^{Z} \subset Z^{1}_s$ with $i<j$ not contained in $\partial_p Z_s^1$, we apply Theorem \ref{connectionprinciple} to construct a $\partial$-framed segment $\mathfrak{s}_{ij,s}$ in $M$ from $m_{i,s}$ to $m_{j,s}$ such that the following conditions hold, and we map $e_{ij,s}^{Z}$ to the carrier of $\mathfrak{s}_{ij,s}$ via a homeomorphism.

\begin{enumerate}
\item The length and phase of $\mathfrak{s}_{ij,s}$ are $\frac{\epsilon}{10}$-close to $2R$ and $0$ respectively, and the height of $\mathfrak{s}_{ij,s}$ is at most $2\log{R}+2$.
\item Let $k$ be the smallest index so that $n_i,n_j,n_k$ form a triangle in $N^{(2)}$, the initial and terminal frames of $\mathfrak{s}_{ij,s}$ are $\frac{\epsilon}{10}$-close to ${\bf F}_{ijk,s}^{M}$ and $-{\bf F}_{jik,s}^{M}$ respectively.
\item The relative homology class of the carrier of $\mathfrak{s}_{ij,s}$ in $H_1(M,\{m_{i,s},m_{j,s}\};\mathbb{Z})$ equals the relative homology class of $i_s(e_{ij})$.
\end{enumerate}

\item 
For any oriented edge $e_{ijk,s}^{Z} \subset Z^{1}_s$ with $i<j$ that corresponds to $e_{ijk}\subset N^{(1)}$, we apply Theorem \ref{connectionprinciple} to construct a $\partial$-framed segment $\mathfrak{s}_{ijk,s}$ in $M$ from $m_{i,s}$ to $m_{j,s}$ such that the following conditions hold, and we map $e_{ijk,s}^{Z}$ to the carrier of $\mathfrak{s}_{ijk,s}$ via a homeomorphism.
\begin{enumerate}
\item The length and phase of $\mathfrak{s}_{ijk,s}$ are $\frac{\epsilon}{10}$-close to $2R$ and $0$ respectively, and the height of $\mathfrak{s}_{ijk,s}$ is at most $2\log{R}+2$.
\item The initial and terminal frames of $\mathfrak{s}_{ijk,s}$ are $\frac{\epsilon}{10}$-close to 
${\bf F}_{ijk,s}^{M}$ and $-{\bf F}_{jik,s}^{M}$ respectively.
\item The relative homology class of the carrier of $\mathfrak{s}_{ijk,s}$ in $H_1(M,\{m_{i,s},m_{j,s}\};\mathbb{Z})$ equals the relative homology class of $i_s(e_{ijk})$.
\end{enumerate}

\end{enumerate}
\end{construction}

Figure \ref{figure1} shows the geometry of $j^{1}(Z^{1}_s)$ near a vertex $v_{i,s}$ corresponding to $n_i\in \partial N_0$. 

\begin{remark}\label{constructj1remark}
\begin{enumerate}
\item In Construction \ref{constructj1} (1), the tangent vector of $j^{1}(e_{ij,s}^{Z})$ at $m_{i,s}$ is almost $(0,0,1)$ (with respect to the preferred coordinate system), while the tangent vector of ${\bf F}_{ijk,s}^{M}$ is almost $(1,0,0)$. This is the crucial reason why we need extra edges ($e_{ijk}$ and $e_{ijk,s}^Z$) in $N^{(1)}$ and $Z^{1}$, which takes care of this difference.
We map $e_{ij,s}^{Z}\subset \partial_p Z_s^1$ to the geodesic segment in the relative homotopy class of $i_s(e_{ij})$, instead of prescribing its tangent vectors at initial and terminal points, since we need to construct proper maps between $3$-manifolds with tori boundary. 

\item In Construction \ref{constructj1} (2), if we take another vertex $n_{k'}$ of $N$ such that $n_i,n_j,n_{k'}$ also form a triangle in $N^{(2)}$, then we can rechoose frames of $\mathfrak{s}_{ij,s}$ so that it still satisfies item (2), with respect to ${\bf F}_{ijk'}^{M,s}$ and $-{\bf F}_{jik'}^{M,s}$ in item (2) (b). The reason is that ${\bf F}_{ijk'}={\bf F}_{ijk}\cdot A$ and $-{\bf F}_{jik'}=(-{\bf F}_{jik})\cdot A$ for the same $A\in \text{SO}(3)$, while $t_s$ is $\text{SO}(3)$-equivariant.

\item In Construction \ref{constructj1} (3), by our construction of the triangulation of $N$ near $\partial N_0$ in Construction \ref{triangulation} (1), the third coordinate of $\vec{v}_{ij,s}^{M}$ is at most $\frac{\epsilon}{100}$.
Up to changing coordinate, we assume that $\vec{v}_{ij,s}^{M}$ has trivial second coordinate, then it is $\frac{\epsilon}{100}$-close to $(1,0,0)$. We suppose that $n_k$ lies to the left of $e_{ij}$, as shown in Figure \ref{figure2}, then $\vec{v}_{ik,s}^{M}$ is
$\frac{\epsilon}{20}$-close to $(\frac{1}{2},\frac{\sqrt{3}}{6},-\frac{\sqrt{6}}{3})$, and the initial frame of $\mathfrak{s}_{ijk,s}$ is $\frac{\epsilon}{5}$-close to $\big((\frac{\sqrt{3}}{2},\frac{1}{6},-\frac{\sqrt{2}}{3}), (0,\frac{2\sqrt{2}}{3},\frac{1}{3})\big)$. See also Remark \ref{coordinate}.

Moreover, the common perpendicular vector of $j_1(e_{ij,s}^Z)$ and $j^1(e_{ijk,s}^Z)$ at $m_{i,s}$
is $\frac{\epsilon}{5}$-close to $(-\frac{1}{2\sqrt{7}},\frac{3\sqrt{3}}{2\sqrt{7}},0)$. If we consider Figure \ref{figure2}
as a picture of $j_s^1(Z^1)$ in $M$,
the dihedral angle between 
the hyperplane determined the above vector and the geodesic triangle homotopic to $T_s$ (relative to vertices) is $\epsilon$-close to 
$$\ \ \ \arccos{\big((-\frac{1}{2\sqrt{7}},\frac{3\sqrt{3}}{2\sqrt{7}},0)\cdot (-\frac{\sqrt{3}}{2},-\frac{1}{2},0)\big)}=\arccos{(-\frac{\sqrt{3}}{2\sqrt{7}})}\approx 0.606\pi. $$

Note that this computation will be crucial for our proof of Theorem \ref{pi1injectivity} in Section \ref{pi1inj2}.

\end{enumerate}
\end{remark}

Before we construct the $2$-complex $Z$, we need the following lemma that proves certain closed curves arised from Construction \ref{constructj1} are good curves.

\begin{lemma}\label{checkgoodcurve}
Under the conditions in Construction \ref{constructj1}, if $R$ is large enough, we have the following good curves.
\begin{enumerate}
\item For any bigon $B_{ijk}$ in $N^{(2)}$, the concatenation of $\overline{j^{1}(e_{ij,s}^{Z})},j^{1}(e_{ijk,s}^{Z})$ is homotopic to a null-homologous $(R_{ij},\epsilon)$-good curve $\gamma_{ijk,s}$ of height at most 
$2\log{R}+3$ in $M$, with $R_{ij}=R+\log{l_{ij}}-\log{\frac{6}{3+\sqrt{2}}}$. Here $l_{ij}$ denotes the length of the Euclidean geodesic segment $i_{\partial,s}(e_{ij})$ and $l_{ij}\in [R, (1+\epsilon)R]$.

\item For any triangle $\Delta_{ijk}$ in $N^{(2)}$ with vertices $n_i,n_j,n_k$, the concatenation of $j^{1}(e_{ij,s}^{Z}), j^{1}(e_{jk,s}^{Z}), j^{1}(e_{ki,s}^{Z})$ is an $(R_{ijk},\epsilon)$-good curve $\gamma_{ijk,s}$ of height at most $2\log{R}+3$ in $M$, with 
$$R_{ijk}=3R-(I(\pi-\theta_{ijk})+I(\pi-\theta_{jki})+I(\pi-\theta_{kij})).$$
(Here $j^{1}(e_{ij,s}^{Z})$ is replaced by $j^{1}(e_{ijk,s}^{Z})$ if $\Delta_{ijk}$ is a modified triangle.) 
Here $\theta_{ijk}$ is the inner angle of the triangle $\Delta_{ijk}$ at vertex $n_i$. Moreover, if $\Delta_{ijk}$ is contained in $N_{\gamma}^{(2)}$, $\gamma_{ijk,s}$ is null-homologous in $M$; if $\Delta_{ijk}$ is not contained in $N_{\gamma}^{(2)}$, then $\gamma_{ijk,1}\cup \gamma_{ijk,2}$ is null-homologous in $M$.
\end{enumerate}
\end{lemma}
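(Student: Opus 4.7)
The plan is to realize each boundary concatenation as an $(L,\theta)$-tame $\delta$-consecutive cycle of oriented $\partial$-framed segments, apply Lemma \ref{lengthphase}(2) to pin down the complex length, use Lemma \ref{distance} to bound the height of the resulting closed geodesic, and read off null-homology through the factorization of Lemma \ref{preparation}.

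I would handle part (2) first. By Construction \ref{constructj1}(2) and (3), each segment $\mathfrak{s}_{ij,s}$ (or $\mathfrak{s}_{ijk,s}$) has length within $\epsilon/10$ of $2R$, phase within $\epsilon/10$ of $0$, height at most $2\log R+2$, and prescribed initial/terminal frames adapted to the triangle $\Delta_{ijk}$. The cross-product identity in Notation \ref{notation}(1) gives $\vec{n}_{jki}=-\vec{n}_{jik}$, and with the frame reindexing allowed by Remark \ref{constructj1remark}(2) one checks that at every vertex the terminal framing of one segment matches the initial framing of the next up to $\delta=\epsilon/5$. The bending angle at $n_i$ is $\pi-\theta_{ijk}$ where $\theta_{ijk}\in[\phi_0,\pi]$ is the inner angle of the (possibly modified) triangle at $n_i$, so $(L,\theta)$-tameness with $L=R$ and $\theta=\pi-\phi_0$ is satisfied for $R$ large. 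Lemma \ref{lengthphase}(2) then gives $|l(\gamma_{ijk,s})-2R_{ijk}|<\epsilon$ and phase within $\epsilon$ of $0$, and Lemma \ref{distance} places the closed geodesic in the unit neighborhood of the cycle, bounding its height by $(2\log R+2)+1$.

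Part (1) is the more delicate case, because $j^1(e_{ij,s}^Z)$ is the hyperbolic geodesic homotopic to a horospherical Euclidean edge $i_s(e_{ij})$ of length $l_{ij}\in[R,(1+\epsilon)R]$. An elementary upper half-space computation (using the formula $\cosh d=1+l^2/2$) gives it hyperbolic length $2\log l_{ij}+O(1/R^2)$ and initial tangent at $m_{i,s}$ within $O(1/R)$ of the cusp direction $\vec{v}_i\times\vec{n}_i$. Combining this with $\vec{v}_{ijk,s}^{M}\approx (\sqrt{3}/2,1/6,-\sqrt{2}/3)$ from Remark \ref{coordinate}, the bending angle at each of the two vertices of the bigon equals $\arccos(\sqrt{2}/3)$, and the identity $\cos^{2}(\theta/2)=(3+\sqrt{2})/6$ yields $I(\arccos(\sqrt{2}/3))=\log\frac{6}{3+\sqrt{2}}$; Lemma \ref{lengthphase}(2) then produces the asserted length $2R_{ij}=2R+2\log l_{ij}-2\log\frac{6}{3+\sqrt{2}}$. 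For the phase I plan to exploit two pieces of freedom: first, a frame rotation of $\mathfrak{s}_{ijk,s}$ through the angle $\arctan(-1/\sqrt{6})$ (computed directly from the data of Remark \ref{constructj1remark}(3)) which brings its initial framing onto the common perpendicular vector $(-1/(2\sqrt{7}),3\sqrt{3}/(2\sqrt{7}),0)$; second, an unconstrained choice of framing on $j^1(e_{ij,s}^Z)$, which I take to be parallel transport of this common perpendicular. The $\mathbb{Z}/2$ symmetry of the bigon swapping $v_{i,s}$ and $v_{j,s}$ then forces the phase of the resulting closed geodesic to lie in $\{0,\pi\}$ modulo $2\pi$, and a continuity argument (the phase deforms continuously in $l_{ij}$ and must vanish in the short-edge limit) pins it to $0$, giving the good-curve condition.

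The null-homology follows uniformly from Lemma \ref{preparation}. Construction \ref{constructj1}(1), (2)(c), (3)(c) give that the relative homology classes of the carrying segments equal the images under $i_s$ of the corresponding edges of $N^{(1)}$, so the class of $\gamma_{ijk,s}$ in $H_1(M;\mathbb{Z})$ equals $(i_s)_*[\partial B_{ijk}]$ or $(i_s)_*[\partial \Delta_{ijk}]$, and Lemma \ref{preparation}(4) factors this as $\mathbf{i}_s$ applied to the class in $H_1(N_{\gamma}^{(2)};\mathbb{Z})$. Bigons lie in $N_{collar}$ and hence in $N_{\gamma}^{(2)}$ (since $\mathcal{N}(\gamma)$ is disjoint from $N_{collar}$), so $[\partial B_{ijk}]=0$, proving part (1). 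For triangles, either $\Delta_{ijk}\subset N_{\gamma}^{(2)}$ (same argument), or $[\partial\Delta_{ijk}]=n_{ijk}[c]$ in $H_1(N_{\gamma}^{(2)};\mathbb{Z})$ for $n_{ijk}$ the algebraic intersection $\Delta_{ijk}\cdot\gamma$, and Lemma \ref{preparation}(3) together with the vanishing of $[c]$ in $H_1(N;\mathbb{Z})$ (since $c$ bounds the meridian disk of $\mathcal{N}(\gamma)\subset N$) yields $(\mathbf{i}_1+\mathbf{i}_2)(n_{ijk}[c])=0$. The hard part of the whole argument is the phase control in part (1): the vast disparity in segment lengths ($2\log R$ versus $2R$) together with the near-cusp tangent of the horospherical-homotopic geodesic makes the naturally prescribed framings incompatible in the perpendicular plane, so the explicit common-perpendicular computation of Remark \ref{constructj1remark}(3) and the bigon symmetry are essential.
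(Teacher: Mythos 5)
Your part (2) argument matches the paper's proof essentially verbatim: equip each $\mathfrak{s}_{ij,s}$ with framings along the $\vec{n}_{ijk,s}^M$ directions, use $\vec{n}_{ijk,s}^M=-\vec{n}_{ikj,s}^M$ to close the cycle, then read off length and phase from Lemma~\ref{lengthphase}(2) and height from Lemma~\ref{distance}. Your null-homology arguments (factoring through $\mathbf{i}_s$, the intersection-with-$\gamma$ case split, using Lemma~\ref{preparation}(3)--(4)) are also the paper's.

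For part (1), most of your setup is right: the rotation angle $\arctan(-1/\sqrt{6})=-\arcsin(1/\sqrt{7})$ you compute does bring the initial framing onto a common-perpendicular vector (your sign convention differs from the paper's $\phi=\pi-\arcsin(1/\sqrt{7})$ by a flip of the normal, which is immaterial once chosen consistently); the bending angle $\arccos(\sqrt{2}/3)$ and the identity $I(\arccos(\sqrt{2}/3))=\log\frac{6}{3+\sqrt{2}}$ are correct; the hyperbolic length of $j^1(e_{ij,s}^Z)$ is $2\log\frac{\sqrt{l_{ij}^2+4}+l_{ij}}{2}=2\log l_{ij}+O(R^{-2})$, which is within $\epsilon$ of what you need. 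However, your phase control is a detour with a genuine gap. You appeal to a $\mathbb{Z}/2$ symmetry of the bigon swapping the two vertices to force the phase into $\{0,\pi\}$ and then a continuity argument to rule out $\pi$. Neither step is solid: the bigon picture is only approximately symmetric (the constructed segment $\mathfrak{s}_{ijk,s}$ is produced by the connection principle and has no exact symmetry), so ``forces the phase into $\{0,\pi\}$'' is not justified; and there is no actual one-parameter family in the construction along which to run a continuity argument, since $l_{ij}$ is pinned to $[R,(1+\epsilon)R]$ with no ``short-edge limit'' available. The good news is that no such argument is needed. Once you frame $j^1(e_{ij,s}^Z)$ by parallel transport (giving phase exactly $0$) and frame $\mathfrak{s}_{ijk,s}$ by the rotation you computed (preserving its phase, which is $\frac{\epsilon}{10}$-close to $0$ by Construction~\ref{constructj1}(3)(a)), you only need to verify that the two framings are $\delta$-consecutive at \emph{both} vertices of the bigon --- i.e., that parallel transport of the common perpendicular at $m_{i,s}$ along the cusp geodesic lands on the common perpendicular at $m_{j,s}$. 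This is a finite check in the preferred coordinates (and the paper carries it out), after which Lemma~\ref{lengthphase}(2) directly gives $|\varphi(\gamma_{ijk,s})|$ small; the symmetry and continuity machinery can be discarded.
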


Note that if $R$ is large enough, all good curves in this lemma have length contained in $[2R,6R]$.

\begin{proof}
\begin{enumerate}
\item 
Note that $j^{1}(e_{ijk,s}^{Z})$ is the carrier of $\mathfrak{s}_{ijk,s}$, and we assumed that $\vec{v}_{ij,s}^{M}$ has trivial second coordinate as in Remark \ref{constructj1remark} (2). So by Remark \ref{constructj1remark} (3), the initial and terminal frames of $\mathfrak{s}_{ijk,s}$ are $\frac{\epsilon}{5}$-close to 
$$\big((\frac{\sqrt{3}}{2},\frac{1}{6},-\frac{\sqrt{2}}{3}), (0,\frac{2\sqrt{2}}{3},\frac{1}{3})\big)\text{\ and\ }\big((\frac{\sqrt{3}}{2},-\frac{1}{6},\frac{\sqrt{2}}{3}), (0,\frac{2\sqrt{2}}{3},\frac{1}{3})\big)$$ 
respectively (with respect to preferred coordinates). For $\phi=\pi-\arcsin{\frac{1}{\sqrt{7}}}$, the initial and terminal frames of the frame rotation $\mathfrak{s}_{ijk,s}(\phi)$ are 
$\frac{\epsilon}{5}$-close to 
$$\qquad \qquad \big((\frac{\sqrt{3}}{2},\frac{1}{6},-\frac{\sqrt{2}}{3}), (\frac{1}{2\sqrt{7}},-\frac{3\sqrt{3}}{2\sqrt{7}},0)\big)\text{\ and\ }\big((\frac{\sqrt{3}}{2},-\frac{1}{6},\frac{\sqrt{2}}{3}),  (-\frac{1}{2\sqrt{7}},-\frac{3\sqrt{3}}{2\sqrt{7}},0)\big)$$ respectively.	

For $j^{1}(e_{ij,s}^{Z})$, its initial and terminal directions are $\frac{2}{R}$-close to $(0,0,1)$ and $(0,0,-1)$ respectively. Since $j^{1}(e_{ij,s}^{Z})$ is a geodesic segment in a cusp, it parallel transports $(0,1,0)$ to $(0,1,0)$. We obtain a $\partial$-framed segment $\mathfrak{t}$ with phase $0$, by equipping $j^{1}(e_{ij,s}^{Z})$ with initial and terminal framings $(0,1,0)$. Then for $\phi'=\pi+\arcsin{\frac{1}{2\sqrt{7}}}$, its $\phi'$-rotation $\mathfrak{t}(\phi')$ is a $\partial$-framed segment with $0$-phase, with initial and terminal frames $\frac{4}{R}$-close to 
$$((0,0,1), (\frac{1}{2\sqrt{7}},-\frac{3\sqrt{3}}{2\sqrt{7}},0)),\text{\ and \ } ((0,0,-1), (-\frac{1}{2\sqrt{7}},-\frac{3\sqrt{3}}{2\sqrt{7}},0))$$ respectively.

If $R>\frac{20}{\epsilon}$, $\overline{\mathfrak{t}(\phi')},\mathfrak{s}_{ijk,s}(\phi)$ is a $(\frac{2}{5}\epsilon)$-consecutive cycle of $\partial$-framed segments, with both bending angles $(\frac{2}{5}\epsilon)$-close to $\arccos{\frac{\sqrt{2}}{3}}$. By elementary hyperbolic geometry, the length of $\overline{\mathfrak{t}(\phi')}$ equals $2\log{\frac{\sqrt{l_{ij}^2+4}+l_{ij}}{2}}$.

Lemma \ref{lengthphase} (2) implies that the concatenation of $\overline{j^{1}(e_{ij,s}^{Z})},j^{1}(e_{ijk,s}^{Z})$ is homotopic to a closed geodesic $\gamma_{ijk,s}$ with complex length $2\epsilon$-close to $$2R_{ij}=2R+2\log{l_{ij}}-2\log{\frac{6}{3+\sqrt{2}}}.$$ So $\gamma_{ijk,s}$ is an $(R_{ij},\epsilon)$-good curve.

We take large enough $R$, so that heights of $T_1$ and $T_2$ are at most $\log{R}$.
Since the heights of $\mathfrak{s}_{ijk,s}$ and $\mathfrak{t}$ are at most $2\log{R}+2$, Lemma \ref{distance} implies the height of $\gamma_{ijk,s}$ is at most $2\log{R}+3$. By the homological conditions in Construction \ref{constructj1} (1) and (3) (c), $j^{1}(e_{ij,s}^{Z})$ and $j^{1}(e_{ijk,s}^{Z})$ represent the same relative homology class, so $\gamma_{ijk,s}$ is a null-homologous closed geodesic in $M$.

\item We prove this result in the case that $\Delta_{ijk}$ is not a modified triangle, and the case of modified triangles can be proved similarly. 

By our constructions of $j^{1}(e_{ij,s}^{Z}), j^{1}(e_{jk,s}^{Z}), j^{1}(e_{ki,s}^{Z})$ in Construction \ref{constructj1} (2), we equip them with initial and terminal frames as following to get three $\partial$-framed segments.
\begin{itemize}
\item Equip $j^{1}(e_{ij,s}^{Z})$ with initial and terminal frames that are $\frac{\epsilon}{10}$-close to $\vec{n}_{ijk,s}^M$ and $-\vec{n}_{jik,s}^M$ respectively.
\item Equip $j^{1}(e_{jk,s}^{Z})$ with initial and terminal frames that are $\frac{\epsilon}{10}$-close to $\vec{n}_{jki,s}^M$ and $-\vec{n}_{kji,s}^M$ respectively.
\item Equip $j^{1}(e_{ki,s}^{Z})$ with initial and terminal frames that are $\frac{\epsilon}{10}$-close to $\vec{n}_{kij,s}^M$ and $-\vec{n}_{ikj,s}^M$ respectively.
\end{itemize}

By Construction \ref{constructj1} (2) and Remark \ref{constructj1remark} (2), the phases of these $\partial$-framed segments are $\frac{\epsilon}{10}$-close to $0$. Since $\vec{n}_{ijk,s}^M=-\vec{n}_{ikj,s}^M$, these three $\partial$-framed segments form a $\frac{\epsilon}{5}$-consecutive cycle.
 Then Lemma \ref{lengthphase} (2) implies that the concatenation is homotopic to a closed geodesic $\gamma_{ijk,s}$ with complex length $2\epsilon$-close to 
$$2R_{ijk}=6R-2(I(\pi-\theta_{ijk})+I(\pi-\theta_{jki})+I(\pi-\theta_{kij})).$$ So $\gamma_{ijk,s}$ is an $(R_{ijk},\epsilon)$-good curve. The height bound of $\gamma_{ijk,s}$ follows from the argument in (1). 

By Construction \ref{constructj1} (2), $\gamma_{ijk,s}$ is homologous to the concatenation of $i_s(e_{ij}), i_s(e_{jk}), i_s(e_{ki})$. If $\Delta_{ijk}$ is a triangle in $N_{\gamma}^{(2)}$, Lemma \ref{preparation} (4) implies that $\gamma_{ijk,s}$ is null-homologous in $M$.
If $\Delta_{ijk}$ is not contained in $N_{\gamma}^{(2)}$, it is a meridian disc of $\mathcal{N}(\gamma)$, then Lemma \ref{preparation} (4) implies that $\gamma_{ijk,1}\cup \gamma_{ijk,2}$ is homologous to ${\bf i}_1(\partial \Delta_{ijk})+{\bf i}_2(\partial \Delta_{ijk})$, which is null-homologous in $M$ by Lemma \ref{preparation} (3).
\end{enumerate}
\end{proof}

Now we construct the $2$-complex $Z$, by adding surfaces to two copies of $Z^{1}$. Rigorously speaking, two copies of $Z^1$ are not the $1$-skeleton of $Z$ as a CW-complex, but we still call it the $1$-skeleton of $Z$, for our convenience. The map $j^1:Z^1\to M$ and the construction of $Z$ below automatically give the desired immersion $j:Z\looparrowright M$.

\begin{construction}\label{constructZ2}
 We take $R'$ to be an integer greater than all the $R_{ij}$ and $R_{ijk}$ in Lemma \ref{checkgoodcurve}.
\begin{enumerate}
\item Recall that $Z^1$ has two components: $Z^1_1$ and $Z^1_2$, and each of them is isomorphic to $N^{(1)}$. The $1$-skeleton $Z^{(1)}$ of $Z$ consists of two copies of $Z^1$, so we have $Z^{(1)}=Z^{1,1}_1\cup Z^{1,1}_2\cup  Z^{1,2}_1\cup Z^{1,2}_2$. For any $s=1,2$, the restriction of $j$ to $Z^{1,1}_s$ and $Z^{1,2}_s$ equals $j^1|_{Z^1_s}$. We denote the two copies of $\partial_p Z^1$ in $Z^{(1)}$ by $\partial_p Z^{(1)}$.

\item For any triangle $\Delta_{ijk}$ in $\partial N_0$ with vertices $n_i,n_j,n_k$, and any component $Z^{1,t}_s$ of $Z^{(1)}$ with $s,t\in \{1,2\}$, we paste a triangle $\Delta_{ijk,s}^{Z,t}$ to $Z^{1,t}_s$ along the concatenation of edges $e^{Z}_{ij,s}, e^{Z}_{jk,s},e^{Z}_{ki,s}$ in $Z^{1,t}_s$. Since $e^{Z}_{ij,s}$ is mapped to a path homotopic to $i_s(e_{ij})$ (Construction \ref{constructj1} (1)) and $i_s|_{\partial N_0^{(1)}}$ extends to $i_{\partial,s}:\partial N_0\to T_s$ (Lemma \ref{preparation} (4)), the $j^{1}$-image of this concatenation is null-homotopic in $M$, so we map the triangle $\Delta_{ijk,s}^{Z,t}$ to the corresponding totally geodesic triangle in $M$.

\item For any bigon $B_{ijk}$ in $N_{\gamma}^{(2)}$ containing an edge $e_{ij}\subset \partial N_0$ and any $s=1,2$, we do the following construction.

By Lemma \ref{checkgoodcurve} (1), the concatenation of $\overline{j^{1}(e_{ij,s}^{Z})},j^{1}(e_{ijk,s}^{Z})$ is homotopic to a null-homologous $(R_{ij},\epsilon)$-good curve $\gamma_{ijk,s}$ in $M$, via a nearly geodesic two-cornered annulus $A_{ijk,s}$ (see Figure \ref{figure3} (a)). Let $\vec{w}_{ijk,s}$ be the tangent vector of the shortest geodesic in $A_{ijk,s}$ from $\gamma_{ijk,s}$ to $m_{i,s}$, and let $\vec{v}_{ijk,s}=\vec{w}_{ijk,s}+(1+\pi i)\in N^1(\sqrt{\gamma_{ijk,s}})$.
By Proposition \ref{boundingsurface} and Remark \ref{combinatorialdistance}, two copies of $\gamma_{ijk,s}$ bound an $(R_{ij},R',\epsilon)$-nearly geodesic subsurface $S_{ijk,s}\looparrowright M$, such that the following hold.
\begin{enumerate}
\item The two feet of $S_{ijk,s}$ on two copies of $\gamma_{ijk,s}$ are both $\frac{\epsilon}{R}$-close to $\vec{v}_{ijk,s}$.
\item Any essential path in $S_{ijk,s}$ with end points in $\partial S_{ijk,s}$ must have combinatorial length (with respect to the decomposition of $S_{ijk,s}$ to pants and hamster wheels) at least $R'e^{\frac{R'}{2}}$.
\end{enumerate}

We identify the two boundary components of $S_{ijk,s}$ with the two copies of concatenations of $e_{ij,s}^{Z},e_{jk,s}^{Z}, e_{ki,s}^{Z}$ in $Z^{1,1}_s$ and $Z^{1,2}_s$ respectively. The restriction of $j$ on $S_{ijk,s}$ is naturally defined by two copies of the nearly geodesic $2$-cornered annulus $A_{ijk,s}$ and the above $(R_{ij},R',\epsilon)$-nearly geodesic subsurface $S_{ijk,s}\looparrowright M$.

\item For any triangle $\Delta_{ijk}$ in $N_{\gamma}^{(2)}$ not contained in $\partial N_0$ and any $s=1,2$, we do the following construction.

By Lemma \ref{checkgoodcurve} (2), the concatenation of $j^{1}(e_{ij,s}^{Z}),j^{1}(e_{jk,s}^{Z}), j^{1}(e_{ki,s}^{Z})$ is homotopic to a null-homologous $(R_{ijk},\epsilon)$-good curve $\gamma_{ijk,s}$ in $M$, via a nearly geodesic three-cornered annulus $A_{ijk,s}$ (see Figure \ref{figure3} (b)). Let $\vec{w}_{ijk,s}$ be the tangent vector of the shortest geodesic in $A_{ijk,s}$ from $\gamma_{ijk,s}$ to $m_{i,s}$, and let $\vec{v}_{ijk,s}=\vec{w}_{ijk,s}+(1+\pi i)\in N^1(\sqrt{\gamma_{ijk,s}})$.
By Proposition \ref{boundingsurface} and Remark \ref{combinatorialdistance}, two copies of $\gamma_{ijk,s}$ bound an $(R_{ijk},R',\epsilon)$-nearly geodesic subsurface $S_{ijk,s}\looparrowright M$, such that conditions (a) (b) in item (3) hold.

We identify the two boundary components of $S_{ijk,s}$ with the two copies of concatenations $e_{ij,s}^{Z},e_{jk,s}^{Z}, e_{ki,s}^{Z}$ in $Z^{1,1}_s$ and $Z^{1,2}_s$ respectively. The restriction of $j$ on $S_{ijk,s}$ is naturally defined by two copies of the nearly geodesic $3$-cornered annulus $A_{ijk,s}$ and the above $(R_{ijk},R',\epsilon)$-nearly geodesic subsurface $S_{ijk,s}\looparrowright M$.

\item Up until now, the $2$-complex we have constructed has (at least) two components, one containing $Z^{1,1}_1\cup Z^{1,2}_1$ and one containing $Z^{1,1}_2\cup Z^{1,2}_2$. For any triangle $\Delta_{ijk}$ in $N^{(2)}$ not contained in $N_{\gamma}^{(2)}$, we do the following construction.

By Lemma \ref{checkgoodcurve} (3), for any $s=1,2$, the concatenation $j^{1}(e_{ij,s}^{Z}),j^{1}(e_{jk,s}^{Z}),j^{1}(e_{ki,s}^{Z})$ is homotopic to an $(R_{ijk},\epsilon)$-good curve $\gamma_{ijk,s}$ via a three-cornered annulus $A_{ijk,s}$ in $M$, and $\gamma_{ijk,1}\cup \gamma_{ijk,2}$ is null-homologous in $M$. 
Let $\vec{w}_{ijk,s}$ be the tangent vector of the shortest geodesic in $A_{ijk,s}$ from $\gamma_{ijk,s}$ to $m_{i,s}$, and let $\vec{v}_{ijk,s}=\vec{w}_{ijk,s}+(1+\pi i)\in N^1(\sqrt{\gamma_{ijk,s}})$.
By Proposition \ref{boundingsurface} and Remark \ref{combinatorialdistance}, two copies of $\gamma_{ijk,1}\cup \gamma_{ijk,2}$ bound an $(R_{ijk},R',\epsilon)$-nearly geodesic surface $S_{ijk}\looparrowright M$, such that conditions (a) (b) in item (3) hold, with $S_{ijk,s}$ replaced by $S_{ijk}$.

We identify the four boundary components of $S_{ijk}$ with two copies of concatenations of $e_{ij,1}^{Z},e_{jk,1}^{Z}, e_{ki,1}^{Z}$ and $e_{ij,2}^{Z},e_{jk,2}^{Z}, e_{ki,2}^{Z}$ in $Z^{1,1}_1, Z^{1,2}_1$ and $Z^{1,1}_2, Z^{1,2}_2$ respectively. The restriction of $j$ on $S_{ijk}$ is naturally defined by two copies of the nearly geodesic $3$-cornered annuli $A_{ijk,1}\cup A_{ijk,2}$, and the above $(R_{ijk},R',\epsilon)$-nearly geodesic subsurface $S_{ijk}\looparrowright M$.

\end{enumerate}
\end{construction}

\begin{figure}
\centering
\label{new picture}
\def\svgwidth{.7\textwidth}
\begingroup%
  \makeatletter%
  \providecommand\color[2][]{%
    \errmessage{(Inkscape) Color is used for the text in Inkscape, but the package 'color.sty' is not loaded}%
    \renewcommand\color[2][]{}%
  }%
  \providecommand\transparent[1]{%
    \errmessage{(Inkscape) Transparency is used (non-zero) for the text in Inkscape, but the package 'transparent.sty' is not loaded}%
    \renewcommand\transparent[1]{}%
  }%
  \providecommand\rotatebox[2]{#2}%
  \newcommand*\fsize{\dimexpr\f@size pt\relax}%
  \newcommand*\lineheight[1]{\fontsize{\fsize}{#1\fsize}\selectfont}%
  \ifx\svgwidth\undefined%
    \setlength{\unitlength}{504.49476186bp}%
    \ifx\svgscale\undefined%
      \relax%
    \else%
      \setlength{\unitlength}{\unitlength * \real{\svgscale}}%
    \fi%
  \else%
    \setlength{\unitlength}{\svgwidth}%
  \fi%
  \global\let\svgwidth\undefined%
  \global\let\svgscale\undefined%
  \makeatother%
  \begin{picture}(1,0.48593845)%
    \lineheight{1}%
    \setlength\tabcolsep{0pt}%
    \put(0,0){\includegraphics[width=\unitlength,page=1]{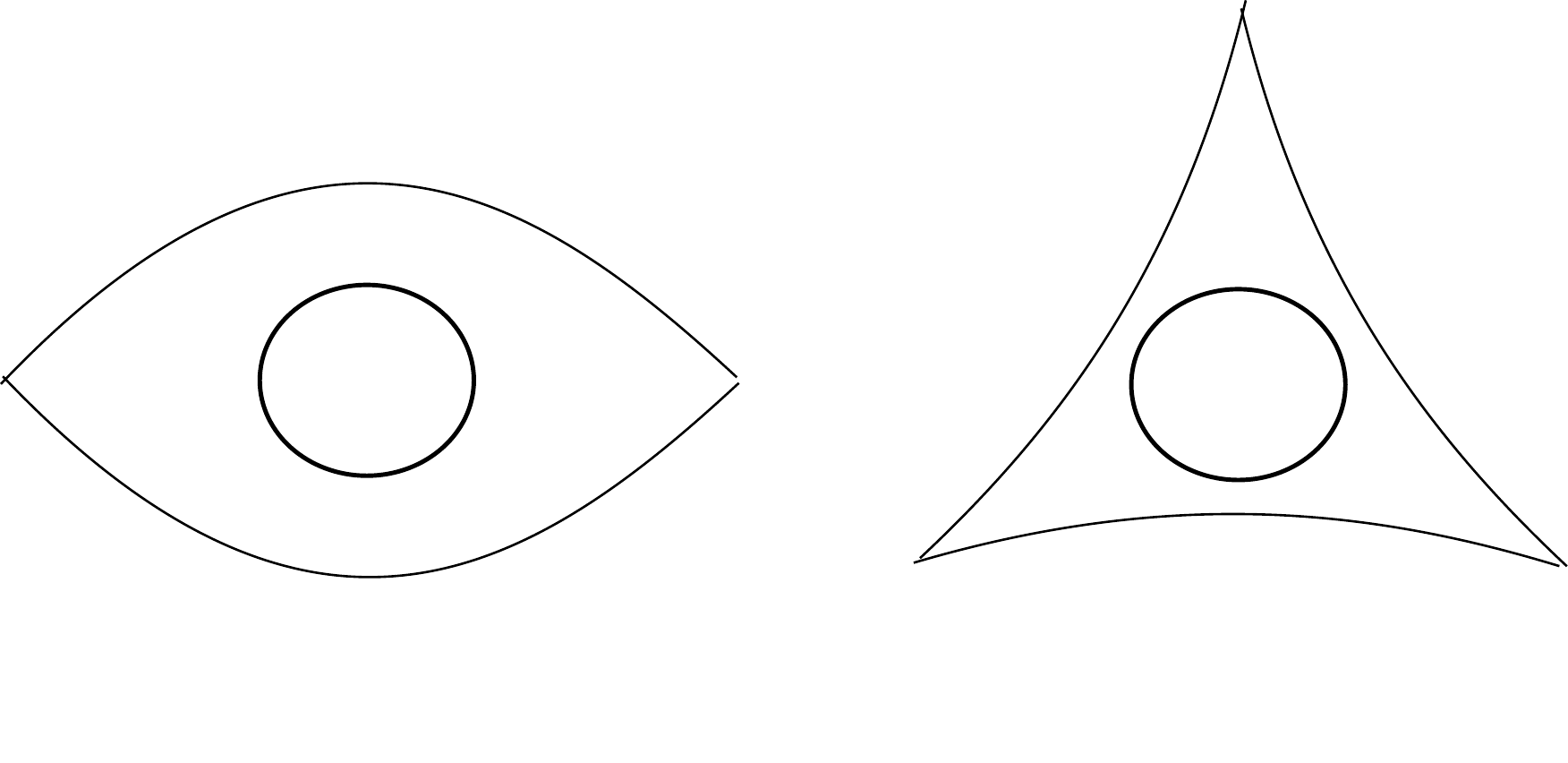}}%
    \put(0.20197248,0.01196273){\color[rgb]{0,0,0}\makebox(0,0)[lt]{\lineheight{1.25}\smash{\begin{tabular}[t]{l}(a)\\\end{tabular}}}}%
    \put(0.75467521,0.01423988){\color[rgb]{0,0,0}\makebox(0,0)[lt]{\lineheight{1.25}\smash{\begin{tabular}[t]{l}(b)\\\end{tabular}}}}%
  \end{picture}%
\endgroup%

\caption{The two-cornered annulus and three-cornered annulus in Construction \ref{constructZ2}.}
\label{figure3}
\end{figure}

Note that the surfaces $S_{ijk,s}$ and $S_{ijk}$ in Construction \ref{constructZ2} (3), (4), (5) may not be connected, so $Z$ may not be connected and has at most four connected components. One can actually work harder as in \cite{LM} to make sure these surfaces are connected, but we choose to save some work here.

Alternatively, $Z$ is obtained from four copies of $N^{(2)}$, denoted by $N^{(2),1}_1, N^{(2),1}_2, N^{(2),2}_1, N^{(2),2}_2$ respectively, by making the following modifications.
\begin{enumerate}
\item Each triangle in $N^{(2)}\cap \partial N_0$ is not modified. 

\item For any bigon $B_{ijk}$ or triangle $\Delta_{ijk}$ in $N_{\gamma}^{(2)}\setminus \partial N_0$ and any $s=1,2$, the two copies of $B_{ijk}$ or $\Delta_{ijk}$ in $N^{(2),1}_s, N^{(2),2}_s$ are replaced by a compact orientable surface $S_{ijk,s}$ with two boundary components.


\item For each triangle $\Delta_{ijk}$ in $N^{(2)}$ not contained in $N_{\gamma}^{(2)}$, its four copies are replaced by a compact orientable surface $S_{ijk}$ with four boundary components.
\end{enumerate}

\bigskip

\subsection{Contruction of virtual proper domination}\label{constructdomination2}

In this section, we construct the desired finite cover $M'$ of $M$ and the proper non-zero map $f:M'\to N$, modulo a $\pi_1$-injectivity result (Theorem \ref{pi1injectivity}).

Recall that the $1$-skeleton $Z^{(1)}$ of $Z$ has four identical components. So $Z$ has at most four components, and we take a component of $Z$ that contains the least number (one, two or four) of components of the $1$-skeleton. We abuse notation and still denote this component by $Z$, and we still denote the restriction of $j$ on this component by $j$.

The next section is devoted to prove the $\pi_1$-injectivity of $j:Z\looparrowright M$ and some further refinements. To state this result, we first need to define a compact $3$-manifold $\mathcal{Z}$ with boundary.

Recall that the cellulation of $N_0$ in Construction \ref{cellulation} induces a handle structure on a compact neighborhood $\mathcal{N}(N_0)$ of $N_0$. Let $\mathcal{N}^{(1)}$ be the union of $0$- and $1$-handles of this handle structure. Then we define a compact oriented $3$-manifold $\mathcal{Z}$ as following. 

\begin{construction}\label{constructmathcalZ}
\begin{enumerate}
\item We start with four copies of $\mathcal{N}^{(1)}$, denoted by $\mathcal{N}^{(1),1}_1, \mathcal{N}^{(1),1}_2, \mathcal{N}^{(1),2}_1, \mathcal{N}^{(1),2}_2$ respectively.

\item For any triangle $\Delta_{ijk}$ contained in $N^{(2)}\cap \partial N_0$, the corresponding $2$-handle is pasted to $\mathcal{N}^{(1)}$ along an annulus $L_{ijk}$. Then for each copy of $\mathcal{N}^{(1)}$, we attach a $2$-handle along the same annulus $L_{ijk}$.

\item For any bigon $B_{ijk}$ or triangle $\Delta_{ijk}$ in $N_{\gamma}^{(2)}\setminus \partial N_0$, the corresponding $2$-handle is pasted to $\mathcal{N}^{(2)}$ along an annulus $L_{ijk}$. For any $s=1,2$, we take the surface $S_{ijk,s}$ constructed in Construction \ref{constructZ2} (3) or (4), which has two boundary components. Then we attach $S_{ijk,s}\times I$ to $\mathcal{N}^{(1),1}_s\cup \mathcal{N}^{(1),2}_s$ via an orientation reversing homeomorphism from $\partial S_{ijk,s}\times I$ to copies of $L_{ijk}$ in $\mathcal{N}^{(1),1}_s\cup \mathcal{N}^{(1),2}_s$. 


\item For any triangle $\Delta_{ijk}$ in $N^{(2)}$ not contained in $N^{(2)}_{\gamma}$, the corresponding $2$-handle is pasted to $\mathcal{N}^{(1)}$ along an annulus $L_{ijk}$. We take the surface $S_{ijk}$ constructed in Construction \ref{constructZ2} (5), which has four boundary components. Then we paste $S_{ijk}\times I$ to the four copies of $\mathcal{N}^{(1)}$ via an orientation reversing homeomorphism from $\partial S_{ijk}\times I$ to the four copies of $L_{ijk}$.
\end{enumerate}

Then we take $\mathcal{Z}$ to be the component of the resulting manifold containing the least number ($1, 2$ or $4$) of components of $\mathcal{N}^{(1)}$.
\end{construction}

In the construction of $\mathcal{Z}$ (Construction \ref{constructmathcalZ}), after the second step, we obtain four copies of the same $3$-manifold, which is homeomorphic to the union of $\mathcal{N}^{(1)}$ and a neighborhood of $\partial N_0$. Here each copy has a unique boundary component homeomorphic to the torus. Since further constructions in Construction \ref{constructmathcalZ} do not affect these four tori, we obtain at most four tori components of $\partial \mathcal{Z}$ and we denote their union by $\partial_p\mathcal{Z}$. 

Now we can state the result to be proved in Section \ref{pi1inj2}.

\begin{theorem}\label{pi1injectivity}
For any small $\epsilon\in (0,10^{-2})$, there exists $R_0>0$ depending on $M$ and $\epsilon$, such that the following statement holds for any $R>R_0$. If the construction of $j:Z\looparrowright M$ satisfies all conditions in Constructions \ref{constructZ1}, \ref{constructj1}, \ref{constructZ2} (involving $\epsilon$ and $R$), then $j:Z\looparrowright M$ is $\pi_1$-injective and the $\pi_1$-image $j_*(\pi_1(Z))<\pi_1(M)$ is a geometrically finite subgroup. 

Moreover, the convex core of the covering space $\tilde{M}$ of $M$ corresponding to $j_*(\pi_1(Z))<\pi_1(M)$ is homeomorphic to the $3$-manifold $\mathcal{Z}\setminus \partial_p\mathcal{Z}$ as oriented manifolds, and the cusp ends of $\tilde{M}$ corresponding to $\partial_p \mathcal{Z}$ are mapped to $T_1\cup T_2\subset \partial M$ via the covering map.
\end{theorem}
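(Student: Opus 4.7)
The plan is to carry out the strategy sketched in the introduction: first enlarge $Z$ to an ideal $3$-complex $Z^{3}$ by attaching, for each torus $T \subset Z$ coming from the triangulation of $\partial N_0$ in $\partial_p Z^{(1)}$, an open mapping cylinder $T \times [0,\infty)$ (so $T$ sits at height $0$, the cone point is deleted). The map $j$ extends to $j_1: Z^{3} \looparrowright M$ sending $T \times \{t\}$ to the horotorus at depth $t$ above the component $T_s$ of $\partial M$ into which $T$ is homotoped. The reason for this enlargement is exactly as anticipated: the intrinsic metric on $Z$ is not quasi-isometric to its lift in $\mathbb{H}^3$, since the flat tori sit inside horospheres and are exponentially distorted, but once the cusps have been coned off this obstruction disappears.

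The proof proper then consists of putting a natural piecewise metric on $Z^{3}$ and showing that $\widetilde{j_1}: \widetilde{Z^{3}} \to \mathbb{H}^3$ is a quasi-isometric embedding. I equip each $1$-skeleton edge with its hyperbolic length, each surface $S_{ijk,s}$ or $S_{ijk}$ with the hyperbolic structure in which its cuffs have the lengths $2R_{ij}$, $2R_{ijk}$, $2R'$ coming from its decomposition into $(R,\epsilon)$-good pants and $R'$-hamster wheels, each torus in $\partial_p Z^{(1)}$ with its Euclidean metric from Construction \ref{triangulation}, and each attached cusp $T\times [0,\infty)$ with the standard hyperbolic cusp metric. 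In each piece $j_1$ is nearly an isometric embedding, so the real work is to control what happens at the seams.

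The technical heart, which I expect to be the main obstacle, is showing that any path $\widetilde{\alpha}$ in $\widetilde{Z^{3}}$ maps to a path of comparable length in $\mathbb{H}^3$. Cutting $\widetilde{\alpha}$ along lifts of the edges of $Z^{(1)}$ and of the cuff system $\mathcal{C}$ of the decomposition of the surfaces, one obtains a consecutive chain of almost-geodesic segments of length at least $R/2$ (for surface pieces this is Remark \ref{combinatorialdistance}, for $1$-skeleton edges it is Construction \ref{constructj1}, and for cusp pieces it comes from radial rays). The bending angles at the breakpoints are bounded away from $\pi$: where two surfaces meet along a common cuff the bending is almost $\pi/2$ by the nearly $1$-shift condition of Proposition \ref{boundingsurface}(5) and the Kahn-Wright well-matched condition; where the path turns at a corner inside an almost-equilateral horotoral triangle, the bending is $\epsilon$-close to $\pi/3$; where the path transitions between a cusp cone and the adjacent bigon/surface through a $1$-skeleton edge of the special shape $e_{ijk,s}^Z$, the explicit computation in Remark \ref{constructj1remark}(3) gives a bending of about $0.606\pi$, comfortably less than $\pi$. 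Feeding all of this into Proposition \ref{lengthshort} (treating the short cusp-transverse segments as the short case of that proposition), the length of the straightened geodesic in $\mathbb{H}^3$ is at least half of the sum of the segment lengths, which is comparable to the intrinsic length of $\widetilde{\alpha}$. This proves the quasi-isometric embedding and, as a consequence, the $\pi_1$-injectivity and geometric finiteness of $j_1$ (hence of $j$), since $\widetilde{Z^{3}}$ has finitely many $\pi_1(Z)$-orbits of cusp subcomplexes and each is sent onto a horoball based at a parabolic fixed point in $\mathbb{H}^3$.

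For the final assertion on the convex core: the fact that $j$ factors through the inclusion $Z \hookrightarrow \mathcal{Z}$ (which is a deformation retract) and $j_1$ sends each added cusp component $T \times [0,\infty)$ isometrically into the corresponding cusp of $M$ above $T_s$ shows that the pair $(\mathcal{Z}, \partial_p \mathcal{Z})$ maps to $(\tilde M, \text{parabolic ends})$ via a $\pi_1$-isomorphism sending $\partial_p\mathcal{Z}$ onto a full system of horotorus cross-sections of the parabolic ends. The orientation is preserved because Construction \ref{constructmathcalZ} was performed by orientation-reversing gluings along annuli consistent with the orientation of $M$. By geometric finiteness and tameness, the convex core of $\tilde M$ deformation retracts onto this compact core, and the cusp ends corresponding to $\partial_p\mathcal{Z}$ cover $T_1 \cup T_2 \subset \partial M$ by construction of $j_1$.
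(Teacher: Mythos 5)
Your high-level plan is the right one, and matches the paper's up to the point of coning off the peripheral tori of $Z$ to get $Z^3$ and trying to show the lift to $\mathbb{H}^3$ is a quasi-isometric embedding. But the technical core of your argument has two genuine gaps that would sink it.

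First, the claim that cutting a shortest path along lifts of $Z^{(1)}$ and the cuff system produces ``almost-geodesic segments of length at least $R/2$'' is simply false. Edges of $\partial_p Z^{(1)}$ have length only about $2\log R$, and more importantly, when a shortest path in $\tilde{Z}^3$ threads near a vertex, its intersections with $2$-dim and $3$-dim pieces can be \emph{arbitrarily} short. Proposition~\ref{lengthshort} does allow some short segments, but only with a strong hypothesis: when one of $s_i, s_{i+1}$ is short, the bending angle there must lie in $[0,\frac{\pi}{2}-\theta]$, a much more restrictive range than $[0,\pi-\theta]$. Your appeal to the $0.606\pi$ dihedral angle being ``comfortably less than $\pi$'' does not verify this; the paper's actual proof has to convert that dihedral estimate into an angle bound $\angle y_{i-1}y_iy_{i+1} \geq \frac{\pi}{2}+\eta_0$ in exactly the cases where a $3$-dim piece is involved (Lemma~\ref{modifiedsequenceangle}(2)), and more importantly it needs the \emph{modified sequence} device (Construction~\ref{modifiedsequence}): intersection points near a vertex are systematically replaced by the vertex itself, using Lemma~\ref{new} to guarantee that after modification all non-$3$-dim pieces have length $\geq L$. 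Without something playing the role of the modified sequence, your chain decomposition simply does not meet the hypotheses of Proposition~\ref{lengthshort}, and the argument collapses precisely where it matters most.

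Second, you attempt to estimate $\tilde{j}_1$ directly, asserting ``in each piece $j_1$ is nearly an isometric embedding.'' This is not precise enough to carry the argument: the good-pants surfaces $S_{ijk,s}$ are only \emph{nearly} geodesic, the good curves and $\partial$-framed segments are only $\epsilon$-close to the idealized frames, and the cumulative error along a long chain is what one has to control. The paper instead interpolates through a family $\tilde{j}_t$, $t\in[0,1]$, where $\tilde{j}_0$ maps each piece of $\tilde{Z}^3\setminus\tilde{Z}^{(1)}$ into a totally geodesic plane or genuine ideal tetrahedron in $\mathbb{H}^3$. This lets the delicate length/angle estimates be carried out in the exactly-geodesic model (Proposition~\ref{model}), and then Lemmas~\ref{estimate2dpiece} and~\ref{estimate3dpiece} transfer those estimates, uniformly in $t$, to the family — the uniform lower angle bound $\eta_0$ is what makes the perturbation argument (Proposition~\ref{deformation}) work. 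Your final step identifying the convex core also silently relies on knowing $\tilde{j}_1$ is an actual \emph{embedding}, not merely a QI embedding; in the paper this is established for $\tilde{j}_0$ by an explicit case analysis and then propagated to $\tilde{j}_1$ by the continuity of the family $\rho_t$ of geometrically finite representations. The direct route you sketch would need its own injectivity argument and its own identification of the manifold, and you do not supply either.
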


The proof of Theorem \ref{pi1injectivity} is more complicated than proofs of corresponding $\pi_1$-injectivity results in \cite{Sun2} and \cite{Sun5}, since the construction of the mapped-in $2$-complex $j:Z\looparrowright M$ is more complicated. The proof of Theorem \ref{pi1injectivity} is more geometric, which is in different flavor from other constructions in this section, so we defer its proof to Section \ref{pi1inj2}.

For $\mathcal{N}^{(2)}$, it has a unique boundary component homeomorphic to the torus, and we denote it by $\partial_p \mathcal{N}^{(2)}$.
The following elementary property of $\mathcal{Z}$ is important for the construction of virtual domination.

\begin{lemma}\label{elementary}
There exists a proper map $g:(\mathcal{Z},\partial \mathcal{Z})\to (\mathcal{N}^{(2)},\partial \mathcal{N}^{(2)})$ of $\text{deg}(g)\in \{1,2,4\}$, such that 
$g^{-1}(\partial_p \mathcal{N}^{(2)})=\partial_p \mathcal{Z}$, and the restriction of $g$ on each component of $\partial_p \mathcal{Z}$ is an orientation preserving homeomorphism to $\partial_p \mathcal{N}^{(2)}$ .
\end{lemma}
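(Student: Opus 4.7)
The plan is to construct $g$ directly using the piecewise structure of $\mathcal{Z}$ from Construction \ref{constructmathcalZ}. Let $n\in\{1,2,4\}$ denote the number of copies of $\mathcal{N}^{(1)}$ contained in $\mathcal{Z}$; this integer will turn out to be the degree of $g$. On each copy of $\mathcal{N}^{(1)}$ and on each step-(2) $2$-handle inside $\mathcal{Z}$, let $g$ be the tautological identity onto the corresponding piece of $\mathcal{N}^{(2)}$. On each $S_{ijk,s}\times I$ piece arising from step (3) or (4), set $g = \phi_{ijk,s}\times \text{id}_I$, where $\phi_{ijk,s}:S_{ijk,s}\to B_{ijk}$ or $\Delta_{ijk}$ is a proper map that restricts on each of its two boundary circles to the identification prescribed by the attaching data; analogously, on each $S_{ijk}\times I$ piece from step (5), use $\phi_{ijk}:S_{ijk}\to \Delta_{ijk}$ with the analogous condition on all four boundary circles.

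The required maps $\phi$ exist by elementary obstruction theory, since each target disk is contractible and any prescribed continuous boundary map therefore extends. A commutative square argument shows that, because the attachings of the $S\times I$ pieces in $\mathcal{Z}$ and of the $2$-handles in $\mathcal{N}^{(2)}$ are both orientation-reversing along each boundary circle, the two signs cancel and every restriction $\phi|_{C_i}$ is automatically orientation-preserving. The standard formula $\text{deg}(\phi) = \sum_i \text{deg}(\phi|_{C_i})$ then yields $\text{deg}(\phi_{ijk,s}) = 2$ and $\text{deg}(\phi_{ijk}) = 4$.

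The piecewise map $g$ is continuous, since its definitions agree on attaching annuli by construction, and it is a proper map of pairs: the horizontal faces $S\times\{0,1\}$ land in the top/bottom of the corresponding $2$-handles in $\partial \mathcal{N}^{(2)}$, while each copy of $\partial_p\mathcal{N}^{(1)}$ together with its step-(2) $2$-handles assembles into a single torus component of $\partial_p\mathcal{Z}$ that maps by the identity onto $\partial_p\mathcal{N}^{(2)}$. Thus $\partial_p\mathcal{Z}$ is a disjoint union of $n$ tori, $g^{-1}(\partial_p\mathcal{N}^{(2)}) = \partial_p\mathcal{Z}$, and the restriction to each component is an orientation-preserving homeomorphism onto $\partial_p\mathcal{N}^{(2)}$. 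A generic interior point of $\mathcal{N}^{(1)}\subset \mathcal{N}^{(2)}$ has exactly $n$ preimages, each of local degree $+1$, so $\text{deg}(g) = n\in\{1,2,4\}$.

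The step requiring the most care, and the point that I expect to be the crux, is the degree-consistency check at generic interior points of the non-step-(2) $2$-handles of $\mathcal{N}^{(2)}$. This works because of the symmetric structure of Construction \ref{constructmathcalZ}: a step-(3)/(4) $2$-handle is covered by exactly $n/2$ pieces $S_{ijk,s}\times I$ of degree $2$ each (and the existence of a step-(3)/(4) attachment inside $\mathcal{Z}$ already forces $n\geq 2$), while a step-(5) $2$-handle is covered by the unique piece $S_{ijk}\times I$ of degree $4$ (and forces $n = 4$). In the degenerate case $n = 1$, which can only occur when $N^{(2)}$ has no $2$-cells outside $\partial N_0$, one simply has $\mathcal{Z} = \mathcal{N}^{(2)}$ and $g$ is the identity.
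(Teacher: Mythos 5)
Your construction is essentially the paper's: map each copy of $\mathcal{N}^{(1)}$ and each step-(2) $2$-handle by the identity, use pinching maps times $\mathrm{id}_I$ on the remaining $S\times I$ pieces, and read off the degree from the preimage of a generic point of $\mathcal{N}^{(1)}$. That argument is complete and correct as far as the computation at $\mathcal{N}^{(1)}$ is concerned.

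However, you have mislocated the ``crux.'' The degree of a proper map is well-defined and equals the local count at \emph{any} regular value, so once you have checked $n$ preimages of local degree $+1$ over a generic interior point of $\mathcal{N}^{(1)}$, you are done; there is no separate consistency to verify at the other $2$-handles. This matters because the claims in your final paragraph are not actually correct as stated: the surfaces $S_{ijk,s}$ and $S_{ijk}$ may be disconnected (the paper explicitly allows this), so the portion of $S_{ijk,s}\times I$ or $S_{ijk}\times I$ landing in $\mathcal{Z}$ need not have degree $2$ or $4$, a step-(5) attachment does not force $n=4$ (a disconnected $S_{ijk}$ can split the four copies among several components), and $n=1$ can occur even though $N^{(2)}$ always has $2$-cells outside $\partial N_0$ (if a component of some $S$-surface has a single boundary circle). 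None of this undermines your proof, because the correct value of the degree is already established at $\mathcal{N}^{(1)}$; just be aware that the paragraph you singled out as the essential step is both unnecessary and, as written, false in edge cases.
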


\begin{proof}
We construct $g$ by following the steps in Construction \ref{constructmathcalZ} that constructs $\mathcal{Z}$. 

At first, $g$ maps one, two or four copies of $\mathcal{N}^{(1)}$ in $\mathcal{Z}$ to $\mathcal{N}^{(1)}\subset \mathcal{N}^{(2)}$ by identity. Then $g$ maps each $2$-handle in $\mathcal{Z}$ corresponding to $\Delta_{ijk}\subset \partial N_0$ to the corresponding $2$-handle in $\mathcal{N}^{(2)}$ by homeomorphism. So $g$ maps $\partial_p \mathcal{Z}$ to $\partial_p \mathcal{N}^{(2)}$, and the restriction of $g$ on each component of $\partial_p \mathcal{Z}$  is an orientation preserving homeomorphism to $\partial_p \mathcal{N}^{(2)}$.

In steps (3), (4) of Construction \ref{constructmathcalZ}, each $2$-handle in $\mathcal{N}^{(2)}$ not contained in $\partial N_0$ corresponds to a bigon $B_{ijk}$ or triangle $\Delta_{ijk}$, and it gives $(S_{ijk,1}\times I) \cup (S_{ijk,2}\times I)$ or $S_{ijk}\times I$ in $\mathcal{Z}$. Then each component of $(S_{ijk,1}\times I) \cup (S_{ijk,2}\times I)$ or $S_{ijk}\times I$ in $\mathcal{Z}$ is mapped to the corresponding $2$-handle $B_{ijk}\times I$ or $\Delta_{ijk}\times I$ in $\mathcal{N}^{(2)}$, via the product of a pinching map on surfaces and the identity on $I$. Then we get a proper map $g:(\mathcal{Z},\partial \mathcal{Z})\to (N^{(2)},\partial N^{(2)})$ that maps $\partial \mathcal{Z}\setminus \partial_p \mathcal{Z}$ to $\partial \mathcal{N}^{(2)}\setminus \partial_p \mathcal{N}^{(2)}$. 

We have $\text{deg}(g)\in \{1,2,4\}$ since $g^{-1}(\mathcal{N}^{(1)})$ consists of $1$, $2$ or $4$ copies of $\mathcal{N}^{(1)}$, and the restriction of $g$ on each such component is identity. 

\end{proof}

Now we are ready to prove Theorem \ref{main3}. 

\begin{proof}[Proof of Theorem \ref{main3}]

At first, we consider $M$ as a noncompact $3$-manifold with cusp ends. We take small $\epsilon>0$ and large enough $R>0$ such that Constructions \ref{constructj1}, \ref{constructZ2} and Theorem \ref{pi1injectivity} hold, and we construct a mapped in $2$-complex $j:Z\looparrowright M$.
By Theorem \ref{pi1injectivity}, $j_*:\pi_1(Z)\to \pi_1(M)$ is injective and the convex core of $j_*(\pi_1(Z))<\text{Isom}_+(\mathbb{H}^3)$ is homeomorphic to $\mathcal{Z}\setminus \partial_p \mathcal{Z}$. Let $\tilde{M}$ be the covering space of $M$ corresponding to $j_*(\pi_1(Z))<\pi_1(M)$, then it contains a submanifold homeomorphic to $\mathcal{Z}\setminus \partial_p \mathcal{Z}$, such that ends of $\mathcal{Z}\setminus \partial_p \mathcal{Z}$ correspond to cusp ends of $\tilde{M}$. 

Now we chop off cusp ends of $M$ and consider it as a compact $3$-manifold with boundary. As a manifold with boundary, $\tilde{M}$ contains a compact submanifold homeomorphic to $\mathcal{Z}$ such that $\mathcal{Z}\cap \partial \tilde{M}=\partial_p \mathcal{Z}$.

By Agol's celebrated result that hyperbolic $3$-manifold groups are LERF (\cite{Agol2}), the covering map $\tilde{M}\to M$ factors through a finite cover $M'$ of $M$, such that $\mathcal{Z}$ is mapped into $M'$ via embedding, we have $\mathcal{Z}\cap \partial M'=\partial_p \mathcal{Z}$. 

Recall that we have a handle decomposition of a neighborhood $\mathcal{N}(N_0)$ of $N_0$, and we can identify $\mathcal{N}(N_0)$ with $N$. By Lemma \ref{elementary}, there is a proper map $g:(\mathcal{Z},\partial \mathcal{Z})\to (\mathcal{N}^{(2)},\partial \mathcal{N}^{(2)})$ such that $g^{-1}(\partial_p \mathcal{N}^{(2)})=\partial_p \mathcal{Z}$ and $\text{deg}(g)\in \{1,2,4\}$. Note that each component of $\partial N^{(2)}\setminus \partial_p N^{(2)}$ is the boundary of a $3$-cell, which is homeomorphic to $S^2$. Then we extend $g$ to a proper map $f:M'\to N$ as following. 

Let $K$ be a component of $M'\setminus \mathcal{Z}$, then $K$ is disjoint from $\partial_p \mathcal{Z}$ and $\partial K$ is the union of $\partial_p K=K\cap \partial M'$ and $\partial_i K=K\cap \mathcal{Z}$. Then $\partial_i K$ has a neighborhood $\partial_i K\times I$ in $K$. Since $g$ maps each component of $\partial_i K\subset \partial \mathcal{Z}$ to an $S^2$-component of $\partial \mathcal{N}^{(2)}$ that bounds a $3$-cell, we first map $\partial_i K\times I\subset K$ to a union of $3$-cells in $N_0$ that maps $\partial_i K\times 0=\partial_i K$ via $g$ and maps $\partial_i K\times 1$ to centers of $3$-cells. Then we map $K\setminus (\partial_i K\times I)$ to a graph in $\mathcal{N}(N_0)$ such that each component of $\partial_i K\times 1$ is mapped to the corresponding center of $3$-cell and each component of $\partial_p K$ is mapped to a point in $\partial (\mathcal{N}(N_0))$. Moreover, we can assume that this graph misses $\mathcal{N}^{(1)}$. By the above definition of $f$ on components of $M'\setminus \mathcal{Z}$, we get a proper map $f:(M',\partial M')\to (\mathcal{N}(N_0),\partial \mathcal{N}(N_0))=(N,\partial N)$.

For any point $p\in \mathcal{N}^{(1)}$, we have $f^{-1}(p)=g^{-1}(p)$, while $f$ and $g$ have the same local mapping degree at $f^{-1}(p)$. So $\text{deg}(f)=\text{deg}(g)\in \{1,2,4\}$ holds.
\end{proof}

\bigskip

\subsection{Proof of Theorem \ref{mixed}}\label{proof1.3}

The proof of Theorem \ref{mixed} is similar to the proof of Theorem \ref{main2}, and we sketch the proof in the following. 

We start with a compact oriented mixed $3$-manifold $M$ with tori boundary such that its boundary intersects with a hyperbolic JSJ piece, and a compact oriented $3$-manifold $N$ with tori boundary.

At first, Propositions \ref{homology2boundarymixed} and \ref{reduction1cusp} imply the following hold.
\begin{itemize}
\item $M$ has a finite cover $M'$ with two boundary components $T_1,T_2$ contained in the same hyperbolic JSJ piece $M_0'\subset M'$, such that the kernel of $H_1(T_1\cup T_2;\mathbb{Z})\to H_1(M_0';\mathbb{Z})$ induced by inclusion  contains an element that has nontrivial components in both $H_1(T_1;\mathbb{Z})$ and $H_1(T_2;\mathbb{Z})$.
\item $N$ is virtually $2$-dominated by a one-cusped oriented hyperbolic $3$-manifold $N'$.
\end{itemize}
By the argument at the beginning of Section \ref{topo2} (Theorem \ref{main3} implies Theorem \ref{main2}), it suffices to prove that $M'$ virtually dominates $N'$ with virtual mapping degree in $\{1,2,4\}$. By abusing notation, we still use $M$ and $N$ to denote $M'$ and $N'$ respectively, and use $M_0$ to denote the hyperbolic piece of $M$ containing $T_1,T_2\subset \partial M$.

Then we take a geometric cellulation of $N$ as in Constructions \ref{triangulation} and \ref{cellulation}, and construct instructional maps and homomorphisms 
as in Lemma \ref{preparation}, with $M$ replaced by $M_0$. Then we choose small enough $\epsilon>0$ and large enough $R>0$, and follow Constructions \ref{constructZ0}, \ref{constructZ1}, \ref{constructj1} and \ref{constructZ2} to construct a map $j:Z\looparrowright M_0$. By Theorem \ref{pi1injectivity} (to be proved in Section \ref{pi1inj2}), if $R$ is large enough, $j:Z\looparrowright M_0$ is $\pi_1$-injective, and the convex core of $j_*(\pi_1(Z))<\pi_1(M_0)<\text{Isom}_+(\mathbb{H}^3)$ is homeomorphic to $\mathcal{Z}\setminus \partial_p \mathcal{Z}$. 

So the covering space $\tilde{M_0}$ of $M_0$ corresponding to $j_*(\pi_1(Z))<\pi_1(M_0)$ contains a submanifold homeomorphic $\mathcal{Z}\setminus \partial_p \mathcal{Z}$, and $\partial_p \mathcal{Z}$ corresponds to cusp ends of $\tilde{M_0}$. Now we chop off cusp ends of $M_0$ and consider it as a compact $3$-manifold with tori boundary, the corresponding $\tilde{M_0}$ contains a compact submanifold homeomorphic to $\mathcal{Z}$ such that $\partial \tilde{M_0}\cap \mathcal{Z}=\partial_p \mathcal{Z}$, and $\partial \tilde{M_0}\cap \mathcal{Z}$ is mapped to $T_1\cup T_2\subset \partial M_0\cap \partial M$.

Let $\tilde{M}$ be the covering space of $M$ corresponding to $j_*(\pi_1(Z))<\pi_1(M_0)<\pi_1(M)$, then it contains a submanifold homeomorphic to $\tilde{M_0}$.
Since $\partial M$ contains the tori boundary components $T_1,T_2$ of $M_0$, $\mathcal{Z}\subset \tilde{M_0}$ is also a compact submanifold of $\tilde{M}$ such that $\partial \tilde{M}\cap \mathcal{Z}=\partial_p \mathcal{Z}$. Since $j:Z\to M$ maps into a JSJ hyperbolic piece $M_0\subset M$, by \cite{Sun3} (which heavily relies on \cite{Agol2}), $j_*(\pi_1(Z))$ is a separable subgroup of $\pi_1(M)$. Then by \cite{Sco}, there is an intermediate finite cover $M'$ of $\tilde{M}\to M$, such that $\mathcal{Z}$ is mapped into $M'$ via embedding, and $\mathcal{Z}\cap \partial M'=\partial_p \mathcal{Z}$ holds.

Again, we identify $N$ with a neighborhood $\mathcal{N}(N_0)$ of $N_0$, which has an induced handle structure. By Lemma \ref{elementary}, there is a proper map $g:(\mathcal{Z},\partial \mathcal{Z})\to (\mathcal{N}^{(2)},\partial \mathcal{N}^{(2)})$ such that $g^{-1}(\partial_p \mathcal{N}^{(2)})=\partial_p \mathcal{Z}$ and $\text{deg}(g)\in \{1,2,4\}$. Then we extend $g$ to a proper map $f:(M',\partial M')\to (\mathcal{N}(N_0), \partial \mathcal{N}(N_0))$. For each component $K$ of $M'\setminus \mathcal{Z}$, the map $f$ can be defined exactly as in the proof of Theorem \ref{main3}, which maps each component of $K\cap \partial M'$ to a point in $\partial \mathcal{N}(N_0)$. Then we have $\text{deg}(f)=\text{deg}(g)\in \{1,2,4\}.$

\bigskip
\bigskip

\section{Proof of $\pi_1$-injectivity}\label{pi1inj2}

In this section, we devote to prove the $\pi_1$-injectivity result Theorem \ref{pi1injectivity}, and the proof is more difficult than the $\pi_1$-injectivity results in \cite{Sun2} and \cite{Sun5}. One reason is that our constructions of $Z$ and $j:Z\looparrowright M$ are more complicated. The more important reason is that the induced map $\tilde{j}:\tilde{Z}\to \tilde{M}=\mathbb{H}^3$ on universal covers is not a quasi-isometric embedding.

We sketch the structure of this section in the following. In Section \ref{technical}, we define a (ideal) $3$-complex $Z^{3}$ that contains $Z$ as a deformation retract, then we define a family of representations $\rho_t:\pi_1(Z^{3})\to \text{Isom}(\mathbb{H}^3)$ and a family of maps $\tilde{j}_t:\tilde{Z^{3}}\to \mathbb{H}^3$ that is $\rho_t$-equivariant, such that $\tilde{j}_1|_{\tilde{Z}}=\tilde{j}$ and $\rho_1=j_*$. In Section \ref{estimate2}, we prove that $\tilde{j}_0:\tilde{Z^{3}}\to \mathbb{H}^3$ is a quasi-isometric embedding and some further details, via a more complicated definition of {\it modified sequence} than in \cite{Sun2}. In Section \ref{qi2}, we prove that any $\tilde{j}_t$ is a quasi-isometric embedding, and finish the proof of Theorem \ref{pi1injectivity}.

\subsection{An extension of $Z$ and a family of maps}\label{technical}

By Construction \ref{constructZ2} (2), for any $s,t\in \{1,2\}$, each triangle $\Delta_{ijk}\subset \partial N_0$ gives a triangle $\Delta_{ijk,s}^{Z,t}$ in $Z$ with $s,t\in \{1,2\}$. If we fix $s$ and $t$, then the union of all such $\Delta_{ijk,s}^{Z,t}$ gives a torus $T_s^t\subset Z$. Each $T_s^t\subset Z$ is combinatorially isomorphic to $\partial N_0$, and the restriction of $j$ on $T_s^t$ is homotopic to a covering map to $T_s\subset M$. Since both $T_1$ and $T_2$ are peripheral tori in $M$, the lifting of $j|:T_s^t\to M$ to the universal cover is not a quasi-isometric embedding, so neither is $\tilde{j}:\tilde{Z}\to \tilde{M}=\mathbb{H}^3$. To treat this undesired situation, we extend $Z$ to an ideal $3$-complex $Z^{3}$ as following. 

\begin{definition}\label{Z3}
For each triangulated torus $T_s^t\subset Z$ with $s,t=1,2$, we take a cone over $T_s^t$ and delete the cone point, and denote the resulting ideal $3$-complex by $C(T_s^t)$. We define $Z^{3}$ to be the union of $Z$ and all these $C(T_s^t)$.
\end{definition}

It is clear that $Z^3$ deformation retracts to $Z$. For any vertex $v_{i,s}^t$, edge $e_{ij,s}^{Z,t}$ and triangle $\Delta_{ijk,s}^{Z,t}$ (copies of $v_i$, $e_{ij}^Z$, $\Delta_{ijk}$ in $T_s^t$) contained in $T_s^t$, their cones give an edge, a triangle and a tetrahedron in $Z^3$ respectively, with the cone point deleted. We denote these cones by $e_{i\infty,s}^{Z,t}$, $\Delta_{ij\infty,s}^{Z,t}$ and $T_{ijk\infty,s}^{Z,t}$ respectively.

We define $j_1:Z^3\looparrowright M$ as following, which is an extension of $j:Z\looparrowright M$. We require that $j_1$ maps $e_{i\infty,s}^{Z,t}$ to the geodesic ray from $j(v_{i,s}^t)$ to the ideal point corresponding to the cusp end $T_s$ of $\partial M$, maps $\Delta_{ij\infty,s}^{Z,t}$ to the ideal triangle given by $j(e_{ij,s}^{Z,t})$ and the ideal point,
and maps $T_{ijk\infty,s}^{Z,t}$ to the ideal tetrahedron given by $j(\Delta_{ijk,s}^{Z,t})$ and the ideal point. To prove Theorem \ref{pi1injectivity}, the main step is to prove that the lifting $\tilde{j_1}:\tilde{Z^3}\to \tilde{M}=\mathbb{H}^3$ of $j_1:Z^3\looparrowright M$ to the universal cover is a quasi-isometric embedding. 

Now we define a family of maps $\{\tilde{j_t}:\tilde{Z^3}\to \mathbb{H}^3\ |\ t\in [0,1]\}$ (the map given by $t=1$ is the above $\tilde{j_1}$) and a family of representations $\{\rho_t:\pi_1(Z^3)\to \text{Isom}_+(\mathbb{H}^3) \ |\ t\in [0,1]\}$, such that $\tilde{j_t}$ is $\rho_t$-equivariant. We also have the property that $\tilde{j_0}$ maps each component of the preimage of $Z\setminus Z^{(1)}$ in $\tilde{Z^3}$ to a totally geodesic subsurface of $\mathbb{H}^3$.

The map $j_1:Z^3\looparrowright M$ gives us the following parameters. All parameters are similar to the ones given in Parameter 5.4 of \cite{Sun5}, except items (3) and (4).
\begin{parameter}\label{parameterj1}
\begin{enumerate}
\item For each vertex $v_{i,s}^t\in Z^{(0)}$ and each edge $e_{ij,s}^{Z,t}\subset Z^{(1)}\setminus \partial_p Z^{(1)}$ (or $e_{ijk,s}^{Z,t}$) adjacent to $v_{i,s}$,
the initial frame of $\mathfrak{s}_{ij,s}$ (or $\mathfrak{s}_{ijk,s}$) equals $F_{ijk}^{M}\cdot A_{ij,s}$ (or $F_{ijk,s}^{M}\cdot A_{ijk,s}$) for some  $A_{ij,s}\in \text{SO}(3)$ (or $A_{ijk,s}$) that is $\frac{\epsilon}{10}$-close to $id\in \text{SO}(3)$ (see Construction \ref{constructj1} (2) (b) and (3) (b)).

\item For each edge $e_{ij,s}^{Z,t}\in Z^{(1)}\setminus \partial_p Z^{(1)}$ (or $e_{ijk,s}^{Z,t}$), the complex length of the associated $\partial$-framed segment $\mathfrak{s}_{ij,s}$ (or $\mathfrak{s}_{ijk,s}$) equals $2R+\lambda_{ij,s}$ (or $2R+\lambda_{ijk,s}$) for some complex number $\lambda_{ij,s}$ (or $\lambda_{ijk,s}$) with modulus at most $\frac{\epsilon}{5}$ (see Construction \ref{constructj1} (2) (a) and (3) (a)).

\item For each edge $e_{ij,s}^{Z,t}\subset \partial_p Z^{(1)}$, $j(e_{ij,s}^{Z,t})$ is homotopic to an Euclidean geodesic segment $g_{ij,s}$ in the horotorus $T_s\subset M$ of length $(1+\tau_{ij,s})R$ with $|\tau_{ij,s}|<\epsilon$, (see Construction \ref{constructj1} (1) and Lemma \ref{preparation} (1).)

\item For each vertex $v_{i,s}^t\in \partial_p Z^{(1)}$, we take a preferred edge $e_{ij,s}^{Z,t}\subset \partial_p Z^{(1)}$, and let $n_k$ be the vertex of $N$ such that $n_i,n_j,n_k$ form a triangle in $N^{(2)}$ not contained in $\partial N_0$, and $n_k$ lies to the left of the edge $e_{ij}$ (as in Figure \ref{figure2}). Let ${\bf F}_{i,s}=(j(v_{i,s}^t),\vec{v}_{ij,s},\vec{n}_{i,s})$ be the frame based at $j(v_{i,s}^t)$ such that $\vec{v}_{ij,s}$ is tangent to $g_{ij,s}$ (in item (3)) and $\vec{n}_{i,s}$ is tangent to $j_1(e_{i\infty, s}^{Z,t})$. Then we have $${\bf F}_{ijk,s}^{M}={\bf F}_{i,s}\cdot X\cdot B_{i,s},$$ where $X\in \text{SO}(3)$ satisfies 
$\begin{pmatrix}
\vec{v}_2^T & \vec{v}_3^T & (\vec{v}_2\times \vec{v}_3)^T
\end{pmatrix}=\begin{pmatrix}
1 & 0 & 0\\
0 & 0& -1\\
0 & 1 & 0
\end{pmatrix}\cdot X$ for vectors $\vec{v}_2,\vec{v}_3$ in Remark \ref{coordinate} and $B_{i,s}\in \text{SO}(3)$ is $\epsilon$-close to $\text{id}\in \text{SO}(3)$.

\item For each decomposition curve $C$ of some surface $S_{ijk,s}$ (or $S_{ijk}$) and is not an inner cuff of any hamster wheel, the corresponding good curve has complex length $2R'+\xi_C$, for some complex number $\xi_C$ with $|\xi_C|<2\epsilon$ (the condition of $(R',\epsilon)$-good curves).

\item For each hamster wheel $H$ in some $S_{ijk,s}$ or $S_{ijk}$, it has $R'$ rungs (common perpendicular segments of its two outer cuffs) $r_{H,1},\cdots, r_{H,R'}$, and these rungs divide both outer cuffs $c,c'$ to $R'$ geodesic segments $s_{H,1},\cdots,s_{H,R'}$ and $s_{H,1}',\cdots,s_{H,R'}'$ respectively. Then for any $i=1,\cdots,R'$, the complex distance between $c$ and $c'$ along $r_{H,i}$ is $R'-2\log{\sinh{1}}+\mu_{H,i}$ with $|\mu_{H,i}|<\frac{\epsilon}{R'}$ (equation (2.9.1) of \cite{KW}); and for any $i=1,\cdots,R'-1$, the complex distance between $r_{H,i}$ and $r_{H,i+1}$ along $s_{H,i}$ and $s_{H,i}'$ are $2+\nu_{H,i}$ and $2+\nu_{H,i}'$ respectively, with $|\nu_{H,i}|,|\nu_{H,i}'|<\frac{\epsilon}{R'}$ (equation (2.9.3) of \cite{KW}).

\item For each decomposition curve $C$ of some surface $S_{ijk,s}$ (or $S_{ijk}$), the feet of its two adjacent good components differ by $1+\pi i+\eta_C$, for some complex number $\eta_C$ with $|\eta_C|<100\epsilon$ (the $(R,\epsilon)$-well-matched condition in Section 2.10 of \cite{KW}) and $|\eta_C|<\frac{\epsilon}{R'}$ if formal feet are defined on both sides of $C$. Here if $C$ is contained in $\partial S_{ijk,s}$, the foot from the three-cornered annulus (or two-cornered annulus) $A_{ijk,s}$ is the foot of the shortest geodesic segment from $\gamma_{ijk,s}$ to a preferred vertex $v_{i,s}$, as in Construction \ref{constructZ2} (3) (4) (5).
\end{enumerate}
\end{parameter}

So we have parameters 
$$A_{ij,s},A_{ijk,s},B_{i,s}\in \text{SO}(3),$$ 
$$\lambda_{ij,s},\lambda_{ijk,s},\xi_C,\eta_C, \mu_{H,i}, \nu_{H,i}, \nu_{H,i}'\in \mathbb{C},\ \ \tau_{ij,s}\in \mathbb{R}$$ associated to $j_1:Z^3\looparrowright M$, and these parameters are very small with respect to metrics of $\text{SO}(3)$, $\mathbb{C}$ and $\mathbb{R}$, respectively.

Note that the data in Parameter \ref{parameterj1} (5) and (6) determine shapes of all $(R',\epsilon)$-good components, as in the discussion after Parameter 5.4 of \cite{Sun5}.

For any $t\in[0,1]$, we take parameters $$tA_{ij,s}, tA_{ijk,s}, tB_{i,s}\in \text{SO}(3),$$ 
$$t\lambda_{ij,s},t\lambda_{ijk,s},t\xi_C,t\eta_C, t\mu_{H,i}, t\nu_{H,i}, t\nu_{H,i}'\in \mathbb{C},\ \ t\tau_{ij,s}\in \mathbb{R}.$$ Here for any $A\in\text{SO}(3)$ close to $\text{id}$, $tA$ denotes the image of $t\in [0,1]$ under the shortest geodesic $[0,1]\to \text{SO}(3)$ from $\text{id}$ to $A$. These parameters give rise to a map $\tilde{j}_t:\tilde{Z}^3\to \mathbb{H}^3$ that is equivariant with respect to a representation $\rho_t:\pi_1(Z^3)\to \text{Isom}_+(\mathbb{H}^3)$, and $\tilde{j}_t$ can be defined by a developing argument as following.

We use $Z'$ to denote the subcomplex of $Z^3$ consisting of following pieces:
\begin{itemize}
\item $Z^{(1)}\subset Z$ (as in Construction \ref{constructZ1}),
\item all triangles $\Delta_{ijk,s}^t\subset Z$ corresponding to triangles $\Delta_{ijk}\subset \partial N_0$, 
\item all ideal edges $e_{i\infty,s}^{Z,t}$, ideal triangles $\Delta_{ij,\infty,s}^{Z,t}$ and ideal tetrahedra $T_{ijk\infty,s}^{Z,t}$.
\end{itemize}
We define $Z''$ to be the union of $Z'$ and all decomposition curves and boundary components of $S_{ijk,s}$ and $S_{ijk}$. The inclusion $Z''\hookrightarrow Z^3$ is $\pi_1$-injective on each component. 
We further define $Z'''$ to be the union of $Z''$ and following pieces:
\begin{itemize}
\item For each two-cornered or three-cornered annulus $A_{ijk,s}^t$ in $Z\subset Z^3$, take an edge from a vertex $v_{i,s}^t$ to the corresponding good curve $\gamma_{ijk,s}^t$
\item For each pair of pants in $Z\subset Z^3$, take its three seams.
\item For each hamster wheel in $Z\subset Z^3$, take all of its short seams between adjacent inner cuffs, and $2R'$ seams from two outer cuffs to all inner cuffs.
\end{itemize}

Let $\pi:\tilde{Z}^3\to Z^3$ be the universal cover of $Z^3$, then each component of $\pi^{-1}(Z')=\tilde{Z}'\subset \tilde{Z}^3$ is the universal cover of a component of $Z'$. We also use $\pi_M:\mathbb{H}^3\to M$ to denote the universal cover of $M$. The construction of $\tilde{j}_t:\tilde{Z}^3\to \mathbb{H}^3$ is given in the following.

\begin{construction}\label{constructjt}
We will first define $\tilde{j}_t:\tilde{Z}^3\to \mathbb{H}^3$ on $\tilde{Z}''=\pi^{-1}(Z'')\subset \tilde{Z}^3$, by the following steps.
\begin{enumerate}
\item We start with a vertex $\tilde{v}_{i,1}^1\in \tilde{Z}^3$ such that $\pi(\tilde{v}_{i,1}^1)=v_{i,1}^1$, and a point $p\in \mathbb{H}^3$ such that $\pi_M(p)=j(v_{i,1}^1)\in M$. Then we have an isometry of tangent spaces $(d\pi_M)_p:T_p\mathbb{H}^3\to T_{\pi_M(p)}(M)$, and we define $\tilde{j}_t(\tilde{v}_{i,1}^1)=p$. 

\item Let $\tilde{e}_{ij,1}^{Z,1}$ be an edge from $\tilde{v}_{i,1}^1$ to another vertex $\tilde{v}_{j,1}^1$ such that it projects to $e_{ij,1}^{Z,1}\subset Z^1\setminus \partial_p Z$. We map $\tilde{e}_{ij,1}^{Z,1}$ to a geodesic segment in $\mathbb{H}^3$ of length $R+t\text{Re}(\lambda_{ij,1})$ from $p$ to some $q\in \mathbb{H}^3$, such that its tangent vector at $p$ is the tangent vector of 
$$\tilde{{\bf F}}_{ijk,1}^M(t):=(d\pi_M)_p^{-1}({\bf F}_{ijk,1}^M\cdot tA_{ij,1}).$$ 
We parallel transport $-\tilde{{\bf F}}_{ijk,1}^M(t)(t\text{Im}\lambda_{ij,1})\cdot(tA_{ji,1})^{-1}$ along this geodesic segment to get a frame $\tilde{{\bf F}}_{jik,1}^M(t)\in \text{SO}_q(\mathbb{H}^3)$. Here $\tilde{{\bf F}}_{ijk,1}^M(t)(t\text{Im}\lambda_{ij,1})$ denotes the frame rotation of $\tilde{{\bf F}}_{ijk,1}^M(t)$ by angle $t\text{Im}\lambda_{ij,1}$. Then we take an isometry $T_q\mathbb{H}^3\to T_{j(v_{j,1}^1)}M$ that identifies $\tilde{{\bf F}}_{jik,1}^M(t)$ with ${\bf F}_{jik,1}^M\in \text{SO}_{j(v_{j,1}^1)}(M)$. Under this identification, we can further define the map $\tilde{j}_t$ on edges adjacent to $\tilde{v}_{j,1}^1$ that do not project to $\partial_p Z$, as in item (1).

\item If $\tilde{v}_{i,1}^1$ corresponds to a vertex $n_i\in \partial N_0$, then $v_{i,1}^1$ is contained in the torus $T_1^1\subset \partial_p Z$ and let $\tilde{T}_1^1$ be the component of $\pi^{-1}(T_1^1)\subset \tilde{Z}^3$ containing $\tilde{v}_{i,1}^1$, then we do the following construction. Let $\tilde{e}_{ij,1}^{Z,1}\subset \tilde{Z}^{(1)}$ be the edge of $\tilde{Z}^{(1)}$ adjacent to $\tilde{v}_{i,1}^1$ corresponding to the preferred edge $e_{ij}\in \partial N_0$ as in Parameter \ref{parameterj1} (4), and let $n_k$ be the vertex of $N^{(0)}$ as in Figure \ref{figure2}. The geodesic ray starting from $p=\tilde{j}_t(\tilde{v}_{i,1}^1)$ and tangent with the normal vector of 
$$(d\pi_M)_p^{-1}\big({\bf F}_{ijk,1}^M\cdot (X\cdot tB_{i,1})^{-1}\big)$$ 
gives an ideal point $b\in \partial \mathbb{H}^3$, and it determines a horoplane $P$ going through $p$. Then we take an Euclidean geodesic segment in $P$ tangent to the tangent vector of $(d\pi_M)_p^{-1}\big({\bf F}_{ijk,1}^M\cdot (X\cdot tB_{i,1})^{-1}\big)$ with length $(1+t\tau_{ij,1})R$ and map $\tilde{v}_{j,1}^1$ to its endpoint. The parameters $\tau_{st,1}$ (in Parameter \ref{parameterj1} (3)) inductively give triangles in $P$ with edge length $(1+t\tau_{st,1})R$. In this way, we get a map $\tilde{i}_t:\tilde{T}_1^1\to P$. 

\item For $\tilde{i}_t:\tilde{T}_1^1\to P$ defined in (3), we homotopy each edge in $\tilde{T}_1^1$ to a geodesic segment in $\mathbb{H}^3$ (relative to endpoints) and each triangle in $\tilde{T}_1^1$ to the corresponding totally geodesic triangle, to get the desired map $\tilde{j}_t|:\tilde{T}_1^1\to \mathbb{H}^3$.
For each ideal edge $e_{i\infty,1}^{Z,1}$, ideal triangle $\Delta_{ij\infty,1}^{Z,1}$, ideal tetrahedron $T_{ij\infty,1}^{Z,1}$ in $\tilde{Z}^3$ adjacent to $\tilde{T}_1^1$, we map them to the geodesic ray, the ideal geodesic triangle and the ideal hyperbolic tetrahedron determined by the ideal point $b\in \partial \mathbb{H}^3$ and $\tilde{j}_t:\tilde{T}_1^1\to \mathbb{H}^3$, respectively.

\item For any vertex $\tilde{v}_{i',1}^1\in \tilde{T}_1^1$ that corresponds to $v_{i',1}^1\in T_1^1$, there is an preferred edge $\tilde{e}_{i'j',1}^{Z,1}$ adjacent to $\tilde{v}_{i',1}^1\in \tilde{T}_1^1$. Then we get a frame $\tilde{\bf F}_{i',1}$ based at $\tilde{j}_t(\tilde{v}_{i',1}^1)$ such that its tangent vector is tangent to $\tilde{i}_t(\tilde{e}_{i'j',1}^{Z,1})$ and its normal vector points to $b\in \partial \mathbb{H}^3$. We take the isometry $T_{\tilde{j}_t(\tilde{v}_{i',1}^1)}\mathbb{H}^3\to T_{j(v_{i',1}^1)}M$ that maps $\tilde{\bf F}_{i',1}$ to ${\bf F}_{i'j'k',1}^M\cdot X\cdot (tB_{i',1})$. 
Then we construct $\tilde{j}_t$ on edges of $\tilde{Z}^{(1)}$ adjacent to $\tilde{v}_{i',1}^1\in \tilde{T}_1^1$ as in item (2).

\item By applying the construction in items (2), (3), (4), (5) inductively, we define $\tilde{j}_t$ on the component $\tilde{W}$ of $\tilde{Z}'\subset \tilde{Z}^3$ containing $\tilde{v}_{i,1}^1$, and let $W$ be the image of $\tilde{W}$ in $Z'$. Note that when $t=0$, for any triangle $\Delta_{ijk}$ (or bigon $B_{ijk}$) in $N_0\setminus \partial N_0$, the $\tilde{j}_0$-image of any component of $\pi^{-1}(e_{ij,1}^{Z,1}\cup e_{jk,1}^{Z,1}\cup e_{ki,1}^{Z,1})$ (or $\pi^{-1}(e_{ijk,1}^{Z,1}\cup e_{ji,1}^{Z,1})$) lies in a hyperbolic plane in $\mathbb{H}^3$. For $\Delta_{ijk}$, this claim follows from item (2) since the normal vector of $\tilde{F}_{ijk,1}^M(0)$ is parallel transported to the normal vector of $-\tilde{F}_{jik,1}^M(0)$, and the same holds if $i,j,k$ are permuted. For $B_{ijk}$, this claim follows from Remark \ref{coordinate} that the $t=0$ case is modeled by an equilateral tessellation of the horoplane.

When $t=1$, $\tilde{j}_1$ is exactly the restriction of $\tilde{j}:\tilde{Z}\to \mathbb{H}^3$ on $\tilde{W}$. Since $\tilde{j}_t|_{\tilde{W}}$ is defined by geometric parameters, it induces a representation $\rho_t^{\tilde{W}}:\pi_1(W)\to \text{Isom}_+(\mathbb{H}^3)$, such that $\tilde{j}_t|_{\tilde{W}}$ is $\rho_t^{\tilde{W}}$-equivariant.

\item Now we work on $\tilde{Z}'''$. For any line component $l\subset \tilde{Z}'''$ of the preimage of some $C\subset \partial S_{ijk,1}$ (or $\partial S_{ijk}$) that is adjacent to $\tilde{W}$, it corresponds to a bi-infinite concatenation of edges in $\tilde{W}$, and $\tilde{j}_t$ maps this concatenation to a bi-infinite quasi-geodesic in $\mathbb{H}^3$. We map $l$ to the bi-infinite geodesic that share end points with the above quasi-geodesic, equivariant under the $\pi_1(C)$-action (via $\rho_t^{\tilde{W}}$). For each edge $e$ of $\tilde{Z}'''$ adjacent to $l$ that is mapped to an edge contained in a three-cornered annulus (or two-cornered annulus) in $Z$, we map it to the shortest geodesic segment between the corresponding vertex in $\tilde{W}$ and the bi-infinite geodesic $\tilde{j}_t(l)$ in $\mathbb{H}^3$.

\item For each line component $l'\subset \tilde{Z}''$ in the preimage of an $(R',\epsilon)$-good curve $C'$ in $S_{ijk}$ and adjacent to $l$ in $\tilde{Z}''$, we map the seam $s$ between $l$ and $l'$ to the geodesic segment whose feet is the $(1+\pi i +t\eta_C)$-shift of the closest feet on $\tilde{j}_t(l)$ (arised from an edge in a three-cornered or a two-cornered annulus in item (7)). The complex length of $\tilde{j}_t(s)$ is determined by the parameters of this good component, which determines the $\tilde{j}_t$-image of $l'$, a bi-infinite geodesic in $\mathbb{H}^3$. Let $\pi_1(C')$ acts on $\mathbb{H}^3$ by translating along $\tilde{j}_t(l')$, with complex translation length $2R'+\xi_{C'}$, and we define $\tilde{j}_t$ on $l'$ to be $\pi_1(C')$-equivarient. 

Then we apply this process inductively to define $\tilde{j}_t$ on one component of $\tilde{Z}^3\setminus \tilde{Z}'$. For a hamster wheel, the complex lengths of its seams are determined by parameters $\mu_{H,i},\nu_{H,i},\nu_{H,i}'$ in Parameter \ref{parameterj1}  (6) and complex lengths of its outer cuffs.

\item Then we define $\tilde{j}_t$ on $\tilde{Z}'''$ inductively by the process in items (2) - (8). Again, since $\tilde{j}_t:\tilde{Z}'''\to \mathbb{H}^3$ is defined by geometric parameters, it induces a representation $\rho_t:\pi_1(Z^3)\to \text{Isom}(\mathbb{H}^3)$ and $\tilde{j}_t$ is $\rho_t$-equivariant. 
\end{enumerate}

At the end, since each component of $Z^3\setminus Z'''$ is topologically a disc, we can further triangulate $Z^3$ and map each new triangle in $Z^3$ to a geodesic triangle in $\mathbb{H}^3$. Then the map $\tilde{j}_t:\tilde{Z}'''\to \mathbb{H}^3$ extends to a $\rho_t$-equivariant map $\tilde{j}_t:\tilde{Z}^3\to \mathbb{H}^3$. Here $\tilde{j}_1:\tilde{Z}\to \mathbb{H}^3$ is the lifting of $j:Z^3\looparrowright M$ to universal covers, and $\tilde{j}_0$ maps each component of $\tilde{Z}^3\setminus \tilde{Z}'$ into a hyperbolic plane in $\mathbb{H}^3$.

\end{construction}

Note that $\tilde{j}_0:\tilde{Z}^3\to \mathbb{H}^3$ maps each component of $\tilde{Z}^3\setminus \tilde{Z}'$ to a totally geodesic subsurface of $\mathbb{H}^3$, maps each ideal tetrahedron in $\tilde{Z}^3$ to an ideal tetrahedron in $\mathbb{H}^3$, and the union of these pieces is $\tilde{Z}^3$. Then $\tilde{j}_0$ pulls back the hyperbolic metric on $\mathbb{H}^3$ to metrics on aforementioned pieces of $\tilde{Z}^3$, and further induces a path metric on $\tilde{Z}^3$. Each component of $\tilde{Z}^3\setminus \tilde{Z}^{(1)}$ is called a {\it $2$-dim piece} or a {\it $3$-dim piece} of $\tilde{Z}^3$, according to its dimension.

Now we prove a lemma that describes the coarse geometry of $\tilde{Z}^3$, with respect to the above metric. These estimates may not be optimal.

\begin{lemma}\label{new}
If $R>0$ is large enough, the following estimates hold  for $\tilde{Z}^3$, with respect to the above metric.
\begin{enumerate}
\item For any vertex $v\in \tilde{Z}^3$ and any edge $e\in \tilde{Z}^{(1)}$ not containing $v$, $d_{\tilde{Z}^3}(v,e)\geq\log{R}$. 

\item For any two distinct $3$-dim pieces of $\tilde{Z}^3$, their distance is at least $\frac{9}{10}\log{R}$.

\item For any two distinct edges $e_1,e_2\in \tilde{Z}^{(1)}$ that do not share a vertex and do not lie in the same $3$-dim piece of $\tilde{Z}^3$, we have $d_{\tilde{Z}^3}(e_1,e_2)\geq\frac{1}{3}\log{R}$.

\item For any two distinct vertices $v_1,v_2\in \tilde{Z}^3$, we have $d_{\tilde{Z}^3}(v_1,v_2)\geq \log{R}$.

\end{enumerate}
\end{lemma}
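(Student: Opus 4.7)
The strategy is to work directly with the pull-back metric on $\tilde{Z}^3$ induced by $\tilde{j}_0$, and to reduce all four estimates to elementary $\mathbb{H}^3$-distance estimates on the individual pieces. At $t=0$ all perturbation parameters of Parameter \ref{parameterj1} vanish, so every edge of $\tilde{Z}^{(1)}\setminus\partial_p\tilde{Z}^{(1)}$ has length exactly $2R$, every edge in $\partial_p\tilde{Z}^{(1)}$ has length exactly $R$, every $3$-dim piece is isometric to the ideal tetrahedron in $\mathbb{H}^3$ whose base is an equilateral horospherical triangle of Euclidean side $R$ and whose apex is the cusp point, and every $2$-dim piece is a totally geodesic polygonal subsurface of $\mathbb{H}^3$ built out of geometric good pants and hamster wheels, annuli $A_{ijk,s}$, and horospherical triangles $\Delta_{ijk,s}^{Z,t}$. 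Any shortest path in $\tilde Z^3$ between the two objects in question decomposes as a concatenation of subarcs lying in successive pieces, and its length will be bounded below piece by piece.

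\textbf{Three basic estimates.} I will use: (a) two points on a horosphere at Euclidean distance $d$ have hyperbolic distance $2\operatorname{arcsinh}(d/2)$, so for $d\geq R$ this is at least $2\log R-\log 2$; (b) in an ideal tetrahedron with finite base a horospherical equilateral triangle of side $R$, the perpendicular hyperbolic distance from a finite vertex to the opposite finite edge and to each non-incident ideal edge is at least $2\log R-O(1)$, a direct computation in the upper half-space model; (c) inside a good surface $S_{ijk,s}$ or $S_{ijk}$, Remark \ref{combinatorialdistance} plus Theorem 2.2 of \cite{KW} imply that any proper essential arc between two boundary components has length at least $R$ and any seam between adjacent $(R,\epsilon)$-good components has length at least $1-\epsilon$. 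Using (a)--(c), item (4) follows by a case split on whether a shortest path from $v_1$ to $v_2$ traverses a full edge of $\tilde Z^{(1)}$ (length $\geq R$) or stays inside the union of pieces adjacent to $v_1$, in which case (a)--(c) give distance at least $\log R$; item (1) is similar, noting that the vertex and the non-incident edge are separated by at least the perpendicular width furnished by (b) or the seam length of (c). For items (2) and (3), two distinct $3$-dim pieces (respectively two non-incident edges) are connected in $\tilde Z^3$ either through a common bounding edge, across which (b) gives perpendicular separation $\tfrac{9}{10}\log R$, or through a single intervening $2$-dim piece whose traversal costs at least one seam length or one edge length; the slightly weaker $\tfrac{1}{3}\log R$ in (3) reflects the worst case where $e_1$ and $e_2$ are opposite sides of a single pair of pants or hamster wheel cuff cross-section.

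\textbf{Expected main obstacle.} The routine part consists of items (2) and (4), which reduce to the horospherical distance formula (a) and the Euclidean side length $R$ of the horosphere triangles. The more delicate part is item (3) and (1), because paths may enter a good surface piece through an annulus corner $A_{ijk,s}$ and then travel along a thin region parallel to an edge of $\tilde Z^{(1)}$ before exiting through another corner. Controlling this scenario requires an explicit analysis of the geometry of the two-cornered and three-cornered annuli in $\tilde{j}_0(A_{ijk,s})$: one must show that the perpendicular distance from such a corner into the adjacent $(R',\epsilon)$-good pants or hamster wheel is comparable to $\log R$, which in turn uses the precise position of $\mathbf{foot}_\gamma$ prescribed in Construction \ref{constructZ2} and Proposition \ref{boundingsurface}. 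Once this is established, the $(R,\epsilon)$-well-matched condition of Parameter \ref{parameterj1}(7) propagates the estimate across subsequent good components without loss.
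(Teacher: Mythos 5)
Your proposal has the right general spirit — a pull-back metric, piece-by-piece estimates — but there are several genuine gaps that would prevent it from being completed, and it misidentifies where the real difficulty lies.

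First, estimate (b) is off by a factor of $2$. In the upper half-space model with the ideal vertex at $\infty$ and the base vertices of a $3$-dim piece on the plane $\{z=1\}$ (say at $(0,0,1)$, $(R,0,1)$, $(R/2,R\sqrt3/2,1)$), the hyperbolic distance from $(0,0,1)$ to the geodesic through the other two vertices satisfies $\cosh d\approx 1+\dfrac{R^2}{2\cdot(R/2)}=1+R$, so $d\approx\log R$, not $2\log R-O(1)$. The paper's item (1) claims only $\geq\log R$, and that is sharp.

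Second, and more importantly, you have not addressed the case the paper actually spends most of its effort on: a shortest path $\gamma$ inside a $2$-dim piece that does \emph{not} enter any $\tilde S_{ijk,s}$ or $\tilde S_{ijk}$. Such a $\gamma$ is homotopic rel endpoints to a concatenation of subsegments of edges of $\tilde Z^{(1)}$, and the paper bounds its length by feeding the consecutive-chain estimate of Lemma~\ref{lengthphase} with the inner-angle lower bound $\phi_0$ of Notation~\ref{notation}(3). Your proposal never invokes Lemma~\ref{lengthphase} and never uses the angle bound $\phi_0$, yet that combination is exactly what produces the bounds in items (1), (2)(b), and (3). Once the path \emph{does} enter a good surface piece, Remark~\ref{combinatorialdistance} immediately gives length $\geq R$ and the argument is over; so the ``annulus corner'' analysis you flag as the main obstacle is unnecessary.

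Third, your route to item (2) is unavailable: two distinct $3$-dim pieces of $\tilde Z^3$ never share a bounding edge, since they are cones over disjoint lifts of the tori $T_s^t\subset Z$. The path between them must pass through a chain of $2$-dim pieces, and the constant $\tfrac{9}{10}$ comes from the hyperbolic sine rule applied to a vertex where edge-subsegments meet at angle $\geq\phi_0$, namely $\sinh l(\gamma)\geq\sinh(\log R)\cdot\sin\phi_0$, giving $l(\gamma)\geq\log R+\log(\sin\phi_0)>\tfrac{9}{10}\log R$. Likewise the $\tfrac13\log R$ in item (3) arises from subtracting the $\tfrac{2}{3}\log R$ budget for short subsegments near a vertex from the $\log R$ bound of item (1); it has nothing to do with opposite cuffs of a pair of pants or a hamster wheel, which by Remark~\ref{combinatorialdistance} are at distance $\geq R$ anyway.
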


\begin{proof}

We take $R>0$ large enough such that 
$$\log{R}\geq \max{\{4, 2I(\pi-\phi_0)+10, -10\log{(\sin{\phi_0})}\}}.$$
At first, note that all edges of $\tilde{Z}^{(1)}$ that project to $\partial_p Z$ have length at least $2\log{R}$ (Construction \ref{constructj1} (1)), and all other edges have length at least $2R-1>2\log{R}$ (Construction \ref{constructj1} (2)(3)).

\begin{enumerate}
\item Let $e$ be an edge of $\tilde{Z}^{(1)}$, let $v$ be a vertex of $\tilde{Z}^3$ not contained in $e$, and let $\gamma$ be the shortest oriented path in $\tilde{Z}^3$ from $v$ to $e$.

If $v$ and $e$ do not lie in the closure of the same component of $\tilde{Z}^3\setminus \tilde{Z}^{(1)}$, then the interior of $\gamma$ intersects with $\tilde{Z}^{(1)}$ at finitely many edges, and let $e'$ be the first such edge. Then $e'$ and $v$ lie in the same component of the closure of $\tilde{Z}^3\setminus \tilde{Z}^{(1)}$ and $v$ is not a vertex of $e'$. So it suffices to assume that $v$ and $e$ lie in the closure $C$ of the same component of $\tilde{Z}^3\setminus \tilde{Z}^{(1)}$. 

\begin{enumerate}
\item We first suppose that $C$ is a $3$-dim piece. Let the ideal point corresponding to $C$ be $\infty$, then the projection of $C$ gives a tessellation of $\mathbb{R}^2$ consisting of equilateral triangles of length $R$. A computation in hyperbolic geometry gives $d_{\tilde{Z}^3}(v,e)>\log{R}$. So we can assume that $C$ is a $2$-dim piece.

\item If $\gamma$ intersects with the preimage of some surface piece $\tilde{S}_{ijk,s}$ or $\tilde{S}_{ijk}$, Remark \ref{combinatorialdistance} implies that $d_{\tilde{Z}^3}(v,e)=l(\gamma)\geq R>\log{R}$. 
So $\gamma$ does not intersect with any $\tilde{S}_{ijk,s}$ or $\tilde{S}_{ijk}$, and is homotopic to a concatenation of subsegments of edges in $\tilde{Z}^{(1)}$ relative to endpoints, as shown in Figure \ref{figure4}.

                       \begin{figure}
\centering
\label{new picture}
\def\svgwidth{0.6\textwidth}
\begingroup%
  \makeatletter%
  \providecommand\color[2][]{%
    \errmessage{(Inkscape) Color is used for the text in Inkscape, but the package 'color.sty' is not loaded}%
    \renewcommand\color[2][]{}%
  }%
  \providecommand\transparent[1]{%
    \errmessage{(Inkscape) Transparency is used (non-zero) for the text in Inkscape, but the package 'transparent.sty' is not loaded}%
    \renewcommand\transparent[1]{}%
  }%
  \providecommand\rotatebox[2]{#2}%
  \newcommand*\fsize{\dimexpr\f@size pt\relax}%
  \newcommand*\lineheight[1]{\fontsize{\fsize}{#1\fsize}\selectfont}%
  \ifx\svgwidth\undefined%
    \setlength{\unitlength}{551.07420872bp}%
    \ifx\svgscale\undefined%
      \relax%
    \else%
      \setlength{\unitlength}{\unitlength * \real{\svgscale}}%
    \fi%
  \else%
    \setlength{\unitlength}{\svgwidth}%
  \fi%
  \global\let\svgwidth\undefined%
  \global\let\svgscale\undefined%
  \makeatother%
  \begin{picture}(1,0.55383002)%
    \lineheight{1}%
    \setlength\tabcolsep{0pt}%
    \put(0,0){\includegraphics[width=\unitlength,page=1]{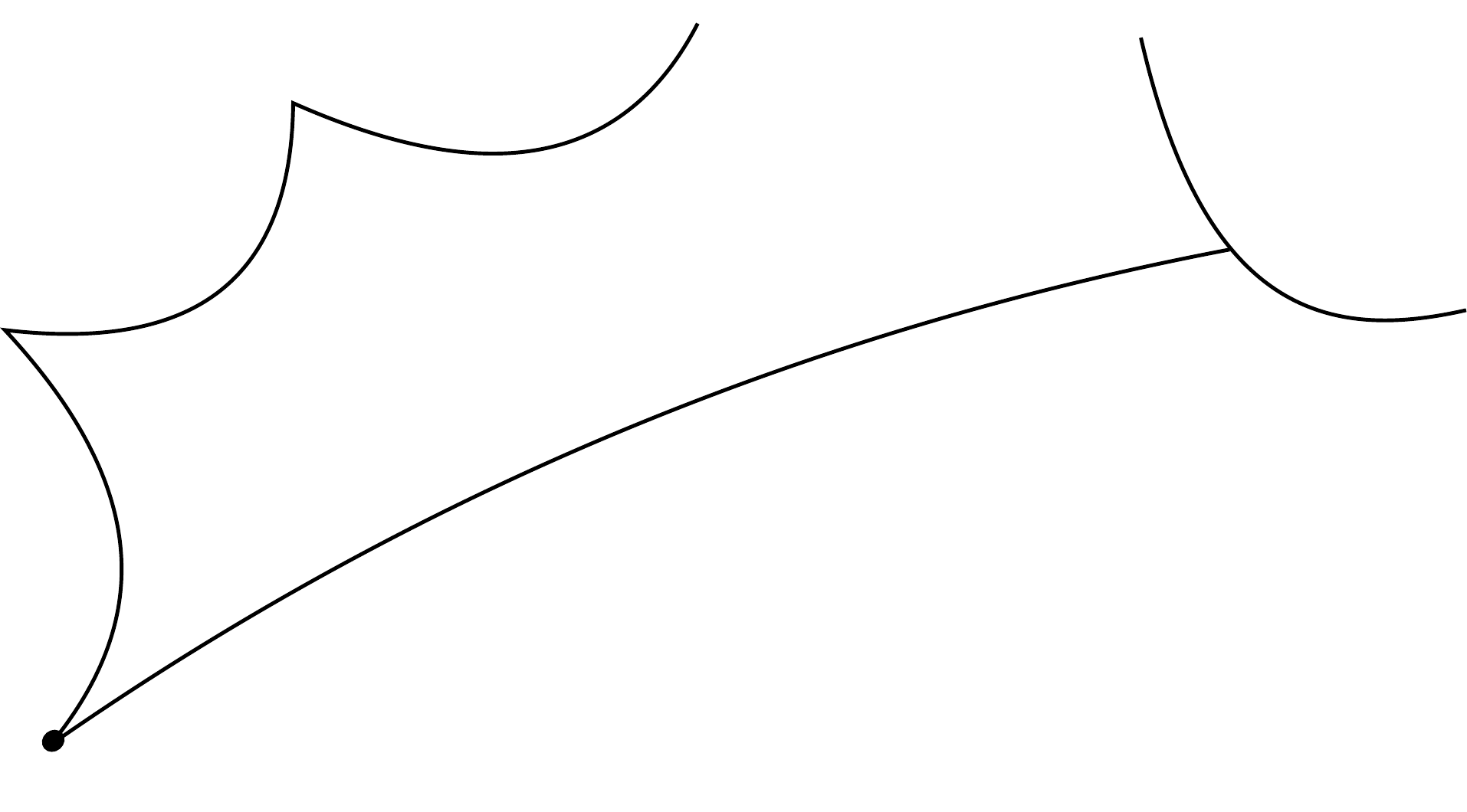}}%
    \put(0.00641537,0.00639731){\color[rgb]{0,0,0}\makebox(0,0)[lt]{\lineheight{1.25}\smash{\begin{tabular}[t]{l}$v$\end{tabular}}}}%
    \put(0.80889805,0.43556414){\color[rgb]{0,0,0}\makebox(0,0)[lt]{\lineheight{1.25}\smash{\begin{tabular}[t]{l}$e$\end{tabular}}}}%
    \put(0.37485266,0.20679313){\color[rgb]{0,0,0}\makebox(0,0)[lt]{\lineheight{1.25}\smash{\begin{tabular}[t]{l}$\gamma$\end{tabular}}}}%
    \put(0.50921589,0.5259016){\color[rgb]{0,0,0}\makebox(0,0)[lt]{\lineheight{1.25}\smash{\begin{tabular}[t]{l}$\cdots \qquad \cdots$\end{tabular}}}}%
  \end{picture}%
\endgroup%

\caption{\ The shortest path in a piece of $\tilde{Z}^3\setminus \tilde{Z}^{(1)}$ from $v$ to $e$, without intersecting $\tilde{S}_{ijk,s}$ of $\tilde{S}_{ijk}$.}
\label{figure4}
\end{figure} 

\item There must be $k\geq 2$ edges in Figure \ref{figure4}, and all edges have length at least $2\log{R}$ except the last one. Since $\gamma$ lies in a $2$-dim piece, each inner angle between edges is at least $\phi_0$. If the last edge has length at least $\frac{1}{4}\log{R}$, then Lemma \ref{lengthphase} (1) implies 
\begin{align*}
&d_{\tilde{Z}^3}(v,e)=l(\gamma) \geq (k-1)\cdot 2\log{R}+\frac{1}{4}\log{R}-(k-1)(I(\pi-\phi_0)+1)\\
\geq &\ \log{R}+(k-1)(\frac{1}{2}\log{R}-I(\pi-\phi_0)-1) \geq \log{R}.
\end{align*}
If the last edge has length less than $\frac{1}{4}\log{R}$, then Lemma \ref{lengthphase} (1) implies
\begin{align*}
&d_{\tilde{Z}^3}(v,e)=l(\gamma) \geq (k-1)\cdot 2\log{R}-(k-2)(I(\pi-\phi_0)+1)-\frac{1}{4}\log{R}\\
\geq &\ \log{R}+(k-2)(2\log{R}-I(\pi-\phi_0)-1) \geq \log{R}.
\end{align*}
\end{enumerate}

\item Let $C_1,C_2$ be two distinct $3$-dim pieces of $\tilde{Z}^3$, then $C_1\cap C_2=\emptyset$ holds. Let $\gamma$ be the shortest path in $\tilde{Z}^3$ between $C_1$ and $C_2$. We call intersections of $\gamma$ with $3$-dim and $2$-dim pieces of $\tilde{Z}^3\setminus \tilde{Z}^{(1)}$ as $3$-dim subsegments and $2$-dim subsegments of $\gamma$.
We can assume that $\gamma$ does not have any $3$-dim subsegment, otherwise the proof can be reduced to a subsegment of $\gamma$. So we can assume that $\gamma$ only has $2$-dim subsegments.

\begin{enumerate}
\item If $\gamma$ intersects with some component of $\tilde{S}_{ijk,s}$ or $\tilde{S}_{ijk}$, then $d_{\tilde{Z}^3}(v,e)=l(\gamma)\geq R>\log{R}$ as in (1) (b). So we assume that $\gamma$ does not intersect with any such $\tilde{S}_{ijk,s}$ or $\tilde{S}_{ijk}$, and the intersection of $\gamma$ with $2$-dim pieces of $\tilde{Z}^3\setminus \tilde{Z}^{(1)}$ are as shown in Figure \ref{figure5} (a) and (b). 

\item If a $2$-dim subsegment of $\gamma$ is homotopic to a concatenation of $k\geq 3$ subsegments of edges of $\tilde{Z}^{(1)}$ relative to endpoints, as in Figure \ref{figure5} (a), then all such subsegments have lengths at least $2\log{R}$ except the first and last one. We divide into three cases. If both the first and last subsegments of edges have length at least $\frac{1}{4}\log{R}$, by Lemma \ref{lengthphase} (1), we have 
\begin{align*}
& l(\gamma)\geq 2\times \frac{1}{4}\log{R}+(k-2)2\log{R}-(k-1)(I(\pi-\phi_0)+1)\\
= &\ \frac{3}{2}\log{R}+(2k-5)(\log{R}-2I(\pi-\phi_0)-2)+(3k-9)(I(\pi-\phi_0)+1)\\
\geq &\ \log{R}.
\end{align*}
If exactly one of the first and last subsegment of edges has length at least $\frac{1}{4}\log{R}$, we have
\begin{align*}
& l(\gamma)\geq \frac{1}{4}\log{R}+(k-2)2\log{R}-(k-2)(I(\pi-\phi_0)+1)-\frac{1}{4}\log{R}\\
= &\ \frac{3}{2}\log{R}+(2k-\frac{11}{2})(\log{R}-2I(\pi-\phi_0)-2)+(3k-9)(I(\pi-\phi_0)+1)\\
\geq & \ \log{R}.
\end{align*}
If both the first and last subsegments of edges have length at most $\frac{1}{4}\log{R}$, we have
\begin{align*}
& l(\gamma)\geq (k-2)2\log{R}-(k-3)(I(\pi-\phi_0)+1)-2\cdot \frac{1}{4}\log{R}\\
= &\ \frac{5}{4}\log{R}+(2k-\frac{23}{4})(\log{R}-2I(\pi-\phi_0)-2)+(3k-\frac{17}{2})(I(\pi-\phi_0)+1)\\
\geq & \ \log{R}.
\end{align*}

\item By (2) (b) above, we can assume that only Figure \ref{figure5} (b) shows up. If all subsegments of edges in Figure \ref{figure5} (b) have length less than $\log{R}$, since all edges of $\tilde{Z}^{(1)}$ have length at least $2\log{R}$, the vertices in each occurrence of Figure \ref{figure5} (b) must be the same vertex, contradicting with the fact that $\gamma$ connects two distinct $3$-dim pieces. So some subsegment of edge in Figure \ref{figure5} (b) must have length at least $\log{R}$. Since the angle in Figure \ref{figure5} (b) is at least $\phi_0$, a computation in hyperbolic geometry gives 
$\sinh(l(\gamma))\geq \sinh(\log{R})\cdot \sin{\phi_0}$, thus $$l(\gamma)\geq \log{R}+\log{(\sin{\phi_0})}>\frac{9}{10}\log{R}.$$
\end{enumerate}

\item Let $e_1,e_2$ be two distinct edges of $\tilde{Z}^{(1)}$ that do not share a vertex and do not lie in the same $3$-dim piece of $\tilde{Z}^3$. Let $\gamma$ be the shortest path in $\tilde{Z}^3$ between $e_1$ and $e_2$.
If $\gamma$ intersects with a component of $\tilde{S}_{ijk,s}$ or $\tilde{S}_{ijk}$, then $l(\gamma)\geq \log{R}$ as in (1) (b) above. So we assume that $\gamma$ does not intersect with any such $\tilde{S}_{ijk,s}$ or $\tilde{S}_{ijk}$, and we consider  the intersection of $\gamma$ with components of $\tilde{Z}^3\setminus \tilde{Z}^{(1)}$, as the following cases.

\begin{enumerate}
\item If $\gamma$ has a $2$-dim subsegment as shown in Figure \ref{figure5} (a), with $k\geq 3$ edges in $\tilde{Z}^{(1)}$ show up in the picture. Then (2) (b) implies $l(\gamma)\geq \log{R}$ holds. So we can assume that all $2$-dim subsegments of $\gamma$ are as shown in Figure \ref{figure5} (b).

\item If some subsegment of edge in Figure \ref{figure5} (b) has length at least $\frac{2}{3}\log{R}$, then the argument as in (2) (c) implies $l(\gamma)>\frac{2}{3}\log{R}+\log{(\sin{\phi_0})}>\frac{1}{2}\log{R}$. So we can assume that, for any $2$-dim subsegment of $\gamma$ as in Figure \ref{figure5} (b), all subsegments of edges have length smaller than $\frac{2}{3}\log{R}$.

\item We call concatenations of adjacent $2$-dim subsegments of $\gamma$ as $2$-dim pieces of $\gamma$, and we also call $3$-dim subsegments of $\gamma$ as $3$-dim pieces. Note that by (3) (b), each $2$-dim piece of $\gamma$ lies in the link of a vertex of $\tilde{Z}^3$. In particular, $\gamma$ can not be a single $2$-dim piece, since $e_1$ and $e_2$ do not share a vertex of $\tilde{Z}^3$, thus $\gamma$ must contain at least one $3$-dim piece.

\item If $\gamma$ contains two $3$-dim pieces, since each $3$-dim piece of $\tilde{Z}^3$ is convex (as a subset of $\mathbb{H}^3$), these two $3$-dim pieces of $\gamma$ must be contained in two distinct $3$-dim pieces of $\tilde{Z}^3$. Then item (2) implies that $l(\gamma)\geq \frac{9}{10}\log{R}$ holds. So we can assume that $\gamma$ has at most one $3$-dim piece. 

\item So $\gamma$ is a concatenation of one $2$-dim piece, one $3$-dim piece and one $2$-dim piece.
It is possible that one $2$-dim piece of $\gamma$ may degenerate, but these $2$-dim pieces can not both degenerate, since $e_1$ and $e_2$ do not lie in the same $3$-dim piece of $\tilde{Z}^3$. 

If exactly one of the two $2$-dim pieces of $\gamma$ degenerates, we assume that the degenerated edge contains the terminal point of $\gamma$. The unique $2$-dim piece of $\gamma$ lies in the the $(\frac{2}{3}\log{R})$-neighborhood of a vertex $v$. Then $v$ is a vertex of $e_1$, and is contained in a $3$-dim piece $C$ of $\tilde{Z}^3\setminus \tilde{Z}^{(1)}$. Since $e_1$ and $e_2$ do not share a vertex, $e_2$ is contained in $C$ and does not have $v$ as its vertex. By (1), $d_{\tilde{Z}^3}(e_2,v)\geq \log{R}$ holds. So we have $l(\gamma)\geq \log{R}-\frac{2}{3}\log{R}=\frac{1}{3}\log{R}$.

If neither two $2$-dim pieces of $\gamma$ degenerate, then there are two vertices $v_1$ and $v_2$ of the same $3$-dim piece $C$ of $\tilde{Z}^3$, such that the two $2$-dim pieces of $\gamma$ lie in $(\frac{2}{3}\log{R})$-neighborhoods of $v_1$ and $v_2$ respectively. Since $e_1$ and $e_2$ do not share a common vertex, $v_1$ and $v_2$ are two distinct vertices of $C$. Since $d_{\tilde{Z}^3}(v_1,v_2)=d_C(v_1,v_2)\geq 2\log{R}$ holds, we have $l(\gamma)\geq 2\log{R}-2\cdot \frac{2}{3}\log{R}=\frac{2}{3}\log{R}$.
\end{enumerate}

\item Let $v_1,v_2$ be two distinct vertices of $\tilde{Z}^3$, and let $\gamma$ be the shortest path in $\tilde{Z}^3$ between $v_1$ and $v_2$. If $\gamma$ intersects with some component of $\tilde{S}_{ijk,s}$ or $\tilde{S}_{ijk}$, or if $\gamma$ intersects with some $2$-dim piece of $\tilde{Z}^3$ as in Figure \ref{figure5} (a) with $k\geq 3$, then (1)(b) and (2)(b) imply $l(\gamma)\geq \log{R}$ hold, respectively. So we assume that each $2$-dim subsegment of $\gamma$ is as shown in Figure \ref{figure5} (b). 

\begin{enumerate}
\item If the initial point $v_1$ of $\gamma$ is contained in a $2$-dim subsegment of $\gamma$, as in Figure \ref{figure5} (b), then the subsegment of edge containing $v_1$ has length at least $2\log{R}$. So as in (2)(c), we have $l(\gamma)\geq \log{R}$.

\item If the initial point $v_1$ of $\gamma$ is contained in a $3$-dim subsegment of $\gamma$, then the other end point of this $3$-dim subsegment is contained in an edge not containing $v_1$. Then by (1), we have $l(\gamma)\geq \log{R}$.
\end{enumerate}

\end{enumerate}

                                          \begin{figure}
\centering
\label{new picture}
\def\svgwidth{0.6\textwidth}
\begingroup%
  \makeatletter%
  \providecommand\color[2][]{%
    \errmessage{(Inkscape) Color is used for the text in Inkscape, but the package 'color.sty' is not loaded}%
    \renewcommand\color[2][]{}%
  }%
  \providecommand\transparent[1]{%
    \errmessage{(Inkscape) Transparency is used (non-zero) for the text in Inkscape, but the package 'transparent.sty' is not loaded}%
    \renewcommand\transparent[1]{}%
  }%
  \providecommand\rotatebox[2]{#2}%
  \newcommand*\fsize{\dimexpr\f@size pt\relax}%
  \newcommand*\lineheight[1]{\fontsize{\fsize}{#1\fsize}\selectfont}%
  \ifx\svgwidth\undefined%
    \setlength{\unitlength}{519.29270414bp}%
    \ifx\svgscale\undefined%
      \relax%
    \else%
      \setlength{\unitlength}{\unitlength * \real{\svgscale}}%
    \fi%
  \else%
    \setlength{\unitlength}{\svgwidth}%
  \fi%
  \global\let\svgwidth\undefined%
  \global\let\svgscale\undefined%
  \makeatother%
  \begin{picture}(1,0.43129562)%
    \lineheight{1}%
    \setlength\tabcolsep{0pt}%
    \put(0,0){\includegraphics[width=\unitlength,page=1]{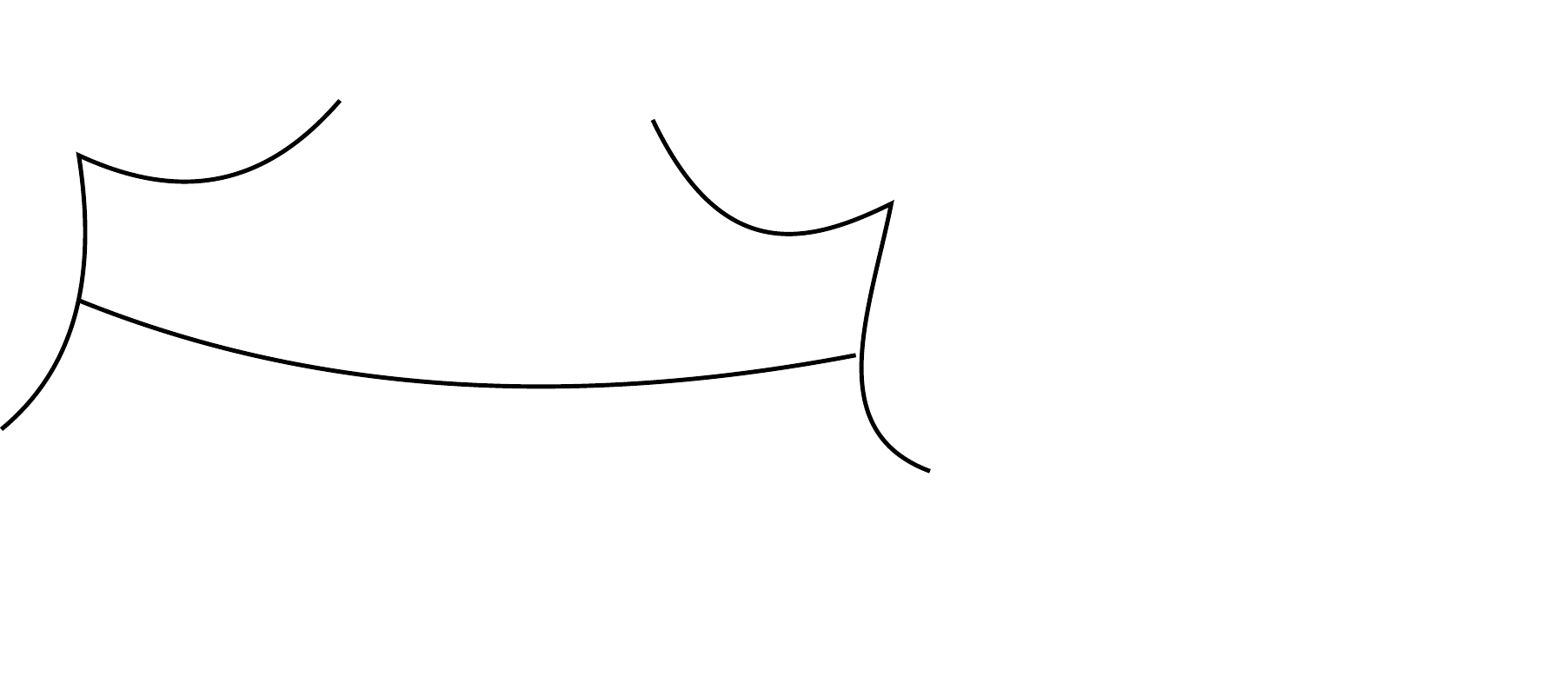}}%
    \put(0.24388889,0.14707087){\color[rgb]{0,0,0}\makebox(0,0)[lt]{\lineheight{1.25}\smash{\begin{tabular}[t]{l}$\gamma$\end{tabular}}}}%
    \put(0.22092969,0.35272789){\color[rgb]{0,0,0}\makebox(0,0)[lt]{\lineheight{1.25}\smash{\begin{tabular}[t]{l}$\cdots \quad \cdots$\end{tabular}}}}%
    \put(0,0){\includegraphics[width=\unitlength,page=2]{figure5.pdf}}%
    \put(0.79103173,0.13599403){\color[rgb]{0,0,0}\makebox(0,0)[lt]{\lineheight{1.25}\smash{\begin{tabular}[t]{l}$\gamma$\end{tabular}}}}%
    \put(0.27466788,0.01084709){\color[rgb]{0,0,0}\makebox(0,0)[lt]{\lineheight{1.25}\smash{\begin{tabular}[t]{l}(a)\end{tabular}}}}%
    \put(0.80531067,0.01120108){\color[rgb]{0,0,0}\makebox(0,0)[lt]{\lineheight{1.25}\smash{\begin{tabular}[t]{l}(b)\end{tabular}}}}%
  \end{picture}%
\endgroup%

\caption{\ Intersection of $\gamma$ and components of $\tilde{Z}^3\setminus \tilde{Z}^{(1)}$.}
\label{figure5}
\end{figure}

\end{proof}

\bigskip

\subsection{Estimation on the ideal model of $Z$}\label{estimate2}

In this section, we prove the following result on the map $\tilde{j}_0:\tilde{Z}^3\to \mathbb{H}^3$ defined in the last section.

\begin{proposition}\label{model}
Given the metric on $\tilde{Z}^3$, for large enough $R>0$, the following statements hold.
\begin{enumerate} 
\item The map $\tilde{j}_0:\tilde{Z}^3\to \mathbb{H}^3$ is a quasi-isometric embedding. 
\item The representation $\rho_0:\pi_1(Z^3)\to \text{Isom}_+(\mathbb{H}^3)$ is injective.
\item The map $\tilde{j}_0:\tilde{Z}^3\to \mathbb{H}^3$ is an embedding.
\item The convex core of $\mathbb{H}^3/\rho_0(\pi_1(Z^3))$ is homeomorphic to the $3$-manifold $\mathcal{Z}\setminus \partial_p\mathcal{Z}$ in Theorem \ref{pi1injectivity}, as oriented manifolds.
\end{enumerate}
\end{proposition}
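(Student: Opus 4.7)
The plan is to prove item (1) first, from which the remaining items will follow by fairly standard arguments. For (1), given any two points $p, q \in \tilde{Z}^3$, let $\sigma$ be a geodesic in $\tilde{Z}^3$ from $p$ to $q$. Decompose $\sigma$ into consecutive subsegments $\sigma_1, \ldots, \sigma_m$, each contained in the closure of a single component of $\tilde{Z}^3 \setminus \tilde{Z}^{(1)}$. By definition of the metric on $\tilde{Z}^3$ and by Construction \ref{constructjt}, each such closure is mapped totally geodesically by $\tilde{j}_0$ into $\mathbb{H}^3$ (either into a hyperplane for a $2$-dim piece, or into an ideal hyperbolic tetrahedron for a $3$-dim piece), so each $\tilde{j}_0(\sigma_i)$ is a genuine geodesic segment of length $l(\sigma_i)$. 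Hence $\tilde{j}_0(\sigma_1), \ldots, \tilde{j}_0(\sigma_m)$ is a consecutive chain of geodesic segments in $\mathbb{H}^3$ of total length $l(\sigma) = d_{\tilde{Z}^3}(p,q)$, and proving
$$l\bigl(\tilde{j}_0(\sigma_1)\cdots\tilde{j}_0(\sigma_m)\bigr) \;\geq\; \tfrac{1}{2}\,l(\sigma)$$
would show that $\tilde{j}_0$ is a $(2,0)$-quasi-isometric embedding. This reduction matches the framework used in \cite{Sun2, LS, Sun5}, and the conclusion we want is exactly what Proposition \ref{lengthshort} provides, once its hypotheses are verified.

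The length hypothesis of Proposition \ref{lengthshort} is supplied by Lemma \ref{new}: any subsegment of $\sigma$ that crosses at least one full edge of $\tilde{Z}^{(1)}$ has length at least $\frac{1}{3}\log R$, which exceeds the threshold $L$ once $R$ is large. The only possibly short $\sigma_i$ are the first and last, and segments that cut a single $2$-dim piece near a vertex; these fall under the weakened case (2) of Proposition \ref{lengthshort}, where only one of the two adjacent segments needs to be long and the bending angle requirement is strengthened to $\leq \frac{\pi}{2} - \theta$. The remaining task is to bound all bending angles uniformly away from $\pi$ (and away from $\frac{\pi}{2}$ in the short case).

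The main obstacle, and the bulk of the work, is the case analysis of dihedral angles at the junction edges in $\tilde{Z}^{(1)}$. We will distinguish: (a) junctions in the interior of a nearly geodesic subsurface $S_{ijk,s}$ or $S_{ijk}$, where two pants or hamster wheels meet along an $(R',\epsilon)$-good curve under the $(R,\epsilon)$-well-matched condition with formal-foot shift exactly $1+\pi i$ at $t=0$; (b) junctions where such a subsurface meets the bounding corner annulus $A_{ijk,s}$, with the same $1+\pi i$ shift; (c) junctions where two horotorus triangles in some $T_s^t \subset \partial_p Z$ meet along a short Euclidean edge that is mapped to a long geodesic in $M$; (d) junctions where a horotorus triangle meets an adjacent ideal tetrahedron from the cone $C(T_s^t)$; and (e) junctions along the long edges leaving a horotorus vertex, where Remark \ref{constructj1remark}(3) shows the dihedral angle is $\epsilon$-close to $\arccos(-\sqrt{3}/(2\sqrt{7})) \approx 0.606\pi$, hence bounded away from both $0$ and $\pi$. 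Cases (a) and (b) are treated as in \cite{KM,KW,Sun5}: the $1+\pi i$ shift forces a definite acute bend independent of $R$. Cases (c), (d), (e) are the new ingredient of this paper; the calculations are reduced to explicit hyperbolic trigonometry in the horoball coordinates set up in Remark \ref{coordinate}, where the long-edge geometry and the nearly-equilateral horotorus triangulation produce dihedral angles that are computable from the data in Construction \ref{triangulation} and Lemma \ref{preparation}, and are bounded away from $\pi$ uniformly for large $R$.

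Granting (1), the remaining items are quick. For (2), any nontrivial $\gamma \in \pi_1(Z^3)$ acts freely on $\tilde{Z}^3$ with unbounded orbit, so by the quasi-isometric embedding the orbit of $\rho_0(\gamma)$ on $\tilde{j}_0(\tilde{Z}^3) \subset \mathbb{H}^3$ is unbounded, hence $\rho_0(\gamma) \neq 1$. For (3), $\tilde{j}_0$ is a local isometry by the definition of the metric, and the quasi-isometric embedding bounds the preimage diameter of any point; local injectivity on uniform balls then forces global injectivity. For (4), since $\tilde{j}_0$ is a quasi-isometric embedding, $\rho_0(\pi_1(Z^3))$ is geometrically finite, and its convex core is homotopy equivalent to $Z^3$, hence to $Z$. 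By the construction of $Z^3$ as $Z$ together with open cones over the peripheral tori $T_s^t$, the convex core realizes the same combinatorial thickening of $Z$ as the handle construction produces in $\mathcal{Z}$ from Construction \ref{constructmathcalZ}, and the deleted cone points correspond precisely to removing the cuspidal boundary $\partial_p \mathcal{Z}$; the orientations agree because $\rho_0$ is a limit of $\rho_t$ and $\tilde{j}_1$ agrees with the local orientation of $M$.
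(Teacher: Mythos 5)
Your overall architecture is right, and the treatment of the $3$-dim/$2$-dim junctions via the $0.606\pi$ dihedral angle (case (e)) matches the paper's use of Proposition~\ref{lengthshort} case~(2) for unmodified $3$-dim pieces. But there is a genuine gap in the reduction for item~(1), and it is exactly the problem the paper's notion of a \emph{modified sequence} was invented to handle.

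Your decomposition of the geodesic $\sigma$ into pieces $\sigma_1,\ldots,\sigma_m$ by intersection with $\tilde{Z}^{(1)}$ produces, near any vertex $v$ of $\tilde{Z}^{3}$, the possibility of \emph{several consecutive short segments}. A geodesic that passes within distance $L$ of a vertex can cross many $2$-dim pieces incident to $v$ in rapid succession (Lemma~\ref{new}(3) only bounds distances between edges that do \emph{not} share a vertex, so it gives no lower bound here), and at each such crossing the bending angle can be arbitrarily close to $\pi$: the dihedral-angle bound $\geq \phi_0$ is insufficient because, locally near $v$, the geodesic can look almost straight in the flattened link picture while its individual jumps are tiny. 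This situation satisfies \emph{neither} case~(1) nor case~(2) of Proposition~\ref{lengthshort}: case~(1) needs both adjacent segments long, case~(2) needs exactly one long. Your claim that the short segments ``fall under the weakened case~(2)'' is therefore false; case~(2) requires the bending angle to be $\leq \frac{\pi}{2}-\theta$, which the dihedral bound $\geq\phi_0$ alone does not give, and more fundamentally several short pieces can occur in a row.

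The paper resolves this with Construction~\ref{modifiedsequence}: whenever the intersection point $x_i$ is within $L$ of a vertex $v_i$, it is replaced by $v_i$ itself, collapsing the chain of short crossings near $v$ into a single visit to the vertex. This replacement is exactly what makes Lemma~\ref{modifiedsequencelength} true (every non-degenerate piece of the modified path has length $\geq L$, or is an unmodified $3$-dim piece), and the angle bound at the vertex is supplied by the link analysis at the start of the proof of Lemma~\ref{modifiedsequenceangle}: the spherical set $S_{v^Z}\subset S^2$ of tangent directions into incident pieces is a union of arcs of length $\geq\phi_0$ meeting at angles $\geq\phi_0$, and by a finiteness/compactness argument there is a uniform $\theta_0>0$ so that two directions at path-distance $\geq\phi_0$ in $S_{v^Z}$ are at angle $\geq\theta_0$ in $S^2$. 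Without the modified sequence and this vertex-link constant $\theta_0$, there is no way to make Proposition~\ref{lengthshort} apply along the whole chain.

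A smaller remark on item (3): ``local injectivity on uniform balls then forces global injectivity'' does not do the job here, because the quasi-isometric estimate only gives $d_{\tilde{Z}^{3}}(x,y)\lesssim \log R$ when $\tilde{j}_0(x)=\tilde{j}_0(y)$, a ball whose radius grows with $R$. The paper's proof of (3) is a separate case analysis for $d_{\tilde{Z}^3}(x,y)\leq\frac{1}{5}\log R$ using Lemma~\ref{new} to show that $x$ and $y$ lie in pieces sharing a vertex and then appealing to the explicit geometry of $\tilde{j}_0$ on the star of a vertex; you would need to supply an argument of this kind rather than invoke a local-isometry slogan.
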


The idea of the proof of Proposition \ref{model} is similar to proofs of corresponding results in \cite{Sun2} and \cite{Sun5}, but the actual proof is more complicated, since the construction of the $3$-complex $Z^3$ is more involved. In particular, we will have a more complicated definition of the {\it modified sequence}.

We take large enough $R$ so that there is an $L$ satisfying the following inequality: 
\begin{align}\label{6.0}
\max{\{1000,2I(\pi-\phi_0)\}}\leq L\leq \frac{1}{320}\log{R}.
\end{align} 
Here $\phi_0$ is defined in Notation \ref{notation} (3).

For any two points $x,y\in \tilde{Z}''\subset \tilde{Z}^3$, we will estimate $d_{\mathbb{H}^3}(\tilde{j}_0(x),\tilde{j}_0(y))$. 
Let $\gamma$ be the shortest path in $\tilde{Z}^3$ from $x$ to $y$, and we will assume that $\gamma$ intersects with $\tilde{Z}^{(1)}$ nontrivially in its interior. Let $x_1,x_2,\cdots,x_n$ be the intersection points of $\gamma\cap \tilde{Z}^{(1)}$ that follow the orientation of $\gamma$. If $\gamma$ contains a subsegment of an edge in $\tilde{Z}^{(1)}$, we only record the endpoints of this subsegment. This sequence $x_1,x_2,\cdots,x_n$ is called the {\it intersection sequence} of $\gamma$, and let $x_0=x$ and $x_{n+1}=y$. We use $\gamma_i=\overline{x_ix_{i+1}}$ to denote the subsegment of $\gamma$ (and the geodesic segment) from $x_i$ to $x_{i+1}$. Such a $\gamma_i$ is called a {\it $3$-dim piece} of $\gamma$ if it is contained in the union of ideal tetrahedra of $\tilde{Z}^3$.

Now we make the following assumption on $\gamma$.
\begin{assumption}\label{assumption}
For the segment $\overline{xx_1}$, we assume the following hold.
\begin{enumerate}
\item Either $x$ is a vertex of $\tilde{Z}^3$, or $x$ lies on an edge of $\tilde{Z}^{(1)}$ and its distance to any vertex of $\tilde{Z}^3$ is at least $L$, or the distance between $x$ and any vertex of $\tilde{Z}^3$ is at least $L+\frac{1}{160}\log{R}$.
\item If $\overline{xx_1}$ is not a $3$-dim piece, then $d(x,x_1)\geq \frac{1}{160}\log{R}$.
\end{enumerate}
We also assume the same condition holds for $\overline{x_ny}$.
\end{assumption}

If $\gamma$ satisfies Assumption \ref{assumption}, we construction the {\it modified sequence} of $\gamma$ as following.
 \begin{construction}\label{modifiedsequence} 
 For any $i=1,\cdots,n$, we do the following modification. 
\begin{enumerate}
\item If neither $\gamma_{i-1}$ nor $\gamma_i$ are $3$-dim pieces and both of following hold:
\begin{itemize}
\item $d(x_i,v_i)<L$ holds for some vertex $v_i\ne x_i$ (then $x_i$ and $v_i$ lie on the same edge of $\tilde{Z}^3$ and $v_i$ is unique, by Lemma \ref{new} (1)(4)).
\item $d(x_{i-1},x_i)<\frac{1}{160}\log{R}$ or $d(x_i,x_{i+1})<\frac{1}{160}\log{R}$.
\end{itemize}
Then we replace $x_i$ by $v_i$.

\item If $\gamma_{i-1}$ or $\gamma_i$ is a $3$-dim piece, then exactly one of them is. Without loss of generality, we assume $\gamma_i$ (from $x_i$ to $x_{i+1}$) is a $3$-dim piece, and $\gamma_{i-1}$ is not. Then we do the following two steps.
\begin{enumerate}
\item If $d(x_i,v_i)<L$ for some vertex $v_i\ne x_i$ and $d(x_{i-1},x_i)<\frac{1}{160}\log{R}$, we replace $x_i$ by $v_i$. Similarly, if $d(x_{i+1},v_{i+1})<L$ for some vertex $v_{i+1}\ne x_{i+1}$, then $x_{i+1}\ne y$ and $x_{i+2}$ exists by Assumption \ref{assumption} (1). In this case, if $d(x_{i+1},x_{i+2})<\frac{1}{160}\log{R}$, we replace $x_{i+1}$ by $v_{i+1}$.
\item If the modification in Step I is done for $x_i$ but not for $x_{i+1}$ and $d(x_{i+1},v_i)<L$, then by Lemma \ref{new} (1), $v_i$ is contained in an edge containing $x_{i+1}$, and we replace $x_{i+1}$ by $v_i$. We do a similar process if the modification in Step I is done for $x_{i+1}$ but not for $x_i$.
\end{enumerate}
\end{enumerate}
\end{construction}

Note that during the modification process, if some $x_i$ is replaced by $v_i$, then they lie on the same edge of $\tilde{Z}^{(1)}$ and we must have $d(x_i,v_i)< L$.

After doing the above process, by replacing certain $x_i$ by corresponding vertices of $\tilde{Z}^3$, we get the {\it modified sequence} $x=y_0,y_1,\cdots,y_m,y_{m+1}=y$ of $\gamma$. Note that a few points in the intersection sequence might be replaced by the same point $y_i$. Then $y_i$ and $y_{i+1}$ lie in the same component of $\tilde{Z}^3\setminus \tilde{Z}^{(1)}$. Each component of $\tilde{Z}^3\setminus \tilde{Z}^{(1)}$ is either a simply connected convex hyperbolic surface with piecewise geodesic boundary, or a convex hyperbolic $3$-manifold obtained by an infinite union of ideal tetrahedra. Let $\gamma_i'=\overline{y_iy_{i+1}}$ be the shortest path in the piece of $\tilde{Z}^3\setminus \tilde{Z}^{(1)}$ from $y_i$ to $y_{i+1}$, and we say that $\gamma_i'$ is a $2$-dim or a $3$-dim piece if it lies in a $2$-dim or a $3$-dim piece of $\tilde{Z}^3\setminus \tilde{Z}^{(1)}$, respectively.
We define the {\it modified path} $\gamma'$ of $\gamma$ to be the concatenation of $\gamma_0',\gamma_1',\cdots,\gamma_m'$. 

Any $y_i$ that also lies in the intersection sequence of $\gamma$ is called {\it an unmodified point}, otherwise it is called {\it a modified point}. Any $\gamma_i'$ that is also a piece of $\gamma$ is called {\it an unmodified piece} of $\gamma'$.

We will prove a few properties of the modified path in the following.

\begin{lemma}\label{modifiedsequencelength}
If $\gamma$ satisfies Assumption \ref{assumption}, then for any $i=0, 1,\cdots,m$, $\gamma_i'$ is either an unmodified $3$-dim piece or satisfies $l(\gamma_i')\geq L$.
\end{lemma}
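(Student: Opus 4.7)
The plan is to proceed by a case analysis on whether the endpoints $y_i$ and $y_{i+1}$ come from unmodified intersection points of $\gamma$ with $\tilde Z^{(1)}$ or from vertices produced by the modification procedure. The only case in which $l(\gamma_i')$ can legitimately be smaller than $L$ should be the one the lemma explicitly exempts, namely an unmodified $3$-dim piece.

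First, suppose both $y_i = x_j$ and $y_{i+1} = x_{j+1}$ are unmodified and $\gamma_i' = \gamma_j$ is a $2$-dim piece. I would argue by contradiction assuming $l(\gamma_j) < L$; since $L \leq \tfrac{1}{320}\log R$, this makes $\gamma_j$ ``short'' in the sense of Construction \ref{modifiedsequence}. Reading off the contrapositive of modification rule (1) (and rule (2)(a) in the sub-case that $\gamma_{j-1}$ is $3$-dim) forces both $x_j$ and $x_{j+1}$ to lie at distance at least $L$ from every vertex of $\tilde Z^3$: otherwise the short piece $\gamma_j$ together with a nearby vertex would have triggered modification. If the edges of $\tilde Z^{(1)}$ carrying $x_j$ and $x_{j+1}$ share no vertex, Lemma \ref{new}(3) gives $l(\gamma_j) \geq \tfrac{1}{3}\log R > L$; if they share a vertex $v$, I would apply Proposition \ref{lengthshort} to the two-segment consecutive chain $\overline{x_jv},\overline{vx_{j+1}}$ --- both segments of length $\geq L$, meeting at a bending angle bounded by $\pi - \phi_0$ since the inner angle of the $2$-dim piece at $v$ is at least $\phi_0$, with $L \geq 2I(\pi-\phi_0)$ by (\ref{6.0}) --- to conclude $l(\gamma_j) \geq \tfrac{1}{2}(d(x_j,v) + d(v,x_{j+1})) \geq L$. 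Either alternative contradicts $l(\gamma_j) < L$.

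Next, suppose at least one of $y_i,y_{i+1}$ is a modified point, hence a vertex of $\tilde Z^3$. If both are modified vertices, they are distinct (else they would collapse to a single entry of the modified sequence), so Lemma \ref{new}(4) immediately gives $l(\gamma_i') \geq d_{\tilde Z^3}(y_i,y_{i+1}) \geq \log R$. Otherwise, say $y_i = v$ is a vertex produced from $x_j$ (with $d(x_j,v) < L$) while $y_{i+1} = x_{j+1}$ is unmodified. The rule that modified $x_j$ required some piece adjacent to $x_j$ to be short. If that short piece is $\gamma_j$ itself, then replaying the same rule at $x_{j+1}$ either modifies $x_{j+1}$ (contradicting the unmodified assumption) or forces $x_{j+1}$ to lie at distance $\geq L$ from every vertex of $\tilde Z^3$; in the latter situation Lemma \ref{new}(1) yields $l(\gamma_i') \geq \log R - L \geq L$, the $-L$ absorbing the shift from $x_j$ to $v$ along a boundary edge of the piece. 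If instead the short piece is $\gamma_{j-1}$, then $\gamma_j$ is long, $l(\gamma_j) \geq \tfrac{1}{160}\log R$, and the triangle inequality inside the convex piece containing $\gamma_j$ gives $l(\gamma_i') \geq l(\gamma_j) - d_P(v,x_j) \geq \tfrac{1}{160}\log R - L \geq L$.

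The main obstacle I anticipate is the bookkeeping required to ensure the case analysis exhausts all interactions of rules (1), (2)(a), and (2)(b); in particular one must check the case in which rule (2)(b) collapses two consecutive intersection points across a short $3$-dim piece to the same vertex, so that the corresponding segment simply disappears from the modified sequence and needs no bound, and the case in which a short unmodified $3$-dim piece survives --- this is precisely the escape hatch that the lemma's phrasing ``unmodified $3$-dim piece'' is designed to accommodate. A secondary technicality is that the shortest path $\gamma_i'$ is measured intrinsically inside a $2$-dim or $3$-dim piece, so one has to verify that the bounds of Lemma \ref{new}, which are stated in the ambient metric, transfer to the intrinsic metric of the convex piece; this holds because each such piece embeds isometrically and the relevant short paths stay inside it.
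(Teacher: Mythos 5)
Your overall plan---a case analysis on whether $y_i$ and $y_{i+1}$ are modified points, with contradiction arguments via the contrapositive of Construction~\ref{modifiedsequence} together with Lemma~\ref{new}---is essentially the paper's approach, but there are two genuine gaps.

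\textbf{The endpoint cases are not covered.} Construction~\ref{modifiedsequence} only modifies $x_1,\dots,x_n$, so ``reading off the contrapositive of the modification rule'' gives you nothing when $y_i=x_0=x$ or $y_{i+1}=x_{n+1}=y$. In the both-unmodified $2$-dim case, if $x_j=x$ then the conclusion must come from Assumption~\ref{assumption}(2) directly ($d(x,x_1)\geq\tfrac{1}{160}\log R$), not from arguing that $x$ should have been modified; and in the one-modified case, if $x_{j+1}=y$ then ``replaying the rule at $x_{j+1}$'' is vacuous and one must instead invoke Assumption~\ref{assumption}(1) to get that $y$ is a vertex or lies far from every vertex. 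The paper handles both explicitly as separate sub-cases.

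\textbf{The dichotomy in the one-modified case is miscalibrated and omits rule~(2)(b).} You split on ``the short piece that triggered the modification is $\gamma_j$ or $\gamma_{j-1}$,'' but this is not exhaustive: rule~(2)(b) modifies $x_j$ because its neighbour across a $3$-dim piece was modified in Step~(a) and lies within $L$ of $x_j$---no adjacent short piece is required. It is also not exclusive: both $\gamma_{j-1}$ and $\gamma_j$ may be short, so ``if the short piece is $\gamma_{j-1}$ then $\gamma_j$ is long'' fails. The dichotomy that actually makes your two estimates work is on whether $\gamma_j$ itself is short (then deduce $d(x_{j+1},v')\geq L$ for every vertex $v'$ and hence $d(v,x_{j+1})\geq L$ directly---your ``$\log R - L$'' is neither needed nor correct here) or long (then subtract $d(v,x_j)<L$ from $\tfrac{1}{160}\log R$). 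But making the short-$\gamma_j$ branch rigorous requires distinguishing which of rules~(1), (2)(a), (2)(b) governs $x_{j+1}$, and in the sub-case where $\gamma_j$ is a $3$-dim piece it is precisely rule~(2)(b) that forces $d(x_{j+1},v)\geq L$; this is the key step in the paper's Case~III and it is missing from your write-up.

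Two lesser points: invoking Proposition~\ref{lengthshort} for the two-segment chain at a shared vertex requires $L\geq 12I(\pi-\phi_0)+80\log 2$, which is strictly stronger than the bound $L\geq 2I(\pi-\phi_0)$ guaranteed by~(\ref{6.0}); the paper avoids this by using the elementary inequality $d(y_i,y_{i+1})\geq d(y_i,v)+d(v,y_{i+1})-I(\pi-\phi_0)\geq 2L-I(\pi-\phi_0)\geq L$. And you do not dispose of the sub-case where $\gamma_i'$ meets some $\tilde S_{ijk,s}$ or $\tilde S_{ijk}$; it is implicitly ruled out by $l(\gamma_i')<L<R$ and Remark~\ref{combinatorialdistance}, but this should be said.
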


\begin{proof}

{\bf Case I}. If both $y_i$ and $y_{i+1}$ are modified points, by Lemma \ref{new} (4), we have $l(\gamma_i')=d(y_i,y_{i+1})\geq \log{R}\geq L$.

{\bf Case II}. If both $y_i$ and $y_{i+1}$ are unmodifed points, we divide into following sub-cases.
\begin{itemize}
\item If $\gamma_i'$ is a $3$-dim piece, then it is unmodified and the result trivially holds. So we assume that $\gamma_i'$ is not a $3$-dim piece in the following.

\item If $y_i=x$ of $y_{i+1}=y$, we assume $y_i=x$ without loss of generality. Then Assumption \ref{assumption} (2) implies $l(\gamma_i')=d(y_i,y_{i+1})\geq\frac{1}{160}\log{R}>L$. So we assume that $y_i$ and $y_{i+1}$ are not $x$ and $y$ respectively.

\item If $\gamma_i'$ intersects with any component of $\tilde{S}_{ijk,s}$ or $\tilde{S}_{ijk}$, then $l(\gamma_i')\geq R>L$, by Remark \ref{combinatorialdistance}.

\item If $\gamma_i'$ does not intersect with $\tilde{S}_{ijk,s}$ or $\tilde{S}_{ijk}$, we have two cases. If $l(\gamma_i')\geq \frac{1}{160}\log{R}$, then the result trivially holds. If $l(\gamma_i')<\frac{1}{160}\log{R}$, then by Lemma \ref{new} (3), $y_i$ and $y_{i+1}$ must lie on two adjacent edges of $\tilde{Z}^{(1)}$, with a common vertex $v$. Then we must have $d(y_i,v),d(y_{i+1},v)\geq L$, otherwise $y_i$ or $y_{i+1}$ should be modified in Construction \ref{modifiedsequence} (1) or (2) (a). Since we have $\angle y_ivy_{i+1}\geq \phi_0$ (Notation \ref{notation} (3)), we get $$d(y_i,y_{i+1})\geq d(y_i,v)+d(y_{i+1},v)-I(\pi-\phi_0)\geq 2L-I(\pi-\phi_0)\geq L.$$
\end{itemize}

{\bf Case III}. If exactly one of $y_i$ and $y_{i+1}$ is modified, we assume that $y_i$ is unmodified and $y_{i+1}$ is modified. Then there are $x_i$ and $x_{i+1}$ in the intersection sequence of $\gamma$, such that $x_i=y_i$ and $x_{i+1}$ is modifed to $y_{i+1}$ (in Construction \ref{modifiedsequence}).  If $y_i=x_i=x$, then since $y_{i+1}$ is a vertex of $\tilde{Z}^3$ and distinct from $y_i$, Assumption \ref{assumption} (1) implies $d(y_i,y_{i+1})\geq L$. So we assume that $y_i\ne x$ in the following, thus it lies on an edge of $\tilde{Z}^{(1)}$.

If $y_i$ and $y_{i+1}$ do not lie on the same edge of $\tilde{Z}^{(1)}$, since $y_{i+1}$ is a vertex of $\tilde{Z}^{(1)}$, Lemma \ref{new} (1) implies $d(y_i,y_{i+1})\geq \log{R}>L$. 

So $y_i$ and $y_{i+1}$ lie on the same edge, and the picture is shown in Figure \ref{figure6}. Suppose that $d(y_i,y_{i+1})<L$ holds. Since $x_{i+1}$ is modified to $y_{i+1}$, we have $d(x_{i+1},y_{i+1})<L$. Since $x_i=y_i$, we have $d(x_i,x_{i+1})<2L<\frac{1}{160}\log{R}$. If $\overline{x_ix_{i+1}}$ is not a $3$-dim piece, then $x_i$ should be modified, according to Construction \ref{modifiedsequence} (1) or (2)(a), and we get a contradiction. If $\overline{x_ix_{i+1}}$ is a $3$-dim piece, then the fact that $x_{i+1}$ is modified implies that $x_i$ should be modified, according to Construction \ref{modifiedsequence} (2)(b), which is impossible. So we must have $d(y_i,y_{i+1})\geq L$.

The proof is done.
\end{proof}

\begin{figure}
\centering
\label{new picture}
\def\svgwidth{.4\textwidth}
\begingroup%
  \makeatletter%
  \providecommand\color[2][]{%
    \errmessage{(Inkscape) Color is used for the text in Inkscape, but the package 'color.sty' is not loaded}%
    \renewcommand\color[2][]{}%
  }%
  \providecommand\transparent[1]{%
    \errmessage{(Inkscape) Transparency is used (non-zero) for the text in Inkscape, but the package 'transparent.sty' is not loaded}%
    \renewcommand\transparent[1]{}%
  }%
  \providecommand\rotatebox[2]{#2}%
  \newcommand*\fsize{\dimexpr\f@size pt\relax}%
  \newcommand*\lineheight[1]{\fontsize{\fsize}{#1\fsize}\selectfont}%
  \ifx\svgwidth\undefined%
    \setlength{\unitlength}{499.75757928bp}%
    \ifx\svgscale\undefined%
      \relax%
    \else%
      \setlength{\unitlength}{\unitlength * \real{\svgscale}}%
    \fi%
  \else%
    \setlength{\unitlength}{\svgwidth}%
  \fi%
  \global\let\svgwidth\undefined%
  \global\let\svgscale\undefined%
  \makeatother%
  \begin{picture}(1,0.77598889)%
    \lineheight{1}%
    \setlength\tabcolsep{0pt}%
    \put(0,0){\includegraphics[width=\unitlength,page=1]{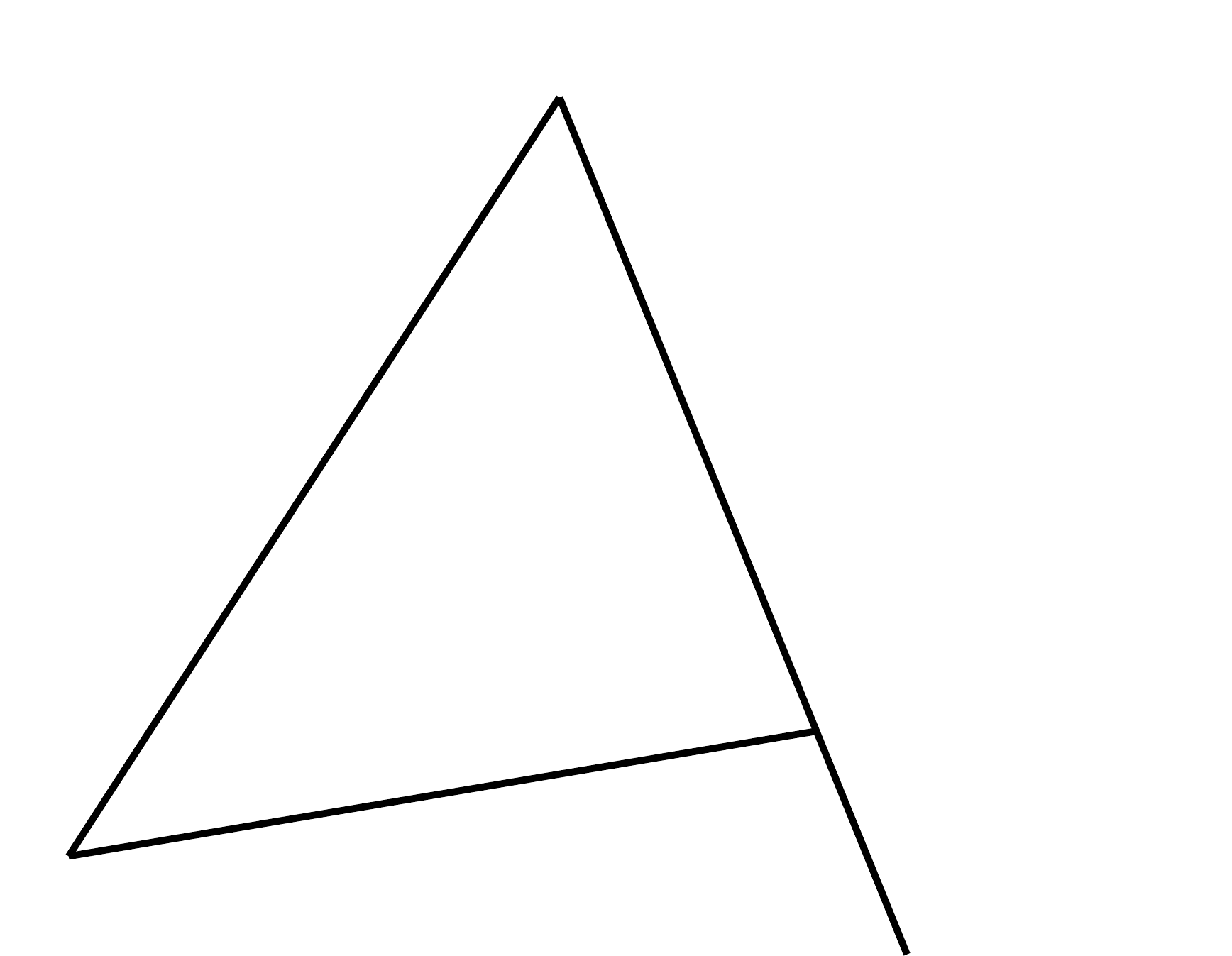}}%
    \put(0.43707092,0.72979477){\color[rgb]{0,0,0}\makebox(0,0)[lt]{\lineheight{1.25}\smash{\begin{tabular}[t]{l}$y_{i+1}$\end{tabular}}}}%
    \put(0.67417901,0.18001016){\color[rgb]{0,0,0}\makebox(0,0)[lt]{\lineheight{1.25}\smash{\begin{tabular}[t]{l}$x_{i+1}$\end{tabular}}}}%
    \put(-0.0045432,0.0355333){\color[rgb]{0,0,0}\makebox(0,0)[lt]{\lineheight{1.25}\smash{\begin{tabular}[t]{l}$y_i=x_i$\end{tabular}}}}%
    \put(0,0){\includegraphics[width=\unitlength,page=2]{figure6.pdf}}%
  \end{picture}%
\endgroup%

\caption{\ The position of $x_i=y_i, x_{i+1}$ and $y_{i+1}$.}
\label{figure6}
\end{figure}

The next job is to estimate the angle $\angle{y_{i-1}y_iy_{i+1}}$. To obtain this estimate, we first prove the following lemma.

\begin{lemma}\label{onemodification}
We suppose that $\gamma$ satisfies Assumption \ref{assumption}. Let $x_i,x_{i+1}$ be two consecutive points in the intersection sequence of $\gamma$ with $x_i\ne x$, such that when producing the modified sequence, $x_i$ is not modified (thus $y_i=x_i$) and $x_{i+1}$ is replaced by $y_{i+1}$, then the following hold.
\begin{enumerate}
\item If $\overline{x_ix_{i+1}}$ is not a $3$-dim piece, then $\angle x_{i+1}y_iy_{i+1}\leq 2e^{-\frac{L}{2}}$.
\item If $\overline{x_ix_{i+1}}$ is a $3$-dim piece, while $x_i$ and $y_{i+1}$ do not lie in the same edge of $\tilde{Z}^{(1)}$, then $\angle x_{i+1}y_iy_{i+1}\leq 2e^{-\frac{L}{2}}$.
\item If $\overline{x_ix_{i+1}}$ is a $3$-dim piece, while $x_i$ and $y_{i+1}$ lie in the same edge of $\tilde{Z}^{(1)}$, then $\angle x_{i+1}y_iy_{i+1}< \frac{\pi}{2}$.
\end{enumerate}
\end{lemma}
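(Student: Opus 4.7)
The strategy is to analyze the hyperbolic triangle $\triangle y_i x_{i+1} y_{i+1}$ via hyperbolic trigonometry. Set $a = d(y_i, y_{i+1})$ and $b = d(x_{i+1}, y_{i+1})$. By construction of the modification, $b < L$ with $x_{i+1}$ and $y_{i+1}$ lying on a common edge of $\tilde{Z}^{(1)}$, while Lemma \ref{modifiedsequencelength} gives $a \geq L$.

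For case (3), $y_i = x_i$ and $y_{i+1}$ lie on this common edge $e$ of $\tilde{Z}^{(1)}$, so all three points $y_i, x_{i+1}, y_{i+1}$ lie on the single geodesic $e$ of $\mathbb{H}^3$. Since $a \geq L > b$, the point $y_i$ cannot lie between $x_{i+1}$ and $y_{i+1}$, so $y_{i+1}$ and $x_{i+1}$ lie on the same side of $y_i$ along $e$, forcing $\angle x_{i+1} y_i y_{i+1} = 0 < \pi/2$.

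For cases (1) and (2), I will first handle the subcase where the edge $e_1$ containing $y_i$ does not contain $y_{i+1}$; by hypothesis this subcase covers case (2) completely. Here Lemma \ref{new}(1) upgrades the lower bound to $a \geq d(y_{i+1}, e_1) \geq \log R$, so the hyperbolic law of sines applied to $\triangle y_i x_{i+1} y_{i+1}$ yields
\begin{align*}
\sin(\angle x_{i+1} y_i y_{i+1}) \;\leq\; \frac{\sinh(b)}{\sinh(a)} \;\leq\; \frac{\sinh(L)}{\sinh(\log R)},
\end{align*}
which by inequality (\ref{6.0}) is much smaller than $e^{-L/2}$, giving the claimed angle bound after converting sine to angle.

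The main obstacle is the remaining subcase of case (1), where $y_{i+1} \in e_1$: then $y_{i+1}$ is necessarily the common vertex of $e_1$ and the edge $e_2$ containing $x_{i+1}$, and we only have $a \geq L$, not $a \geq \log R$. The segment $\gamma_i = \overline{y_i x_{i+1}}$ lies in a convex $2$-dim piece of $\tilde{Z}^3 \setminus \tilde{Z}^{(1)}$ that has $y_{i+1}$ as a vertex, so the angle of $\triangle y_i x_{i+1} y_{i+1}$ at $y_{i+1}$ equals the inner angle of this piece at $y_{i+1}$, hence is at least $\phi_0$ by Notation \ref{notation}(3). The plan is to use the hyperbolic law of cosines, rewritten as
\begin{align*}
\cosh(d(y_i, x_{i+1})) \;=\; \sin^2\!\Big(\tfrac{\angle y_{i+1}}{2}\Big)\cosh(a+b) + \cos^2\!\Big(\tfrac{\angle y_{i+1}}{2}\Big)\cosh(a-b),
\end{align*}
together with $\sin^2(\angle y_{i+1}/2) \geq \sin^2(\phi_0/2) = e^{-I(\pi-\phi_0)}$, to conclude $\sinh(d(y_i, x_{i+1})) \geq \tfrac{1}{4} e^{-I(\pi-\phi_0)} e^{a+b}$ up to lower-order error. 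Feeding this into the hyperbolic law of sines yields $\sin(\angle x_{i+1} y_i y_{i+1}) \leq C \, e^{I(\pi-\phi_0) - a}$ for an absolute constant $C$, and the hypothesis $L \geq 2 I(\pi-\phi_0)$ from (\ref{6.0}) together with $a \geq L$ then gives $\sin(\angle x_{i+1} y_i y_{i+1}) \leq e^{-L/2}$, from which the bound $\angle x_{i+1} y_i y_{i+1} \leq 2 e^{-L/2}$ follows.
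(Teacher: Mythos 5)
Your case (3) argument is incorrect. The hypothesis of case (3) is only that $x_i=y_i$ and $y_{i+1}$ lie on a common edge $e$; it does \emph{not} say that $x_{i+1}$ lies on $e$. In fact, $x_{i+1}$ lies on a \emph{different} edge $e'$ sharing the vertex $y_{i+1}$ (and, being modified, satisfies $d(x_{i+1},y_{i+1})<L$): if $x_{i+1}$ also lay on $e$, then $\overline{x_ix_{i+1}}\subset e$ would not traverse the interior of an ideal tetrahedron, contradicting the assumption that it is a $3$-dim piece. So $\triangle y_i x_{i+1} y_{i+1}$ is a genuine (non-degenerate) hyperbolic triangle and the claim that the angle vanishes is simply false. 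What \emph{is} true, and what the paper uses, is a hyperbolic law-of-cosines sign argument: the side $\overline{x_{i+1}y_{i+1}}$ opposite the angle $\angle x_{i+1}y_iy_{i+1}$ has length $<L\leq d(y_i,y_{i+1})$, and since $\cos(\angle x_{i+1}y_iy_{i+1})=\bigl(\cosh(d(y_i,y_{i+1}))\cosh(d(y_i,x_{i+1}))-\cosh(d(x_{i+1},y_{i+1}))\bigr)/\bigl(\sinh(d(y_i,y_{i+1}))\sinh(d(y_i,x_{i+1}))\bigr)$ has positive numerator when the opposite side is shortest, the angle is $<\pi/2$. You should replace your collinearity argument by this.

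Your cases (1) and (2) are essentially sound and take a route close in spirit to the paper's, just with different bookkeeping. The paper gets the needed lower bound on $d(y_i,x_{i+1})$ from Lemma 4.10(2) of \cite{LM} ($d(y_i,x_{i+1})\geq a+b-I(\pi-\phi_0)$) rather than from your law-of-cosines identity $\cosh c = \sin^2(C/2)\cosh(a+b)+\cos^2(C/2)\cosh(a-b)$; these are equivalent since $I(\pi-\phi_0)=-2\log\sin(\phi_0/2)$, so your identity buys nothing new, but it is correct. Two small points worth tightening: (i) from $\sin(\angle x_{i+1}y_iy_{i+1})\lesssim 2e^{-L/2}$ alone, the angle could a priori be close to $\pi$ rather than $0$, so you should note that $\angle x_{i+1}y_iy_{i+1}\approx\pi$ would force $d(x_{i+1},y_{i+1})\approx d(x_{i+1},y_i)+d(y_i,y_{i+1})\geq L$, contradicting $d(x_{i+1},y_{i+1})<L$; (ii) your ``$\leq e^{-L/2}$'' for the sine in the shared-vertex subcase of (1) should really be ``$\leq 2e^{I(\pi-\phi_0)-a}$'' or similar, and after converting sine to angle you land slightly above $2e^{-L/2}$ by an absolute constant factor; this is harmless for the downstream use but is not literally the stated bound.
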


\begin{proof}
Since $x_{i+1}$ is replaced by $y_{i+1}$, by Construction \ref{modifiedsequence}, we have $d(x_{i+1},y_{i+1})<L$. Since $\overline{y_iy_{i+1}}$ is a piece of the modified sequence and is not an unmodified $3$-dim piece, Lemma \ref{modifiedsequencelength} implies $d(x_i,y_{i+1})=d(y_i,y_{i+1})\geq L$. 

{\bf Case I}. Suppose that $y_i=x_i$ does not lie in any edge of $\tilde{Z}^{(1)}$ containing $y_{i+1}$. Since $y_{i+1}$ is a vertex of $\tilde{Z}^{(1)}$, by Lemma \ref{new} (1),  we have $d(y_i,y_{i+1})\geq \log{R}$. Then we get 
 \begin{align}\label{6.1}
\angle{x_{i+1}y_iy_{i+1}}\leq 2 \sin{\angle{x_{i+1}y_iy_{i+1}}}\leq 2\frac{\sinh{d(y_{i+1},x_{i+1})}}{\sinh{d(y_i,y_{i+1})}}\leq 2e^{-\frac{L}{2}}.
 \end{align} So the proof of (2) is done.
 
{\bf Case II}. Suppose that $y_i=x_i$ lies in an edge of $\tilde{Z}^{(1)}$ containing $y_{i+1}$, the picture is shown in Figure \ref{figure6}. 
\begin{enumerate}
\item[(a)] Suppose that $\overline{x_ix_{i+1}}$ is not a $3$-dim piece. Then the triangle in Figure \ref{figure6} does not lie in a $3$-dim piece of $\tilde{Z}^3\setminus \tilde{Z}^{(1)}$, we have $\angle y_iy_{i+1}x_{i+1}\geq \phi_0$, and 
\begin{align*}
&d(y_i,x_{i+1})\geq d(y_i,y_{i+1})+d(y_{i+1},x_{i+1})-I(\pi-\phi_0)\\
\geq &\ d(y_{i+1},x_{i+1})+(L-I(\pi-\phi_0))\geq d(y_{i+1},x_{i+1})+\frac{L}{2}.
\end{align*}
So we have 
\begin{align}\label{6.2}
\angle{x_{i+1}y_iy_{i+1}}\leq 2 \sin{\angle{x_{i+1}y_iy_{i+1}}}\leq 2\frac{\sinh{d(x_{i+1},y_{i+1})}}{\sinh{d(y_i,x_{i+1})}}\leq 2e^{-\frac{L}{2}}.
\end{align}
Then equations (\ref{6.1}) and (\ref{6.2}) together imply (1).

\item[(b)] Suppose that $\overline{x_ix_{i+1}}$ is a $3$-dim piece. Then since $x_i$ is not modified, by Construction \ref{modifiedsequence} (2)(b), we have $d(y_i,y_{i+1})\geq L >d(x_{i+1},y_{i+1})$. So $\angle x_{i+1}x_iy_{i+1}<\frac{\pi}{2}$ holds, and the proof of (3) is done.

\end{enumerate}

\end{proof}

The next technical lemma estimates the angle $\angle{y_{i-1}y_iy_{i+1}}$ in the modified path.

\begin{lemma}\label{modifiedsequenceangle}
There exists $\eta_0>0$ (only depend on the geometry of the triangulation of $N$) such that for large enough $L>0$ (depending on $\eta_0$) and large enough $R>0$ (depending on $L$), the following statements hold for any $\gamma$ satisfying Assumption \ref{assumption}.
\begin{enumerate}
\item If neither $\gamma_{i-1}'$ nor $\gamma_i'$ are unmodified $3$-dim pieces, then $\angle y_{i-1}y_iy_{i+1}\geq \eta_0$.
\item If either $\gamma_{i-1}'$ or $\gamma_i'$ is an unmodified $3$-dim piece, then $\angle y_{i-1}y_iy_{i+1}\geq \frac{\pi}{2}+\eta_0$.
\end{enumerate}
\end{lemma}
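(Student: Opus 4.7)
The plan is to choose $\eta_0 > 0$ depending only on the geometric triangulation of $N$, carry out a case analysis on whether the adjacent pieces $\gamma_{i-1}'$ and $\gamma_i'$ are 2-dim or 3-dim and whether their endpoints are modified, extract a clean angle lower bound in the ``unperturbed'' situation from the local geometry of $\tilde{Z}^3$, and then absorb any modification corrections of size $O(e^{-L/2})$ coming from Lemma \ref{onemodification}(1)(2) into the slack afforded by $\eta_0$.

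Concretely I would take $\eta_0$ smaller than both $\phi_0/2$ (half of the minimum dihedral angle between adjacent 2-cells of $N^{(2)}$, from Notation \ref{notation}(3)) and half of the excess $\arccos(-\tfrac{\sqrt{3}}{2\sqrt{7}}) - \tfrac{\pi}{2} \approx 0.106\pi$ coming from the explicit dihedral angle computed in Remark \ref{constructj1remark}(3). Then I would choose $L$ large enough that $4e^{-L/2} < \eta_0/10$, and take $R$ large enough that (\ref{6.0}) and Lemma \ref{new} apply.

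For Case (1), in which neither $\gamma_{i-1}'$ nor $\gamma_i'$ is an unmodified 3-dim piece, I would first treat the ``unmodified'' sub-case where $y_{i-1},y_i,y_{i+1}$ all come from the intersection sequence of $\gamma$: then at $y_i$ the reversed tangent of $\gamma_{i-1}'$ and the forward tangent of $\gamma_i'$ lie in distinct 2-cells of $\tilde{Z}^3$ meeting along an edge through $y_i$, and the link calculation at $y_i$ forces $\angle y_{i-1} y_i y_{i+1} \geq \phi_0$ by Notation \ref{notation}(3). When $y_{i-1}$ or $y_{i+1}$ is a modified point, Lemma \ref{onemodification}(1)(2) gives that the corresponding tangent direction at $y_i$ is perturbed by at most $2e^{-L/2}$, so the angle is still at least $\phi_0 - 4e^{-L/2} \geq \eta_0$. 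For Case (2), when exactly one of $\gamma_{i-1}',\gamma_i'$, say $\gamma_i'$, is an unmodified 3-dim piece, the tangent of $\gamma_i'$ at $y_i$ points along a cusp ray of an ideal tetrahedron while the tangent of $\gamma_{i-1}'$ lies in a 2-cell on the horotorus side, and Remark \ref{constructj1remark}(3) pins down the dihedral angle between them up to $O(\epsilon)$ as $\approx 0.606\pi$. If $y_{i-1}$ is modified via Lemma \ref{onemodification}(1) or (2) the deviation is $\leq 2e^{-L/2}$ and the bound $\tfrac{\pi}{2}+\eta_0$ is immediate.

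The main obstacle is the sub-case of Case (2) in which the modification of $y_{i-1}$ falls under Lemma \ref{onemodification}(3), where only the qualitative bound $\angle x_{i+1} y_i y_{i+1} < \tfrac{\pi}{2}$ is available rather than an $O(e^{-L/2})$ estimate. The key point there is that this sub-case only occurs when the modified point and its predecessor lie on a common edge of $\tilde{Z}^{(1)}$ separating a 3-dim piece from a 2-dim piece on the horotorus side; once this geometric configuration is isolated, the $0.106\pi$ slack from Remark \ref{constructj1remark}(3) absorbs the $\tfrac{\pi}{2}$-sized correction and yields $\angle y_{i-1} y_i y_{i+1} \geq \tfrac{\pi}{2} + \eta_0$. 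Beyond this subtlety, the proof is a careful but routine link-geometry case analysis at $y_i$ combined with the modification error estimates from Lemma \ref{onemodification}.
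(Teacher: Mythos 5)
The proposal has a genuine gap: it never addresses the scenario where the center point $y_i$ of the modified sequence is itself a modified point, i.e., $y_i$ is a vertex of $\tilde{Z}^3$ to which one or more points $x_i,\ldots,x_{i+k}$ of the intersection sequence have been collapsed. Your proposal only discusses modifications of $y_{i-1}$ or $y_{i+1}$, treating $y_i$ as an unmodified intersection point throughout. When $y_i$ is a modified vertex, the "perturbation by at most $2e^{-L/2}$" picture fails: the cluster $x_i,\ldots,x_{i+k}$ near $y_i$ has diameter on the order of $L$, and the directions from $y_i$ toward $x_{i-1}$ and $x_{i+k+1}$ can sweep across the full link of the vertex $y_i$, so there is no obvious angle lower bound coming directly from the dihedral-angle condition at the intermediate points. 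This is precisely the paper's Case II, which occupies the majority of the proof and requires a new ingredient that your proposal does not introduce: a constant $\theta_0 > 0$ defined via the link geometry. For each vertex $v^Z$, the unit tangent vectors at $v^Z$ into adjacent pieces of $\tilde{Z}^3\setminus\tilde{Z}^{(1)}$ sweep out a subcomplex $S_{v^Z}\subset S^2$, and $\theta_0$ is chosen so that any two points of $S_{v^Z}$ at \emph{path-metric} distance $\geq\phi_0$ subtend an actual $S^2$-angle $\geq\theta_0$. The paper then shows that because $\gamma$ is a shortest path, the path-metric distance in $S_{y_i}$ between the directions toward $x_{i-1}$ and $x_{i+k+1}$ must be at least $\phi_0$, which converts via $\theta_0$ into $\angle x_{i-1}y_ix_{i+k+1}\geq\theta_0$; only afterward are the perturbations of $y_{i\pm 1}$ absorbed. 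Accordingly the paper's $\eta_0$ is $\min\{\phi_0/2,\theta_0/2\}$, and your proposed $\eta_0 < \phi_0/2$ may fail.

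A secondary but real mix-up: you identify the Lemma \ref{onemodification}(3) difficulty as a sub-case of Case (2) of the present lemma, but that cannot occur. If $\gamma_i'$ (say) is an unmodified $3$-dim piece then $y_i$ and $y_{i+1}$ are unmodified and $\gamma_{i-1}$ is a $2$-dim piece, so when the reversed version of Lemma \ref{onemodification} is applied to the modified $y_{i-1}$ only case (1) of that lemma is relevant. Lemma \ref{onemodification}(3) appears only in sub-cases of Case (1) (e.g., the paper's Case I (2)(b) and (3)(c)), where the $\arccos(-\sqrt{3}/(2\sqrt{7}))-\pi/2$ slack from Remark \ref{constructj1remark}(3) indeed absorbs the $\pi/2$-sized correction, but the resulting bound is the weaker $\eta_0$, not $\pi/2+\eta_0$.
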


Note that it is impossible that both $\gamma_{i-1}'$ and $\gamma_i'$ are unmodified $3$-dim pieces.

\begin{proof}

Recall that the constant $\phi_0>0$ defined in Notation \ref{notation} (3) is a lower bound of all inner angles of triangles in $N$ and all dihedral angles between intersecting totally geodesic triangles in $N$. We take a smaller $\phi_0$ if necessary, such that $\phi_0\in (0,\frac{\pi}{20})$.


For any vertex $v^Z$ of $\tilde{Z}^3$, we have a subspace  $S_{v^Z}\subset T^1_{\tilde{j}_0(v^Z)}=S^2$, consisting of all unit tangent vectors at $v^Z$ tangent to $\tilde{j}_0$-images of pieces of $\tilde{Z}^3\setminus \tilde{Z}^{(1)}$ that are adjacent to $v^Z$. If $v^Z$ corresponds to a vertex $v$ in $N_0\setminus \partial N_0$, $S_{v^Z}$ is a union of finitely many geodesic arcs in $S^2$, and it is determined by the geometry of $N$ near $v$.  If $v^Z$ corresponds to a vertex in $\partial N_0$, $S_{v^Z}$ is a union of finitely many geodesic arcs and one hexagon in $S^2$ (corresponding to a $3$-dim component of $\tilde{Z}^3\setminus \tilde{Z}^{(1)}$, see Figure \ref{figure1}). Moreover, in the second case, $S_{v^Z}$ also depends on $R$. The hexagon degenerates to a point when $R$ goes to infinity, and the limit geometry only depends on the geometry of $N$. The metric on $S^2$ induces a path metric on each $S_{v^Z}$. For fixed $R$, we only have finitely many isometric classes of $S_{v^Z}$, since the vertex set of $\tilde{Z}^{(1)}$ is a finite union of $\pi_1(Z)$-orbits.
So there exists $R_0>0$ and a constant $\theta_0>0$, such that for any $R>R_0$, any vertex $v^Z$ of $\tilde{Z}^3$ and any two vectors $v_1,v_2\in S_{v^Z}\subset S^2$, if their distance under the path metric of $S_{v^Z}$ is at least $\phi_0$, then the angle between them is at least $\theta_0$.

We take $\eta_0=\min{\{\frac{\phi_0}{2},\frac{\theta_0}{2}\}}$. We also take large $L$ and $R$ such that $L>2\log{\frac{8}{\eta_0}}, R>(\frac{8}{\eta_0})^{640}$ and equation (\ref{6.0}) holds.
Since the definition of the modified sequence is complicated, we need to run a case-by-case argument.

{\bf Case I}. We first assume that $y_i$ is an unmodified point.
       \begin{enumerate}
       \item Both $y_{i-1}$ and $y_{i+1}$ are unmodified points. Then the concatenation of $\overline{y_{i-1}y_i},     \overline{y_iy_{i+1}}$ is the shortest path in $\tilde{Z}^3$ from $y_{i-1}$ to $y_{i+1}$. 

               \begin{enumerate}
               \item We first suppose that neither of $\overline{y_{i-1}y_i}, \overline{y_iy_{i+1}}$ are $3$-dim pieces. Since the dihedral angle between corresponding totally geodesic subsurfaces in $\mathbb{H}^3$ is at least $\phi_0$ (Notation \ref{notation} (3)), we have 
               \begin{align}\label{6.3}
               \angle x_{i-1}x_ix_{i+1}=\angle{y_{i-1}y_iy_{i+1}}\geq \phi_0> \eta_0.
               \end{align}

                \item We suppose that one of $\overline{y_{i-1}y_i}, \overline{y_iy_{i+1}}$ is a $3$-dim piece. By the explicit vectors in Remark \ref{constructj1remark} (3), the dihedral angle between the boundary of an ideal tetrahedron and an adjacent geodesic subsurface (not the boundary of an ideal tetrahedron) in $\tilde{Z}^3$ is at least $\arccos{(-\frac{\sqrt{3}}{2\sqrt{7}})}>\frac{3}{5}\pi$. So we have 
                \begin{align}\label{6.4}
                \angle x_{i-1}x_ix_{i+1}=\angle y_{i-1}y_iy_{i+1}>\frac{3}{5}\pi>\frac{\pi}{2}+\eta_0.
                \end{align}
                Note that equations (\ref{6.3}) and (\ref{6.4}) will be repeatedly used in the remainder of this proof.
                \end{enumerate}

        \item Exactly one of $y_{i-1}$ and $y_{i+1}$ is a modified point, and we assume that $y_{i-1}$ is unmodified and $y_{i+1}$ is modified. Then we have $y_{i-1}=x_{i-1}$, $y_i=x_i$, and $y_{i+1}$ is replaced by $x_{i+1}$ as in Construction \ref{modifiedsequence}. In particular, we have $d(x_{i+1},y_{i+1})<L$ holds.

                \begin{enumerate}
                \item We first suppose that $\overline{x_ix_{i+1}}$ is not a $3$-dim piece. Then Lemma \ref{onemodification} (1) implies that $\angle{x_{i+1}y_iy_{i+1}}\leq 2e^{-\frac{L}{2}}<\frac{\phi_0}{2}$. By equations (\ref{6.3}) and (\ref{6.4}) respectively, $\angle{y_{i-1}y_i x_{i+1}}$ is at least $\phi_0$ or $\frac{3\pi}{5}$ in the two cases of this lemma. So we get that $\angle{y_{i-1}y_i y_{i+1}}$ is at least $\eta_0$ or $\frac{\pi}{2}+\eta_0$ in these two cases.
 
                \item Now we suppose that $\overline{x_ix_{i+1}}$ is a $3$-dim piece, then $\overline{x_{i-1}x_i}$ is not a $3$-dim piece. So neither $\overline{y_{i-1}y_i}$ nor $\overline{y_iy_{i+1}}$ are unmodified $3$-dim pieces, and $d(y_{i-1},y_i),d(y_i,y_{i+1})\geq L$ holds by Lemma \ref{modifiedsequencelength}.
                
                          \begin{enumerate}
                          \item If $y_i$ and $y_{i+1}$ do not lie on the same edge of $\tilde{Z}^{(1)}$, then Lemma \ref{onemodification} (2) implies $\angle{x_{i+1}y_iy_{i+1}}\leq 2e^{-\frac{L}{2}}$, and equation (\ref{6.3}) implies $\angle{y_{i-1}y_i y_{i+1}}\geq \eta_0$.
                          
                           \item If $y_i$ and $y_{i+1}$ lie on the same edge of $\tilde{Z}^{(1)}$, then Lemma \ref{onemodification} (3) implies $\angle{x_{i+1}y_iy_{i+1}}\leq \frac{\pi}{2}$. Since $\gamma$ is the shortest path in $\tilde{Z}^3$, equations (\ref{6.4}) implies 
                           $$\angle y_{i-1}y_i y_{i+1}=\angle x_{i-1}x_i y_{i+1}\geq \angle x_{i-1}x_ix_{i+1}-\angle x_{i+1}y_iy_{i+1}\geq \frac{3}{5}\pi- \frac{\pi}{2}>\eta_0.$$
                           \end{enumerate}
                 \end{enumerate}

         \item Both $y_{i-1}$ and $y_{i+1}$ are modified points. So $y_i=x_i$, while $y_{i-1}$ and $y_{i+1}$ are modified from $x_{i-1}$ and $x_{i+1}$ respectively.
In this case, neither $\overline{y_{i-1}y_i}$ nor $\overline{y_iy_{i+1}}$ are unmodified $3$-dim pieces, and $d(y_{i-1},y_i),d(y_i,y_{i+1})\geq L$ hold by Lemma \ref{modifiedsequencelength}.

                  \begin{enumerate}
                  \item Neither $\overline{x_{i-1}x_i}$ nor $\overline{x_ix_{i+1}}$ are $3$-dim pieces. By Lemma \ref{onemodification} (1), we have $\angle x_{i-1}y_iy_{i-1}, \angle x_{i+1}y_iy_{i+1}\leq 2e^{-\frac{L}{2}}$. Then by equation (\ref{6.3}), we have $\angle x_{i-1}y_ix_{i+1}\geq \phi_0$, so 
$$\angle y_{i-1}y_iy_{i+1}\geq \phi_0-4e^{-\frac{L}{2}}\geq \frac{\phi_0}{2}\geq \eta_0.$$

                  \item Both $\overline{x_{i-1}x_i}$ and $\overline{x_ix_{i+1}}$ are $3$-dim pieces. It is impossible.

                  \item Exactly one of $\overline{x_{i-1}x_i}$ and $\overline{x_ix_{i+1}}$ is a $3$-dim piece. We assume that $\overline{x_{i-1}x_i}$ is not a $3$-dim piece and $\overline{x_ix_{i+1}}$ is a $3$-dim piece. By Lemma \ref{onemodification} (1), we have $\angle x_{i-1}y_iy_{i-1}\leq 2e^{-\frac{L}{2}}$. 

                             \begin{itemize}
                             \item If $y_i$ and $y_{i+1}$ do not lie on the same edge of $\tilde{Z}^{(1)}$, then Lemma \ref{onemodification} (2) implies $\angle x_{i+1}y_iy_{i+1}\leq 2e^{-\frac{L}{2}}$. Since $\angle x_{i-1}y_ix_{i+1}\geq \phi_0$ (equation (\ref{6.3})), we have 
$\angle y_{i-1}y_iy_{i+1}\geq \phi_0-4e^{-\frac{L}{2}}\geq \frac{\phi_0}{2}\geq \eta_0.$

                              \item If $y_i$ and $y_{i+1}$ lie on the same edge of $\tilde{Z}^{(1)}$, then Lemma \ref{onemodification} (3) implies $\angle x_{i+1}y_iy_{i+1}\leq \frac{\pi}{2}$. Since $\angle x_{i-1}y_ix_{i+1}\geq \frac{3}{5}\pi$ (equation (\ref{6.4})), we get 
                              $$\angle y_{i-1}y_iy_{i+1}\geq \angle x_{i-1}y_ix_{i+1}-\angle x_{i-1}y_iy_{i-1}-\angle x_{i+1}y_iy_{i+1}\geq \frac{3}{5}\pi-2e^{-\frac{L}{2}}-\frac{1}{2}\pi\geq \eta_0.$$
                              \end{itemize}

                    \end{enumerate}

         \end{enumerate}

So the proof of Case I is done.

\bigskip

{\bf Case II}. Now we assume that $y_i$ is a modified point. In this case, there might be several consecutive points $x_i,\cdots,x_{i+k}$ in the intersection sequence of $\gamma$ that are modified to $y_i$, then we have $d(y_i,x_i),\cdots,d(y_i,x_{i+k})<L$. Note that neither $\overline{y_{i-1}y_i}$ nor $\overline{y_iy_{i+1}}$ are unmodified $3$-dim pieces in this case.

          \begin{enumerate}
          \item Both $y_{i-1}$ and $y_{i+1}$ are unmodified points. 

                   \begin{enumerate}
                   \item We assume that $k=0$ holds, then the picture is shown as in Figure \ref{figure7} (a). This figure shows a flattened picture of $\tilde{Z}^3$ in a hyperbolic plane, while the actual picture is bended in $\mathbb{H}^3$. By Construction \ref{modifiedsequence}, at least one of $d(x_{i-1},x_i)$ and $d(x_i,x_{i+1})$ is smaller than $\frac{1}{160}\log{R}$ and we assume $d(x_{i-1},x_i)<\frac{1}{160}\log{R}$. Then we have $d(x_{i-1},y_i)\leq d(x_{i-1},x_i)+d(x_i,y_i)<\frac{1}{160}\log{R}+L$. By Assumption \ref{assumption} (1), even if $x_{i-1}=x$ holds, we know that $x_{i-1}$ lies on an edge of $\tilde{Z}^{(1)}$. Since $d(x_{i-1},y_i)<\frac{1}{160}\log{R}+L<\frac{1}{80}\log{R}$, by Lemma \ref{new} (1), $x_{i-1}$ and $y_i$ must lie on the same edge of $\tilde{Z}^{(1)}$.

                            \begin{itemize}
                            \item  If $\overline{x_{i-1}x_i}$ is not a $3$-dim piece, then $\angle x_{i-1}y_ix_i\geq \phi_0$ holds by the definition of $\phi_0$ in Notation \ref{notation} (3).

                            \item If $\overline{x_{i-1}x_i}$ is a $3$-dim piece, then $\overline{x_ix_{i+1}}$ is not a $3$-dim piece. Since $x_i$ is a modified point and $x_{i-1}$ is not, by Construction \ref{modifiedsequence} (2), we have $d(x_i,x_{i+1})<\frac{1}{160}\log{R}$. The same argument as above implies $\angle x_iy_ix_{i+1}\geq \phi_0$.
                            \end{itemize}

                             Let $S$ be the subset of $S^2$ corresponding to vertex $y_i$ given in the beginning of this proof, and let $\vec{v}_{i-1}$ and $\vec{v}_{i+1}$ be points in $S$ given by tangent vectors of $\overline{y_ix_{i-1}}$ and $\overline{y_ix_{i+1}}$ respectively. The above inequalities on $\angle x_{i-1}y_ix_i$ and $\angle x_iy_ix_{i+1}$ imply $d_S(\vec{v}_{i-1},\vec{v}_{i+1})\geq \phi_0$, otherwise the concatenation $\overline{x_{i-1}x_i}\cdot \overline{x_ix_{i+1}}$ is not the shortest path in $\tilde{Z}^3$ from $x_{i-1}$ to $x_{i+1}$. The choice of $\theta_0$ implies $\angle y_{i-1}y_iy_{i+1}=\angle x_{i-1}y_ix_{i+1}\geq \theta_0\geq \eta_0$ holds. This argument will be used repeatedly in the following part of this proof, referred as ``the argument in Case II (1)(a)''.
 
                      \item We assume that $k= 1$ holds, then the picture is shown in Figure \ref{figure7} (b). 

                               \begin{itemize}
                               \item If $\overline{x_ix_{i+1}}$ is not a $3$-dim piece, then we have $\angle x_iy_ix_{i+1}\geq \phi_0$. The argument in Case II (1)(a) implies $\angle y_{i-1}y_iy_{i+1}=\angle x_{i-1}y_ix_{i+1}\geq \theta_0\geq\eta_0$.

                                \item If $\overline{x_ix_{i+1}}$ is a $3$-dim piece, then by Construction \ref{modifiedsequence} (2), either $d(x_{i-1},x_i)<\frac{1}{160}\log R$ or $d(x_{i+1},x_{i+2})<\frac{1}{160}\log R$ holds. We assume that $d(x_{i-1},x_i)<\frac{1}{160}\log R$ holds. By Assumption \ref{assumption} (1) and Lemma \ref{new} (1) again, even if $x_{i-1}= x$ holds,  $x_{i-1}$ lies on an edge of $\tilde{Z}^{(1)}$ containing $y_i$. Since $\overline{x_{i-1}x_i}$ is not a $3$-dim piece, we have $\angle x_{i-1}y_ix_i\geq \phi_0$. The argument in Case II (1)(a) implies $\angle y_{i-1}y_iy_{i+1}=\angle x_{i-1}y_ix_{i+2}\geq \theta_0\geq\eta_0$.

                                 \end{itemize}

                        \item We assume that $k\geq 2$ holds, then the picture is shown in Figure \ref{figure7} (c). Here either $\overline{x_ix_{i+1}}$ or $\overline{x_{i+1}x_{i+2}}$ is not a $3$-dim piece, thus either $\angle x_iy_ix_{i+1}\geq \phi_0$ or $\angle x_{i+1}y_ix_{i+2}\geq \phi_0$ holds. Again, the argument in Case II (1)(a) implies $\angle y_{i-1}y_iy_{i+1}=\angle x_{i-1}y_ix_{i+k+1}\geq \theta_0\geq\eta_0$.
                        \end{enumerate}
                        
                        \begin{figure}
\centering
\label{new picture}
\def\svgwidth{1.0\textwidth}
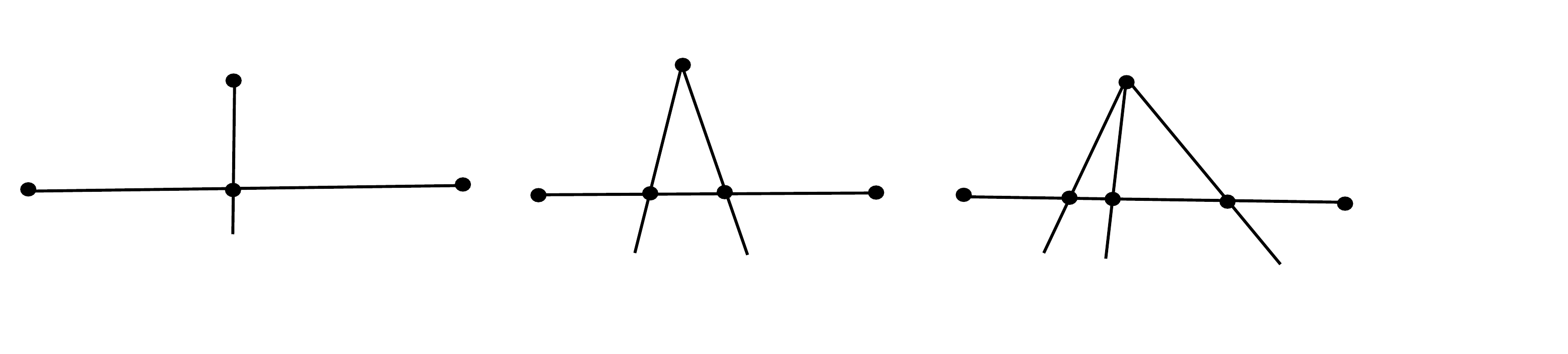
\caption{\ $y_i$ is a modified point, while $y_{i-1}=x_{i-1}$ and $y_{i+1}=x_{i+k+1}$ are not.}
\label{figure7}
\end{figure}

                \item Exactly one of $y_{i-1}$ and $y_{i+1}$ is a modified point, and we assume $y_{i-1}$ is modified (from $x_{i-1}$) and $y_{i+1}$ is unmodified (equals $x_{i+k+1}$). Since $y_{i-1}$ and $y_i$ are distinct vertices of $\tilde{Z}^{(1)}$, by Lemma \ref{new} (4), we have $d(y_{i-1},y_i)\geq \log{R}$. Since $x_{i-1}$ and $x_i$ are modified to $y_{i-1}$ and $y_i$ respectively, we have $d(x_{i-1},y_{i-1}),d(x_i,y_i)<L$. So we have 
$$\angle x_{i-1}y_iy_{i-1}\leq 2\sin{\angle x_{i-1}y_iy_{-1}}\leq 2 \frac{\sinh{d(x_{i-1},y_{i-1})}}{\sinh{d(y_{i-1},y_i)}}\leq 2e^{-\frac{R}{2}}.$$
Moreover, we have
$$\qquad \qquad d(x_{i-1},x_i)\geq d(y_{i-1},y_i)-d(x_{i-1},y_{i-1})-d(x_i,y_i)\geq \log R-2L\geq \frac{1}{2}\log{R}.$$
Now we claim that $\angle x_{i-1}y_ix_{i+k+1}\geq \theta_0$ holds, and the proof divides into following cases.

                         \begin{enumerate}
                         \item We first assume that $k=0$ holds, then the picture is shown in Figure \ref{figure8} (a). 
Since $x_i$ is modified to $y_i$, by Construction \ref{modifiedsequence}, $\overline{x_ix_{i+1}}$ can not be a $3$-dim piece, and we must have $d(x_i,x_{i+1})<\frac{1}{160}\log{R}$. By Assumption \ref{assumption} (1) and Lemma \ref{new} (1), even if $x_{i+1}= y$ holds, $x_{i+1}$ lies on an edge of $\tilde{Z}^{(1)}$ containing $y_i$, and we have $\angle x_iy_ix_{i+1}\geq \phi_0$. By the argument in Case II (1)(a), we have $\angle x_{i-1}y_ix_{i+1}\geq \theta_0$.

                          \item We assume that $k=1$ holds, then the picture is shown in Figure \ref{figure8} (b).
                                  \begin{itemize}
                                  \item If $\overline{x_ix_{i+1}}$ is not a $3$-dim piece, then we have $\angle x_iy_ix_{i+1}\geq \phi_0$. The argument  in Case II (1)(a) implies $\angle x_{i-1}y_ix_{i+2}\geq \theta_0$.

                                  \item If $\overline{x_ix_{i+1}}$ is a $3$-dim piece, then $\overline{x_{i+1}x_{i+2}}$ is not a $3$-dim piece. Since $d(x_{i-1},x_i)\geq\frac{1}{2}\log{R}$, by Construction \ref{modifiedsequence} (2), we must have $d(x_{i+1},x_{i+2})<\frac{1}{160}\log{R}$. By Assumption \ref{assumption} (1) and Lemma \ref{new} (1), even if $x_{i+2}= y$ holds, $x_{i+2}$  lies on an edge of $\tilde{Z}^{(1)}$ containing $y_i$, and $\angle x_{i+1}y_ix_{i+2}\geq \phi_0$ holds. Again, the argument  in Case II (1)(a) implies $\angle x_{i-1}y_ix_{i+2}\geq \theta_0$.
                                  \end{itemize}

                          \item We assume that $k\geq 2$ holds, then the picture is shown in Figure \ref{figure8} (c). Then either $\overline{x_ix_{i+1}}$ or $\overline{x_{i+1}x_{i+2}}$ is not a $3$-dim piece, thus either $\angle x_iy_ix_{i+1}\geq \phi_0$ or $\angle x_{i+1}y_ix_{i+2}\geq \phi_0$ holds. Again, the argument in Case II (1)(a) implies $\angle x_{i-1}y_ix_{i+k+1}\geq \theta_0$.
                          \end{enumerate}
                          
                          So the claim is established, and we have 
$$\qquad \quad \angle y_{i-1}y_iy_{i+1}=\angle y_{i-1}y_ix_{i+k+1}\geq \angle x_{i-1}y_ix_{i+k+1}-\angle x_{i-1}y_iy_{i-1}\geq \theta_0-2e^{-\frac{R}{2}}\geq \eta_0.$$ 

                        \begin{figure}
\centering
\label{new picture}
\def\svgwidth{1.0\textwidth}
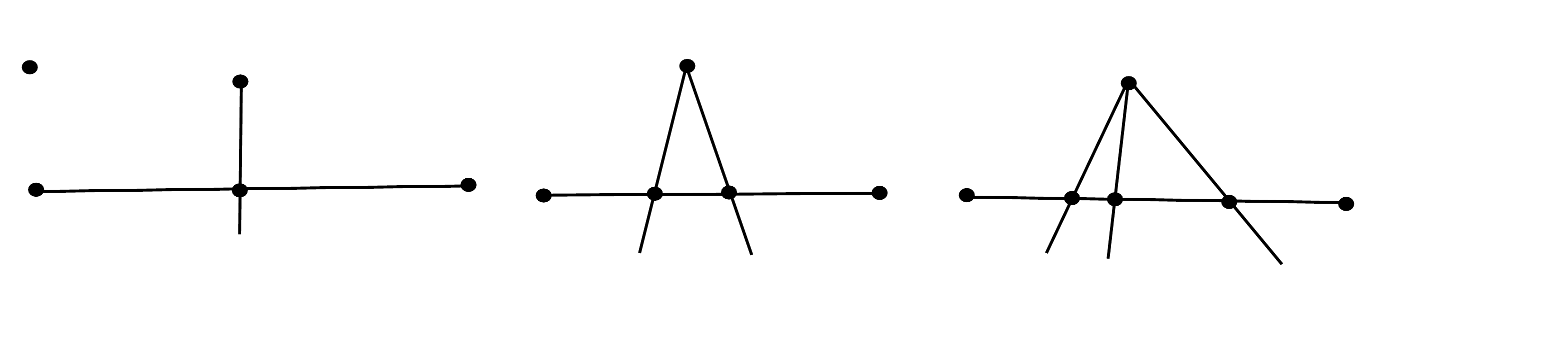
\caption{\ $y_{i-1}$ and $y_i$ are modified points, while $y_{i+1}=x_{i+k+1}$ is not.}
\label{figure8}
\end{figure}

                  \item Both $y_{i-1}$ and $y_{i+1}$ are modified points. Then $y_{i-1}$ and $y_{i+1}$ are obtained by modifying $x_{i-1}$ and $x_{i+k+1}$ respectively. Since $y_{i-1},y_i, y_{i+1}$ are distinct vertices of $\tilde{Z}$, by Lemma \ref{new} (4), we have $d(y_{i-1},y_i),d(y_i,y_{i+1})\geq \log{R}$. By the modification process in Construction \ref{modifiedsequence}, we have 
                  $$d(x_{i-1},y_{i-1}), d(x_i,y_i), \cdots, d(x_{i+k},y_i),d(x_{i+k+1},y_{i+1})<L.$$
By the computation at the beginning of Case II (2), we have 
$$\qquad \quad \angle x_{i-1}y_iy_{i-1},\angle x_{i+k+1}y_iy_{i+1}\leq 2e^{-\frac{R}{2}};\ \  d(x_{i-1},x_i),d(x_{i+k},x_{i+k+1})>\frac{1}{2}\log{R}.$$
As in Case II (2), we claim that $\angle x_{i-1}y_ix_{i+k+1}\geq \theta_0$, and the proof divides into following cases.

                           \begin{enumerate}
                           \item We first assume that $k=0$ holds, then the picture is shown in Figure \ref{figure9} (a). Since $d(x_{i-1},x_i),d(x_{i},x_{i+1})>\frac{1}{2}\log{R}$, Construction \ref{modifiedsequence} implies that $x_i$ should not be modified. This case is impossible.

                           \item We assume that $k=1$ holds, then the picture is shown in Figure \ref{figure9} (b).
                                   \begin{itemize}
                                   \item If $\overline{x_ix_{i+1}}$ is not a $3$-dim piece, then we have $\angle x_iy_ix_{i+1}\geq \phi_0$. The argument  in Case II (1)(a) implies $\angle x_{i-1}y_ix_{i+2}\geq \theta_0$.

                                   \item If $\overline{x_ix_{i+1}}$ is a $3$-dim piece, since $d(x_{i-1},x_i),d(x_{i+k},x_{i+k+1})>\frac{1}{2}\log{R}$, Construction \ref{modifiedsequence} (2) implies that $x_i$ and $x_{i+1}$ should not be modified. This case is impossible.
                                   \end{itemize}

                            \item We assume that $k\geq 2$ holds, then the picture is shown in Figure \ref{figure9} (c). Then either $\overline{x_ix_{i+1}}$ or $\overline{x_{i+1}x_{i+2}}$ is not a $3$-dim piece, thus either $\angle x_iy_ix_{i+1}\geq \phi_0$ or $\angle x_{i+1}y_ix_{i+2}\geq \phi_0$ holds. Again, the argument in Case II (1)(a) implies $\angle x_{i-1}y_ix_{i+k+1}\geq \theta_0$.
                            \end{enumerate}
                            So the claim is established, and we have 
$$\qquad \quad \angle y_{i-1}y_iy_{i+1}\geq \angle x_{i-1}y_ix_{i+k+1}-\angle x_{i-1}y_iy_{i-1}-\angle x_{i+k+1}y_iy_{i+1}\geq \theta_0-4e^{-\frac{R}{2}} \geq \eta_0.$$
                  \end{enumerate}
                  
                                          \begin{figure}
\centering
\label{new picture}
\def\svgwidth{1.0\textwidth}
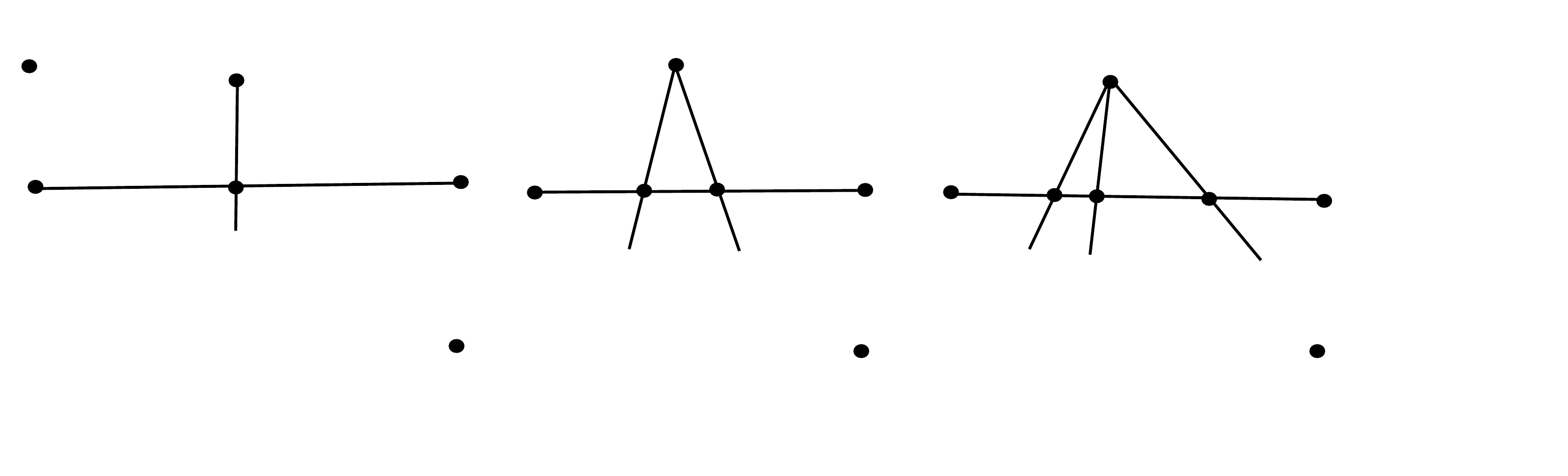
\caption{\ All of $y_{i-1}, y_i,y_{i+1}$ are modified points.}
\label{figure9}
\end{figure}

The proof of Case II is done and the proof of this lemma is finished.
\end{proof}

Now we are ready to prove Proposition \ref{model}.

\begin{proof}[Proof of Proposition \ref{model}]

We first take $\eta_0>0$ in Lemma \ref{modifiedsequenceangle}, and take $L>0$ such that $\frac{L}{2}$ satisfies the assumption of Proposition \ref{lengthshort} with respect to $\frac{\eta_0}{2}$. Then we enlarge $L$ and take large $R>0$ so that equation (\ref{6.0}) and Lemma \ref{modifiedsequenceangle} hold.

{\bf (1) $\tilde{j}_0:\tilde{Z}^3\to \mathbb{H}^3$ is a quasi-isometric embedding.} Since $\tilde{Z}''$ (defined before Construction \ref{constructjt}) is $2$-dense in $\tilde{Z}^3$, we only need to prove that the restriction $\tilde{j}_0|:\tilde{Z}''\to \mathbb{H}^3$ is a quasi-isometric embedding.
More precisely, for any $x,y\in \tilde{Z}''$, we will prove that
\begin{align}\label{6.5}
\frac{1}{2}d_{\tilde{Z}^3}(x,y)-\frac{3}{40}\log{R}-4L \leq d_{\mathbb{H}^3}(\tilde{j}_0(x),\tilde{j}_0(y)) \leq d_{\tilde{Z}^3}(x,y)+\frac{3}{40}\log{R}+4L.
\end{align}
We first do the following two-step modification on $x$ and $y$.

{\bf Modification I.} If $x$ or $y$ lie in the $(L+\frac{1}{80}\log{R})$-neighborhood of some vertex of $\tilde{Z}^3$ (which is unique by Lemma \ref{new} (4)), we replace it by the corresponding vertex. So we can assume that $x$ and $y$ are either vertices of $\tilde{Z}^3$ or do not lie in the $(L+\frac{1}{80}\log{R})$-neighborhood of any vertex of $\tilde{Z}^3$. Under this assumption, we only need to prove the following estimate, which implies (\ref{6.5}).
\begin{align}\label{6.6}
\frac{1}{2}d_{\tilde{Z}^3}(x,y)-\frac{1}{40}\log{R}\leq d_{\mathbb{H}^3}(\tilde{j}_0(x),\tilde{j}_0(y)) \leq d_{\tilde{Z}^3}(x,y)+\frac{1}{40}\log{R}.
\end{align}

Let $\gamma$ be the shortest path in $\tilde{Z}^3$ from $x$ to $y$. If the interior of $\gamma$ is contained in $\tilde{Z}^3\setminus \tilde{Z}^{(1)}$, then $\tilde{j}_0(\gamma)$ is a geodesic segment in $\mathbb{H}^3$, thus $d_{\tilde{Z}^3}(x,y)=d_{\mathbb{H}^3}(\tilde{j}_0(x),\tilde{j}_0(y))$ holds and (\ref{6.6}) holds. So we can assume that $\gamma$ is not contained  in $\tilde{Z}^3\setminus \tilde{Z}^{(1)}$.

{\bf Modification II.} We take the intersection sequence $x_1,\cdots,x_n$. If $d(x,x_1)<\frac{1}{160}\log{R}$ or $d(x_n,y)<\frac{1}{160}\log{R}$, we replace $x$ or $y$ by $x_1$ or $x_n$ respectively. We still denote the new initial and terminal points by $x$ and $y$, and still denote the new shortest path between $x$ and $y$ by $\gamma$. Then we only need to prove the following estimate, which implies (\ref{6.6}).
\begin{align}\label{6.7}
\frac{1}{2}d_{\tilde{Z}^3}(x,y) \leq d_{\mathbb{H}^3}(\tilde{j}_0(x),\tilde{j}_0(y)) \leq d_{\tilde{Z}^3}(x,y).
\end{align}

We claim that the path $\gamma$ obtained by Modifications I and II satisfies Assumption \ref{assumption}. We will only argue for the initial point $x$, and the proof for the terminal point $y$ is the same. 
\begin{itemize}
\item If we did Modification I for $x$, then the new $x$ is a vertex of $\tilde{Z}^3$ and its distance to any edge of $\tilde{Z}^{(1)}$ not containing $x$ is at least $\log{R}$ (by Lemma \ref{new} (1)). So Modification II is not applied to $x$, and Assumption \ref{assumption} holds. 

\item If we did not do Modification I but did Modification II for $x$, then the new $x$ lies on an edge of $\tilde{Z}^{(1)}$ and its distance to any vertex of $\tilde{Z}^3$ is at least $(L+\frac{1}{80}\log{R})-\frac{1}{160}\log{R}=L+\frac{1}{160}\log{R}$. After Modification II, if $\overline{xx_1}$ is not a $3$-dim piece and $d(x,x_1)<\frac{1}{160}\log{R}$, then the edges of $\tilde{Z}^{(1)}$ containing $x$ and $x_1$ share a vertex $v$ (by Lemma \ref{new} (3)) and $\angle{xvx_1}\geq \phi_0$ holds. So we have 
$$\qquad \quad d(x,x_1)\geq d(x,v)+d(v,x_1)-I(\pi-\phi_0)\geq (L+\frac{1}{160}\log{R})-I(\pi-\phi_0)>\frac{1}{160}\log{R},$$
which is impossible. So Assumption \ref{assumption} holds in this case. If $\overline{xx_1}$ is a $3$-dim piece, Assumption \ref{assumption} trivially holds.

\item If we neither do Modification I nor Modification II for $x$, then the distance between $x$ and any vertex of $\tilde{Z}^3$ is at least $L+\frac{1}{80}\log{R}>L+\frac{1}{160}\log{R}$, and we have $d(x,x_1)\geq \frac{1}{160}\log{R}$. So Assumption \ref{assumption} holds.
\end{itemize}

Now we take the modified sequence $y_1,\cdots,y_m$ of $\gamma$, and let $\gamma_i'$ be the shortest path (in a piece of $\tilde{Z}^3\setminus \tilde{Z}^{(1)}$) from $y_i$ to $y_{i+1}$. The modified path $\gamma'$ is the concatenation of $\gamma_0',\gamma_1',\cdots,\gamma_m'$.

For each consecutive $\gamma_i'$ and $\gamma_{i+1}'$ in the modified path $\gamma$, we have the following possibilities. 
\begin{itemize}
\item If neither of them are unmodified $3$-dim pieces, then Lemma \ref{modifiedsequencelength} implies $l(\gamma_i'),l(\gamma_{i+1}')\geq L$, and Lemma \ref{modifiedsequenceangle} (1) implies the bending angle is at most $\pi-\eta_0$. 
\item If either $\gamma_i'$ or $\gamma_{i+1}'$ is an unmodified $3$-dim piece, exactly one of them is. Then Lemma \ref{modifiedsequencelength} implies that one of $l(\gamma_i'),l(\gamma_{i+1}')$ is at least $L$, and Lemma \ref{modifiedsequenceangle} (2) implies the bending angle is at most $\frac{\pi}{2}-\eta_0$.
\end{itemize}

Then Proposition \ref{lengthshort} implies 
\begin{align}\label{6.8}
d_{\mathbb{H}^3}(\tilde{j}_0(x),\tilde{j}_0(y))=l(\gamma_0'\gamma_1'\cdots\gamma_m')\geq \frac{1}{2}\sum_{i=0}^ml(\gamma_i')=\frac{1}{2}\sum_{i=0}^md_{\tilde{Z}^3}(y_i,y_{i+1})\geq \frac{1}{2}d_{\tilde{Z}^3}(x,y).
\end{align}
On the other hand, since the metric on $\tilde{Z}^3$ is a path metric induced by the metric of $\mathbb{H}^3$, we always have $d_{\mathbb{H}^3}(\tilde{j}_0(x),\tilde{j}_0(y))\leq d_{\tilde{Z}^3}(x,y)$. So equation (\ref{6.7}) holds for the path $\gamma$ obtained after Modifications I and II, thus equation (\ref{6.5}) holds for any $x,y\in \tilde{Z}''$. This implies that
\begin{align}\label{6.9}
\frac{1}{2}d_{\tilde{Z}^3}(x,y)-\frac{3}{40}\log{R}-4L-8 \leq d_{\mathbb{H}^3}(\tilde{j}_0(x),\tilde{j}_0(y)) \leq d_{\tilde{Z}^3}(x,y)+\frac{3}{40}\log{R}+4L+8
\end{align}
holds for any $x,y\in \tilde{Z}^3$, by the $2$-denseness of $\tilde{Z}''\subset \tilde{Z}^3$. So $\tilde{j}_0:\tilde{Z}^3\to \mathbb{H}^3$ is a quasi-isometric embedding.

{\bf (2) $\pi_1$-injectivity of $\rho_0$.} Since $\pi_1(Z)\cong \pi_1(Z^3)$ is torsion-free and $\tilde{j}_0:\tilde{Z}^3\to \mathbb{H}^3$ is $\rho_0$-equivariant, the fact that $\tilde{j}_0$ is a quasi-isometric embedding implies that $\rho_0$ is injective.

Moreover, since $\pi_1(Z^3)$ is neither a surface group nor a free group, the covering theorem (\cite{Can}) implies that $\rho_0(\pi_1(Z^3))<\text{Isom}_+(\mathbb{H}^3)$ is a geometrically finite subgroup.

{\bf (3) Injectivity of $\tilde{j}_0$.} Now we prove that  $\tilde{j}_0:\tilde{Z}^3\to\mathbb{H}^3$ is injective.
For $x,y\in \tilde{Z}^3$ such that $d_{\tilde{Z}^3}(x,y)>\frac{1}{5}\log{R}>\frac{3}{20}\log{R}+8L+16$, the left hand side of equation (\ref{6.9}) implies $\tilde{j}_0(x)\ne \tilde{j}_0(y)$. Moreover, if $x$ and $y$ lie in the same component of $\tilde{Z}^3\setminus \tilde{Z}^{(1)}$, then since $\tilde{j}_0$ restricts to an embedding on this component, we have $\tilde{j}_0(x)\ne \tilde{j}_0(y)$.

So we can assume that $d_{\tilde{Z}^3}(x,y)\leq\frac{1}{5}\log{R}$ holds, while $x$ and $y$ lie in different components of $\tilde{Z}^3\setminus \tilde{Z}^{(1)}$. We take the shortest path $\gamma$ in $\tilde{Z}^3$ from $x$ to $y$ and take the intersection sequence $x_1,\cdots,x_n$. Let $x=x_0$ and $y=x_{n+1}$, and we denote the subpath of $\gamma$ from $x_i$ to $x_{i+1}$ by $\gamma_i$. Then we have $l(\gamma_i)\leq \frac{1}{5}\log{R}$ for all $i=0,\cdots,n$. So for any $\gamma_i$ with $i=1,\cdots,n-1$, one of the following hold.
\begin{enumerate}
\item [(i)] Either $\gamma_i$ is a $3$-dim piece.
\item [(ii)] Or $\gamma_i$ is a $2$-dim piece, and by Lemma \ref{new} (3), the two edges of $\tilde{Z}^{(1)}$ containing $x_i$ and $x_{i+1}$ share a vertex $v_i$. Moreover, since $\angle x_iv_ix_{i+1}\geq \phi_0$, we have $d(x_i,v_i),d(x_{i+1},v_i)<\frac{2}{5}\log{R}$. 
\end{enumerate}
Moreover, $\gamma$ contains at most one $3$-dim piece, since by Lemma \ref{new} (2), any two different $3$-dim components of $\tilde{Z}^3\setminus \tilde{Z}^{(1)}$ have distance at least $\frac{9}{10}\log{R}$. 

Now we prove $\tilde{j}_0(x)\ne \tilde{j}_0(y)$ by dividing into following cases.
\begin{enumerate}
\item [(a)] If $\gamma$ contains no $3$-dim pieces, then all vertices $v_i$ in item (ii) above must be the same vertex, thus $x$ and $y$ lie in two $2$-dim pieces of $\tilde{Z}^3\setminus \tilde{Z}^{(1)}$ that share a vertex. Since $\tilde{j}_0$ maps these two pieces to two totally geodesic subsurfaces in $\mathbb{H}^3$ that are disjoint except at the common edge or vertex, we have $\tilde{j}_0(x)\ne \tilde{j}_0(y)$. So we can assume that $\gamma$ contains exactly one $3$-dim piece in the following.

\item [(b)] If $\gamma_0$ or $\gamma_n$ is a $3$-dim piece, we assume that $\gamma_0$ is. Then by item (ii) again, the $3$-dim piece of $\tilde{Z}^3\setminus \tilde{Z}^{(1)}$ containing $x$ and the $2$-dim piece of $\tilde{Z}^3\setminus \tilde{Z}^{(1)}$ containing $y$ share a vertex. Then as in case (a), the geometry of $\tilde{j}_0$ implies $\tilde{j}_0(x)\ne \tilde{j}_0(y)$. So we further assume that neither $\gamma_0$ nor $\gamma_n$ are $3$-dim pieces, and $n\geq 2$ holds in the following.

\item [(c)] If $n=2$, then $\gamma_1$ is a $3$-dim piece. By the proof of Proposition \ref{modifiedsequenceangle} Case I (1)(b), the bending angle at $y_1$ and $y_2$ are both at most $\frac{2}{5}\pi<\frac{\pi}{2}$, so we have $\tilde{j}_0(x)\ne \tilde{j}_0(y)$. So we can further assume that $n\geq 3$ in the following.


\item [(d)] So $n\geq 3$ for some $1\leq i\leq n-1$, $\gamma_i$ is the unique $3$-dim piece of $\gamma$. Then $x_i$ is $(\frac{2}{5}\log{R})$-close to a vertex $v_i$ of $\tilde{Z}^{(1)}$, and $x_{i+1}$ is $(\frac{2}{5}\log{R})$-close to a vertex $v_{i+1}$. We must have $v_i=v_{i+1}$, otherwise $d_{\tilde{Z}^3}(v_i,v_{i+1})\geq \log{R}$ by Lemma \ref{new} (4) and $d_{\tilde{Z}^3}(x_i,x_{i+1})\geq \frac{1}{5}\log{R}$, which is impossible. So both $x_i$ and $x_{i+1}$ lie on edges of $\tilde{Z}^{(1)}$ containing $v_i$, and by item (ii), the $2$-dim pieces of $\tilde{Z}^3\setminus \tilde{Z}^{(1)}$ containing $x$ and $y$ share the vertex $v_i$. So we obtain $\tilde{j}_0(x)\ne \tilde{j}_0(y)$ as in case (a).
\end{enumerate}

The proof of the injectivity of $\tilde{j}_0:\tilde{Z}^3\to \mathbb{H}^3$ is finished.

{\bf (4) Homeomorphic type of the convex core.} Since $\tilde{j}_0:\tilde{Z}^3\to \mathbb{H}^3$ is injective and is $\rho_0$-equivariant, the image $\tilde{j}_0(\tilde{Z}^3)$ has a closed $\rho_0(\pi_1(Z^3))$-equivariant neighborhood $\mathcal{N}(\tilde{Z}^3)$ in $\mathbb{H}^3$. By the construction of $\mathcal{Z}$, we can see that $\mathcal{N}(\tilde{Z}^3)/\rho_0(\pi_1(Z^3))$ is homeomorphic to $\mathcal{Z}\setminus \partial_p \mathcal{Z}$, as oriented manifolds.

Also note that $\mathcal{N}(\tilde{Z}^3)/\rho_0(\pi_1(Z^3))$ is a finite volume submanifold of $\mathbb{H}^3/\rho_0(\pi_1(Z^3))$ such that the inclusion induces an isomorphism on $\pi_1$. Since all boundary components of $\mathcal{Z}$ are incompressible, by tameness of open hyperbolic $3$-manifolds (\cite{Agol1, CG}), each component of 
$$\big(\mathbb{H}^3/\rho_0(\pi_1(Z^3))\big)\setminus \big(\mathcal{N}(\tilde{Z}^3)/\rho_0(\pi_1(Z^3))\big)$$
is homeomorphic to the product of a surface and $(0,\infty)$. So $\mathbb{H}^3/\rho_0(\pi_1(Z^3))$ is homeomorphic to $\mathcal{Z}\setminus \partial_p \mathcal{Z}$. 

Since $\rho_0(\pi_1(Z^3))<\text{Isom}_+(\mathbb{H}^3)$ is a geometrically finite subgroup and $\partial_p \mathcal{Z}$ corresponds to cusp ends of  $\mathbb{H}^3/\rho_0(\pi_1(Z^3))$, the convex core of $\mathbb{H}^3/\rho_0(\pi_1(Z^3))$ is homeomorphic to $\mathcal{Z}\setminus \partial_p \mathcal{Z}$. Note that all above homeomorphisms preserve natural orientations on involved manifolds.

\end{proof}

\bigskip

\subsection{Proof of quasi-isometric embedding}\label{qi2}

The following proposition is the main result of this subsection, which is the last technical piece of the proof of Theorem \ref{pi1injectivity}.

\begin{proposition}\label{deformation}
For any $t\in [0,1]$, $\tilde{j}_t:\tilde{Z}^3\to \mathbb{H}^3$ is a quasi-isometric embedding.
\end{proposition}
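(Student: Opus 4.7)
The plan is to rerun the argument of Proposition \ref{model} with the parameters $t A_{ij,s}, t\lambda_{ij,s}, t\xi_C, t\eta_C, t\mu_{H,i}, t\nu_{H,i}, t\nu_{H,i}', t\tau_{ij,s}$ of Parameter \ref{parameterj1} in place of the zero parameters used for $\tilde{j}_0$, and to show that each quantitative estimate that went into that proof degrades by at most a factor of $O(\epsilon)$ or $O(\epsilon/R)$ as $t$ varies in $[0,1]$. Since the metric on $\tilde{Z}^3$ is fixed (pulled back by $\tilde{j}_0$), for any $x,y\in\tilde Z''$ I will keep the same shortest path $\gamma$, apply Modifications I and II verbatim, and form the same modified sequence $y_0,\ldots,y_{m+1}$ with pieces $\gamma_i'=\overline{y_iy_{i+1}}$. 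What changes is that for each $i$ I need to estimate $d_{\mathbb{H}^3}(\tilde j_t(y_i),\tilde j_t(y_{i+1}))$ rather than working inside a single totally geodesic subsurface.

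First I would control the geometry of $\tilde{j}_t$ on each single piece $\gamma_i'$. If $\gamma_i'$ lies in a $3$-dim piece of $\tilde Z^3$ then $\tilde j_t(\gamma_i')$ is still a genuine geodesic segment of the same length, because ideal tetrahedra with the same combinatorial structure are mapped isometrically. If $\gamma_i'$ lies in a $2$-dim piece that is part of some $S_{ijk,s}$ or $S_{ijk}$, I would lift the arc to a concatenation of $\partial$-framed segments coming from the good pants and hamster wheels whose parameters are $t\xi_C, t\eta_C, t\mu_{H,i}, t\nu_{H,i}, t\nu_{H,i}'$, all of modulus bounded by $\epsilon$ (or $\epsilon/R'$). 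Lemmas \ref{lengthphase} and \ref{directiondifference} then give that $\tilde j_t(\gamma_i')$ is $O(\epsilon)$-close in length and $O(\epsilon)+O(e^{-L})$-close in direction to the geodesic segment between $\tilde j_0(y_i)$ and $\tilde j_0(y_{i+1})$. The $2$-dim pieces lying in the horotorus tessellations $T_s^t$ or their cones are treated similarly, with the parameters $t\tau_{ij,s}, tA_{ij,s}, tB_{i,s}$ producing only $O(\epsilon)$ perturbations on segments of length $\geq L$.

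Next I would show that the bending angle between consecutive images $\tilde j_t(\gamma_{i-1}')$ and $\tilde j_t(\gamma_i')$ differs from the angle computed for $\tilde j_0$ in Lemma \ref{modifiedsequenceangle} by at most some $C\epsilon+Ce^{-L/2}$. Again this uses Lemma \ref{directiondifference} to compare the terminal frame of the lift of $\gamma_{i-1}'$ under $\tilde j_t$ with the initial frame of the lift of $\gamma_i'$, together with the fact that at each vertex the small matrices $tA_{ij,s}, tA_{ijk,s}, tB_{i,s}$ rotate frames by at most $\epsilon$. In particular, choosing $\epsilon$ small enough relative to the $\eta_0$ provided by Lemma \ref{modifiedsequenceangle} and $R$ large enough that $e^{-L/2}<\eta_0/100$, the inequalities $\angle_{\tilde j_t}\geq \eta_0/2$ (resp.\ $\geq \pi/2+\eta_0/2$) persist uniformly in $t\in[0,1]$.

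With length estimates and bending-angle estimates in hand, Proposition \ref{lengthshort} applied to the $\partial$-framed polyline whose pieces are the hyperbolic geodesic segments from $\tilde j_t(y_i)$ to $\tilde j_t(y_{i+1})$ gives
\[
d_{\mathbb{H}^3}\bigl(\tilde j_t(x),\tilde j_t(y)\bigr)\;\geq\;\tfrac12\sum_{i=0}^m d_{\mathbb{H}^3}\bigl(\tilde j_t(y_i),\tilde j_t(y_{i+1})\bigr)\;\geq\;\tfrac14\sum_{i=0}^m l(\gamma_i'),
\]
the last step using the per-piece length comparison between $\tilde j_t$ and $\tilde j_0$. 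This yields the quasi-isometric lower bound, while the upper bound $d_{\mathbb{H}^3}(\tilde j_t(x),\tilde j_t(y))\leq d_{\tilde Z^3}(x,y)+O(\log R)$ follows by concatenating the short geodesic segments $\tilde j_t(\gamma_i')$ and bounding their total length by $\sum l(\gamma_i')+O(\epsilon)m$, which is at most $d_{\tilde Z^3}(x,y)(1+O(\epsilon))$.

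The main obstacle I anticipate is the bookkeeping for the angle estimate across all the cases in Lemma \ref{modifiedsequenceangle}: every case there (two unmodified points, one modified, two modified; inside $2$-dim pieces vs.\ through $3$-dim pieces; including the delicate case where $y_i$ and $y_{i+1}$ lie on the same edge of $\tilde Z^{(1)}$) must be revisited with the perturbed parameters and the angle inequalities verified to survive, typically with constants replaced by their halves. The rest of the argument is an almost mechanical transcription of the $t=0$ proof.
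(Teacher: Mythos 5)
Your high-level plan --- keep the metric on $\tilde{Z}^3$ pulled back by $\tilde{j}_0$, reuse the same shortest path $\gamma$, Modifications I and II, and the same modified sequence $\{y_i\}$, estimate each image $\tilde{j}_t(\gamma_i')$ piece by piece, and close with Proposition~\ref{lengthshort} --- coincides with the paper's. What you elide is packaged in the paper as Lemmas~\ref{estimate2dpiece} and~\ref{estimate3dpiece}: a per-piece bi-Lipschitz estimate together with a comparison $d_{S^2}\bigl(\Theta(x,y,e),\Theta(\tilde{j}_t(x),\tilde{j}_t(y),\tilde{j}_t(e))\bigr)<10\delta$ of the emanating direction, which is what keeps the bending-angle bounds of Lemma~\ref{modifiedsequenceangle} alive (up to halving $\eta_0$) under $\tilde{j}_t$.

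There is a genuine error in your treatment of $3$-dim pieces. You assert that $\tilde{j}_t(\gamma_i')$ is a geodesic of the same length ``because ideal tetrahedra with the same combinatorial structure are mapped isometrically.'' This is false. Two ideal tetrahedra are isometric only if they have the same shape (dihedral angles, equivalently cross-ratio), and the parameters $t\tau_{ij,s}$, $tB_{i,s}$, $tA_{ij,s}$ change the edge lengths of the horotorus tessellation and the base frames (Construction~\ref{constructjt}(3)(4)); so the tetrahedra in $\tilde{j}_t(\tilde{Z}^3)$ for $t\neq 0$ are genuinely different shapes from those in $\tilde{j}_0(\tilde{Z}^3)$. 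A geodesic of $\tilde{Z}^3$ that passes through several tetrahedra in a $3$-dim piece is therefore not mapped by $\tilde{j}_t$ to a geodesic in $\mathbb{H}^3$, and neither its length nor its tangent direction is literally preserved. The correct statement is Lemma~\ref{estimate3dpiece}: $\tilde{j}_t$ restricted to a $3$-dim piece is a $(1+\delta)$-bi-Lipschitz map with the $\Theta$-comparison above, proved by identifying each tetrahedron with a Klein-model simplex, controlling the derivative of $\tilde{j}_t$ and the geodesic curvature of images of geodesics (both $<\delta^3$), and then concatenating across a bounded number of tetrahedra, with a separate case for large Euclidean projection distance. You are fortunate that the conclusion you asserted is stronger than what is needed (so the proof outline would still close), but the reason you give for it proves nothing. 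Your $2$-dim analysis via Lemmas~\ref{lengthphase} and~\ref{directiondifference} is reasonable as a re-derivation, but the paper simply cites the quantitative statement as Lemma~\ref{estimate2dpiece} from~\cite{Sun5}; establishing the analogous $3$-dim statement is new to this paper and cannot be waved away.
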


To prove Proposition \ref{deformation}, we need the following two lemmas. The first lemma appeared as Lemma 5.7 of \cite{Sun5}, which estimates the geometry of $\tilde{j}_t$ on $2$-dim pieces of $\tilde{Z}^3\setminus \tilde{Z}^{(1)}$. 

\begin{lemma}\label{estimate2dpiece}
For any $\delta\in(0,10^{-6})$, there exists $\epsilon_0>0$ and $R_0>0$, such that for any positive numbers $\epsilon\in (0,\epsilon_0)$, $R>R_0$ and any positive integer $R'$ greater than all of the $R_{ij}$ and $R_{ijk}$ in Lemma \ref{checkgoodcurve}, the following statement holds. 

If $\{\tilde{j}_t:\tilde{Z}^3\to \mathbb{H}^3\ |\ t\in [0,1]\}$ is constructed with respect to $\epsilon,R$ and $R'$, then for any $t\in[0,1]$ and any $x,y$ lying in the closure of a $2$-dim piece $C\subset \tilde{Z}^3\setminus \tilde{Z}^{(1)}$ such that $x\in \partial C$, we have:
\begin{align}\label{a}
\frac{1}{2}d_{\mathbb{H}^3}(\tilde{j}_t(x),\tilde{j}_t(y))\leq d_{\tilde{Z}^3}(x,y)\leq 2d_{\mathbb{H}^3}(\tilde{j}_t(x),\tilde{j}_t(y)).
\end{align}
Moreover, let $e$ be an edge in $\tilde{Z}^{(1)}$ containing $x$ (with a preferred orientation), if $d(x,y)\geq 100$, then we have
\begin{align}\label{b}
d_{S^2}\big(\Theta(x,y,e),\Theta(\tilde{j}_t(x),\tilde{j}_t(y),\tilde{j}_t(e))\big)<10\delta.
\end{align}
Here $\Theta(x,y,e)$ denotes the point in $S^2$ determined by the tangent vector of $\overline{xy}$ in $\mathbb{H}^3$, with respect to a coordinate of $T_x(\mathbb{H}^3)$ given by a frame ${\bf p}=(x,\vec{v},\vec{n})$, where $\vec{v}$ is tangent to $e$ and $\vec{n}$ is tangent to $C$ (points inward). Similarly, $\Theta(\tilde{j}_t(x),\tilde{j}_t(y),\tilde{j}_t(e))$ is defined with respect to a frame based at $\tilde{j}_t(x)$, with the first vector tangent to $\tilde{j}_t(e)$, and the second vector is $\epsilon$-close to be tangent to $\tilde{j}_t(C)$ (points inward).
\end{lemma}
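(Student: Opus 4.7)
The plan is to reduce the statement to known quasi-isometric embedding properties of nearly geodesic subsurfaces in $\mathbb{H}^3$, applied uniformly in $t\in[0,1]$. The crucial observation is that the deformation $\tilde{j}_t$ is constructed via parameters obtained by scaling the parameters $A_{ij,s},\lambda_{ij,s},\xi_C,\eta_C,\mu_{H,i},\nu_{H,i},\nu_{H,i}',\tau_{ij,s}$ of $\tilde{j}_1$ by the factor $t\in[0,1]$. Since the original parameters are bounded in modulus by multiples of $\epsilon$ (resp.\ $\epsilon/R$, $\epsilon/R'$), so are the scaled parameters. Consequently, every $2$-dim piece $C\subset \tilde{Z}^3\setminus \tilde{Z}^{(1)}$ is mapped by $\tilde{j}_t$ to (a portion of) the universal cover of an $(R,\epsilon)$-panted or $(R',\epsilon)$-nearly geodesic subsurface in the sense of Definition \ref{subsurfaces}, with the same uniform bounds on all relevant parameters, independent of $t$.

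First, I would classify the $2$-dim pieces into three types: (i) the totally geodesic triangles coming from the triangulation of $\partial N_0$, (ii) the two-cornered or three-cornered annuli $A_{ijk,s}$, and (iii) the nearly geodesic subsurfaces $S_{ijk,s}$ and $S_{ijk}$ obtained from Proposition \ref{boundingsurface}. For type (i), the map $\tilde{j}_t|_C$ is an isometric embedding of a geodesic triangle into $\mathbb{H}^3$ for every $t$, so both inequalities are trivial. For type (ii), the annuli are built from $\partial$-framed segments of bounded length and small bending angles, so the length and direction estimates follow from a direct application of Lemma \ref{lengthphase} (1) and Lemma \ref{directiondifference}. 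The main case is type (iii).

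For type (iii), I would appeal to Theorem 2.2 of \cite{KW}, which guarantees that an $(R,\epsilon)$-nearly geodesic subsurface is $\pi_1$-injective and its universal cover embeds quasi-isometrically into $\mathbb{H}^3$, with quasi-isometry constants depending only on $\epsilon$ (and not on $R$, provided $R$ is sufficiently large). Since the $t$-scaled parameters remain within the same $\epsilon$-bounds, this theorem applies uniformly to each $\tilde{j}_t|_C$. Concretely, one can write an arbitrary path in $C$ from $x$ to $y$ as a concatenation of seams and arcs inside good pants/hamster wheels, each of bounded combinatorial type, and then apply Lemma \ref{lengthphase} (1) together with the $(R,\epsilon)$-well-matched condition to convert combinatorial length into hyperbolic length, with multiplicative distortion at most $2$ once $R$ (and hence $L\geq I(\pi-\theta)+10\log 2$) is large enough. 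The ``$\frac{1}{2}$ versus $2$'' constants are conservative; the real estimates approach $1$ as $\epsilon\to 0$.

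The direction estimate (\ref{b}) would follow from Lemma \ref{directiondifference}: if we realize a path from $x$ to $y$ as an $(L,\theta)$-tame $\delta$-consecutive chain of $\partial$-framed segments (with $\delta=O(\epsilon)$), then the initial frame of the reduced concatenation is $8e^{-L}$-close to the initial frame of the first segment, which in turn is $O(\epsilon)$-close to the frame $\bf{p}$ specified in the statement. Since $d(x,y)\geq 100$ guarantees that the chain has enough length to absorb the small corrections, the bound $10\delta$ can be achieved by choosing $\epsilon_0,R_0$ appropriately.

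The main obstacle is the bookkeeping: keeping track of the composite distortion as one crosses many seams and rungs, and ensuring that the uniformity in $t$ is genuine (i.e., that scaling all parameters by $t$ never violates the well-matched condition thresholds used by \cite{KW}). Because this estimate is essentially identical to Lemma 5.7 of \cite{Sun5}—the only changes being the extra edges $e_{ijk}$ and the extra annulus pieces coming from the bigons, which are of the same type as the pieces already handled there—I would expect no new difficulty beyond transcribing that argument to the present setting. In particular, no new geometric ideas are needed; the work is to verify that each of the bounded combinatorial types arising in Construction \ref{constructZ2} falls within the scope of the estimates already established for nearly geodesic subsurfaces.
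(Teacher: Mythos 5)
The paper does not actually prove Lemma \ref{estimate2dpiece}; the text introducing it simply cites Lemma 5.7 of \cite{Sun5}, where the corresponding estimate was established. So there is no local proof in this paper to compare against, and your recognition that the work reduces to transcribing the argument from \cite{Sun5} is the same decision the author made. Your uniformity remark --- that $t$-scaling only shrinks the parameters $A_{ij,s}, \lambda_{ij,s}, \xi_C, \eta_C, \mu_{H,i}, \nu_{H,i}, \nu_{H,i}', \tau_{ij,s}$, so the $(R,\epsilon)$-well-matched hypotheses remain satisfied for all $t\in[0,1]$ --- is the key observation that makes the cited estimate applicable uniformly, and you state it correctly.

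However, your preliminary classification of $2$-dim pieces is structurally wrong. The totally geodesic triangles $\Delta_{ijk,s}^{Z,t}$ coming from the triangulation of $\partial N_0$ are the ``floor'' faces of the ideal tetrahedra $T_{ijk\infty,s}^{Z,t}$; when one removes $\tilde Z^{(1)}$, the interior of such a triangle lies in the same connected component as the interiors of the adjacent ideal tetrahedra, so it belongs to a $3$-dim piece of $\tilde Z^3\setminus\tilde Z^{(1)}$ (covered by Lemma \ref{estimate3dpiece}, not this lemma). Moreover, your types (ii) and (iii) are not separate $2$-dim pieces: each $2$-dim piece is a single connected lift of one surface $S_{ijk,s}$ or $S_{ijk}$ \emph{together with} its two- or three-cornered annuli glued along the good curves $\gamma_{ijk,s}$ (only the boundary concatenations of edges, which lie in $\tilde Z^{(1)}$, are removed). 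A correct proof must therefore control a shortest path from $x\in\partial C$ that may cross an annulus, a good curve, and then many pants or hamster-wheel sub-pieces of $S_{ijk,s}$; this is exactly what your last paragraph gestures at with the $(L,\theta)$-tame chain argument via Lemma \ref{lengthphase}, Lemma \ref{directiondifference}, and Remark \ref{combinatorialdistance}, so your outline is right in spirit but its opening step misdescribes what the objects being estimated actually are.
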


The second lemma estimates the geometry of $\tilde{j}_t$ on $3$-dim pieces of $\tilde{Z}^3\setminus \tilde{Z}^{(1)}$. Since this lemma only concerns $3$-dim pieces of $\tilde{Z}^3$, actually we do not need the $R_{ij}$, $R_{ijk}$ and $R'$ part of $\tilde{Z}^3$, but we still state them in the following lemma, so that its statement is parallel with the statement of Lemma \ref{estimate2dpiece}. We will only give a sketch of the proof of this lemma and some computations are skipped.

\begin{lemma}\label{estimate3dpiece}
For any $\delta\in(0,10^{-6})$, there exists $\epsilon_0>0$ and $R_0>0$, such that for any positive numbers $\epsilon\in (0,\epsilon_0)$, $R>R_0$ and any positive integer $R'$ greater than all of the $R_{ij}$ and $R_{ijk}$ in Lemma \ref{checkgoodcurve}, the following statement holds. 

If $\{\tilde{j}_t:\tilde{Z}^3\to \mathbb{H}^3\ |\ t\in [0,1]\}$ is constructed with respect to $\epsilon,R$ and $R'$, then for any $t\in[0,1]$, the following hold. For any $x,y$ lying in the closure of a $3$-dim piece $C\subset \tilde{Z}^3\setminus \tilde{Z}^{(1)}$, we have:
\begin{align}\label{a}
(1-\delta)d_{\mathbb{H}^3}(\tilde{j}_t(x),\tilde{j}_t(y))\leq d_{\tilde{Z}^3}(x,y)\leq (1+\delta) d_{\mathbb{H}^3}(\tilde{j}_t(x),\tilde{j}_t(y)).
\end{align}

Moreover, if $x$ belongs to an oriented edge $e\subset \tilde{Z}^{(1)}$ contained in the boundary of $C$, then we have 
\begin{align}\label{angledifference}
d_{S^2}\big(\Theta(x,y,e),\Theta(\tilde{j}_t(x),\tilde{j}_t(y),\tilde{j}_t(e))\big)<10\delta.
\end{align}
Here $\Theta(x,y,e)$ denotes the point in $S^2$ determined by the tangent vector of $\overline{xy}$ in $\mathbb{H}^3$, with respect to a coordinate of $T_x(\mathbb{H}^3)$ given by a frame ${\bf p}=(x,\vec{v},\vec{n})$, where $\vec{v}$ is tangent to $e$ and $\vec{n}$ is tangent to a face of $C$ (points inward). $\Theta(\tilde{j}_t(x),\tilde{j}_t(y),\tilde{j}_t(e))$ is defined by a similar frame based at $\tilde{j}_t(x)$, given by $\tilde{j}_t(e)$ and $\tilde{j}_t(C)$.

\end{lemma}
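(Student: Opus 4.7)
\textbf{Plan for the proof of Lemma \ref{estimate3dpiece}.} Each $3$-dim piece $C$ of $\tilde{Z}^3\setminus\tilde{Z}^{(1)}$ arises from a lifted torus $\tilde{T}_s^t\subset \tilde{Z}^3$ together with all ideal tetrahedra of the form $\tilde{T}_{ijk\infty,s}^{Z,t}$ adjacent to it; the closure of $C$ is a convex ``horoball'' region. My plan is to place $\tilde{j}_t(C)$ and $\tilde{j}_0(C)$ into the upper half-space model with a common ideal cusp at $\infty$, and then write the map $\tilde{j}_t\circ \tilde{j}_0^{-1}$ as an explicit $\mathbb{R}$-affine perturbation of the identity on the horoplane, extended vertically to the horoball. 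Concretely, by Construction \ref{constructjt} (3)-(4), the restrictions $\tilde{j}_0|_{\tilde{T}_s^t}$ and $\tilde{j}_t|_{\tilde{T}_s^t}$ are equivariant for parabolic actions $\rho_0,\rho_t$ of $\pi_1(T_s^t)$ fixing ideal points $b_0,b_t\in\partial\mathbb{H}^3$. Since $b_t$ depends continuously on $t$ through the single parameter $tB_{i,s}$, it is at (spherical) distance $O(\epsilon)$ from $b_0$, so I first postcompose $\tilde{j}_t$ with an isometry $A_t$ taking $b_t$ to $b_0=\infty$; this does not affect hyperbolic distances or angles, and for sufficiently small $\epsilon$ may be absorbed into the $\delta$-error.

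With $b_t=b_0=\infty$, both horoplanes can be identified with $\mathbb{C}$. Under $\tilde{j}_0$ the image $\tilde{j}_0(\tilde{T}_s^t)$ is the \emph{equilateral} tessellation by triangles of Euclidean side $R$, while under $\tilde{j}_t$ the tessellation has edge lengths $(1+t\tau_{ij,s})R$ and inner angles $O(\epsilon)$-close to $\pi/3$, by Parameter \ref{parameterj1} (3)-(4) and Construction \ref{constructjt} (3). Since both tessellations are equivariant with respect to a $\mathbb{Z}^2$ action by Euclidean translations, and the deformation of a single fundamental triangle extends uniquely to a globally equivariant map of $\mathbb{C}$, the resulting map $\tilde{j}_t\circ\tilde{j}_0^{-1}|_{\text{horoplane}}$ is a real-affine bijection $F_t\colon\mathbb{C}\to\mathbb{C}$ whose linear part $DF_t\in GL_2(\mathbb{R})$ satisfies $\|DF_t-I\|\leq C\epsilon$ for an absolute constant $C$. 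I then extend $\tilde{j}_t\circ\tilde{j}_0^{-1}$ to the whole horoball above by the ``vertical'' rule $\Psi_t(z,h)=(F_t(z),h)$; this extension sends each $t=0$ ideal tetrahedron (in $\tilde{j}_0(C)$) to the corresponding $t$-ideal tetrahedron, so up to an innocuous homotopy inside each ideal tetrahedron we may assume $\tilde{j}_t=\Psi_t\circ\tilde{j}_0$ on $C$.

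A direct computation shows that $\Psi_t$ is $(1+C'\epsilon)$-bi-Lipschitz in the hyperbolic metric $ds^2=(|dz|^2+dh^2)/h^2$: the differential distorts the horizontal form $|dz|/h$ by a factor bounded by $\|DF_t\|$ and $\|DF_t^{-1}\|$, and leaves the vertical form $dh/h$ unchanged. Combined with $d_{\tilde{Z}^3}(x,y)=d_{\mathbb{H}^3}(\tilde{j}_0(x),\tilde{j}_0(y))$ for $x,y\in C$ (from the definition of the path metric on $\tilde{Z}^3$ via $\tilde{j}_0$), this yields the distance bound (\ref{a}). The angle bound (\ref{angledifference}) follows similarly: since the frame at $\tilde{j}_t(x)$ comparing $e$ and a face of $C$ is the pushforward by $\Psi_t$ of the corresponding frame at $\tilde{j}_0(x)$, the point $\Theta(\tilde{j}_t(x),\tilde{j}_t(y),\tilde{j}_t(e))\in S^2$ is obtained from $\Theta(x,y,e)$ by applying a map $S^2\to S^2$ whose distance from the identity is controlled by $\|DF_t-I\|$, hence by $O(\epsilon)\leq 10\delta$.

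The main obstacle is to justify that $\tilde{j}_t\circ\tilde{j}_0^{-1}$ on $C$ really can be arranged, or at least estimated, by the explicit map $\Psi_t$. Construction \ref{constructjt} only specifies $\tilde{j}_t$ on the $1$-skeleton, on the base torus $\tilde{T}_s^t$, and on boundary ideal triangles; the extension to $3$-cells (ideal tetrahedra) is implicit via the final ``triangulate and geodesically straighten'' step, and different natural choices give different maps that may differ by a composition with an affine map of the tetrahedron. The key point to verify carefully is that for any such sensible choice the interior map on each ideal tetrahedron is still $(1+C\epsilon)$-bi-Lipschitz, uniformly over all tetrahedra in $C$. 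This uniformity is available because the set of shapes of relevant ideal tetrahedra (in $\mathbb{H}^3$ with one vertex at $\infty$) is compact up to the $\mathbb{Z}^2$-action, and the parameters $t\tau_{ij,s},tB_{i,s}$ governing the deformations range over a finite set modulo this action, so standard continuity gives the desired uniform bound.
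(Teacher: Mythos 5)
Your plan of working in the upper half-space model with the cusp at $\infty$ and comparing $\tilde{j}_t$ to $\tilde{j}_0$ via a composition on the horoball is a natural idea, and close in spirit to the paper, but three concrete steps do not hold up as stated.

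\textbf{First}, the map $F_t=\tilde{j}_t\circ\tilde{j}_0^{-1}$ on the horoplane is not globally real-affine. You argue that the deformation of ``a single fundamental triangle'' extends uniquely to a $\mathbb{Z}^2$-equivariant affine map of $\mathbb{C}$; but a fundamental domain of the $\mathbb{Z}^2$-action contains many triangles, and the edge-length parameters $\tau_{ij,s}$ in Parameter \ref{parameterj1}(3) depend on the edge. So the vertex-correspondence between the $\tilde{j}_0$-tessellation and the $\tilde{j}_t$-tessellation is generically not the restriction of an affine bijection of $\mathbb{C}$; $F_t$ is at best \emph{piecewise} affine. The piecewise version would still satisfy $\|DF_t-\mathrm{id}\|=O(\epsilon)$ on each piece, so the distance bound (\ref{a}) can be rescued, but the global-affine structure you rely on later is gone.

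\textbf{Second}, the vertical extension $\Psi_t(z,h)=(F_t(z),h)$ is incompatible with the boundary constraints already imposed by Construction \ref{constructjt}(4). The restriction of $\tilde{j}_t$ to each bottom face $\Delta^{Z,t}_{ijk,s}$ is \emph{prescribed} to be a totally geodesic triangle, i.e.\ a hemisphere piece in the upper half-space model. A Euclidean horizontal shear/dilation like $\Psi_t$ carries a hemisphere piece to an ellipsoid piece, not to another hemisphere piece, so $\Psi_t\circ\tilde{j}_0$ and $\tilde{j}_t$ \emph{disagree on the bottom faces}. Your ``innocuous homotopy inside each ideal tetrahedron'' cannot repair this, since a homotopy in the interior of the tetrahedron is necessarily relative to its faces. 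The paper uses the flexibility of only the tetrahedron interiors and chooses the Klein-model linear interpolation, which does preserve totally geodesic planes and hence automatically matches the prescribed geodesic-triangle boundary map; your choice does not.

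\textbf{Third}, the angle estimate (\ref{angledifference}) is not a direct consequence of $\|DF_t-I\|=O(\epsilon)$. $\Psi_t$ is a Euclidean affine map of the upper half-space, not a hyperbolic isometry, so it does not take hyperbolic geodesics to hyperbolic geodesics. The quantity $\Theta(\tilde{j}_t(x),\tilde{j}_t(y),\tilde{j}_t(e))$ is the initial direction of the hyperbolic geodesic from $\tilde{j}_t(x)$ to $\tilde{j}_t(y)$, not the $D\Psi_t$-pushforward of the tangent to $\overline{xy}$; to compare them one must estimate the geodesic curvature of $\Psi_t(\overline{xy})$ and then control how this curvature \emph{accumulates} along the path. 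This is exactly why the paper splits into two regimes: a ``far'' case where the large projected Euclidean distance forces both tangent vectors to be $\delta$-close to the vertical direction $(0,0,1)$, and a ``near'' case where an area count shows $\overline{xy}$ crosses at most $O(1/\delta)$ tetrahedra, so the accumulated angle error is $O(1/\delta)\cdot O(\delta^3)=O(\delta^2)<\delta$. Without such a control your argument gives an error of size $O(\epsilon)$ \emph{per tetrahedron crossed}, which is not uniformly $<10\delta$ when the path is long. You would need to reproduce this dichotomy (or an equivalent global argument) for your approach to close.
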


\begin{proof}
At first, note that we did not give a precise definition of $\tilde{j}_t$ on ideal tetrahedra of $\tilde{Z}^3$ in Construction \ref{constructjt} (4), so it suffices to prove this lemma for some choice of $\tilde{j}_t$.

We can use the Klein model of the hyperbolic space to (non-canonically) identify each ideal tetrahedron in $\tilde{Z}^3$ with a $3$-simplex in the $3$-ball (with one vertex on the boundary). Then we use the linear structure of the $3$-simplex to define the $\tilde{j}_t$ map on each tetrahedron, which is piecewisely smooth on $C$. Moreover, if $\epsilon>0$ is small enough, the restriction of $\tilde{j}_t$ on each ideal tetrahedron is very close to an isometry, up to the second derivative, in the following sense.
\begin{enumerate} 
\item for any two unit tangent vectors $\vec{v}_1,\vec{v}_2$ based at the same point $z\in C$, we have
\begin{align}\label{vectordistortion}
|\langle\vec{v}_1,\vec{v}_2\rangle-\langle D\tilde{j}_t(\vec{v}_1),D\tilde{j}_t(\vec{v}_1)\rangle|<\delta^3.
\end{align}
Here if $z$ lies in the boundary of an ideal tetrahedron, then $\vec{v}_1$ and $\vec{v}_2$ point toward the same ideal tetrahedron.
\item For any geodesic $\gamma$ contained in an ideal tetrahedron contained in $C$, the geodesic curvature of $\tilde{j}_t\circ \gamma$ is always bounded above by $\delta^3$. 
\end{enumerate}

At first, equation (\ref{vectordistortion}) implies that $\tilde{j}_t|_C$ is a $(1+\delta)$-bi-Lipschitz map, so equation (\ref{a}) holds.

Now we work on the angle estimate, and we identify both $C$ and $\tilde{j}_t(C)$ with convex subsets of the upper half space model of $\mathbb{H}^3$, such that all vertices have $z$-coordinate $1$. We take projections of $x$ and $y$ to $\mathbb{R}^2$, and let the Euclidean distance bewteen these projections by $d$.

{\bf Case I.} We first suppose that $d\geq \frac{4R}{\delta}$. Since the $z$-coordinate of $x$ is at most $\sqrt{(\frac{R}{2})^2+1}$ (by construction \ref{constructjt} (3)), an elementary computation implies that the tangent vector of $\overline{xy}$ at $x$ is at most $\delta$ away from $(0,0,1)$. Since $\tilde{j}_t|_C$ is induced by an almost isometry of the equilaterial tessellation of $\mathbb{R}^2$, the Euclidean distance between the projections of $\tilde{j}_t(x)$ and $\tilde{j}_t(y)$ to $\mathbb{R}^2$ is at least $\frac{2R}{\delta}$. So the tangent vector of $\overline{\tilde{j}_t(x)\tilde{j}_t(y)}$ at $\tilde{j}_t(x)$ is at most $2\delta$ away from $(0,0,1)$. Since the geometry of $C$ and $\tilde{j}_t(C)$ are close on their boundaries, equation (\ref{angledifference}) holds in this case.

{\bf Case II.} Now we suppose that $d\leq \frac{4R}{\delta}$.  Since the equilateral tesselation of $\mathbb{R}^2$ has side length $R$, an elementary area estimate implies that $\overline{xy}$ intersects with $m\leq \frac{20}{\delta}$ ideal tetrahedra of $C$. Let $\gamma_1,\gamma_2,\cdots,\gamma_m$ be the intersections of $\overline{xy}$ with these tetrahedra such that their concatenation gives $\overline{xy}$. Then $\tilde{j}_t\circ \gamma_1,\tilde{j}_t\circ \gamma_2,\cdots,\tilde{j}_t\circ \gamma_m$ are smooth curves in tetrahedra of $\tilde{j}_t(C)$ such that their concatenation is homotopic to $\overline{\tilde{j}_t(x)\tilde{j}_t(y)}$, and their geodesic curvatures are always bounded above by $\delta^3$. 

For each $i=1,2,\cdots,m-1$, by equation (\ref{vectordistortion}), the angle between the terminal tangent vector of $\tilde{j}_t(\gamma_i)$ and the initial tangent vector of $\tilde{j}_t(\gamma_{i+1})$ is at most $4\delta^3$.

Let $\gamma_i'$ be the geodesic segment that share endpoints with $\tilde{j}_t(\gamma_i)$. The condition on geodesic curvatures implies that the initial tangent vectors of $\gamma_i'$ and $\tilde{j}_t(\gamma_i)$ differ by at most $2\delta^3$, and the same hold for their terminal tangent vectors. So the angle between the terminal tangent vector of $\gamma_i'$ and the initial tangent vector of $\gamma_{i+1}'$ is at most $10\delta^3$.

Since we have $m\leq \frac{20}{\delta}$ geodesic segments $\gamma_i'$, then initial tangent vectors of $\gamma_1'$ and $\overline{\tilde{j}_t(x)\tilde{j}_t(y)}$ differ by at most $10\delta^3\cdot \frac{20}{\delta}=200\delta^2\leq \delta$. Since the tangent vector of $\gamma_1'$ is $2\delta^3$-close to the tangent vector of $\tilde{j}_t\circ \gamma_1$, equation (\ref{angledifference}) holds in this case. So the proof is done.

\end{proof}

Given these two lemmas, we are ready to prove Proposition \ref{deformation}.

\begin{proof}[Proof of Proposition \ref{deformation}]
For $\eta_0$ given in Lemma \ref{modifiedsequenceangle}, we take $\delta \in (0,\frac{\eta_0}{40})$. Then we take small $\epsilon>0$ and large $R>0$ satisfying Lemmas \ref{estimate2dpiece} and \ref{estimate3dpiece}.

To prove $\tilde{j}_t$ is a quasi-isometric embedding, for any two points $x,y\in \tilde{Z}^3$, we want to prove the following inequality:
\begin{align}\label{6.11}
\frac{1}{4}d_{\tilde{Z}}(x,y)-5(L+\frac{3}{160}\log{R}+2)  \leq d_{\mathbb{H}^3}(\tilde{j}_t(x),\tilde{j}_t(y))\leq 4d_{\tilde{Z}^3}(x,y)+12(L+\frac{3}{160}\log{R}+2).
\end{align}

As in the proof of Proposition \ref{model}, we replace $x$ and $y$ by two $2$-close points in $\tilde{Z}''$ and then do Modifications I and II. After this modification process, the shortest path $\gamma$ in $\tilde{Z}^3$ from $x$ to $y$ satisfies Assumption \ref{assumption}.

Recall that the modification process moves both $x$ and $y$ by distance at most $L+\frac{3}{160}\log{R}+2$. Lemmas \ref{estimate2dpiece} and \ref{estimate3dpiece} imply that $\tilde{j}_t(x)$ and $\tilde{j}_t(y)$ are moved by distance at most $2(L+\frac{3}{160}\log{R}+2)$. So to prove equation (\ref{6.11}), it suffices to prove the following inequality for points $x,y\in \tilde{Z}''$ satisfying Assumption \ref{assumption}:
\begin{align}\label{6.12}
\frac{1}{4}d_{\tilde{Z}^3}(x,y)  \leq d_{\mathbb{H}^3}(\tilde{j}_t(x),\tilde{j}_t(y))\leq 4d_{\tilde{Z}^3}(x,y).
\end{align}

For the new $x$ and $y$, we take the shortest path $\gamma$ in $\tilde{Z}^3$ from $x$ to $y$, take the modified sequence $y_1,\cdots,y_n$. Let $\gamma_i'$ be the shortest path in $\tilde{Z}^3$ from $y_i$ to $y_{i+1}$ (in the closure of a component of $\tilde{Z}^3\setminus \tilde{Z}^{(1)}$), and let $\gamma'$ be the concatenation of $\gamma_i'$ for $i=0,1,\cdots, n$. Since $\gamma$ satisfies Assumption \ref{assumption}, Lemmas \ref{modifiedsequencelength} and \ref{modifiedsequenceangle} imply the following hold: 
\begin{itemize}
\item Each $\gamma_i'$ is either an unmodified $3$-dim piece, or $l(\gamma_i')\geq L$ holds.
\item If neither $\gamma_{i-1}'$ nor $\gamma_i'$ are unmodified $3$-dim pieces, then $\angle y_{i-1}y_iy_{i+1}\geq \eta_0$.
\item If  $\gamma_{i-1}'$ or $\gamma_i'$ is an unmodified $3$-dim piece, then $\angle y_{i-1}y_iy_{i+1}\geq \frac{\pi}{2}+ \eta_0$.
\end{itemize}

Let $z_i=\tilde{j}_t(y_i)$, and let $\delta_i$ be the geodesic segment in $\mathbb{H}^3$ from $z_i$ to $z_{i+1}$. Then we have $z_0=\tilde{j}_t(x)$ and $z_{n+1}=\tilde{j}_t(y)$.
By Lemmas \ref{estimate2dpiece} and \ref{estimate3dpiece}, since $\delta<\frac{\eta_0}{40}$, the following conditions hold for $\delta_i$.
\begin{enumerate}
\item For any $i=0,\cdots,n$, we have $\frac{1}{2}l(\gamma_i')\leq l(\delta_i)\leq 2l(\gamma_i')$. Moreover, if $\gamma_i'$ is not an unmodified $3$-dim piece, $l(\delta_i)\geq \frac{L}{2}$ holds.
\item If neither $\gamma_{i-1}'$ nor $\gamma_i'$ are unmodified $3$-dim pieces, then $\angle z_{i-1}z_iz_{i+1}\geq \frac{\eta_0}{2}$.

\item  If  $\gamma_{i-1}'$ or $\gamma_i'$ is an unmodified $3$-dim piece, then $\angle z_{i-1}z_iz_{i+1}\geq \frac{\pi}{2}+ \frac{\eta_0}{2}$.
\end{enumerate}
On one hand, we have 
$$d_{\mathbb{H}^3}(\tilde{j}_t(x),\tilde{j}_t(y))\leq \sum_{i=0}^nl(\delta_i)\stackrel{\text{item\ (1)}}{\leq} 2\sum_{i=0}^nl(\gamma_i')\stackrel{(\ref{6.8})}{\leq} 4d_{\mathbb{H}^3}(\tilde{j}_0(x),\tilde{j}_0(y))\leq 4d_{\tilde{Z}^3}(x,y).$$
On the other hand, since $L$ is large with respect to $\eta_0$, items (2) (3) and Proposition \ref{lengthshort} imply that 
$$d_{\mathbb{H}^3}(\tilde{j}_t(x),\tilde{j}_t(y))\stackrel{\text{Prop\ \ref{lengthshort}}}{\geq} \frac{1}{2}\sum_{i=0}^nl(\delta_i)\stackrel{\text{item\ (1)}}{\geq} \frac{1}{4}\sum_{i=0}^nl(\gamma_i')\geq \frac{1}{4}d_{\tilde{Z}^3}(x,y).$$

We have proved equation (\ref{6.12}) holds for the modified endpoints $x$ and $y$, thus equation (\ref{6.11}) holds for any $x,y\in \tilde{Z}^3$. So $\tilde{j}_t:\tilde{Z}^3\to \mathbb{H}^3$ is a quasi-isometric embedding.

\end{proof}

Now we finish the proof of Theorem \ref{pi1injectivity}.
\begin{proof}[Proof of Theorem \ref{pi1injectivity}]
By Proposition \ref{deformation}, each $\tilde{j}_t:\tilde{Z}^3\to \mathbb{H}^3$ is a quasi-isometric embedding. Moreover, since $\pi_1(Z^3)$ is torsion free and $\tilde{j}_t$ is $\rho_t$-equivariant, each representation $\rho_t:\pi_1(Z^3)\to \text{Isom}_+(\mathbb{H}^3)$ is injective. Again, since $\tilde{j}_t:\tilde{Z}^3\to \mathbb{H}^3$ is a quasi-isometric embedding and $Z^3=\tilde{Z}^3/\pi_1(Z^3)$ is compact after truncating cusp ends, $\rho_t(\pi_1(Z^3))<\text{Isom}_+(\mathbb{H}^3)$ is a geometrically finite subgroup.

So $\{\rho_t(\pi_1(Z^3))\ |\ t\in[0,1]\}$ forms a continuous family of geometrically finite subgroups of $\text{Isom}_+(\mathbb{H}^3)$. Then the convex core of $\mathbb{H}^3/j_*(\pi_1(Z^3))=\mathbb{H}^3/\rho_1(\pi_1(Z^3))$ is homeomorphic to the convex core of $\mathbb{H}^3/\rho_0(\pi_1(Z^3))$, which is homeomorphic to $\mathcal{Z}\setminus \partial_p \mathcal{Z}$ (as oriented manifolds) by Proposition \ref{model} (4).
\end{proof}

\bigskip
\bigskip

\end{document}